\theoremstyle{plain}
\newtheorem{theorem}{Theorem}[section]
\newtheorem{lemma}[theorem]{Lemma}
\newtheorem{proposition}[theorem]{Proposition}
\newtheorem{corollary}[theorem]{Corollary}
\theoremstyle{remark}
\newtheorem{defn}[theorem]{Definition}
\newtheorem{defnprop}[theorem]{Definition-Proposition}
\newtheorem{remark}[theorem]{Remark}
\newtheorem{example}[theorem]{Example}
\newtheorem{conjecture}[theorem]{Conjecture}
\newtheorem{property}[theorem]{Property}
\newtheorem{assumption}[theorem]{Assumption}
\newcommand{\Hom}{{\mathrm{Hom}}}
\newcommand{\gr}{{\mathrm{gr}}}
\newcommand{\cOmega}{ {\Omega}}
\title{Irregular Hodge numbers of stacky Clarke mirror pairs}
\author{Andrew Harder and Sukjoo Lee}
\address{Andrew Harder 
   \textnormal{Department of Mathematics, Lehigh University, Chandler--Ullmann Hall, 17 Memorial Dr. E., Bethlehem, PA 18015, USA.}    \textnormal{\texttt{anh318@lehigh.edu}}}
   \address{Sukjoo Lee
   \textnormal{Center for Geometry and Physics, Institute for Basic Science (IBS), Pohang 37673, South Korea.}    
   \textnormal{\texttt{sukjoo216@ibs.re.kr}}}
\date{}
\begin{document}

\begin{abstract}
We prove a duality between the graded pieces of the irregular Hodge filtration on the twisted cohomology for a large class of Clarke mirror pairs of stacky Landau--Ginzburg models. We use this to recover results of Batyrev--Borisov, generalize results of Ebeling--Gusein-Zade--Takahashi and Krawitz, and prove results similar to those of Gross--Katzarkov--Ruddat. We apply our results to prove a generalized version of a conjecture of Katzarkov--Kontsevich--Pantev for orbifold toric complete intersections with nef anticanonical divisors and orbifold Fano stacks, and we prove the Hodge number duality result for orbifold log Calabi--Yau complete intersections. Along the way, we study the behaviour of twisted cohomology under degeneration and prove that for certain degenerations of toric Landau--Ginzburg models, irregular Hodge numbers admit a tropical realization. 
\end{abstract}

\maketitle

\section{Introduction}

\subsection{Motivation} One of the first notable results in the mathematical mirror symmetry literature is the monumental proof of Batyrev and Borisov \cite{batyrev1996mirror} that if $X$ and $\check{X}$ are $d$-dimensional Calabi--Yau varieties coming from dual nef partitions, then the following relation holds
\begin{equation}\label{e:bb}
h^{p,q}_\mathrm{st}(X) = h^{d-p,q}_\mathrm{st}(\check{X}).
\end{equation}
Here, the stringy Hodge numbers of $X$ and $\check{X}$ can be defined by motivic integration or by more direct combinatorial means. In the past few decades, there have been a number of results in the same vein. Borisov and Mavlyutov \cite{borisov2003string} give a simplified version of the proof of Batyrev and Borisov; Gross \cite{gross2005toric} combines general results of Gross and Siebert \cite{gross2010mirror} to recover Batyrev and Borisov's result in the smooth case, and, very recently, Matessi and Renaudineau \cite{matessi2024mirror} recover Batyrev and Borisov's result for hypersurfaces in smooth toric varieties using tropical homology.  

Over the past decades, since Batyrev and Borisov's work, it has become clear that mirror symmetry produces relationships between many pairs of varieties which are not Calabi--Yau. For instance, both Fano varieties and varieties of general type are expected to be mirror to objects called \emph{Landau--Ginzburg models}. In plain terms, a Landau--Ginzburg model is a pair consisting of a quasiprojective variety or Deligne--Mumford stack $Y$ along with a regular function $w$ called a \emph{Landau--Ginzburg superpotential}. Furthermore, simple constructions of mirror pairs $X$ and $(Y,w)$ exist in many cases, in particular, when $X$ is a toric complete intersection (see Section \ref{s:smooth} for more details). 

For smooth, ample, general-type hypersurfaces in toric varieties, Gross, Katzarkov, and Ruddat explained how to construct a Landau--Ginzburg mirror $(Y,w)$ and, in a major breakthrough, they proved a result analogous to that of Batyrev and Borisov \cite{gross2017towards}. Precisely, for any such hypersurface $X$, there is a particular singular fibre $Y_0$ of $w:Y\rightarrow \mathbb{A}^1$ so that
\[
    h^{p,q}(X) = \dim \gr_F^{p}\mathbb{H}^{p+d+2-q}(Y_0, \phi_w\underline{\mathbb{Q}}_{\mathcal{Y}}). 
\]
This relationship was predicted by Katzarkov, Kapustin, Orlov, and Yotov \cite{kapustin2009homological}. The proofs of Batyrev--Borisov and Gross--Katzarkov--Ruddat are based heavily on the algorithm of Danilov and Khovanskiǐ along with various combinatorial generating function identities. Both results use the compactness of $X$ in an important way.

If $X$ is a Fano variety, the mirror object is expected to be a Landau--Ginzburg model, $(Y,w)$ consisting of a sufficiently smooth quasiprojective variety $Y$ and a regular function $w$. Katzarkov, Kontsevich, and Pantev construct a cohomology theory for $(Y,w)$ (see also \cite{yu2014irregular,esnault20171}), which we denote $H^*(Y ,w)$. This cohomology theory admits a mixed Hodge structure \cite{esnault20171,shamoto2018hodge} and we denote the $(p,q)$-Hodge number $f^{p,q}(Y,w) = \dim \gr_F^pH^{p+q}(Y,w)$. Katzarkov, Kontsevich, and Pantev conjectured that if $X$ is a $d$-dimensional Fano variety and $(Y,w)$ is a homological mirror dual Landau--Ginzburg model, then 
\begin{equation}\label{eq:kkp}
h^{p,q}(X) = f^{d-p,q}(Y,w).
\end{equation}
To date, only a little progress has been made proving \eqref{eq:kkp}, even in small classes of examples. It is known for weak Fano toric varieties \cite{harder2016geometry,batyrev1993variations}, del Pezzo surfaces \cite{lunts2018landau}, smooth Fano 3-folds \cite{cheltsov2018katzarkov}, and it is known that $h^{d-1,1}(X) = f^{1,1}(Y,w)$ for $d$-dimensional smooth complete intersections in weighted projective spaces \cite{przyjalkowski2015hodge}. The Batyrev--Borisov mirror construction can also be extended to produce mirrors of complete intersections in toric varieties with nef anticanonical bundle (see Section \ref{s:mirror-constr}). 

In \cite{clarke2016dual}, Clarke reinterprets many toric mirror constructions, including those listed above, as a pair of Landau--Ginzburg models coming from a simple duality of fans: For rank $d$-lattice $M$ and its dual $N$, two fans $\Sigma \subseteq M$ and $\check{\Sigma}\subseteq N$ are called \emph{Clarke dual} if $\langle m, n \rangle \geq 0$ for all $m \in \mathrm{Supp}(\Sigma)$ and $n \in \mathrm{Supp}(\check{\Sigma})$. From this data, Clarke \cite{clarke2016dual} constructs a pair of Landau--Ginzburg models which we call {\em Clarke dual pairs}. In this article, we extend this definition to stacky fans to produce more general Clarke dual pairs of stacky Landau--Ginzburg models. 

One expects that that arbitrary Clarke dual pairs are mirror to one another. However, it is difficult to even make a precise conjecture about mirror symmetry between pairs of stacky Landau--Ginzburg models. For instance, little is known about the definition of the Fukaya--Seidel category for stacky Landa--Ginzburg models (see e.g., \cite{cho2022fukayaorb}), and while \cite{katzarkov2008hodge} provides a definition of the Gromov--Witten theory of a Landau--Ginzburg model, there does not seem to be much research in this direction. So at the moment neither Homological nor Hodge-theoretic mirror symmetry seems to be within reach for completely general Clarke dual pairs.

The main goal of this paper is to prove that Clarke dual pairs of Landau–Ginzburg models satisfy a duality of irregular Hodge numbers resembling \eqref{e:bb} and which will be described in detail below. This establishes fundamental evidence that Clarke 
 dual pairs of Landau--Ginzburg models are in fact mirror pairs. In doing so, we are able to completely recover general results of Batyrev--Borisov, Krawitz \cite{krawitz2010fjrw}, partially recover results of Gross--Katzarkov--Ruddat, and prove that \eqref{eq:kkp} holds for all toric complete intersections. We will elaborate on all of this in the remainder of the introduction.



\subsection{Irregular Hodge numbers of a Landau--Ginzburg model}\label{s:intro-irreg}

As mentioned above, the main focus of this article is a Landau--Ginzburg model $(Y, w: Y \to \mathbb{A}^1)$, which, for us, consists of a smooth Deligne--Mumford stack $Y$ and a regular function $w$. Specifically, we are interested in the cohomology of $(Y, w)$ and the irregular Hodge structure it carries. These objects have been extensively studied over the past decade: In the tame case, when all components of the pole divisor have degree 1, see \cite{katzarkov2017bogomolov, shamoto2018hodge, harder2016geometry}; for extensions to the general case, see \cite{yu2014irregular, esnault20171}.
We briefly recall this background and provide more details in Section \ref{s:tw} to make the article self-contained. For the moment, suppose that $Y$ is a smooth, quasiprojective variety, and $w$ is proper. We adopt the de Rham theoretic description of the cohomology of $(Y, w)$, denoted simply by $H^*(Y, w)$. Fix a normal crossings compactification $\overline{Y}$ of $Y$ and let $P = \overline{Y}\setminus Y$. The {\em twisted de Rham cohomology} is defined as the hypercohomology of the twisted holomorphic de Rham complex 
    \[
    (\cOmega^\bullet_{\overline{Y}}(*P),  d + d{w}\wedge(-)).
    \]
Furthermore, in \cite{yu2014irregular}, Yu introduces a rationally graded decreasing filtration $F^\bullet_\mathrm{irr}$ on the twisted cohomology, called the \emph{irregular Hodge filtration}. The dimensions of the associated graded pieces of this filtration are called \emph{irregular Hodge numbers}, denoted by
\[
f^{\lambda,\mu}(Y, w)=\dim_\mathbb{C}\gr_{F_\mathrm{irr}}^\lambda H^{\lambda+\mu}(Y,w)
\]
for $\lambda,\mu \in \mathbb{Q}$ and $\lambda+\mu \in \mathbb{Z}$.\footnote{To emphasize that the gradings are rational, we use $\lambda$ and $\mu$ instead of $p$ and $q$, which are commonly used to describe the usual Hodge numbers.} In particular, in the tame case, this filtration becomes integrally graded and also coincides with the one introduced in \cite{katzarkov2017bogomolov} (see \cite[Proposition 1.5.3]{esnault20171}). The irregular Hodge numbers are then the same as the Hodge-graded dimensions of the relative cohomology $H^*(Y,w^{-1}(t))$ for a generic fibre $w^{-1}(t)$ (see e.g., \cite{harder2016geometry,shamoto2018hodge}).

In this article, we relax the assumption that $Y$ is smooth and instead work in the orbifold setting, specifically where $Y$ is a toric Deligne--Mumford stack. We consider the so-called \emph{orbifold cohomology} introduced in \cite{ChenRuan2004orb}, which is defined as the usual cohomology of the inertia stack of $Y$ with a particular twist applied to cohomological degree and the irregular Hodge filtration. Following the procedure outlined in Section \ref{s:tw}, we can naturally define the orbifold twisted cohomology and the orbifold irregular Hodge filtration. We denote the orbifold twisted cohomology by $H^*_{\mathrm{orb}}(Y, w)$ and the corresponding irregular Hodge numbers by $f^{\lambda,\mu}_{\mathrm{orb}}(Y, w)$.



\subsection{Irregular Hodge number duality for Clarke mirror pairs}

As above, let $N$ be a lattice of rank $d$, and let $M$ be its dual. Let $\langle -, - \rangle: N \times M \to \mathbb{Z}$ denote the canonical nondegenerate pairing. Consider rational fans $\Sigma \subset M$ and $\check{\Sigma} \subset N$, which we assume to be quasiprojective and simplicial. In \cite{clarke2016dual}, Clarke studies a pair of such fans $(\Sigma, \check{\Sigma})$ satisfying the condition $\langle m, n \rangle \geq 0$ for any $m \in \mathrm{Supp}(\Sigma)$ and $n \in \mathrm{Supp}(\check{\Sigma})$. This condition implies that the rational functions
 \[
 w(\check{\Sigma}):=1+\sum_{n \in \check{\Sigma}[1]}a_n\underline{\check{x}}^n , \qquad w({\Sigma}):=1+\sum_{m \in \check{\Sigma}[1]}b_m \underline{x}^m ,
\]
with sufficiently general coefficients are indeed regular on $T(\Sigma)$ and $T(\check{\Sigma})$, respectively. We generalize this definition to the stacky setting. 

For us, a simplicial stacky fan consists of a pair ${\bf{\Sigma}} = (\Sigma, \beta)$ where $\Sigma$ is a simplicial fan and $\beta : \Sigma[1] \to \mathbb{Z}_{>0}$ assigns a positive integer to each ray generator of $\Sigma$. Associated to this data is a smooth toric Deligne--Mumford stack which we denote $T({\bf \Sigma})$. The underlying toric variety $T(\Sigma)$ is a coarse moduli space of $T({\bf \Sigma})$ so there is a canonical morphism $\pi_{\bf \Sigma}:T({\bf \Sigma}) \to T(\Sigma)$. See Section \ref{s:tstack} for a review. 

We define ${\bf{\Sigma}}[1] = \{\beta(\rho) \cdot \rho \mid \rho \in \Sigma[1]\}$, and for each cone $c \in {\bf{\Sigma}}$, let $\Delta_c$ be the convex hull of $c[1] \cup {0}$. We call a pair of stacky fans $({\bf \Sigma}, \check{\bf \Sigma})$ a (stacky) Clarke dual pair if it satisfies the following two conditions:

\begin{enumerate}
    \item (Regularity) $\langle m, n \rangle \geq 0$ for any $m \in \mathrm{Supp}(\Sigma)$ and $n \in \mathrm{Supp}(\check{\Sigma})$, 
    \item (Convexity) The polyhedral complexes $\Delta_{\bf\Sigma}=\cup_{c \in {\bf\Sigma}}\Delta_c $ and $\Delta_{\bf\check{\Sigma}}=\cup_{\check{c} \in {\bf \check{\Sigma}}}\Delta_{\check{c}}$ have convex support.
\end{enumerate}
A Clarke dual pair induces a pair of Landau--Ginzburg models 
\[
(T({\bf \Sigma}), w({\bf \check{\Sigma})}), \qquad (T({\bf \check{\Sigma}}), w({\bf {\Sigma}}))
\]
where $w({\bf \check{\Sigma}})=\pi_{\bf \check{\Sigma}} \circ w({\bf\check{\Sigma}})$ and $w({\bf {\Sigma}})=\pi_{\bf {\Sigma}} \circ w({\bf{\Sigma}})$. We call this pair of Landau--Ginzburg models a \emph{Clarke mirror pair}. Our main theorem establishes the \emph{irregular} Hodge number duality for such a pair, thereby justifying the use of the term "mirror." See Section \ref{s:pfThm} for the outline of the proof. 

\begin{theorem}\label{thm:intro-clarkedual}\label{t:clintro}
	Let $({\bf\Sigma}, {\bf \check{\Sigma}})$ be a Clarke dual pair. For $\lambda,\mu \in \mathbb{Q}$, we have the identification of the orbifold irregular Hodge numbers 
 \[
 f_\mathrm{orb}^{\lambda,\mu}(T({\bf \Sigma}), w({\bf \check{\Sigma}}))=f_\mathrm{orb}^{d-\lambda,\mu}(T({\bf \check{\Sigma}}), w(\bf {\Sigma})).
 \]
\end{theorem}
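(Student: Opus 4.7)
The plan is to prove the duality by writing both sides as the same combinatorial generating function attached to the paired polytopal complexes $\Delta_{\bf\Sigma}$ and $\Delta_{\bf\check\Sigma}$, in the spirit of the Danilov--Khovanskii and Batyrev--Borisov strategy, but performed at the level of orbifold \emph{twisted} de Rham cohomology. The two key ingredients I expect to combine are a Yu-type Newton filtration description of each orbifold twisted sector and a Stanley-reciprocity type identity enforced by the Clarke convexity hypothesis.

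First, I would reduce to the open tori. Because $w({\bf\check\Sigma})$ is a Laurent polynomial whose pole behaviour at the toric boundary is controlled by the convexity of $\Delta_{\bf\check\Sigma}$, the meromorphic twisted de Rham complex on $T({\bf\Sigma})$ should be quasi-isomorphic, as a filtered complex, to the Koszul-type complex $(\cOmega^\bullet_T, d + dw({\bf\check\Sigma})\wedge)$ on the dense torus $T \subset T({\bf\Sigma})$, filtered by dilates of $\Delta_{\bf\check\Sigma}$. This identifies Yu's irregular filtration with a concrete Newton filtration. I would justify this using excision on a toric normal crossings compactification together with weight estimates of \cite{yu2014irregular}. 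Then I would unfold the orbifold structure: the inertia stack of $T({\bf\Sigma})$ decomposes into twisted sectors indexed by box elements $v$ of ${\bf\Sigma}$, each sector being a stacky toric substack fixed by $v$ with an induced potential $w_v$ and an age shift $a(v) \in \mathbb{Q}_{\geq 0}$. This yields a decomposition
\[
f^{\lambda,\mu}_\mathrm{orb}(T({\bf\Sigma}),w({\bf\check\Sigma})) = \sum_{v \in \mathrm{Box}({\bf\Sigma})} f^{\lambda - a(v),\,\mu - a(v)}(T_v, w_v),
\]
and an analogous one on the dual side.

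Next, applying the Newton filtration description sector by sector, each summand should compute as the dimension of a graded piece of a Jacobian-style quotient algebra associated with the minimal cone $c(v)$ containing $v$, and this dimension should equal the number of lattice points in the relative interior of a face of a suitable dilate of $\Delta_{\bf\check\Sigma}$ cut out by $c(v)$, shifted by $a(v)$. Assembling these contributions gives a bivariate generating function $P_{\bf\Sigma,\bf\check\Sigma}(u,v)$ indexed by pairs (cone of ${\bf\Sigma}$, face of $\Delta_{\bf\check\Sigma}$) for the entire left-hand side. The regularity condition $\langle m,n\rangle \geq 0$ yields a canonical face--antiface correspondence between the data on the two sides, and the convexity hypothesis is precisely what is needed to invoke a Stanley-reciprocity identity for the pair $(\Delta_{\bf\Sigma}, \Delta_{\bf\check\Sigma})$. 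That identity produces a functional equation relating $P_{\bf\Sigma,\bf\check\Sigma}(u,v)$ and $P_{\bf\check\Sigma,\bf\Sigma}(u,v)$ which exchanges $\lambda$ with $d-\lambda$ while preserving $\mu$, giving the claimed duality.

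The hard part, I expect, is the first step. In the smooth projective case Yu's filtration has a clean description via logarithmic de Rham complexes, but in the stacky setting with meromorphic $w$ one must simultaneously control three distinct gradings: the orbifold age grading coming from the inertia stack, the rational grading coming from Yu's construction, and the Newton polytope grading coming from the potential. Showing that these gradings are mutually compatible and that the whole is computed sector by sector by a single Koszul-type complex on each torus is the genuinely technical content; once this is in place, the remaining combinatorial identity should be an elaboration of the Batyrev--Borisov and Borisov--Mavlyutov calculations in the non-stacky Calabi--Yau setting.
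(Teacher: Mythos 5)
There is a genuine gap, and it is in your first step. The twisted cohomology $H^*\big(T({\bf \Sigma}), w({\bf\check\Sigma})\big)$ is \emph{not} computed by the Koszul complex $(\Omega^\bullet_T, d + dw({\bf\check\Sigma})\wedge(-))$ on the dense torus $T \subset T({\bf\Sigma})$: excision only lets you discard boundary divisors contained in the pole divisor of $w$, whereas the divisors along which $w({\bf\check\Sigma})$ is regular contribute genuinely (this is exactly what the residue exact sequence of Proposition \ref{p:resles} measures). A minimal counterexample is the weak Fano case $\check{\bf\Sigma}=\{0\}$, where $w({\bf\check\Sigma})$ is constant: the left-hand side of the theorem is $H^*_{\mathrm{orb}}(T({\bf\Sigma}))$, whose total dimension is the normalized volume of $\Delta_{\bf\Sigma}$, while your torus-level complex computes $H^*((\mathbb{C}^*)^d)$, of dimension $2^d$. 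More generally your reduction makes the left-hand side depend only on the Newton polytope $\Delta_{\bf\check\Sigma}$ (via Adolphson--Sperber the torus answer is $d!\,\mathrm{Vol}(\Delta_{\bf\check\Sigma})$ in each relevant degree), erasing all dependence on the ambient fan ${\bf\Sigma}$ beyond its box elements; but both fans enter the answer, which is why the paper's combinatorics lives on the poset $({\bf\Sigma}\oplus{\bf\check\Sigma})_0$ of \emph{pairs} of orthogonal cones. The subsequent steps (Jacobian-ring dimensions as lattice-point counts in dilates, Stanley reciprocity for the pair of polytopes) inherit this defect, and in addition the lattice-point formulation is not the right bookkeeping for rational ages on non-Gorenstein simplices — the paper instead uses box bases of the individual simplices of the star triangulation (Proposition \ref{p:adolphson-sperber}), not Ehrhart data of the whole polytope.

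For comparison, the paper does not argue by generating functions at all. It degenerates $(T({\bf\check\Sigma}), w({\bf\Sigma}))$ over a disc using the piecewise-linear function $\varphi$, proves constancy of the irregular Hodge-graded dimensions in the family and an $E_1$-type degeneration for the limit (Sections \ref{s:degnby}--\ref{s:nearbyfib}), and identifies the limit with the cohomology of a tropical Jacobian sheaf ${\bf J}^\bullet_{\mathrm{orb}}$ on $\mathsf{T}({\bf\check\Sigma}, w({\bf\Sigma})_\varphi)_0$ (Theorem \ref{t:descent}); the duality is then a sheaf-level isomorphism given by contraction with $\mathrm{Vol}(M)$ and swapping factors on $({\bf\Sigma}\oplus{\bf\check\Sigma})_0$ (Theorem \ref{t:combdual}), not a reciprocity identity for a bivariate series. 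If you want to salvage your outline, the step you must replace is precisely the claim that Yu's filtration on $T({\bf\Sigma})$ reduces to a Newton filtration on the open torus; some mechanism (degeneration, or a Steenbrink-type spectral sequence over the toric strata of $T({\bf\Sigma})$) is needed to record the contributions of the boundary strata where $w$ has no pole.
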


\subsection{Katzarkov--Kontsevich--Pantev conjecture for toric complete intersections}

The first major application of Theorem \ref{t:clintro} is Hodge number duality for quasi-smooth, log Calabi-Yau, Gorenstein complete intersections of nef divisors in toric varieties. For this application, we will always have $\beta = (1,\dots, 1)$, so we simply write $\Sigma$ instead of ${\bf \Sigma}$. Let us briefly outline the construction of mirror pairs in this context and provide an overview of our proof of Hodge number duality. For more details, we refer the reader to Section \ref{s:smooth}.

Let $\Delta \subset M$ be a reflexive polytope and $T_\Delta$ be the an orbifold crepant resolution of the toric Fano variety attached to $\Delta$. Consider a nef partition $A_1, \dots, A_{k+1}$ of $\Delta$ (Definition \ref{d:nefpar}). There is a union of toric strata $E_i$ attached to each $A_i$ so that $\cup_i E_i$ is the entire toric boundary of $T_\Delta$. Let $\Delta_i=\mathrm{Conv}(A_i, 0)$ and  $\mathcal{O}_{T_\Delta}(E_i)$ be the corresponding line bundle. Choose a generic section $s_i$ of $\mathcal{O}_{T_\Delta}(E_i)$ and take the complete intersection $X_A:=V(s_1, \dots, s_k)$. We also let $D_A$ denote the restriction of the toric divisor $E_{k+1}$ of $T_\Delta$ into $X_A$. Then by adjunction $(X_A, D_A)$ is a log Calabi--Yau pair and we set $U_A=X_A \setminus D_A$. 

Given a nef partition, Borisov \cite{borisov1993towards} explains how to construct a dual nef partition, $\check{\Delta}_1,\dots, \check{\Delta}_{k+1}$ of the polar dual polytope $\check{\Delta}:=\mathrm{Conv}(\check{\Delta}_1, \dots, \check{\Delta}_{k+1})$. As before, we may choose generic sections $\check{s}_{1}, \dots, \check{s}_{k+1}$, of the line bundles $\mathcal{O}_{T_{\check{\Delta}}}(\check{E}_{1}),\dots, \mathcal{O}_{T_{\check{\Delta}}}({E}_{{k+1}})$ and apply the same construction outlined above to obtain a mirror Landau--Ginzburg model $(U_{\check{A}}, w_{\check{A}})$ where $U_{\check{A}}=X_{\check{A}} \setminus D_{\check{A}}$ and $w_{\check{A}}$ is the regular function on $U_{\check{A}}$ induced by $\check{s}_{k+1}$.

\begin{example}
	If we set $A_{k+1}=\{0\}$, then $U_A$ and $U_{\check{A}}$ are the compact Calabi--Yau complete intersections studied in \cite{batyrev1996mirror}. If $k=0$, that is, $\Delta_1 = \Delta$, then $X_A = T_\Delta$, $U_{\check{A}} = (\mathbb{C}^*)^d$ and $w_{\check{A}}$ is the usual Hori--Vafa superpotential.
\end{example}

To apply Theorem \ref{thm:intro-clarkedual}, must incorporate the mirror pair $(X_A, D_A), (U_{\check{A}}, w_{\check{A}})$ into the framework of Clarke duality. This is achieved by using the so-called Cayley trick: Each section $s_i$ induces a regular function $g_i$ on the total space of the dual line bundle $V_i:=\mathrm{Tot}(\mathcal{O}_{T_\Delta}(-E_i))$. Then we show that there exists an $F$-filtered isomorphism between the orbifold cohomology of the Landau--Ginzburg model $(V_A:=V_1\oplus\dots \oplus V_k, g:=g_1\oplus \dots \oplus g_k)$ and $X$ up to Tate twist (Proposition \ref{p:tj2} and Corollary \ref{c:tj2}). Hence, we can replace $X_A$ with a higher-dimensional Landau--Ginzburg model $(V_A, g)$ without losing any Hodge-theoretic information. Similarly, $U_A$ can be replaced with the restriction of $(V_A, g)$ over the open part $T^\circ=T_\Delta \setminus E_{k+1}$, denoted by $(V_A^\circ, g)$. It turns out that both $V_A$ and $V_A^\circ$ are simplicial Gorenstein toric varieties, and we denote the associated fans by $\Sigma_A$ and $\Sigma_{\check{A}}^\circ$, respectively. By applying the same construction on the mirror side, we obtain two Clarke dual pairs

\begin{enumerate}
	\item $(\Sigma_A, \Sigma^\circ_{\check{A}})$ corresponds to the mirror pair $(X_A, (U_{\check{A}}, w_{\check{A}}))$; 
	\item $(\Sigma_A^\circ, \Sigma^\circ_{\check{A}})$ corresponds to the mirror pair  $(U_A, U_{\check{A}})$.
\end{enumerate}
The following result is a direct corollary of Theorem \ref{t:clintro}.

\begin{theorem}[Theorem \ref{t:kkp}]\label{t:functor-intro}
    Let $(X_A, D_A)$ and $(U_{\check{A}}, w_{\check{A}})$ be as above. There are identifications of the orbifold Hodge numbers: For $p,q \in \mathbb{Z}$, 
    \[
    h_{\mathrm{orb}}^{p,q}(X_A)=f_{\mathrm{orb}}^{d-p, q}(U_{\check{A}}, w_{\check{A}}), \quad  f_{\mathrm{orb}}^{p,q}(U_A)=f_{\mathrm{orb}}^{d-p, q}(U_{\check{A}}).
    \] 
\end{theorem}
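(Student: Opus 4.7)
The plan is to deduce Theorem \ref{t:functor-intro} directly from the main duality Theorem \ref{t:clintro} via the Cayley-trick reduction sketched in the paragraphs preceding the statement. The Hodge-theoretic heart of the matter is already contained in Proposition \ref{p:tj2} and Corollary \ref{c:tj2}, which identify, up to a Tate twist, the orbifold cohomology of $X_A$ (respectively $U_A$) with the orbifold twisted cohomology of the higher-dimensional toric Landau--Ginzburg model $(V_A,g)$ (respectively $(V^\circ_A,g)$) obtained by applying the Cayley trick to the sections $s_1,\dots,s_k$. Exactly the same procedure applied to the Borisov-dual nef partition replaces $(U_{\check{A}},w_{\check{A}})$ and $U_{\check{A}}$ by Landau--Ginzburg models on the toric stacks $T(\Sigma^\circ_{\check{A}})$, with the single superpotential induced by $\check{s}_{k+1}$ retained on one side and omitted on the other.

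The first technical step is to verify that the pairs $(\Sigma_A,\Sigma^\circ_{\check{A}})$ and $(\Sigma^\circ_A,\Sigma^\circ_{\check{A}})$ are genuine Clarke dual pairs in the sense of Section 1.3. Regularity is a direct consequence of Borisov's defining inequalities for a dual nef partition: the ray generators of $\Sigma_A$ and $\Sigma^\circ_{\check{A}}$ are the Cayley lifts of the points of $A_i$ and $\check{A}_i$, and the nef-partition axioms force $\langle m,n\rangle\ge 0$ on the respective supports. The convexity of the polyhedral complexes $\Delta_{\Sigma_A}$ and $\Delta_{\Sigma^\circ_{\check{A}}}$ reduces to reflexivity of the Cayley polytopes attached to a nef partition, which is standard; the opened-up versions $\Sigma^\circ_A$ and $\Sigma^\circ_{\check{A}}$ are handled in the same way by simply dropping the ray corresponding to $A_{k+1}$ (respectively $\check{A}_{k+1}$).

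Once these combinatorial checks are in place, Theorem \ref{t:clintro} applied in the Cayley-extended lattice produces
\[
f^{\lambda,\mu}_{\mathrm{orb}}(T(\Sigma_A),w(\Sigma^\circ_{\check{A}})) = f^{D-\lambda,\mu}_{\mathrm{orb}}(T(\Sigma^\circ_{\check{A}}),w(\Sigma_A)),
\]
where $D$ is the rank of the extended lattice, and an analogous identity for the pair $(\Sigma^\circ_A,\Sigma^\circ_{\check{A}})$. Substituting the Cayley-trick identifications on both sides, the $k$-fold Tate twists that appear on the two sides cancel, and the index shift $D-\lambda$ descends to the shift $d-p$ appearing in the statement. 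The same computation applied to the second Clarke dual pair yields the duality for $(U_A,U_{\check{A}})$, completing the proof.

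The step I expect to require the most care is the combinatorial verification that the stacky fans produced by the Cayley construction, together with their opened-up counterparts, literally satisfy the convexity condition of a Clarke dual pair rather than only up to a convenient choice of simplicial refinement, and the careful bookkeeping of Tate twists so that the final index reads $d-p$ exactly with no residual offset by $k$. All of the Hodge-theoretic content is packaged in Theorem \ref{t:clintro} and in the Cayley-trick isomorphisms of Proposition \ref{p:tj2} and Corollary \ref{c:tj2}, so once the combinatorics is settled Theorem \ref{t:functor-intro} follows as a direct corollary.
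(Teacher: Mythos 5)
Your proposal is essentially the paper's own proof: the paper establishes that $(\Sigma_A,\Sigma^\circ_{\check{A}})$ (and hence $(\Sigma^\circ_A,\Sigma^\circ_{\check{A}})$) are Clarke dual pairs (Proposition \ref{p:npdc}), uses the Cayley-trick filtered isomorphisms of Proposition \ref{p:tj2} and Corollary \ref{c:tj2} (packaged as Proposition \ref{t:isohodge}) to trade $X_A$, $U_A$, $(U_{\check{A}},w_{\check{A}})$ for the higher-dimensional toric Landau--Ginzburg models, and then applies Theorem \ref{t:clarke}, exactly as you outline. The only real divergence is at the step you yourself flag as delicate: the paper verifies the convexity of $\Delta_{\Sigma_A}$ and $\Delta_{\Sigma^\circ_{\check{A}}}$ directly, by exhibiting them as regions cut out by the $k+1$ convex piecewise-linear inequalities $\phi_1,\dots,\phi_{k+1}\geq 0$ built from the nef-partition functions $\varphi_i$, rather than invoking reflexivity of the Cayley polytope, which does not literally address the triangulated complex $\bigcup_c\Delta_{\widetilde{c}}$ attached to the refined fan over the crepant resolution.
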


\subsection{Non-convex Clarke dual pairs}
One can check that Theorem \ref{t:clintro} does not hold when either $\Delta_{\bf \Sigma}$ or $\Delta_{\check{\bf \Sigma}}$ is not convex. However, in mirror symmetry, there are situations where non-convex Clarke dual pairs may appear. In Sections \ref{s:weakFano} and \ref{s:gentype} we introduce what seems to be a novel framework for dealing with such examples. Suppose we have pair of fans $(\Sigma, \check{\Sigma})$ which satisfy the regularity condition but fail to satisfy the convexity condition. We address this issue by introducing appropriate additional data $\beta$ and $\beta'$. In other words, we construct a new {\em stacky} Clarke dual pair $({\bf \Sigma}, {\bf \check{\Sigma}})$ where Theorem \ref{t:clintro} can be applied. In some specific cases, we find that the Hodge numbers for $(T(\Sigma), w(\check{\Sigma}))$ and $(T(\check{\Sigma}), w(\Sigma))$ are characterized as the integer-graded part of the Hodge numbers for $(T({\bf\Sigma}), w({\bf\check{\Sigma}}))$ and $(T({\bf\check{\Sigma}}), w({\bf\Sigma}))$, respectively.

For instance, let $Z$ be a smooth hypersurfaces in a smooth, compact, toric variety $T$. As described in the previous section, there is an induced Landau--Ginzburg model $g_Z : \mathrm{Tot}(\mathcal{O}_{T}(-Z)) \rightarrow \mathbb{A}^1$ which admits a natural choice of Clarke mirror Landau--Ginzburg model which we denote $(\check{T}, \check{w})$ in this introduction (see Section \ref{s:gentype} or \cite{gross2010mirror} for details). If $Z$ does not have nef anticanonical bundle, this pair of Landau--Ginzburg models does not satisfy the convexity condition. However, we may replace $\mathrm{Tot}(\mathcal{O}_T(-Z))$ with an appropriate root stack $\sqrt[n]{\mathrm{Tot}(\mathcal{O}_T(-Z)/0_T}$ for a sufficiently large $n$, where $0_T$ indicates the zero section of the bundle $\mathrm{Tot}(\mathcal{O}_T(-Z))\rightarrow T$. Composing $g_Z$ with the canonical morphism $\sqrt[n]{\mathrm{Tot}(\mathcal{O}_T(-Z))/0_T} \rightarrow \mathrm{Tot}(\mathcal{O}_T(-Z))$ we obtain a new stacky Landau--Ginzburg model, $(\sqrt[n]{\mathrm{Tot}(\mathcal{O}_T(-Z))/0_T}, {g}_Z)$ whose Clarke mirror has the same underlying space, $\check{T}$, as the Clarke dual of $(\mathrm{Tot}(\mathcal{O}_T(-Z))/0_T,g_Z)$, but the modified potential function, which we will denote by $\check{\bm{w}}$ in this introduction. Then the associated stacky Clarke dual pair becomes convex, and induces a Clarke mirror pair of Landau--Ginzburg models 
\[
(\sqrt[n]{\mathrm{Tot}(\mathcal{O}_T(-Z))/0_T},{g}_Z),\qquad (\check{T}, \check{\bm w})
\]
We also prove that 
\[
f^{p,q}_\mathrm{orb}(\sqrt[n]{\mathrm{Tot}(\mathcal{O}_T(-Z))/0_T},{g}_Z) = h^{p-1,q-1}(Z),\qquad p,q \in \mathbb{Z}.
\]
Thus we obtain the following result by applying Theorem \ref{t:clintro}.
\begin{theorem}[Theorem \ref{t:gkr}]
    Let $Z$ be a smooth toric hypersurface in a $d$-dimensional toric variety $T$. Then 
    \[
    h^{p-1,q-1}(Z) = f_\mathrm{orb}^{d-p,q}(\check{T},\check{\bm w})
    \]
    for all $p,q \in \mathbb{Z}$.
\end{theorem}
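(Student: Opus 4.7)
The strategy is to reduce the statement to a direct substitution of two ingredients recorded in the introduction: the Cayley-trick identity
\[
f^{p,q}_\mathrm{orb}\bigl(Y_n, g_Z\bigr) \;=\; h^{p-1,q-1}(Z), \qquad Y_n := \sqrt[n]{\mathrm{Tot}(\mathcal{O}_T(-Z))/0_T},
\]
valid for sufficiently large $n$, and Clarke duality for orbifold irregular Hodge numbers (Theorem \ref{t:clintro}) applied to the stacky pair $(Y_n, g_Z)$ and $(\check{T}, \check{\bm w})$. Given both inputs, the stated duality follows by composing them.

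The first concrete task is to set up the stacky Clarke dual pair $({\bf \Sigma}, {\bf \check{\Sigma}})$ explicitly: the underlying fan of ${\bf \Sigma}$ is that of $\mathrm{Tot}(\mathcal{O}_T(-Z))$ with weight function $\beta$ equal to the root exponent $n$ on the ray generator of the zero section $0_T$ and equal to $1$ on every other ray, and ${\bf \check{\Sigma}}$ is defined analogously, with modified potential $\check{\bm w}$ the one induced by this weight as sketched just before the theorem. The regularity condition is inherited from the underlying non-stacky pair of Clarke dual fans of $T$ and $\check T$ and is therefore independent of $n$. Convexity of $\Delta_{\bf \Sigma}$ and $\Delta_{\bf \check{\Sigma}}$ is the delicate step: any non-convexity produced by $Z$ failing to be nef is concentrated around the ray associated to $0_T$, and scaling that ray by $n \gg 0$ is precisely what forces both polyhedral complexes to become convex simultaneously.

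Once the convex stacky Clarke dual pair is in place, Theorem \ref{t:clintro} yields
\[
f^{p,q}_\mathrm{orb}(Y_n, g_Z) \;=\; f^{d-p,q}_\mathrm{orb}(\check{T},\check{\bm w}),
\]
after matching the lattice-rank normalization in the statement of Clarke duality with the effective dimension $d$ of $T$. Substituting the Cayley-trick identity on the left gives exactly $h^{p-1,q-1}(Z) = f^{d-p,q}_\mathrm{orb}(\check{T},\check{\bm w})$, as required. The main obstacle in the argument is therefore the convexity verification: combinatorially ensuring that a single choice of root exponent $n$ simultaneously convexifies both $\Delta_{\bf \Sigma}$ and $\Delta_{\bf \check{\Sigma}}$. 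This is precisely the phenomenon the root-stack modification was introduced to exploit, and everything else is a routine unwinding of definitions on top of Theorem \ref{t:clintro}.
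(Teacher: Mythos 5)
You follow the same route as the paper: put the extra weight on the fibre ray $(0_N,1)$ of $\Sigma_\mathcal{L}$, use it to restore convexity, and compose Clarke duality with the Cayley trick. The genuine gap is that the identity $f^{p,q}_{\mathrm{orb}}(Y_n,g_Z)=h^{p-1,q-1}(Z)$, which you import as an ingredient ``recorded in the introduction,'' is not an available input: it is half of what this theorem's proof has to establish. The plain Cayley trick (Proposition \ref{p:tj2} with $w=0$) only gives $f^{p,q}\bigl(\mathrm{Tot}(\mathcal{O}_T(-Z)),g_Z\bigr)=h^{p-1,q-1}(Z)$ for the coarse space. To pass to the root stack you must analyze its inertia: besides the untwisted sector, $Y_n$ has $\beta_{(0,1)}-1$ twisted sectors, each a copy of $T$ along the zero section with fractional age $k/\beta_{(0,1)}$, on which the potential $g_Z$ vanishes identically; hence these sectors contribute only at non-integral bidegrees, so the integer-graded orbifold irregular Hodge numbers of $(Y_n,g_Z)$ coincide with those of the coarse Landau--Ginzburg model. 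This is Proposition \ref{p:hdst}(1), whose hypotheses hold here because only one ray carries a nontrivial weight. Without this step the left-hand side of Theorem \ref{t:clintro} is an orbifold invariant of the stack, and your substitution does not yet produce $h^{p-1,q-1}(Z)$.

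Some of your framing of the Clarke pair is also off. Only $\Delta_{{\bf\Sigma}_\mathcal{L}}$ can fail to be convex: the mirror fan $\check{\Sigma}_\mathcal{L}$ is the cone over a coherent triangulation of $P_\mathcal{L}\times\{1\}$ with trivial stacky structure, so $\Delta_{\check{\Sigma}_\mathcal{L}}$ is the pyramid over $P_\mathcal{L}\times\{1\}$ and is convex independently of $n$; the weight on $(0_N,1)$ only modifies the potential $\check{\bm w}$ on $\check{T}$, and defining ${\bf\check{\Sigma}}$ ``analogously'' with a nontrivial weight would change the right-hand side of the statement. Moreover the obstruction is not ``$Z$ failing to be nef'' (nefness of the divisor is a standing hypothesis); it is $Z$ failing to be anticanonically nef, i.e.\ $\varphi(\rho)\ge 2$ on some ray, and the convexification needs an argument rather than ``$n\gg 0$'': the paper sandwiches $\Delta_{{\bf\Sigma}_\mathcal{L}}$ between two piecewise-linear graphs and shows that any $\beta_{(0,1)}\ge\min_\rho\varphi(\rho)$ works. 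Finally, ``matching the lattice-rank normalization'' hides real bookkeeping: Theorem \ref{t:clintro} is applied in a lattice of rank $d+1$ while the Cayley trick shifts the bidegree by one, and at integral bidegrees the claim concerns precisely the unipotent part of the limit (the subscript $1$ in the body version of the theorem); a complete proof must track these shifts explicitly rather than wave at them.
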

Note that under the conditions listed above, $h^{p-1,q-1}(Z) = 0$ if $p,q \notin \mathbb{Z}$ but $f^{d-p,q}(\check{T},\check{\bm  w})$ can be nonzero if $p,q$ are not integers. Therefore, Theorem \ref{t:gkr} only finds a source for the {\em integral} irregular Hodge numbers of $(\check{T},\check{\bm w})$.
\begin{remark}
    It is interesting to compare this to the results of Gross, Katzarkov, and Ruddat in \cite{gross2010mirror}, who prove that if $Z$ is of general type and $\check{T}$ is smooth, then 
    \[
    h^{p-1,q-1}(Z) = f^{d-p,q}(\check{T}, \phi_{\check{w}}\mathbb{C}) := \dim \gr^F_{d-p} \mathbb{H}^{d-p+q}(\check{T},\phi_{\check{w}}\mathbb{C}).
    \]
    We expect, but have not checked, that $f^{p,q}(\check{T}, \phi_{\check{w}}\mathbb{C}) = f^{p,q}(\check{T}, \phi_{\check{\bm w}}\mathbb{C})$, which would imply that for all integers, 
    \begin{equation}\label{e:irvan}
    f^{p,q}(\check{T}, \phi_{\check{\bm w}}\mathbb{C}) = f^{p,q}(\check{T},\check{\bm w}).
    \end{equation}
    It has been observed (see e.g. \cite[Remark 34]{lunts2018landau}) that, in general, Hodge numbers of vanishing cycle sheaves of Landau--Ginzburg models are not related to the irregular Hodge numbers, so \eqref{e:irvan} indicates that the Landau--Ginzburg models discussed in this section might have special properties that warrant further study.
\end{remark}

\subsection{Outline of the proof of Theorem \ref{thm:intro-clarkedual}}\label{s:pfThm}

The proof of Theorem \ref{thm:intro-clarkedual} may be roughly divided into two parts. The first part requires proving a comparison theorem between the irregular Hodge numbers of Landau--Ginzburg models  and what we call \emph{tropical irregular Hodge numbers}. These are combinatorial invariants attached to the fans $({\bf \Sigma},\check{\bf \Sigma})$. Once this result is proved, Theorem \ref{thm:intro-clarkedual} becomes a purely combinatorial question which we are able to address without appealing algebraic geometry.

\subsubsection{Comparison results} Suppose we have a Clarke dual pair of fans $({\bf \Sigma}, \check{\bf \Sigma})$. Since we assume that ${\bf \Sigma}$ and $\check{\bf \Sigma}$ are quasiprojective, there is a ${\bf \Sigma}$-linear function $\varphi: {\bf \Sigma} \rightarrow \mathbb{Z}$, which allows us to extend the Landau--Ginzburg model $(T(\check{\Sigma}),w(\bf {\Sigma}))$ to a family of Landau--Ginzburg models over a punctured disc $\mathbb{D}^*$, $w({\bf \Sigma})_\varphi: T(\check{\bf \Sigma}) \times \mathbb{D}^*\rightarrow \mathbb{A}^1$ using the Laurent polynomial 
\[
w({\bf \Sigma})_\varphi = 1 + \sum_{m \in {\bf \Sigma}[1]} b_m t^{\varphi(m)}\underline{x}^m.
\]
In Sections \ref{s:combdeg} and \ref{s:polysub} we show that there is a partial compactification  of $T(\check{\bf \Sigma})\times \mathbb{D}^*$ to which $w({\bf \Sigma})$ extends to a rational function, and so that the projection $T(\check{\bf \Sigma}) \times \mathbb{D}^* \rightarrow \mathbb{D}^*$ extends to a proper morphism to $\mathbb{D}$. In fact this becomes what we call a {\em quasi-stable degeneration of Landau--Ginzburg models} (Definition \ref{d:deglg}). In Section \ref{s:degnby} we study the cohomology of quasi-stable degenerations of Landau--Ginzburg models. Following a modification of the classical approach of Steenbrink \cite{steenbrink1974mixed} we define a log relative twisted de Rham complex for quasi-stable degenerations of Landau--Ginzburg models (Section \ref{s:nbf}) and show that the log relative twisted de Rham complex computes the nearby cohomology of the twisted de Rham complex of a degenerating family of Landau--Ginzburg models (Theorem \ref{t:constdim}). We equip the nearby cohomology with an irregular Hodge filtration and prove that for toric degenerations of Landau--Ginzburg models, the irregular Hodge numbers are constant under degenerations (Theorem \ref{t:nearbyfibcoh}(1), Proposition \ref{p:hnconst}) 

Given a quasi-stable degeneration of $(T(\check{\bf \Sigma}), w({\bf\Sigma}))$ we construct a tropical polyhedral complex $\mathsf{T}(\check{\bf \Sigma}, w({\bf \Sigma})_\varphi)_0$ which indexes pairs of strata in the central fibre. Motivated by work of Itenberg, Katzarkov, Mikhalkin, and Zharkov \cite{itenberg2019tropical}, we build a graded sheaf $({\bf J}_\mathrm{orb}^\bullet,\mathsf{F}^\bullet)$ of filtered vector spaces on $\mathsf{T}(\check{\bf \Sigma}, w({\bf \Sigma})_\varphi)_0$ in Section \ref{s:tsheaf}, which we call the \emph{orbifold Jacobian sheaf}. We prove the following result.

\begin{theorem}[Theorem \ref{t:nearbyfibcoh}, Proposition \ref{p:hodgeorb}]\label{t:degintro}
    Let $(T(\check{\bf \Sigma}), w({\bf \Sigma}))$ be a toric Landau--Ginzburg model coming from a Clarke dual pair of fans $({\bf \Sigma}, \check{\bf \Sigma})$. Then for $\lambda, \mu \in \mathbb{Q}$,
    \[
    f^{\lambda,\mu}_\mathrm{orb}(T({\bf\check{\Sigma}}),w({\bf \Sigma})) = \sum_{k} \dim \gr_{\mathsf{F}}^\lambda H^{\lambda+\mu-k}(\mathsf{T}(\check{\bf \Sigma}, w({\bf \Sigma})_\varphi)_0, {\bf J}_\mathrm{orb}^k).
    \]
\end{theorem}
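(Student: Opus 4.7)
The plan is to split the argument into a deformation step and a combinatorial identification step, following the strategy outlined in Section \ref{s:pfThm}. First, using the quasi-projectivity of $\check{\bf\Sigma}$, I would choose the ${\bf\Sigma}$-linear height function $\varphi$ and assemble the family $w({\bf\Sigma})_\varphi : T(\check{\bf\Sigma}) \times \mathbb{D}^* \to \mathbb{A}^1$. By the partial compactification constructed in Sections \ref{s:combdeg}--\ref{s:polysub}, this family extends to a proper quasi-stable degeneration of Landau--Ginzburg models over $\mathbb{D}$. The first key input (Proposition \ref{p:hnconst}) guarantees that the orbifold irregular Hodge numbers $f^{\lambda,\mu}_\mathrm{orb}$ are constant along this family, so the left-hand side of the theorem is equal to the corresponding graded dimensions of the nearby cohomology at $t=0$.

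Next, I would invoke Theorem \ref{t:constdim}: the nearby orbifold twisted cohomology is computed by the hypercohomology of the log relative twisted de Rham complex on the central fibre, equipped with its relative irregular Hodge filtration $F_\mathrm{irr}^\bullet$. This reduces the problem to a combinatorial computation of the graded pieces $\gr^\lambda_{F_\mathrm{irr}}$ of a specific complex supported on the central fibre.

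For the combinatorial identification, I would stratify the central fibre by the cells of the tropical polyhedral complex $\mathsf{T}(\check{\bf\Sigma}, w({\bf\Sigma})_\varphi)_0$. Each cell records both a toric stratum of the central fibre and a corresponding component of the critical or pole locus of the degenerated potential. Running a \v{C}ech-type spectral sequence adapted to the resulting closed cover---or, equivalently, a Mayer--Vietoris resolution indexed by $\mathsf{T}$---and taking $\gr^\lambda_{F_\mathrm{irr}}$ termwise, I obtain a double complex whose columns are precisely the graded sheaves ${\bf J}^k_\mathrm{orb}$ of Section \ref{s:tsheaf}, with $\mathsf{F}^\bullet$ matching the induced filtration. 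The shift by $k$ in the statement reflects the \v{C}ech direction, and summing over $k$ yields the stated formula, provided the spectral sequence degenerates on graded pieces.

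The main obstacle I anticipate is the stratum-by-stratum identification of the graded pieces of the log relative twisted de Rham complex with the stalks of $({\bf J}^\bullet_\mathrm{orb}, \mathsf{F}^\bullet)$. Two features make this delicate: first, the orbifold structure contributes Chen--Ruan-type twisted sectors, so the tropical sheaf must encode the fractional ray generators coming from $\beta$, and these contributions must be matched with those from the inertia stack of each toric stratum via a Danilov--Khovanski\u\i-style local analysis. Second, one must verify that the spectral sequence degenerates on $\gr_{F_\mathrm{irr}}^\bullet$ (equivalently, that the filtration is strict), which typically requires producing explicit filtered quasi-isomorphisms such as residue maps and Koszul-type contractions adapted to the stacky setting. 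Once these two points are settled, the theorem follows by adding cohomological degrees.
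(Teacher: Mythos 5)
Your architecture is the same as the paper's (extend to a quasi-stable toric degeneration, compute nearby cycles with the log relative twisted de Rham complex, resolve it by the strata indexed by $\mathsf{T}(\check{\bf\Sigma},w({\bf\Sigma})_\varphi)_0$, identify the graded pieces with ${\bf J}^\bullet_\mathrm{orb}$), but the two "obstacles" you defer at the end are precisely the mathematical content of the proof, and your ordering is circular at one point: Proposition \ref{p:hnconst}, which you invoke as the "first key input" to get constancy of the graded dimensions along the family, is conditional on Property \ref{c:E1deg} — which is exactly the strictness/degeneration statement you postpone. In the paper, strictness (Theorem \ref{t:nearbyfibcoh}(1)) and the filtered identification of the nearby cohomology with the Jacobian sheaf (Theorem \ref{t:nearbyfibcoh}(2)) are proved simultaneously by one computation, and only afterwards do Theorem \ref{t:constdim} and Proposition \ref{p:hnconst} transport the answer to the general fibre (Corollary \ref{c:nearbyfibcoh}). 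So "constancy first, combinatorics second" cannot be made to work as stated; the combinatorics must come first.

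The unproved stratum-by-stratum identification is also not a routine Danilov--Khovanski\u\i-type matter: it rests on Proposition \ref{p:adolphson-sperber}, which gives an explicit filtered isomorphism $\xi_\tau : {\bf B}_\tau \otimes \wedge^\bullet M \wedge \mathrm{Vol}(L(\tau)) \rightarrow H^*(T_M,w_\tau)$ for the Newton-polytope (irregular) filtration and, crucially, a preferred splitting of $F_\mathrm{irr}$ compatible with the residue and restriction maps. This needs every local potential $w_\sigma$ to have simplicial monomial support — which is exactly what the convexity condition buys via Proposition \ref{p:simplicial} — and generic coefficients so that each $H^*(T_{L(\tau)},w_\tau)$ sits in a single degree (Theorem \ref{t:as}). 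With that splitting the differentials of the cellular resolution of Proposition \ref{lem:res-irregular} become the graded maps $p\otimes\varpi$, so strictness (Property \ref{c:E1deg}) and the degeneration you leave as a hypothesis follow essentially for free; without exhibiting such a splitting, "provided the spectral sequence degenerates on graded pieces" is an assumption, not a step. The orbifold bookkeeping, by contrast, is easier than you suggest: the relevant twisted sectors of the toric degeneration are themselves toric strata forming quasi-stable degenerations (the relative inertia stack, Remark \ref{r:relinertia}), so the non-stacky argument applies verbatim sector by sector, and Proposition \ref{p:hodgeorb} is a purely formal regrading that packages the sector-wise sum as $H^*(\mathsf{T}(\check{\bf\Sigma},w({\bf\Sigma})_\varphi)_0,{\bf J}^\bullet_\mathrm{orb})$; no further local analysis of fractional ray generators is required.
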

In \cite{itenberg2019tropical}, the authors use results in classical Hodge theory to prove a similar result. Notably, they use the $E_2$ degeneration of the Steenbrink spectral sequence in a crucial way. Since we do not have access to these tools, we are required to construct our tropical complexes by hand, and prove a particular degeneration result (Theorem \ref{t:nearbyfibcoh}(2)) directly. This also suggests an alternate approach to results of \cite{itenberg2019tropical} which we will explore in forthcoming work \cite{descent1}.

\subsubsection{Combinatorial results}
The constructible sheaf ${\bf J}_\mathrm{orb}^\bullet$ on $\mathsf{T}(\check{\bf \Sigma}, w({\bf \Sigma})_\varphi)_0$ can be viewed as a sheaf on the underlying poset equipped with its Alexandrov topology (Section \ref{s:tropsheaves}). In Proposition \ref{p:cover}, we show that there is a basis of open subsets of the Alexandrov topology of $\mathsf{T}(\check{\bf \Sigma}, w({\bf \Sigma})_\varphi)_0$ which is in bijection with the fan
\[
({\bf \Sigma} \oplus \check{\bf \Sigma})_0 = \{(c, \check{c}) \in {\bf \Sigma} \times \check{\bf \Sigma} \mid \langle c,\check{c}\rangle=0\}
\]
considered as a poset. By symmetry the same fact is true for $\mathsf{T}({\bf \Sigma}, w(\check{\bf \Sigma})_\varphi)_0$. Let $\check{\bf J}_\mathrm{orb}^\bullet$ denote the Jacobian sheaf on $\mathsf{T}({\bf \Sigma}, w(\check{\bf \Sigma})_\varphi)_0$. Not only do we have that $\mathsf{T}({\bf \Sigma}, w(\check{\bf \Sigma})_\varphi)_0$ and $\mathsf{T}(\check{\bf \Sigma}, w({\bf \Sigma})_\varphi)_0$ are topologically dual to one another, but we have that the stalks ${\bf J}_\mathrm{orb}^\bullet$ and $\check{\bf J}^\bullet_\mathrm{orb}$ are the same after rearranging various gradings. Taking cohomology, we see that, a version of Theorem \ref{t:clintro} holds for tropical orbifold cohomology groups (Theorem \ref{t:tropidual}). Combining this with Theorem \ref{t:degintro}, we obtain Theorem \ref{t:clintro}.

\subsection{Structure of the article}
In Section \ref{s:2}, we provide some background on toric varieties and orbifold (Chen--Ruan) cohomology. Section \ref{s:tw} collects background on twisted cohomology and the irregular Hodge filtration. The main result of Section \ref{s:tw} is Proposition \ref{p:adolphson-sperber}, which refines a result of Adolphson and Sperber \cite{as}. Section \ref{s:thyper} discusses tropical twisted cohomology. This section is heavily inspired by \cite{itenberg2019tropical} however, our description of the tropical Jacobian sheaf for tropical twisted cohomology in Sections \ref{t:thomsheaf} and \ref{s:torbsheaf} seem to be new. Section \ref{s:clamp} introduces Clarke dual pairs of fans and proves three related numerical dualities for them; the first is purely on the level of fans, and has similar flavour to work of Borisov and Mavlyutov \cite{borisov2014stringy}. The second is on the level of tropical twisted cohomology, and the final result is geometric. These results lead to three major applications:
\begin{itemize}
    \item (Section \ref{s:bhk}) Hodge number duality for possibly degenerate Berglund--H\"ubsch--Krawitz mirrors;
    \item (Section \ref{s:smooth}) Hodge number duality for toric nef complete intersections (known as the Katzarkov--Kontsevich--Pantev conjecture \cite{cheltsov2018katzarkov}) and log Calabi--Yau complete intersections; 
    \item (Section \ref{s:non-convex}) Hodge number duality for arbitrary smooth toric hypersurfaces. 
\end{itemize}

The last two sections contain the most technical parts of this article. Section \ref{s:degnby} shows that twisted cohomology behaves well under semistable degeneration. Precisely, we construct a nearby twisted de Rham complex for semistable degenerations and show that the rank of the nearby twisted cohomology agrees with the rank of the twisted cohomology on a general fibre of the degenerating family. We also construct an irregular Hodge filtration on the nearby twisted de Rham complex. The results in this section can be readily extended to the orbifold setting and also may be of independent interest. Section \ref{s:nearbyfib} constructs semistable degenerations of toric Landau--Ginzburg models and shows that under certain combinatorial assumptions, the irregular Hodge filtration on the nearby twisted de Rham complex degenerates at the initial term, and that the nearby cohomology is filtered isomorphic to the cohomology of the Jacobian sheaf.

\subsection{Acknowledgements}
The authors would like to thank L. Borisov, K. Chan, A. Corti, C. Doran, D. Favero, L. Katzarkov, T. Kelly, M. Kerr, D. Kozevnikov, N.C. Leung, T. Pantev, and N. Sheridan for helpful discussions at various stages of this work. AH was supported by a Simons Foundation Travel Grant. SL was supported by the Leverhulme Prize award from the Leverhulme Trust and the Enhancement Award from the Royal Society University Research Fellowship, both awarded to Nick Sheridan. He was also supported by the Institute for Basic Science (IBS-R003-D1).

\tableofcontents

\section{Background}\label{s:2}
In this section, we collect various background material on toric geometry and orbifolds, with the intent of setting notation and reviewing necessary facts and definitions.

\subsection{Toric varieties}
We recollect some background about toric varieties to set up notation. We refer for more details to \cite{Cox2011toricbook}. Let $N$ and $M$ be dual lattices of rank $d$ with the natural bilinear pairing $\langle -,- \rangle:N \times M \to \mathbb{Z}$. We write $N_\mathbb{K}:=N \otimes_\mathbb{Z} \mathbb{K}$ and $M_\mathbb{K}=M \otimes_\mathbb{Z} \mathbb{K}$ for $\mathbb{K}=\mathbb{Q}, \mathbb{R}, \mathbb{C}$. A rational convex polyhedral cone (simply called \textit{cone}) $c$ in $N_\mathbb{R}$ is a convex cone generated by finitely many vectors in $N$. A cone $c$ is {\em strongly convex} if $c \cap (-c) = 0$. Associated to a strongly convex cone $c$, one can construct an affine toric variety $T(c):=\mathrm{Spec}(\mathbb{C}[c^\vee \cap M])$ where $c^\vee \subset M_\mathbb{R}$ is a dual cone of $c$ defined by 
\begin{equation*}
c^\vee:=\{m \in M_\mathbb{R}\mid \langle m,n\rangle \geq 0 \text{ for all } n \in c\}.
\end{equation*}
Such affine toric varieties can be glued to produce more general toric varieties. This gluing data is combinatorially encoded in a fan $\Sigma \subset N_\mathbb{R}$ which is a collection of strongly convex polyhedral cones such that 
\begin{enumerate}
	\item each face of a cone in $\Sigma$ is also a cone in $\Sigma$,
	\item the intersection of two cones in $\Sigma$ is a face of each cone.
\end{enumerate}

Given a fan $\Sigma$, we define the toric variety $T(\Sigma)$ by gluing the affine toric varieties $T(c)$: Two affine toric varieties $T(c)$ and $T(c')$ are glued over their intersection $T({c \cap c'}):=\mathrm{Spec}(\mathbb{C}[(c \cap c')^\vee \cap M])$. Globally, the toric variety $T(\Sigma)$ is also constructed as a quotient space. Let $\Sigma[1]=\{\rho_i\mid i=1, \dots, n\}$ be the set of primitive generators of $\Sigma$. Take the monomial ideal of $\mathbb{C}[x_1, \cdots, x_n]$, $J_\Sigma:=\langle \prod_{\rho_i \subsetneq \sigma} x_i \mid \sigma \in \Sigma \rangle$ and the induced quasi-affine variety $\mathbb{C}^n \setminus V(J_\Sigma)$. We have the morphism of lattices $\beta:\mathbb{Z}^n \to N$ that sends the standard basis $e_i$ to $\rho_i$. Take the induced morphism on the duals $\beta^\vee:M=\mathrm{Hom}(N, \mathbb{Z}) \to \mathrm{Hom}(\mathbb{Z}^n, \mathbb{Z})$. By taking the Cartier dual $\mathrm{Hom}(-, \mathbb{C}^*)$ on $\beta^\vee$, we have the morphism of tori $T_{\beta}:(\mathbb{C}^*)^n \to (\mathbb{C}^*)^d$. We let $G_{\beta}$ be its kernel. Then the associated toric variety $T({\Sigma})$ is given by the quotient $\mathbb{C}^n \setminus V(J_\Sigma)/G_{\beta}$ where $G_{\beta}$ acts freely via the action of $(\mathbb{C}^*)^n$.

If the support $|\Sigma|$ of the fan $\Sigma$ is $N_\mathbb{R}$, then it is called \textit{complete}, and the corresponding toric variety $T(\Sigma)$ is compact. Any toric variety is stratified by tori $T_c$ of dimension $(d-\dim c)$ corresponding to the cones of $\Sigma$. We let $T(\Sigma)_c$ denote the closure of $T_c$ in $T(\Sigma)$. In particular, each one dimensional cone $c$, or its ray generator $\rho$ determines the torus-invariant divisor, which we also denote by $E_\rho = T(\Sigma)_c$.

\begin{defnprop} 
 A cone $c$ is called \emph{simplicial} (resp. \emph{unimodular}) if the primitive integral ray generators of $c$ forms a basis of $N_\mathbb{R}$ (resp. $N_\mathbb{Z}$). A simplicial (resp. unimodular) fan is a fan whose maximal cones are simplicial (resp. unimodular). If $\Sigma$ is simplicial (resp. unimodular), then the associated toric variety $T(\Sigma)$ is an orbifold (resp. manifold). 
 
\end{defnprop}
\begin{defnprop}
A cone $c$ in $N_\mathbb{R}$ is called \emph{Gorenstein} if there is some $m_c\in M$ so that the integral collection of points $c \cap \{n \mid \langle n, m_c \rangle = 1\}$ generates the cone $c$. A Gorenstein fan is a fan whose cones are all Gorenstein. If $\Sigma$ is Gorenstein, then the associated toric variety $T(\Sigma)$ is Gorenstein. 
\end{defnprop}

\begin{defn}
    A toric variety is \emph{quasiprojective} if there is a convex function on $\mathrm{Supp}(\Sigma)$ which is linear on each cone of $\Sigma$ and takes integral values on $N \cap \mathrm{Supp}(\Sigma)$. 
\end{defn}


A rational convex polytope $\Delta$ in $M_\mathbb{R}$ is the convex hull of a finite number of rational points. We say $\Delta$ is a \textit{lattice polytope} if every vertex of $\Delta$ is in $M$. For example, a lattice polytope is given by
\begin{equation*}
\Delta=\{m \in M_\mathbb{R}\mid\langle m, n_i \rangle \geq -a_i, n_i \in N, a_i \in \mathbb{Z} \text{ for }i=1,\dots, s\}
\end{equation*}
 An \textit{$l$-face} $\sigma$ is the intersection of $\Delta$ with $d-l$ supporting hyperplanes, and we will denote it by $\sigma \prec \Delta$. In particular, a $0$-face, a $1$-face and a $(d-1)$-face are called a vertex, an edge and a facet of $\Delta$, respectively. For each face $\sigma \prec \Delta$, we define a (inward) normal cone $\mathrm{nc}(\sigma)$ to $\sigma$ as
\begin{equation*}
\mathrm{nc}(\sigma):=\{u \in N_\mathbb{R}\mid\langle v, u\rangle \leq \langle v', u \rangle \text{ for all } v \in \sigma \text{ and } v' \in \Delta\}
\end{equation*}
When $\dim(\Delta) = \mathrm{rk}(M)$, the union of such normal cones forms a fan in $N$, called the normal fan of $\Delta$, denoted by $\mathrm{nf}(\Delta)$. The corresponding toric variety is written as $T_\Delta = T(\mathrm{nf}(\Delta))$.

\begin{remark}\label{r:nonpointed}
   If $\dim(\Delta) < \mathrm{rk}(M)$, the normal cone $\mathrm{nc}(\sigma)$ for each $\sigma \prec \Delta$ is not strongly convex, so the induced normal fan is not a fan in the usual sense. Instead, it should be viewed as a non-pointed fan as follows: Let $M_\Delta$ be the minimal sublattice of $M$ containing the vertices of $\Delta$, and denote its dual by $N_\Delta$. Viewing $\Delta$ as a polytope in $M_\mathbb{R}$, we obtain the normal fan, denoted by $\mathrm{nf}(\Delta, N_\Delta)$. The non-pointed normal fan $\mathrm{nf}(\Delta)$ is then given by the product $\mathrm{nf}(\Delta, N_\Delta) \times N^\perp_{\Delta, \mathbb{R}}$.
\end{remark}



Suppose that we have a finite collection of integral points $A$ in $M$. We consider a Laurent polynomial $w \in \mathbb{C}[M]$ of the form 
\[
w=\sum_{m \in A}u_m\underline{x}^m .
\]
for some $u_m \in \mathbb{C}$. The Newton polytope of $w$, denoted by $\Delta(w)$, is defined to be the convex hull of $A$ in $M_\mathbb{R}$. For any toric variety $T(\Sigma)$, one can see that $w$ becomes a regular function on $T(\Sigma)$ if $\langle m, \rho \rangle \geq 0$ for every primitive integral ray generator $\rho \in \Sigma[1] $ and $m \in A$.

For any face $\Delta' \subset \Delta(w)$, we write $w_{\Delta'}=\sum_{m \in A \cap \Delta'}u_m\underline{x}^m$. We say that $w$ is \emph{nondegenerate} if for any face $\Delta' \subset \Delta(w)$ not containing $0$, the $d$ partial derivatives $\{\frac{\partial w_{\Delta'}}{\partial x_i}\mid i=1, \cdots, d \}$ have no common solutions. In this case, the hypersurface $X_w=\{w=0\}$ intersects with the affine torus $T_c$ for each cone $c \in \Sigma$ smoothly.


\subsection{Toric Deligne--Mumford stacks}\label{s:tstack}

We recall the definition of a toric Deligne--Mumford stack introduced in \cite{borisov2005orbifold}. Note that our definition is less general than that of {\em op. cit.}. \footnote{The stacky fans discussed in this paper are a proper subset of those introduced in \cite{borisov2005orbifold}, where the finitely generated abelian group $N$, as described in \textit{op. cit.}, has no torsion. For a more general treatment of toric stacks, we refer to \cite{Geraschenko2011ToricSI}.} 

A \emph{stacky fan} is a simplicial fan $\Sigma$ equipped with additional data: a positive integer $\beta_\rho$ assigned to each primitive integral ray generator $\rho \in \Sigma[1]$. We denote this as ${\bf \Sigma} = (\Sigma, \beta)$, where $\Sigma$ is referred to as the underlying fan of ${\bf \Sigma}$. The assignment $\beta$ can be viewed as a lattice morphism $\beta:\mathbb{Z}^n \to N$ that sends the standard basis $e_i$ to $\beta_\rho \cdot \rho$. Similar to the global construction of toric varieties, it induces the morphism of tori $T_{\beta}:(\mathbb{C}^*)^n \to (\mathbb{C}^*)^d$. Since $\beta$ has finite cokernel, $T_{\beta}$ is surjective. Let $G_{\beta}$ be its kernel. 

\begin{defnprop}
    A toric Deligne--Mumford stack $T(\mathbf{\Sigma})$ is the quotient stack 
    \[
    \left[(\mathbb{C}^n \setminus V(J_\Sigma))/G_{\beta}\right]
    \]
    where $G_{\beta}$ acts via the action of $(\mathbb{C}^*)^n$.
\end{defnprop}
The underlying toric variety $T(\Sigma)$ is the coarse moduli space for $T({\bf \Sigma})$ so that we have a canonical morphism $\pi_{\bf\Sigma}:T({\bf \Sigma}) \to T(\Sigma)$. Any simplicial fan $\Sigma$ has a canonical stacky fan structure where $\beta_\rho = 1$ for all $\rho \in \Sigma[1]$. For a stacky fan ${\bf \Sigma}$, let ${\bf \Sigma}[1]$ denote the set $\{\beta_\rho \rho \mid \rho \in \Sigma[1]\}$. We often write $c \in {\bf \Sigma}$ which means that we take ray generators of the cone $c$ to be extended ones $\{\beta_\rho \rho|\rho \in c\}$.   

Some of the properties of a fan naturally extend to a stacky fan as properties of the underlying fan: ${\bf \Sigma}$ is called unimodular, simplicial, or quasiprojective if its underlying fan $\Sigma$ has these properties. An exception is the Gorenstein property, where we say ${\bf \Sigma}$ is Gorenstein if every cone $c$ in ${\bf \Sigma}$ is Gorenstein.

\subsection{Orbifolds and their cohomology}\label{s:orbcoh}

The main geometric objects considered in this paper are orbifolds in particular, we will consider toroidal orbifolds. Such objects admit both an underlying analtyic space and an overlying Deligne--Mumford stack. With only minor caveats, the analytic space of a toroidal orbifold behaves like a smooth manifold. This fact allows us to apply de Rham theoretic methods.

\begin{defn}
    A $d$-dimensional complex toroidal orbifold is an analytic space $X$ along with local analytic charts near each point $p \in X$ which identify, holomorphically, a neighbourhood of $p$ with a neighbourhood of the origin in the quotient of a polydisc $\mathbb{D}^d/G$ for a finite diagonal subgroup $G$ of $\mathrm{GL}_d(\mathbb{C}).$ 
\end{defn}
To each point $p$ in an orbifold there is an {\em isotopy group} $G_p$, which is the subgroup of $G$ in a local chart which fixes $p$. A comdimension $k$ subvariety $V$ of an orbifold $X$ is quasi-smooth if for each point $p \in V$ there is a local orbifold chart $\mathbb{D}^d/G \cong \mathbb{D}^k \times  \mathbb{D}^{d-k}/G$ on $X$ so that $V = 0 \times \mathbb{D}^{d-k}/G$. Therefore $V$ is itself an orbifold and the isotopy group of $p \in X$ is the same as the isotopy  group of $p$ in $V$.

Let $X$ be an orbifold, covered by open charts $(U, G_U)$. To define orbifold cohomology of $X$, we introduce the inertia stack $\mathfrak{I}_X$ of $X$, the set of pairs
\[
(p, g) \in X \times \mathrm{conj}(G_p)
\]
where $G_p$ denotes the stabilizer of the point $p$ in $G_U$ and $\mathrm{conj}(G_p)$ is the set of all conjugacy classes of $G_p$. The inertia stack is itself an orbifold. Locally, let $U^g$ be the fixed-point set of some $g \in G_U$. Then $C(g)$ acts on $U^g$. Let $K_g$ be the kernel of the action of $C(g)$ on $U^g$. Then we define the charts
\begin{align*}
    (U^g, C(g)/K_g) & \longrightarrow \mathfrak{I}_X  \\
    p & \longmapsto (p,g).
\end{align*}
Given a chart $U$ centred at a point $p$, and $q \in U$ there is an injective homomorphism from $G_q$ to $G_p$ and thus we may declare that $g \in G_q$ is equivalent to $g' \in G_p$ if the conjugacy classes of $g$ and $g'$ are the same in $G_p$. In the case of interest to us, all groups are abelian so this is simple equality under inclusion. Let $I_X$ denote the collection of all such equivalence classes. For each $g \in I_X$, we have 
\[
X_{(g)} = \{(p,g) \mid g \in I_X\}.
\]
This can be considered itself as a sub-orbifold of $X$ and an irreducible component of $\mathfrak{I}_X$. Thus we have that 
\[
\mathfrak{I}_X = \coprod_{g \in I_X} X_{(g)}.
\]
The irreducible components $X_{(g)}$ are called the \emph{twisted sectors} of $X$. Given any $g\in I_X$, we can, up to conjugacy, write the action of $g$ as
\[
\mathrm{diag}(\exp(2\pi {\tt i}m_1/m_g),\dots , \exp (2\pi {\tt i}m_d/m_g)  )
\]
for unique integers $0 \leq m_i < m_g$. This expression is an invariant in $I_X$. We define the {\em age} of $g\in I_X$ to be a rational number
\[
\iota{(g)} = \sum_{i=1}^d \left( \dfrac{m_i}{m_g}\right)
\]
In particular, $\iota{(g)} \in \mathbb{N}$ if the group $G_p$ is contained in $\mathrm{SL}_d(\mathbb{C})$, which is equivalent to being Gorenstein at $p$. 

\begin{defn}
    The orbifold cohomology of $X$ is defined to be the cohomology of $\mathfrak{I}_X$ with a particular shift in grading,
\begin{equation*}
H^*_\mathrm{orb}(X) := \bigoplus_{g\in I_X} H^{*-2\iota(g)}(X_{(g)}).
\end{equation*}
\end{defn}
\noindent Note that the orbifold cohomology of $X$ is a rationally graded vector space.


\begin{remark}\label{r:derhamorb}
    The de Rham cohomology of an orbifold $X$ is defined via its non-singular locus. Let $X^\circ$ denote the complement of the singular locus in $X$. We define ${\Omega}^p_{X}$ as $i_*\Omega^p_{X^\circ}$, where $i : X^\circ \hookrightarrow X$ is the natural inclusion, and $\Omega_{X^\circ}^p$ is the usual de Rham complex of $X^\circ$. The stalk of $\Omega_X^p$ at a point $a$ may be identified in local coordinates $(\mathbb{D}^d,G)$ with $G$-invariant local sections of $\Omega^p_{\mathbb{D}^d}$ at $a$. The induced complex $(\Omega_X^\bullet, d_{\mathrm{dR}})$, equipped with the usual de Rham differential, forms a resolution of $\underline{\mathbb{C}}_X$, thus providing a definition for the de Rham cohomology. 
    
    The mixed Hodge structure on $H^*(X)$ is then defined in the usual way. For instance, according to \cite[Corollary 1.9]{steenbrink1974mixed}, when $X$ is projective, the filtration on ${\Omega}_X^\bullet$ induced by the stupid filtration degenerates at the $E_1$ term, and the resulting filtration coincides with the Hodge filtration. Similarly, a mixed Hodge structure can be defined on the orbifold cohomology $H^*_{\mathrm{orb}}(X)$, where the shift by the age corresponds to the Tate twist.
\end{remark}

One can extend the definition of orbifold cohomology to a separated Deligne--Mumford stack $\mathfrak{X}$ as the cohomology of the inertia stack  $\mathfrak{I}_\mathfrak{X}$ with a particular shift as before. Note that the separatedness implies that the number of irreducible components of the inertia stack $\mathfrak{I}_\mathfrak{X}$ is finite. We mainly deal with smooth toric Deligne--Mumford stacks introduced in Section \ref{s:tstack}.

\subsection{Orbifold cohomology of toric Deligne--Mumford stacks}\label{s:cohdmst} 
Let ${\bf \Sigma}$ be a simplicial stacky fan and $T({\bf \Sigma})$ be the associated toric Deligne--Mumfold stack. For each cone ${c}$ in $\bf{\Sigma}$, we let
\begin{equation}\label{e:box}
    \mathrm{Box}^\circ({c}) := \left\{ \left. \sum_{\rho\in {c}[1]} a_\rho \rho \,\, \right| \,\, a_\rho \in (0,1) \right\}\cap N, \qquad \mathrm{Box}({c}) := \left\{ \left. \sum_{\rho\in {c}[1]} a_\rho \rho \,\, \right| \,\, a_\rho \in [0,1) \right\}\cap N.
\end{equation}
Then the twisted sectors of the inertia stack $\mathfrak{I}_{T({\bf \Sigma})}$ are parametrized by the union of $\mathrm{Box}({c})$ over all $c \in {\bf \Sigma}$ \cite[Theorem 1]{poddar2003orbifold}\cite[Proposition 5.2]{borisov2005orbifold}. For each $g=\sum a_\rho \rho \in \mathrm{Box}^\circ({c})$, the corresponding twisted sector is the closed substack $T({\bf{\Sigma}})_c$, whose coarse moduli space is the closed torus orbit of $T(\Sigma)$ associated with the cone $c$. This twisted sector also has an age $\iota(g) = \sum a_i$. Since the usual cohomology of a toric Deligne--Mumford stack is isomorphic to that of its coarse moduli space, the orbifold cohomology of $T(\mathbf{\Sigma})$ is given by 
\[
H_\mathrm{orb}^*(T(\mathbf{\Sigma}), \mathbb{Q})=\bigoplus_{c \in {\bf\Sigma}} \bigoplus_{g \in \mathrm{Box}^\circ({c})} H^{*-2\iota(g)}(T({\bf\Sigma})_c, \mathbb{Q}). 
\]

\begin{remark}\label{r:orbgr}
    There is a clear distinction between the orbifold cohomology of a toric Deligne–Mumford stack $T(\mathbf{\Sigma})$ and that of its coarse moduli space $T(\Sigma)$. Since the box description \eqref{e:box} depends on the choice of generators for each cone, the age grading differs between the two cases.
\end{remark}

\begin{example}\label{ex:orbsing}
    Let us look at the case of an affine toric orbifold. Let $c$ be a maximal simplicial cone with primitive generators $\rho_1,\dots, \rho_d$. To each cone, we attach positive integers $\beta = (\beta_1,\dots, \beta_d)$, then there is a stacky cone ${\bf c} = (c,\beta)$ constructed from this data. Each toric stratum $T(c')$ of $T(c)$ is a finite quotient of $\mathbb{C}^d$ so $H^i(T(c'),\mathbb{Q})$ is isomorphic to $\mathbb{Q}$ if $i=0$ and 0 otherwise. Therefore, for all $n$,
    \[
    H_\mathrm{orb}^{n}(T({\bf c}), \mathbb{Q})=\bigoplus_{{\bf c}' \subseteq {\bf c}} \bigoplus_{\substack{ g \in \mathrm{Box}^\circ({\bf c}')\\ \iota(g) = n/2}} \mathbb{Q} = \bigoplus_{\substack{ g \in \mathrm{Box}({\bf c})\\ \iota(g)=n/2}} \mathbb{Q}.
    \]

\end{example}

\subsection{A comment on linear algebra}\label{s:linalg}

For later application, we describe a particular piece of linear algebra that we were unable to find in the literature, and plays a large role in the constructions of this paper. Suppose $V$ is a $\mathbb{K}$-vector space, $W\subseteq V$ is a subspace, and $U \subseteq V^*$ is a subspace so that $u(w) = 0$ for $u\in U$ and $w\in W$. We call $(W,U)$ an orthogonal pair. Fix volume forms $\mathrm{Vol}(W) \in \wedge^\bullet V$ and $\mathrm{Vol}(U) \in \wedge^\bullet V^*$ Attached to $(W,U)$ there are two graded vector spaces, 
\[
(\wedge^\bullet W^\perp) \wedge \mathrm{Vol}(U) \cong \wedge^{\bullet}(W^\perp/U),\qquad (\wedge^\bullet U^\perp) \wedge \mathrm{Vol}(W) \cong \wedge^{\bullet} (U^\perp/W).
\]
As subspaces of $\wedge^\bullet V$ and $\wedge^\bullet V^*$, these two vector spaces are independent of choice of volume forms. Suppose $W_1 \subseteq W_2$ and that $U_1\subseteq U_2$ so that $(W_1,U_1)$ and $(W_2,U_2)$ both form orthogonal pairs. There are two induced relations:
\[
\wedge^\bullet W^\perp_2 \subseteq \wedge^\bullet W^\perp_1,\qquad \mathrm{Vol}(U_2) = v_{1,2} \wedge \mathrm{Vol}(U_1)
\]
for some choice $v_{1,2} \in \wedge^{\dim U_2 - \dim U_1} U_2$ which is uniquely defined modulo $\wedge^{\dim U_2 - \dim U_1} U_1$. Therefore, we obtain inclusions
\begin{align*}
\varpi_{(W_2,U_2)}^{(W_1,U_1)} : \wedge^\bullet W_2^\perp \wedge \mathrm{Vol}(U_2) &\longrightarrow \wedge^\bullet W_1^\perp \wedge \mathrm{Vol}(U_1) \\ 
(\alpha \wedge \mathrm{Vol}(U_2)) & \longmapsto (\alpha \wedge v_{1,2}) \wedge \mathrm{Vol}(U_1)
\end{align*}
when both are considered as subspaces of $\wedge^\bullet V^*$. When $(W_1,U_1)$ and $(W_2,U_2)$ are clear from context we simply write $\varpi$. Of course the roles of $U$ and $W$ can easily be reversed, and we get similar inclusions $\bigwedge^\bullet U_2^\perp \wedge \mathrm{Vol}(W_2) \hookrightarrow \wedge^\bullet U_1^\perp \wedge \mathrm{Vol}(W_1)$. Choosing a volume form $\mathrm{Vol}(V)$ in $\wedge^{\dim V}V$, we obtain an isomorphism $\wedge^i V^* \xrightarrow{\sim} \wedge^{\dim V -i} V$ obtained by contraction with $\mathrm{Vol}(V)$. The following statement is an exercise in linear algebra.
\begin{lemma}\label{l:linalg}
    Let $U_1\subseteq U_2$ and $W_1\subseteq W_2$ be inclusions of vector spaces so that both $(W_1,U_1)$ and $(W_2,U_2)$ form orthogonal pairs. There is a commutative diagram of vector spaces
    \[
    \begin{tikzcd}
        \wedge^\bullet W^\perp_2 \wedge \mathrm{Vol}(U_2) \ar[r,"\mathrm{Vol}(V)"] \ar[d,"\varpi"] & \wedge^\bullet U^\perp_2 \wedge \mathrm{Vol}(W_2)\ar[d,"\varpi "] \\ 
        \wedge^\bullet W^\perp_1 \wedge \mathrm{Vol}(U_1) \ar[r,"\mathrm{Vol}(V)"] & \wedge^\bullet U^\perp_1 \wedge \mathrm{Vol}(W_1)
    \end{tikzcd}
    \]
    where $\mathrm{Vol}(V)$ denotes contraction with $\mathrm{Vol}(V)$. Here we have omitted the inversion on grading from our notation.
\end{lemma}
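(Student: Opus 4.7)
The plan is to observe that, with appropriate identifications, both horizontal arrows of the diagram are restrictions of a single global map, while both vertical arrows are set-theoretic inclusions; commutativity will then follow trivially from commutativity of a square in the ambient spaces.

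The first step is to show that, viewed as a map of subspaces of $\wedge^\bullet V^*$ (respectively $\wedge^\bullet V$), the vertical map $\varpi$ is nothing but the inclusion. Indeed, the defining relation $\mathrm{Vol}(U_2) = v_{1,2} \wedge \mathrm{Vol}(U_1)$ gives the honest equality $\alpha \wedge \mathrm{Vol}(U_2) = (\alpha \wedge v_{1,2}) \wedge \mathrm{Vol}(U_1)$ in $\wedge^\bullet V^*$, so $\varpi$ literally sends each subspace element to itself. Well-definedness under the ambiguity of $v_{1,2}$ modulo $\wedge^{\dim U_2 - \dim U_1}U_1$ follows because any correction lies in $\wedge^\bullet U_1$ and dies after wedging with $\mathrm{Vol}(U_1)$. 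The containment $\wedge^\bullet W_2^\perp \wedge \mathrm{Vol}(U_2) \subseteq \wedge^\bullet W_1^\perp \wedge \mathrm{Vol}(U_1)$ uses the inclusion $W_2^\perp \subseteq W_1^\perp$ together with $v_{1,2} \in U_2 \subseteq W_2^\perp \subseteq W_1^\perp$. The $U$-side is treated symmetrically inside $\wedge^\bullet V$, using $U_2^\perp \subseteq U_1^\perp$ and $v'_{1,2} \in W_2 \subseteq U_2^\perp \subseteq U_1^\perp$.

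The next step is to verify that for any orthogonal pair $(W,U)$, contraction with $\mathrm{Vol}(V)$ restricts to an isomorphism from $\wedge^\bullet W^\perp \wedge \mathrm{Vol}(U)$ onto $\wedge^\bullet U^\perp \wedge \mathrm{Vol}(W)$. I would check this directly in a basis $e_1,\dots,e_n$ of $V$ in which $e_1,\dots,e_{\dim W}$ spans $W$ and $e_1,\dots,e_{n-\dim U}$ spans $U^\perp$. A monomial generator of the source has the form $e^J \wedge \mathrm{Vol}(U)$ with index set $J \subseteq \{\dim W+1,\dots,n-\dim U\}$, and interior multiplication yields, up to sign, $\mathrm{Vol}(W) \wedge e_K$, where $K$ is the complement of $J$ in that same range. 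Since the indices in $K$ lie in $U^\perp$, this element indeed belongs to $\wedge^\bullet U^\perp \wedge \mathrm{Vol}(W)$, and monomial bases on each side are in bijection. This step also accounts for the grading inversion alluded to in the statement: a degree $k+\dim U$ element of $\wedge^\bullet V^*$ is sent to a degree $n-k-\dim U$ element of $\wedge^\bullet V$.

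With these two ingredients in hand, the commutativity of the square is immediate, since the two horizontal arrows are restrictions of the same global contraction map $\wedge^\bullet V^* \to \wedge^\bullet V$, while the two vertical arrows are honest inclusions of subspaces of $\wedge^\bullet V^*$ and $\wedge^\bullet V$ respectively. The main obstacle, such as it is, lies entirely in the bookkeeping of the first step: confirming that $\varpi$ is independent of the choice of $v_{1,2}$ and that its image really lies in the declared target subspace. Once this is done cleanly, the remainder of the proof carries essentially no further content.
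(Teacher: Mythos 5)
The paper leaves this lemma as a stated exercise and does not provide a proof, so there is no argument in the paper to compare against. Your proposed argument is correct and is the natural way to verify the claim: you observe that the identity $\mathrm{Vol}(U_2)=v_{1,2}\wedge\mathrm{Vol}(U_1)$ makes $\varpi$ literally the identity on elements once both sides are viewed inside $\wedge^\bullet V^*$ (and symmetrically inside $\wedge^\bullet V$), with the required containments $U_2\subseteq W_2^\perp\subseteq W_1^\perp$ and $W_2\subseteq U_2^\perp\subseteq U_1^\perp$ following directly from orthogonality and the inclusions of the pairs; and your adapted-basis computation correctly shows that contraction with $\mathrm{Vol}(V)$ carries $\wedge^\bullet W^\perp\wedge\mathrm{Vol}(U)$ isomorphically onto $\wedge^\bullet U^\perp\wedge\mathrm{Vol}(W)$, after which the square commutes for free. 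The only cosmetic imprecision is the phrase $v'_{1,2}\in W_2$, which should read $v'_{1,2}\in\wedge^{\dim W_2-\dim W_1}W_2$; the containment you actually use, $\wedge^\bullet W_2\subseteq\wedge^\bullet U_1^\perp$, is what matters and it holds.
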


\section{Twisted cohomology of Landau--Ginzburg models}\label{s:tw}
We introduce main characters of this paper: Landau--Ginzburg models and their cohomology. The cohomology of a Landau--Ginzburg model admits so-called \emph{irregular Hodge filtration} that we want to compute for toric cases. As these materials are not extremely well-known, we provide a brief review of relevant notions in this section. 

\subsection{Landau--Ginzburg models and their cohomology}\label{s:lgcoh} 

We introduce the notion of a Landau--Ginzburg model discussed in this paper. We warn the reader that this is not necessary the most general definition. 
\begin{defn}
    Let $X$ be a $d$-dimensional projective orbifold and $D$ be a divisor. A pair $(X,D)$ has orbifold normal crossings if, locally, $(X,D) = (\mathbb{D}^d,E)/G$ where $(\mathbb{D}^d,E)$ is a normal crossings pair, $E$ is a $G$-invariant normal crossing divisor, and $\mathbb{D}^d/G$ is an orbifold. 
\end{defn} 

\begin{defn}\label{d:lgmodel}
    Let $(X,D)$ be an orbifold normal crossings pair and $w$ be a rational function on $X$ whose pole divisor $P$ is contained in $D$. Let $Z$ be its zero divisor. We say $(X,D,w)$ is a {\em nondegenerate Landau--Ginzburg model} if $Z$ is irreducible and $Z \cup D$ is orbifold normal crossings. We say that $(X,D,w)$ is {\em tame} if the divisor $P$ is equal to $P_\mathrm{red}$ or equivalently, if $f$ has multiplicity $>-2$ on each component of $P$\footnote{There are a number of places in the literature where the word ``tame'' is used to describe Landau--Ginzburg models. Our definition here slightly generalizes that of Katzarkov, Kontsevich, and Pantev \cite[Definition 2.4]{katzarkov2017bogomolov}. Note that the word tame has a completely different meaning in, e.g., \cite[pp. 199,  Examples]{sabbah2006hypergeometric}.}.
\end{defn}

More generally, for an orbifold $U$, we say $(U,w:U \to \mathbb{C})$ is a nondegenerate Landau--Ginzburg model if $w$ is a regular function on $U$ and there exists a nondegenerate Landau--Ginzburg model $(X,D,w)$ in the sense of Definition \ref{d:lgmodel} such that $(U,w)=(X\setminus D, w|_{X\setminus D})$. 

\begin{example}\label{ex:nondeg}
Suppose $w$ is a Laurent polynomial which is nondegenerate. Let $\Delta$ be the Newton polytope of $w$, and assume that $0_M \in \Delta$ and $\Delta$ is full-dimensional. The function $w$ induces a rational map on $T_\Delta$ and, since $0_M$ is contained in $\Delta$, the zero divisor of $w$ does not contain any toric boundary strata in $T_\Delta$. Therefore, if $\widetilde{T}$ is any toric resolution of $T(\Sigma_w)$, the triple $(\widetilde{T},D,w)$, where $D$ is any union of toric boundary divisors of $\widetilde{T}$ for which $\mathrm{mult}_w(D) \leq 0$, is a nondegenerate Landau--Ginzburg model. Here we abuse notation and use the letter $w$ to denote the rational function on $\widetilde{T}$ induced by $T$.
\end{example}

Next, we provide the de Rham-theoretic description of the \emph{twisted cohomology} of a nondegenerate Landau--Ginzburg model $(X,D,w)$ for a toroidal orbifold $X$, following \cite{yu2014irregular}. Note that in {\em op. cit.}, the case where $X$ is smooth and $D$ is a normal crossing divisor was studied in detail. However, all the results reviewed in this section extend to the case of toroidal orbifolds without any difficulty, as noted in Remark \ref{r:derhamorb}, so we omit the proofs.

Define the twisted cohomology of $(X,D,w)$ by letting $\nabla = d + dw\wedge (-)$, from which we obtain 
\[
\cOmega_{X}^0(\log D)(*P) \xrightarrow{\nabla}  \cOmega_X^1(\log D)(*P) \xrightarrow{\nabla} \dots    
\]
which is called the {\em twisted de Rham} complex of $w$. The hypercohomology of this complex is called the {\em twisted cohomology} of the Landau--Ginzburg model $(X,D,w)$. We use the notation $H^p(X\setminus D,w)$ to denote the $p$th twisted cohomology group. Yu defines the irregular Hodge filtration on the twisted de Rham complex by $F_{\mathrm{irr}}^\lambda:=F_w(\lambda)^{\geq \left\lceil \lambda 
\right\rceil }$ where $F_w(\lambda)$ is the complex 
\[
F_w(\lambda) = \left[  \cOmega_{X}^0(\log D)(\lfloor -\lambda P \rfloor) \xrightarrow{\nabla}  \cOmega_X^1(\log D)(\lfloor (1-\lambda) P\rfloor) \xrightarrow{\nabla} \dots \xrightarrow{\nabla} \cOmega_X^p(\log D)(\lfloor (p-\lambda)P \rfloor)\xrightarrow{\nabla}\dots  \right]    
\]
for $\lambda \in \mathbb{Q}_{\geq 0}$. It is clear that $F^\lambda_{\mathrm{irr}}$ is trivial if $\lambda \geq d$. When $w$ is constant, one sees that $F^\lambda_\mathrm{irr} = F^\lambda$, and the irregular Hodge filtration recovers the usual Hodge filtration on the mixed Hodge structure $H^*(X\setminus D)$.

Since the twisted de Rham complex is not a complex of coherent sheaves on $X$ it is convenient to show that there is a quasi-isomorphic coherent complex. We use the specialized notation $\cOmega^\bullet_X(\log D,\nabla) := F_w(0)$, equipped with its induced irregular Hodge filtration. Note that $\cOmega_X^\bullet(\log D,\nabla)$ is in fact a coherent complex.

\begin{proposition}\label{p:Yusub}{\cite[Corollary 1.3]{yu2014irregular}}
    The following inclusion is a quasi-isomorphism of filtered complexes
    \[ 
    (\cOmega_X^\bullet(\log D, \nabla), F_\mathrm{irr}^\bullet) \hookrightarrow (\cOmega_X^\bullet (\log D)(*P), F_\mathrm{irr}^\bullet).
    \]
\end{proposition}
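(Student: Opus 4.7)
The strategy is to first observe that, by construction, $F^\lambda_{\mathrm{irr}} = F_w(\lambda)^{\geq \lceil \lambda\rceil}$ is literally the same subcomplex of sheaves on both sides of the morphism --- it sits inside $F_w(0) = \cOmega^\bullet_X(\log D,\nabla)$ on the left and inside $\cOmega^\bullet_X(\log D)(*P)$ on the right. Consequently the induced map on each filtration level is the identity, and the only nontrivial content of the proposition is that the inclusion of underlying complexes
\[
\iota : F_w(0) \hookrightarrow \cOmega^\bullet_X(\log D)(*P)
\]
is a quasi-isomorphism; equivalently, the quotient complex $Q^\bullet := \cOmega^\bullet_X(\log D)(*P)/F_w(0)$ is acyclic.

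Acyclicity of $Q^\bullet$ is a local question, so I would pass to a local orbifold chart $(\mathbb{D}^d, G)$ on which $D$ is a union of coordinate hyperplanes, $P = \sum_{j=1}^k n_j\{z_j=0\}$, and $w = u(z)\, z_1^{-n_1}\cdots z_k^{-n_k}$ for a holomorphic unit $u$; the normal crossings hypothesis on $D \cup Z \cup P$ coming from nondegeneracy of the Landau--Ginzburg model guarantees this shape. The heart of the argument is then a pole-reduction lemma: given $\omega \in \cOmega^p_X(\log D)(kP)$ with $k > p$, one produces $\eta \in \cOmega^{p-1}_X(\log D)((k-1)P)$ and $\omega' \in \cOmega^p_X(\log D)((k-1)P)$ with $\omega = \nabla\eta + \omega'$. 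Iteration drives any local section of $\cOmega^\bullet_X(\log D)(*P)$ into $F_w(0)$ modulo $\nabla$-boundaries.

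The reduction itself is built from the identity $dw = w\,\bigl(du/u - \sum_j n_j\, dz_j/z_j\bigr)$, which shows that $dw/w$ is a nowhere-vanishing log one-form along every stratum of $P$. Packaged as a symbol calculation: one filters $Q^\bullet$ further by pole order along the components of $P$, and each associated graded piece is identified with the Koszul complex attached to the principal symbol of $\nabla$, namely $(dw/w)\wedge(-)$, shifted by the appropriate power of $w$. This Koszul complex is acyclic precisely because $dw/w$ is a nowhere-vanishing log one-form on every stratum of $P$. The main obstacle is the bookkeeping needed to manage the interplay between the pole-order filtration along $P$ and the log-pole structure along the remaining components of $D$ so that the Koszul calculation respects both; this is handled by a stratum-by-stratum local computation on $P$, and the orbifold extension is immediate since the whole argument takes place $G$-equivariantly on the smooth cover $\mathbb{D}^d$ and $G$-invariants preserve Koszul acyclicity.
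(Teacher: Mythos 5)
Your reduction of the filtered statement to a single unfiltered claim is correct: since $\lfloor (p-\lambda)P\rfloor \leq pP$ for $\lambda\geq 0$, the subcomplexes $F^\lambda_{\mathrm{irr}}=F_w(\lambda)^{\geq\lceil\lambda\rceil}$ are literally the same on both sides, so the only content is that the quotient $Q^\bullet=\cOmega^\bullet_X(\log D)(*P)/F_w(0)$ is acyclic. The paper itself offers no proof beyond the citation to Yu together with the remark that the orbifold case follows by working $G$-equivariantly in local charts (Remark \ref{r:derhamorb}); your pole-order filtration plus Koszul-homotopy outline is essentially a reconstruction of the cited argument, and in Yu's setting it is sound, including the degree-dependent normalization of the pole filtration (pole order $(p+m)P$ in degree $p$) needed so that $d$ strictly drops the filtration and the graded differential is exactly $dw\wedge(-)$. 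One small caveat on the orbifold step: the contraction homotopy need not be chosen $G$-equivariantly; what one actually uses is that $(-)^G$ is exact in characteristic zero, so acyclicity upstairs gives acyclicity of invariants.

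The genuine gap is the local normal form. You claim that nondegeneracy forces $w=u\,z_1^{-n_1}\cdots z_k^{-n_k}$ with $u$ a unit near every point of $P$, "by the normal crossings hypothesis on $D\cup Z\cup P$". That is not what Definition \ref{d:lgmodel} gives: it only requires $Z\cup D$ to have orbifold normal crossings, which permits the zero divisor $Z$ to meet $P$ transversally, i.e.\ indeterminacy points of $w:X\dashrightarrow\mathbb{P}^1$. Such points genuinely occur where the proposition is used in this paper (e.g.\ for a nondegenerate Laurent polynomial compactified on $T_{\Delta(w)}$ as in Example \ref{ex:nondeg}, the closure of $w^{-1}(0)$ meets the pole divisors). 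At such a point $w=z_0\,u\,z_1^{-n_1}\cdots z_k^{-n_k}$ with $u$ a unit and $z_0$ cutting out $Z$: then $dw/w$ has a log pole along $Z\not\subseteq D$, so it is not a section of $\cOmega^1_X(\log D)$, its residues along the components of $P$ vanish on $Z\cap P$, and multiplication by $w$ no longer identifies the pole-order graded pieces with a fixed Koszul complex; your acyclicity criterion fails exactly there as stated. The repair is short, because normal crossings makes $Z$ reduced and transverse: the graded differential is wedging with $dw\in\cOmega^1_X(\log D)(P)$, whose $dz_0$-coefficient $\partial_{z_0}w=(u+z_0\partial_{z_0}u)\,z_1^{-n_1}\cdots z_k^{-n_k}$ is a unit times a local generator of $\mathcal{O}_X(P)$, so contraction with $\partial_{z_0}$ (well defined on $\cOmega^\bullet_X(\log D)$ since $z_0$ is not a $D$-coordinate) still furnishes a null-homotopy on each graded piece. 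Alternatively you could impose, as Yu does, that $w$ extends to a morphism $X\to\mathbb{P}^1$ (so $Z\cap P=\emptyset$), but then your argument does not cover the generality in which the proposition is stated and applied here, so the indeterminacy case must be addressed one way or the other.
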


There is clearly an induced filtration on $H^n(X\setminus D,w):=\mathbb{H}^n(X,(\cOmega_X^\bullet(\log D)(*P),\nabla))$ which we also denote by $F^\lambda_\mathrm{irr}H^n(X\setminus D,w)$. Let 
\[
F^{>\lambda}_\mathrm{irr}H^n(X\setminus D,w) = \bigcup_{\lambda'> \lambda} F^{\lambda'}_\mathrm{irr}H^n(X\setminus D,w),\qquad \gr_{F_\mathrm{irr}}^\lambda H^n(X\setminus D,w) = F^\lambda_\mathrm{irr}/F^{>\lambda}_\mathrm{irr}.    
\]
Finally, denote $\dim \gr_{F_\mathrm{irr}}^\lambda H^n(X\setminus D,w) = f^{\lambda,n-\lambda}(X\setminus D,w)$. These are called the {\em irregular Hodge numbers} of $(X,D,w)$. 

\begin{remark}
    In \cite{yu2014irregular}, it is shown that the graded filtered vector space $H^*(X \setminus D, w), F^\bullet_\mathrm{irr})$ does not depend on the choice of compactification $(X,D,w)$. However, since this choice is essential for computations, we continue to fix a specific compactification.  
\end{remark}

\begin{remark}[Irregular Hodge theory]
While we focus only on the irregular Hodge filtration and its associated graded pieces, there is an irregular mixed Hodge theory on $H^*(X \setminus D, w)$, developed by Sabbah and Yu \cite{sabbah2018irregular}, where the irregular Hodge filtration forms a part of an irregular mixed Hodge structure. An interesting question is how to extend the discussion in this article to the level of irregular mixed Hodge structures. In our companion work \cite{descent1}, we address this question for the tame case.
\end{remark}

There is, furthermore, an isomorphism between the twisted de Rham complex $\Omega^\bullet_X(\log D, \nabla)$ and the Kontsevich complex, whose degree $p$-term is defined to be 
\[
    \cOmega_X^p(\log D,w) := \ker\left((-)\wedge dw : \cOmega_X^p(\log D) \rightarrow \cOmega_X^{p+1}(2D)/\cOmega^{p+1}_X(\log D)\right)
    \]
and the differentials are the usual twisted differential \cite{esnault20171}. This isomorphism is induced by inclusion. If $w$ is tame, then equipping  the complex $(\cOmega_X^\bullet(\log D,w),d + dw)$ with the stupid filtration makes this a filtered isomorphism. 

According to \cite{katzarkov2017bogomolov}, the spectral sequence on the Kontsevich--de Rham complex $(\Omega_X^\bullet(\log D, w),d)$ coming from the stupid filtration also degenerates at the $E_1$ term. Following \cite{katzarkov2017bogomolov, shamoto2018hodge}, there is a limit mixed Hodge structure whose complex part is isomorphic to the hypercohomology of $(\cOmega_X^\bullet(\log D,w),d)$ and the Hodge filtration is induced by the stupid filtration. Thus, the irregular Hodge numbers 
\[
f^{p,q}(U,w) = \dim H^q(X,\cOmega_X^p(\log D,w)) 
\]
are integrally graded ($p,q \in \mathbb{Z}$). Furthermore, the first named author shows that these Hodge numbers are the same as the dimension of the relative Hodge numbers of a pair $(X, w^{-1}(t))$ for a generic quasi-smooth fibre $w^{-1}(t)$ \cite{harder}. Thus we have the following result.
\begin{proposition}\label{p:har}
    If $w$ is tame, $f^{p,q}(U,w) = \dim \gr_F^pH^{p+q}(U,w^{-1}(t))$.
\end{proposition}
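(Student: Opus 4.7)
The plan is to reduce the statement to a computation with the Kontsevich complex and then invoke the relative cohomology comparison from \cite{harder}. By Proposition \ref{p:Yusub} and the tameness assumption, the natural inclusion
\[
(\cOmega_X^\bullet(\log D, w), d + dw \wedge) \hookrightarrow (\cOmega_X^\bullet(\log D)(*P), \nabla)
\]
is a filtered quasi-isomorphism when the left-hand side is equipped with the stupid filtration and the right-hand side with the irregular Hodge filtration (which is integrally graded in the tame case). By the $E_1$-degeneration result of Katzarkov--Kontsevich--Pantev (recalled above),
\[
f^{p,q}(U,w) = \dim H^q(X, \cOmega_X^p(\log D, w)).
\]
So the task becomes identifying this with the Hodge-graded dimensions of the relative cohomology $H^{p+q}(U, w^{-1}(t))$.

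For the second step, fix a generic quasi-smooth fibre $F = w^{-1}(t)$ and consider the pair $(X, D \cup F)$, which by genericity is again an orbifold normal crossings pair. The plan is to build a filtered morphism from $\cOmega_X^\bullet(\log D, w)$ into a log de Rham complex adapted to the triple $(X, D, F)$ that computes the relative cohomology $H^{p+q}(X \setminus D, F \setminus (F\cap D)) = H^{p+q}(U, w^{-1}(t))$. Concretely, subtracting $t$ from $w$ does not change the Kontsevich complex, and $w - t$ then cuts out $F$ with multiplicity one on $U$; using a Koszul-type resolution of $\mathcal{O}_F$ by $(w-t)$ one relates the twisted differential $d + dw\wedge$ to an untwisted log de Rham differential on the relative log pair. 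This comparison is essentially the content of \cite{harder}.

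The main obstacle, and the reason tameness is needed, is matching the two filtrations across this comparison: the stupid filtration on the Kontsevich complex must correspond to the Hodge filtration on the mixed Hodge structure of $(U,F)$. Tameness ensures that $w$ has no higher-order poles along $D$, which is precisely what allows the $(w-t)$-Koszul construction to preserve the logarithmic pole order and hence the stupid/Hodge filtration strata. In the non-tame case the irregular Hodge filtration is only rationally graded, so no such direct identification is available. Once the filtered quasi-isomorphism is in place, the $E_1$-degeneration of both the Kontsevich spectral sequence and the log Hodge-to-de Rham spectral sequence of $(X, D \cup F)$ lets us pass to graded pieces, yielding $f^{p,q}(U,w) = \dim \gr_F^p H^{p+q}(U, w^{-1}(t))$.
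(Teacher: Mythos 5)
Your proposal follows essentially the same route as the paper: in the tame case the Kontsevich complex $(\Omega_X^\bullet(\log D,w),d+dw)$ with its stupid filtration computes the irregular Hodge filtration, the Katzarkov--Kontsevich--Pantev $E_1$-degeneration gives $f^{p,q}(U,w)=\dim H^q(X,\Omega_X^p(\log D,w))$, and the identification of these numbers with the Hodge-graded dimensions of the relative cohomology $H^{p+q}(U,w^{-1}(t))$ is delegated to \cite{harder}, which is exactly what the paper does. Your additional sketch of a $(w-t)$-Koszul comparison is neither required nor carried out in detail here, since that step is precisely the content of the cited result.
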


Beyond the tame case, the monodromy on $H^n(U,w^{-1}(t))$ (around $t=\infty$) is quasi-unipotent so that the generalized eigenvalues of the induced operators are rational in general.  By \cite[Theorem E.1]{esnault20171}, it is shown that the irregular Hodge numbers of a pair $(U,w)$ can be identified with 
\begin{equation}\label{e:irrtop}
    \dim \gr_{F_\mathrm{irr}}^{\alpha + p} H^n(U,w) = \dim \gr_{F_{\lim}}^pH^n(U,w^{-1}(t))_\alpha   
\end{equation}
where $H^n(U,f^{-1}(t))_\alpha$ denotes the generalized eigenspace of eigenvalue $\exp(2\pi i \alpha)$ with respect to the monodromy action and $F^\bullet_{\lim}$ is the induced limiting Hodge filtration. One may view the equation \eqref{e:irrtop} as a topological realization of the irregular Hodge numbers.


We note that the irregular Hodge filtration is compatible with the residue map in the following sense. Suppose we choose an irreducible component $D_1$ of $D$ which is not contained in $P$. Let $P_1 = D_1\cap P$, $w_1 = w|_{D_1}, \nabla_1 = \nabla|_{D_1}$, and let $B_1 = D_1 \cap D^1$ where $D^1$ is the union of all components of $D$ except $D_1$. The usual residue morphism induces a morphism of complexes
\[
\mathrm{Res}_{D_1}^X: (\cOmega_X^\bullet(\log D) (*P),\nabla,F_\mathrm{irr}^\bullet)\longrightarrow    a_{1*}(\cOmega_{D_1}^{\bullet-1}(\log B_1)(*P_1),\nabla_1,F_{\mathrm{irr}}^{\bullet-1})
\]
where $a_1 : D_1\hookrightarrow X$ denotes the inclusion map. Let us explain this point briefly. Choosing local orbifold coordinates $(\mathbb{D}^d,G_a)$ on $X$ at a point $a$ near $D_1$ with variables $(x_1,\dots,x_d)$, with $D_1 = V(x_1)$, we represent an element in the stalk $\cOmega_X^p(\log D) (*P)_a$ as a $G_a$-invariant section and define
\[
\alpha = d\log x_1 \wedge \omega + \beta,\qquad \mathrm{Res}_{D_1}(\alpha) = \omega|_{D_1}.
\]
We note that the isotopy group of $a$ in $D_1$, denoted $G_{a}^1$, is a quotient of $G_a$ and that $\omega|_{D_1}$ is clearly $G_a^1$-invariant. Observe that $\mathrm{Res}_D$ satisfies $\nabla_1\cdot \mathrm{Res}_{D_1}^X = \mathrm{Res}_{D_1}^X \cdot \nabla$. To see this, let $w' : \mathbb{D}^d\rightarrow \mathbb{C}$ be the local representation of $w$ in the orbifold coordinates above. Because $dw'$ does not have $D_1$ as a pole,
\[
\mathrm{Res}_{D_1}(\nabla \alpha) = \mathrm{Res}_{D_1}(d\alpha + dw' \wedge \alpha) = d \omega|_{D_1} + dw|_{D_1}\wedge \omega|_{D_1} = \nabla_1 \mathrm{Res}_{D_1}(\alpha).
\]
Therefore the residue map is a morphism of complexes. Furthermore, $\mathrm{Res}_D$ maps $\Omega_X^p(\log D)(\lfloor(p-\lambda )\rfloor P)$ to $\Omega_{D_1}^{p-1}(\log B_1)(\lfloor(p-\lambda) P_1\rfloor) =\Omega_{D_1}^{p-1}(\log B_1)(\lfloor((p-1)-(\lambda-1))P_1 \rfloor)$, so the residue morphism takes $F^\lambda_\mathrm{irr}$ to $F^{\lambda -1}_\mathrm{irr}$. 

Given a filtered vector space $(V,F^\bullet)$, we use the notation $V(-1)$  to denote the filtered  vector space for which $F^\lambda V(-1) = F^{\lambda-1}  V$. Without much trouble, one proves the following result.
\begin{proposition}\label{p:resles}
    There is a long exact sequence of $F_\mathrm{irr}$-filtered vector spaces
    \[
    \dots \longrightarrow H^p(X\setminus D^1, w) \longrightarrow H^p(X\setminus D,w) \longrightarrow H^{p-1}(D_1\setminus B_1,w|_{D_1})(-1) \longrightarrow \cdots.
    \]
\end{proposition}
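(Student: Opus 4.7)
The plan is to establish a short exact sequence of $F_\mathrm{irr}$-filtered complexes of sheaves on $X$
\[
0 \longrightarrow \cOmega_X^\bullet(\log D^1)(*P) \longrightarrow \cOmega_X^\bullet(\log D)(*P) \xrightarrow{\mathrm{Res}_{D_1}^X} a_{1*}\cOmega_{D_1}^{\bullet-1}(\log B_1)(*P_1) \longrightarrow 0,
\]
each equipped with its twisted differential ($\nabla$ on the first two terms, $\nabla_1$ on the third), and then apply hypercohomology. The Tate twist $(-1)$ in the statement arises because, as already noted in the paragraph preceding the proposition, $\mathrm{Res}_{D_1}^X$ carries $F^\lambda_\mathrm{irr}$ of the middle complex into $F^{\lambda-1}_\mathrm{irr}$ of the right complex. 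Note that the hypothesis $D_1\not\subseteq P$ forces $P\subseteq D^1$, so that the three complexes do indeed compute $H^\bullet(X\setminus D^1,w)$, $H^\bullet(X\setminus D,w)$, and $H^{\bullet-1}(D_1\setminus B_1,w|_{D_1})$ respectively.

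First I would verify exactness of this sequence as a sequence of complexes. Compatibility of $\mathrm{Res}_{D_1}^X$ with $\nabla$ was established in the discussion before the statement, and the inclusion on the left trivially commutes with $\nabla$. Exactness itself is a local statement and reduces, in orbifold coordinates, to the classical Poincar\'e residue short exact sequence attached to the (orbifold) normal crossings pair $(X,D)$; tensoring with $\mathcal{O}_X(*P)$ preserves exactness because $D_1$ is transverse to $P$ in the orbifold sense. Filtered compatibility for the left-hand inclusion is automatic, and for the residue map it is exactly the content of the observation quoted in the text. Taking the long exact sequence of hypercohomology attached to this filtered short exact sequence yields the desired statement. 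Proposition \ref{p:Yusub} allows us to replace $\cOmega_X^\bullet(\log D)(*P)$ by its coherent subcomplex $\cOmega_X^\bullet(\log D,\nabla)$ if convenient, but this is not strictly required.

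The main technical point to check is the surjectivity of $\mathrm{Res}_{D_1}^X$ in orbifold local coordinates. Near a point $a\in D_1$ with orbifold chart $(\mathbb{D}^d,G_a)$ in which $D_1=\{x_1=0\}$, one must lift a $G_a^1$-invariant germ $\omega$ of $\cOmega_{D_1}^{p-1}(\log B_1)(*P_1)$ to a $G_a$-invariant germ of $\cOmega_X^p(\log D)(*P)$ whose residue is $\omega$. The restriction map $G_a\twoheadrightarrow G_a^1$ is surjective, so by extending $\omega$ to be constant in $x_1$ and averaging over the kernel one produces a $G_a$-invariant lift $\widetilde{\omega}$; the form $d\log x_1\wedge\widetilde{\omega}$ is then the desired lift. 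This averaging argument is what allows the classical Poincar\'e residue sequence to pass to the orbifold setting, and it is the only place in the proof where the orbifold structure requires genuine care.
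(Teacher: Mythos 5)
Your proposal is correct and is exactly the argument the paper intends (the paper only sketches it, having just constructed $\mathrm{Res}_{D_1}^X$ as a filtered morphism of twisted complexes): the Poincar\'e residue short exact sequence of $F_\mathrm{irr}$-filtered complexes, with the shift $F^\lambda \mapsto F^{\lambda-1}$ accounting for the Tate twist, yields the long exact sequence in hypercohomology. The only cosmetic remark is that the orbifold surjectivity is handled more cleanly by noting the smooth-chart residue sequence is $G_a$-equivariant and that taking invariants of a finite group is exact in characteristic zero (your constant-in-$x_1$ extension is in fact already $G_a$-invariant for the diagonal actions used here, so the averaging step is superfluous), but your argument is sound as written.
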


Next, we discuss an orbifold version of the twisted cohomology and the irregular Hodge filtrations which may take the stacky structure into account. Suppose, as above, that $(X,D,w)$ form a nondegenerate Landau--Ginzburg model. For each twisted sector $X_{(g)}$, let $D_{(g)}= X_{(g)} \cap D$. Since $D_{(g)}$ has orbifold normal crossings, the same is true for the pair $(X_{(g)},D_{(g)})$. Since $X_{(g)}$ is an orbifold stratum of $X$, we get a similar description of $w$ restricted to $X_{(g)}$ and consequently $(X_{(g)},D_{(g)},w_{(g)}:=w|_{X_{(g)}})$ is also a nondegenerate Landau--Ginzburg model.

\begin{defn}
The {\em orbifold twisted cohomology} of a nondegenerate Landau--Ginzburg model is 
\[
H^n_{\mathrm{orb}}(X\setminus D,w) := \bigoplus_{g\in I_X}    H^{n-2\iota(g)}(X_{(g)}\setminus D_{(g)},w_{(g)}).
\]
There is an induced irregular Hodge filtration on $H^*_\mathrm{orb}(X \setminus D, w)$ 
\[
F^\lambda_\mathrm{irr} H^n_{\mathrm{orb}}(X\setminus D,w) := \bigoplus_{g\in I_X}    F^{\lambda-\iota(g)}_\mathrm{irr}H^{n-2\iota(g)}(X_{(g)}\setminus D_{(g)},w_{(g)}).
\]
Let $f_{\mathrm{orb}}^{\lambda, n-\lambda}(X \setminus D, w):=\dim \gr^\lambda_{F_\mathrm{irr}}H_{\mathrm{orb}}^n(X \setminus D, w)$. These are called the {\em irregular orbifold Hodge numbers of $(X, D, w)$}. In other words, we have
\[
f_\mathrm{orb}^{\lambda,\mu}(X \setminus D,w) = \sum_{g \in I_X} f^{\lambda-\iota(g), \mu-\iota(g)}(X_{(g)} \setminus D_{(g)}, w|_{X_{(g)}}).
\]
\end{defn}

\begin{remark}
    We emphasize that the irregular Hodge numbers of a stacky Landau--Ginzburg model are rational and that both  $\lambda$ and $\iota(g)$  can be distinct rational numbers. In general, $\lambda,\mu,$ and $\lambda + \mu$ need not be integers.
\end{remark}

\subsection{Twisted cohomology and irregular Hodge filtrations for simplicial Laurent polynomials}

Both as an example of the theory developed above and for later use, let us look at the situation where $X\setminus D \cong (\mathbb{C}^{*})^d$ and $w$ is a nondegenerate Laurent polynomial with support $A \subset M$. Assume that $0 \in A$. Let $\Delta(w)$ denote the Newton polytope of $w$. In this case, the twisted cohomology and its irregular Hodge filtration are well known. Let us fix an orbifold compactification $T=T_{\Delta(w)}$ of $(\mathbb{C}^*)^d$ as in Example \ref{ex:nondeg} and let $B$ be the toric boundary divisor. Then we have the following statements.
\begin{theorem}[Adolphson--Sperber \cite{as}, Yu \cite{yu2014irregular}]\label{t:as}
    Let notation be as above. 
    \begin{enumerate}
        \item There is an isomorphism between $\mathbb{H}^*(T,(\cOmega_T^\bullet(\log B)(*P),\nabla))$ and $H^*(\Gamma(\cOmega^\bullet(\log B)(*P)),\nabla)$.
        \item The cohomology $\mathbb{H}^l(T,\cOmega_T^\bullet(\log B)(*P),\nabla)$ has dimension
        \[
        { d-\dim L(\Delta(w)) \choose l-\dim L(\Delta(w))} (\dim L(\Delta(w)))! \mathrm{Volume}(\Delta(w))
        \]
        where $\mathrm{Volume}(\Delta(w))$ is the volume of $\Delta(w)$.
    \end{enumerate}
\end{theorem}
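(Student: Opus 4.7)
The plan is to prove the two parts in order: first, that the hypercohomology on $T$ collapses to the ordinary cohomology of the complex of global sections, and then, using the explicit toric description of those global sections, to express the complex as an exterior tensor product of an Adolphson--Sperber Koszul complex along the lineality direction $L(\Delta(w))$ with a trivial de Rham complex along a complementary torus factor.

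For part (1), the strategy is cohomological vanishing. Because $T = T_{\Delta(w)}$ is chosen so that (a positive multiple of) the pole divisor $P$ is ample, and $\cOmega_T^p(\log B)$ is a locally free sheaf on the orbifold $T$, Serre vanishing gives $H^i(T, \cOmega_T^p(\log B) \otimes \mathcal{O}_T(kP)) = 0$ for every $i > 0$ and every $k \gg 0$ (depending on $p$). Writing $\cOmega_T^p(\log B)(*P) = \varinjlim_k \cOmega_T^p(\log B)(kP)$ and using that cohomology on a Noetherian space commutes with filtered colimits, this vanishing passes to the direct limit, so $H^i(T, \cOmega_T^p(\log B)(*P)) = 0$ for all $i > 0$. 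The spectral sequence with $E_1^{p,q} = H^q(T, \cOmega_T^p(\log B)(*P))$ converging to $\mathbb{H}^{p+q}(T,(\cOmega_T^\bullet(\log B)(*P),\nabla))$ then collapses onto its bottom row, yielding the canonical identification asserted in (1).

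For part (2), use (1) to reduce to computing the cohomology of the global sections complex $(\Gamma(T, \cOmega_T^\bullet(\log B)(*P)), \nabla)$. Because $\cOmega_T^p(\log B)$ is canonically isomorphic to $\wedge^p M \otimes \mathcal{O}_T$ on a toric variety, the global sections take the form $R \otimes \wedge^p M$ where $R := \Gamma(T, \mathcal{O}_T(*P))$ is a monomial subring of $\mathbb{C}[M]$. Choose a splitting $M = M_L \oplus M'$ compatible with the linear span $L = L(\Delta(w))$, so $M_L \otimes \mathbb{R} = L$. Since $dw = \sum_{m \in A} c_m\, m \otimes \underline{x}^m$ lies in $R \otimes M_L$, the twisted differential acts trivially along the $M'$-direction, and the complex decomposes as an exterior tensor product
\[
\bigl(R_L \otimes \wedge^\bullet M_L,\, dw \wedge (-) \bigr) \otimes \bigl(\mathbb{C}[M'] \otimes \wedge^\bullet M',\, 0\bigr),
\]
where $R_L$ is the $L$-component of $R$. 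The second factor is the algebraic de Rham complex of the torus $\mathrm{Spec}\,\mathbb{C}[M'] \cong (\mathbb{C}^*)^{d-\dim L}$, with $k$-th cohomology of dimension $\binom{d-\dim L}{k}$. The first factor is the twisted Koszul complex to which nondegeneracy of $w$ applies: by Adolphson--Sperber, its cohomology is concentrated in top degree $\dim L$ with dimension $(\dim L)! \mathrm{Volume}(\Delta(w))$. A K\"unneth argument in total degree yields the stated formula.

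The main obstacle will be part (1) in the orbifold setting. Classical Serre vanishing applies to ample line bundles on projective schemes; here one must verify it persists on the potentially singular toric orbifold $T$ for the locally free sheaves $\cOmega_T^p(\log B)$ with arbitrarily large twist along $P$. One approach is to pass to a stacky crepant resolution $\widetilde{T} \to T$ on which $P$ remains ample and then transport the vanishing back via a toric pushforward argument; alternatively, one can invoke an orbifold Kodaira--Nakano-type vanishing directly. Once (1) is settled, part (2) is essentially a formal exterior-algebra manipulation combined with a citation of the nondegenerate Adolphson--Sperber computation.
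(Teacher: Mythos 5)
The paper itself gives no proof of Theorem \ref{t:as}: it is quoted from Adolphson--Sperber and Yu, so there is no internal argument to compare against line by line. Judged on its own terms, your reconstruction follows essentially the route those sources take --- collapse of the hypercohomology onto the global-sections complex, then a K\"unneth splitting along $L(\Delta(w))$ reducing to the full-dimensional Adolphson--Sperber computation --- and the skeleton is sound.

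Three points need repair. First, in part (1) you do not need Serre or Kodaira--Nakano-type vanishing at all: on any complete toric variety one has $\cOmega_T^p(\log B)\cong \mathcal{O}_T\otimes\wedge^p M$ (Danilov), so the required vanishing is just $H^i(T,\mathcal{O}_T(kP))=0$ for $i>0$, which is Demazure vanishing for the nef divisor $kP$. This matters because your proposed fallback is wrong --- ampleness is never preserved by pullback along a nontrivial birational toric morphism, only nefness is --- and because the paper's $T$ may be a simplicial refinement of $\mathrm{nf}(\Delta(w))$ (Example \ref{ex:nondeg}), and, when $\Delta(w)$ is not full-dimensional (exactly the case responsible for the binomial factor in (2), handled via the non-pointed normal fan of Remark \ref{r:nonpointed}, so $T\cong T'\times(\mathbb{C}^*)^{d-\dim L}$), the divisor $P$ is only semiample, pulled back from the compact factor; Serre vanishing as you invoke it does not apply there, while Demazure vanishing on $T'$ together with affineness of the torus factor does. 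Second, the displayed tensor factorization carries the wrong differentials: the first factor should carry $d+dw\wedge(-)$, not $dw\wedge(-)$, and the second the de Rham differential rather than $0$ (with differential $0$ its cohomology would be infinite-dimensional); your subsequent dimension counts use the intended differentials, so this is a slip rather than a gap, but it should be fixed. Third, be explicit that the first factor is the cone-ring (global-sections) complex $R_L\otimes\wedge^\bullet M_L$ rather than the twisted de Rham complex of the torus $T_{M\cap L}$: either apply part (1) once more in the full-dimensional situation for the sublattice $M\cap L$ to identify the two, or cite the form of Adolphson--Sperber's theorem stated for the cone ring --- otherwise the appeal to their result in exactly this degree-concentration form reads as circular.
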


In the particular case where $A$ is the set of vertices of a simplex, we can give an explicit representation of the irregular Hodge filtration on the twisted cohomology of $w$. Before doing this, we introduce some notation.  For a simplex $\tau$ whose vertex set contains 0 there is an associated stacky cone $c_\tau$. Consider the set as in \eqref{e:box},
\[
\mathrm{Box}(c_\tau) = \left\{ \left. \,\sum_{\rho \in c_\tau[1]} a_\rho \rho \,\, \right| \,\, a_\rho \in [0,1)\right\} \cap M.
\]
For $g \in \mathrm{Box}(c_\tau)$ define its degree $\iota(g) = \sum_{\rho\in c_\tau[1]} a_\rho$. Let ${\bf B}^i_\tau$ be the $\mathbb{C}$-vector space with graded basis given by elements of $\mathrm{Box}(\tau)$ of degree $i$ and let ${\bf B}_\tau= \oplus_i {\bf B}_\tau^i$. Define a decreasing filtration  
\[
\mathsf{F}^p {\bf B}_\tau = \bigoplus_{i \leq \dim\tau -p } {\bf B}^i_\tau.
\]
Note that if $\tau_2$ is a face of $\tau_1$ there is a canonical projection map $p_{(\tau_1,\tau_2)} : {\bf B}_{\tau_1} \rightarrow {\bf B}_{\tau_2}$ which preserves the grading on ${\bf B}$, hence it also preserves $\mathsf{F}^\bullet$. Let $
T_M:=\mathrm{Spec}\,\mathbb{C}[M]$.

\begin{proposition}\label{p:adolphson-sperber}
    Suppose $w$ is a nondegenerate Laurent polynomial with support given by the vertices of $\tau$. Choose a volume form $\mathrm{Vol}(L(\tau))$. Then there is a filtered isomorphism 
    \begin{equation}\label{e:adolphson-sperber}
         \xi_{\tau} : ({\bf B}_\tau \otimes \wedge^\bullet M \wedge \mathrm{Vol}(L(\tau)), \mathsf{F}^\bullet) \longrightarrow (H^*(T_M,w),F^\bullet_{\mathrm{irr}}).
    \end{equation}
\end{proposition}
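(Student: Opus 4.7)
By Theorem \ref{t:as}(1), $H^\ast(T_M, w)$ is the cohomology of the global sections of the twisted log de Rham complex. Under the torus-invariant identification $\Omega^p_{T_M}\cong \mathcal{O}_{T_M}\otimes\wedge^p M$, this becomes
\[
K^\bullet = \bigl(\mathbb{C}[M]\otimes_\mathbb{C} \wedge^\bullet M,\; D\bigr),\qquad D(x^g\otimes\eta) = x^g\otimes g\wedge\eta + \sum_{m\in\mathrm{vert}(\tau)} u_m\,x^{g+m}\otimes m\wedge\eta.
\]
I would define $\xi_\tau$ by sending $g\otimes\eta\wedge\mathrm{Vol}(L(\tau))$ to the class of $x^g\otimes\eta\wedge\mathrm{Vol}(L(\tau))$ in $K^\bullet$. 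Both summands of $D$ then vanish on such forms, since every box representative $g$ and every vertex $m$ of $\tau$ lies in $L(\tau)$, while $\mathrm{Vol}(L(\tau))\in\wedge^{\dim L(\tau)}L(\tau)$ annihilates any wedge with an element of $L(\tau)$.

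Dimensions match via Theorem \ref{t:as}(2): the left-hand side in degree $l$ has dimension
\[
|\mathrm{Box}(c_\tau)|\cdot\binom{d-\dim L(\tau)}{l-\dim L(\tau)} = (\dim L(\tau))!\,\mathrm{Vol}(\tau)\cdot\binom{d-\dim L(\tau)}{l-\dim L(\tau)},
\]
matching $\dim H^l(T_M,w)$, where we use the identification $\wedge^\bullet M\wedge\mathrm{Vol}(L(\tau))\cong \wedge^\bullet(M/(M\cap L(\tau)))$ from Section \ref{s:linalg}. For bijectivity of $\xi_\tau$, I would filter $K^\bullet$ by the Newton function $\rho_\tau$ on $\mathbb{C}[M]$ (set to $+\infty$ off $c_\tau$); on the associated graded the differential collapses to Koszul multiplication by $\sum_{m\in\mathrm{vert}(\tau)} u_m\,x^m\otimes m$. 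By nondegeneracy of $w$ this Koszul complex is acyclic in the $L(\tau)$-direction outside its top degree, where its cohomology is the Jacobian ring of $w$; for a simplex-supported nondegenerate $w$ this ring admits $\mathrm{Box}(c_\tau)$ as a basis, and a K\"unneth-type factor in the direction transverse to $L(\tau)$ supplies the binomial factor. The dimension match lifts this graded bijectivity to bijectivity of $\xi_\tau$.

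For filtration compatibility, fix a quasiprojective orbifold toric resolution $\widetilde T\to T$ with boundary $\widetilde B$ and pole divisor $P$. Along each $E_\rho\subset P$, $w$ has pole multiplicity $N_\rho = -\min_{m\in\mathrm{vert}(\tau)}\langle\rho,m\rangle > 0$, and the form $x^g\otimes\eta\wedge\mathrm{Vol}(L(\tau))$ has pole of order $\max(0,-\langle\rho,g\rangle)$ at $E_\rho$. Writing $g=\sum_i a_i v_i$ with $a_i\in[0,1)$,
\[
\max(0,-\langle\rho,g\rangle) \leq \sum_i a_i\max(0,-\langle\rho,v_i\rangle) \leq \iota(g)\,N_\rho,
\]
which places the form in $F^{\dim\tau-\iota(g)}_{\mathrm{irr}}$. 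Hence $\xi_\tau$ carries $\mathsf{F}^p$ into $F^p_{\mathrm{irr}}$, and combined with bijectivity and equality of graded dimensions, this upgrades $\xi_\tau$ to a filtered isomorphism.

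The hard part will be the bijectivity step: identifying the Koszul cohomology of the associated graded with the $\mathrm{Box}(c_\tau)$-parametrized Jacobian basis, a classical but delicate calculation in the simplex case that requires care when separating the $L(\tau)$ direction from its complement in $M$, especially when $\dim L(\tau)<d$.
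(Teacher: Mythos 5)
Your setup (the map $\xi_\tau$ sending box monomials times $\wedge^\bullet M\wedge\mathrm{Vol}(L(\tau))$ to twisted-closed forms, plus the dimension count against Theorem \ref{t:as}(2)) matches the paper's proof, but the core bijectivity step has a genuine gap. After invoking Theorem \ref{t:as}(1) you switch to the complex $\mathbb{C}[M]\otimes\wedge^\bullet M$, whereas Theorem \ref{t:as}(1) gives you the global sections of $\Omega^\bullet_T(\log B)(*P)$, which are $\mathbb{C}[c_\tau]\otimes\wedge^\bullet M$ (only monomials supported on the cone over $\tau$). On the full Laurent ring your Koszul/Jacobian argument breaks down: since the nonzero vertices of $\tau$ span $L(\tau)_{\mathbb{Q}}$, the log-partials of $w$ generate, for generic coefficients, the same ideal as the monomials $x^{v_i}$, which are units in $\mathbb{C}[M]$, so the "Jacobian ring of $w$" over $\mathbb{C}[M]$ is zero and certainly has no $\mathrm{Box}(c_\tau)$-basis. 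Relatedly, the filtration you propose (Newton weight, "set to $+\infty$ off $c_\tau$") is not separated on $\mathbb{C}[M]$ --- all off-cone monomials lie in every filtration level --- so the associated-graded/spectral-sequence comparison does not converge to, or control, the cohomology of your $K^\bullet$. The argument works if run on $\mathbb{C}[c_\tau]\otimes\wedge^\bullet M$, where $x^{v_1},\dots,x^{v_{\dim\tau}}$ form a regular sequence in the Cohen--Macaulay semigroup ring and the box monomials visibly span the quotient; but then you either need to prove the inclusion of the cone-supported subcomplex into $\mathbb{C}[M]\otimes\wedge^\bullet M$ is a quasi-isomorphism (a nontrivial fact, essentially Adolphson--Sperber), or simply stay with the complex that Theorem \ref{t:as}(1) actually provides. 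The paper avoids the Koszul machinery altogether: it proves surjectivity by an explicit reduction (any monomial divisible by some $x^{\rho}$ is rewritten, modulo the image of $\nabla$, in terms of monomials of lower Newton degree, using genericity of the coefficients) and then concludes injectivity from the dimension count.

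There is a second, smaller gap in the filtration step. Your pole-order estimate only yields the one-sided containment $\xi_\tau(\mathsf{F}^\lambda)\subseteq F^\lambda_{\mathrm{irr}}$; to upgrade a bijection with this property to a filtered isomorphism you need to know that the graded dimensions of $F^\bullet_{\mathrm{irr}}$ agree with those of $\mathsf{F}^\bullet$, which is exactly what is being proved and is asserted rather than established in your last step. The paper closes this by citing Yu's theorem identifying $F^\bullet_{\mathrm{irr}}$ on $H^*(T_M,w)$ with the Newton-polytope filtration, under which a box element $m=\sum a_\rho\rho$ lies in level $\lambda$ exactly when $\sum a_\rho=\dim\tau-\lambda$, i.e.\ age $=\dim\tau-\lambda$; you should either invoke that result of \cite{yu2014irregular} or prove the reverse containment directly.
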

\begin{proof}

   We first identify $\Omega_T^\bullet(\log B)(*P)$ with $\mathcal{O}_T(*P)\otimes \wedge^\bullet M$. One may check that  $\Gamma(\mathcal{O}_T(*P)) = \mathbb{C}[c_\tau]$. Therefore, there is a natural morphism $\xi'_\tau: \mathbb{C}[c_\tau] \otimes \wedge^\bullet M \rightarrow \Gamma(T,\cOmega^\bullet_{T}(\log B)(*P))$ sending 
    \begin{equation*}
    \left(\sum_{m\in \mathrm{Box}(c_\tau)} s_m [m] \right) \otimes \alpha \longmapsto    \left(\sum_{m\in \mathrm{Box}(c_\tau)} s_m \underline{x}^m \right) \alpha.
    \end{equation*}
    Here $[m]$ represents a basis in $\mathbf{B}_\tau$ corresponding to $m \in \mathrm{Box}(c_\tau)$ and $\underline{x}^m$ represents the  corresponding monomial. If we fix a decomposition $M \cong L(\tau) \oplus M/L(\tau)$ we see that, for any element $\alpha' \in M/L(\tau)$ we get a subcomplex of $\Gamma(T,\cOmega_T^\bullet(\log B)(*P))$ isomorphic to $(\mathbb{C}[\mathrm{cone}(\tau)]\otimes \wedge^\bullet L(\tau) \wedge \alpha',\nabla)$. Therefore the problem reduces to the case where $L(\tau) = M$.

    Viewing ${\bf B}_\tau$ as a subset of $\mathbb{C}[\mathrm{cone}(\tau)]\otimes \wedge \mathrm{Vol}(M)$ we obtain the desired map, which we denote $\xi_\tau$ by restriction of $\xi'_\tau$. It is enough to show that this map is surjective because the image consists of $\nabla$-closed forms and because 
    \[
    |\mathrm{Box}(c_\tau)| = |M/\mathrm{span}_\mathbb{Z}(\rho \in c_\tau[1])| = \dim(L(\tau))!\mathrm{Volume}(\Delta(c_\tau)).    
    \] 
    Fix some basis $x_1,\dots, x_d$ corresponding to a basis of $M$ and let $\alpha_i = \wedge_{j\neq i}d\log x_j$, and note that 
    \begin{equation}\label{e:polord}
    \nabla (f \alpha_i) = (-1)^i \left(x_i\dfrac{\partial f}{\partial x_i} + x_i f \dfrac{\partial w}{\partial x_i}\right) (d\log x_1 \wedge \dots \wedge d\log x_d). 
    \end{equation}
    Suppose $g = x_1^{a_1}\dots x_d^{a_d}$ is a monomial which is divisible by $x^\rho$ corresponding to a nonzero vertex of $\tau$. Because we have chosen the coefficients of $w$ generically, we may find values $t_1,\dots, t_d$ so that 
    \[
        g=x_1^{a_1}\dots x_d^{a_d}= \sum_{i=1}^n   (-1)^i t_i x_i \dfrac{\partial w}{\partial x_i}.
    \]
    In other words, after applying \eqref{e:polord}, 
    \[
    \nabla\left(\sum_{i=1}^d (-1)^it_i\left(\dfrac{g}{\underline{x}^\rho}\right) \alpha_i   \right) =\left(\sum_{i=1}^d a_it^i \left(\dfrac{g}{\underline{x}^\rho}\right) + g \right) (d\log x_1\wedge \dots \wedge d\log x_d).
    \]
    More precisely, the class of $g (d\log x_1\wedge \dots \wedge d\log x_d)$ in $H^n(\Gamma(\cOmega^\bullet_T(\log B)(*P)))$ is equivalent to the class of $\sum_{i=1}^d a_i t_i(g/x^\rho)$. Applying this repeatedly, we may reduce any cohomology class to a class corresponding to something in the image of $\xi_\tau$. Observe that the volume of $\tau$ times $\dim L(\tau)!$ is precisely the number of integral points in $\mathrm{Box}(c_\tau)$, therefore $\xi_\tau$ must be an isomorphism.
    
    Finally, we show that this is a filtered isomorphism. According to Yu, the irregular Hodge filtration on $H^*(T_M,w)$ may be identified with the Newton polytope filtration for which 
    \[
    \underline{x}^m(d\log x_1 \wedge \dots \wedge d\log x_d) \in F_\mathrm{NP}^\lambda \iff   m \in (d-\lambda)\tau.
    \]
    The polytope $(d-\lambda)\tau$ is precisely $m \in M_\mathbb{R}$ for which $m = \sum_{\rho \in \tau[1]} a_\rho \rho, \sum_{\rho \in \tau[1]} a_\rho \leq (d-\lambda)$ and $a_\rho \geq 0$. Thus for $m\in \mathrm{Box}(\tau)$,  $\underline{x}^m(d\log x_1\wedge \dots \wedge d\log x_d) \in F^\lambda_\mathrm{NP} \setminus F^{>\lambda}_\mathrm{NP}$ if and only if
    \[
        m = \sum_{\rho \in \tau[1]} a_\rho \rho, \qquad \sum_{\rho \in \tau[1]} a_\rho = (d-\lambda).
    \]
    This statement makes use of the  fact that $\tau[1]$ is a basis for $L(\tau)$. Therefore, $\mathrm{age}_\tau(m) = d-\lambda$.
\end{proof}
Let $\mathbf{c}$ be a stacky simplicial cone of dimension $d$ in a lattice $M$ of dimension $d$ and let $w(\mathbf{c}) = \sum_{\rho \in \mathbf{c}[1]}b_\rho \underline{x}^\rho$ where $b_\rho$ are generic in $\mathbb{C}^*$. To clarify notation here, $\mathbf{c}$ has underlying cone $c$ with primitive integral ray generators $\rho_1,\dots, \rho_d$ and weight vector $\beta = (\beta_1,\dots, \beta_d)$. Then ${\bf c}[1] =\{\beta_1\rho_1,\dots, \beta_d\rho_d\}$. We expect that the Landau--Ginzburg model $((\mathbb{C}^*)^d, w({\bf c}))$ is mirror to $T(\mathbf{c})$. We have the following statement which justifies this expectation and provides a prototype for the main result of this paper.
\begin{corollary}
     Let notation be as above. For all $\lambda, \mu \in \mathbb{Q}$,
     \[
     f^{\lambda,\mu}((\mathbb{C}^*)^d,w(\mathbf{c})) = f^{d-\lambda,\mu}_\mathrm{orb}(T(\mathbf{c}),0).
     \]
\end{corollary}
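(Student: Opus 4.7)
The plan is to compute both sides explicitly from the definitions and verify that the two counts agree.

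For the left-hand side, observe that $w(\mathbf{c})$ and $1 + w(\mathbf{c})$ differ by a constant, so $d(w(\mathbf{c})) = d(1+w(\mathbf{c}))$ and the two functions have identical pole divisors on any compactification. Consequently they determine the same twisted de Rham complex and the same irregular Hodge filtration, so it suffices to compute $H^*((\mathbb{C}^*)^d, 1 + w(\mathbf{c}))$. The support of $1 + w(\mathbf{c})$ is $\{0\}\cup \mathbf{c}[1]$, which is precisely the vertex set of the simplex $\tau := \mathrm{Conv}(\{0\}\cup \mathbf{c}[1])$, and by construction the associated stacky cone $c_\tau$ coincides with $\mathbf{c}$, so $\mathrm{Box}(c_\tau) = \mathrm{Box}(\mathbf{c})$. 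Because $\dim\mathbf{c} = d = \mathrm{rk}\,M$, one has $L(\tau) = M$, and Proposition \ref{p:adolphson-sperber} supplies a filtered isomorphism
\[
\xi_\tau : \bigl(\mathbf{B}_{\mathbf{c}} \otimes \wedge^\bullet M \wedge \mathrm{Vol}(M),\, \mathsf{F}^\bullet\bigr) \xrightarrow{\sim} \bigl(H^*((\mathbb{C}^*)^d, w(\mathbf{c})),\, F^\bullet_{\mathrm{irr}}\bigr).
\]
All summands with $j > 0$ in $\wedge^j M$ are killed by wedging with the top form $\mathrm{Vol}(M)$, so the twisted cohomology is concentrated in total degree $d$ and identified with $\mathbf{B}_{\mathbf{c}}$. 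The definition $\mathsf{F}^p \mathbf{B}_\tau = \bigoplus_{i \le d-p} \mathbf{B}^i_\tau$ then gives $\gr^\lambda_{F_{\mathrm{irr}}} H^d = \mathbf{B}^{d-\lambda}_{\mathbf{c}}$. Hence $f^{\lambda,\mu}((\mathbb{C}^*)^d, w(\mathbf{c}))$ vanishes unless $\lambda + \mu = d$, in which case it equals $|\{g \in \mathrm{Box}(\mathbf{c}) : \iota(g) = d-\lambda\}| = |\{g : \iota(g) = \mu\}|$.

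For the right-hand side, the superpotential is zero, so the orbifold twisted cohomology of $(T(\mathbf{c}), 0)$ is the usual orbifold cohomology of the affine toric stack $T(\mathbf{c})$. By Example \ref{ex:orbsing}, each twisted sector is a finite quotient of an affine space; in particular, its rational cohomology is one-dimensional in degree $0$ and of pure Hodge type $(0,0)$, while the inertia shift by $\iota(g)$ places it in degree $2\iota(g)$ and Hodge bidegree $(\iota(g),\iota(g))$. Summing over all $g\in \mathrm{Box}(\mathbf{c})$ yields
\[
f^{p,q}_{\mathrm{orb}}(T(\mathbf{c}), 0) = \bigl|\{g \in \mathrm{Box}(\mathbf{c}) : \iota(g) = p = q\}\bigr|.
\]
Specializing to $p = d-\lambda$ and $q = \mu$ forces $\lambda + \mu = d$ and returns the same count as on the left, completing the comparison.

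The only subtleties, which I would verify carefully, are that shifting by the constant $1$ genuinely preserves the filtered complex (immediate, as noted above) and that the filtration grading $\mathsf{F}^p$, which decreases in $\mathbf{B}^i_\mathbf{c}$ by a shift of $\dim\tau = d$, is correctly matched against the age shift $\iota(g)$ on the orbifold side. There is no real geometric obstacle: the argument is essentially a bookkeeping check once Proposition \ref{p:adolphson-sperber} and Example \ref{ex:orbsing} are in hand.
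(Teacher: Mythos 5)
Your proposal is correct and is essentially the argument the paper intends: the corollary is stated as an immediate consequence of Proposition \ref{p:adolphson-sperber} applied to the simplex $\tau=\mathrm{Conv}(\{0\}\cup\mathbf{c}[1])$ (so $c_\tau=\mathbf{c}$ and the cohomology reduces to $\mathbf{B}_{\mathbf{c}}$ in degree $d$ with $\gr_{\mathsf F}^\lambda=\mathbf{B}_{\mathbf c}^{d-\lambda}$), compared with the Box/age count of Example \ref{ex:orbsing} on the orbifold side. Your extra care in passing from $w(\mathbf{c})$ to $1+w(\mathbf{c})$ is the right way to meet the hypothesis $0\in A$ of the proposition, and it is harmless since adding a constant changes neither the pole divisor nor $\nabla=d+dw$, hence neither the twisted complex nor $F^\bullet_{\mathrm{irr}}$.
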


Next we discuss functoriality of the filtered isomorphism \eqref{e:adolphson-sperber} under the residue maps. Suppose $n \in N$ is primitive and choose the partial compactification $(\mathbb{C}^*)^{d-1}\times \mathbb{C}$ of $(\mathbb{C}^*)^{d}$ attached to the fan $\mathbb{R}_{\geq 0}n$. Let $D_n = (\mathbb{C}^*)^{d-1}\times 0$. Assume that $\min_{m\in \tau[1]}\{\langle n,m \rangle\} = 0$. Since $\tau$ is a simplex containing $0_M$, this is equivalent to saying that a Laurent polynomial with Newton polytope $\tau$ does not have a pole at the divisor $D_n$. Let $\tau_n = \{m \in \tau \mid \langle m,n \rangle =0\}$ and let $w_n$ be the Laurent polynomial with support $\tau_n$. We also have the wedge product map and its dual, 
\[
\wedge n : \wedge^\bullet (N/n) \longrightarrow \wedge^{\bullet +1} N ,\qquad (\wedge n)^* : \wedge^{\bullet +1}M \longrightarrow \wedge^\bullet n^\perp.  
\]
An explicit computation then allows us to represent the morphism $\mathrm{Res}_{D}$ combinatorially in terms of ${\bf B}_\tau$. 
\begin{corollary}
There is a commutative diagram of filtered vector spaces
\[
\begin{tikzcd}
    {\bf B}_\tau \otimes (\wedge^\bullet M \wedge \mathrm{Vol}(L(\tau))) \ar[r,"\xi_\tau"] \ar[d,"p_{(\tau,\tau_n)}\otimes (\wedge n)^*"] & H^*(T_M,w) \ar[d,"\mathrm{Res}_{D_n}"] \\
    {\bf B}_{\tau_n} \otimes (\wedge^\bullet n^\perp \wedge \mathrm{Vol}(L({\tau_n})))  \ar[r, "\xi_{\tau_n}"] & H^*(T_{n^\perp},w_n)(-1) 
\end{tikzcd}    
\]
\end{corollary}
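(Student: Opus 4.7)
The plan is to verify commutativity on basis elements $[m]\otimes (\alpha \wedge \mathrm{Vol}(L(\tau)))$ with $m \in \mathrm{Box}(c_\tau)$ and $\alpha \in \wedge^{l} M$. Since $n$ is primitive, extend it to a $\mathbb{Z}$-basis $n = n_1,\ldots, n_d$ of $N$ with dual basis $m_1,\ldots, m_d$ of $M$, so that $\langle n, m_1\rangle = 1$ and $m_2,\ldots, m_d$ span $n^\perp$. Setting $x_i = \underline{x}^{m_i}$, the divisor $D_n$ is identified with $V(x_1)$, and for $m\in \mathrm{Box}(c_\tau)$ one has $\underline{x}^m = x_1^{a_1}\underline{x}^{m-a_1m_1}$ with $a_1 = \langle n, m\rangle \geq 0$ by the hypothesis $\min_{m\in \tau[1]}\langle n,m\rangle = 0$. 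An elementary dual-basis calculation shows that $(\wedge n)^*$ is the contraction with $n$: $(\wedge n)^*(m_1) = 1$, $(\wedge n)^*(m_i) = 0$ for $i\geq 2$, and it acts as an antiderivation on $\wedge^\bullet M$.

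For the right-then-down composition $\mathrm{Res}_{D_n}\circ \xi_\tau$, decompose $\alpha\wedge \mathrm{Vol}(L(\tau)) = d\log x_1\wedge \gamma_1 + \gamma_0$, where $\gamma_0,\gamma_1$ involve only $d\log x_i$ for $i\geq 2$. Choosing the volume forms compatibly so that $\mathrm{Vol}(L(\tau)) = v\wedge \mathrm{Vol}(L(\tau_n))$ for a suitable $v\in L(\tau)$ (in the generic case $\tau_n\subsetneq \tau$ one may take $v = m_1$ up to rescaling), a Leibniz-rule computation yields $\gamma_1 = (\wedge n)^*(\alpha\wedge \mathrm{Vol}(L(\tau)))\in \wedge^\bullet n^\perp\wedge \mathrm{Vol}(L(\tau_n))$. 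Then $\xi_\tau$ sends the basis element to $\underline{x}^m(d\log x_1\wedge \gamma_1 + \gamma_0)$; since $x_1^{a_1}d\log x_1 = x_1^{a_1-1}dx_1$ has zero log residue along $D_n$ when $a_1\geq 1$, and $\gamma_0$ contributes no residue, $\mathrm{Res}_{D_n}\circ \xi_\tau$ vanishes unless $a_1 = 0$, in which case it equals $\underline{x}^m\gamma_1|_{D_n}$.

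For the down-then-right composition, $p_{(\tau,\tau_n)}([m]) = [m]$ precisely when $m\in \mathrm{Box}(c_{\tau_n})$; since the coefficients $a_\rho$ in the decomposition of $m$ are nonnegative and $\langle n,\rho\rangle\geq 0$ with equality if and only if $\rho\in \tau_n[1]$, this is equivalent to $\langle n, m\rangle = 0$. Applying $\xi_{\tau_n}$ then yields $\underline{x}^m\gamma_1$ on $T_{n^\perp}$ exactly when $\langle n, m\rangle = 0$, which matches the residue computation.

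For filtration compatibility, an element $[m]\otimes (\alpha\wedge \mathrm{Vol}(L(\tau)))$ with $m$ of age $i$ and $\alpha$ of wedge degree $l$ lies in $\mathsf{F}^\lambda$ if and only if $l + \dim L(\tau) - i \geq \lambda$, by the Newton-polytope description of $F_\mathrm{irr}^\bullet$ used in the proof of Proposition \ref{p:adolphson-sperber}. The projection $p_{(\tau,\tau_n)}$ preserves the age grading, while $(\wedge n)^*$ behaves in one of two ways: when $\tau_n\subsetneq \tau$, the wedge degree $l$ is preserved and $\dim L(\tau_n) = \dim L(\tau) - 1$; when $\tau_n = \tau$, the wedge degree drops by one and $\dim L(\tau_n) = \dim L(\tau)$. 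In both cases the filtration index drops by exactly one, matching the Tate twist $(-1)$ on the bottom-right cohomology. The principal obstacle is careful sign bookkeeping through the log decomposition and the contraction, along with verifying that the argument handles both cases $\tau_n=\tau$ and $\tau_n\subsetneq \tau$ uniformly.
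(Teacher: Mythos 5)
Your strategy---evaluating both compositions on the monomial representatives $\underline{x}^m(\alpha\wedge \mathrm{Vol}(L(\tau)))$, identifying $(\wedge n)^*$ with contraction $\iota_n$, and matching the vanishing of the residue for $\langle n,m\rangle>0$ with the support condition defining $p_{(\tau,\tau_n)}$ (indeed $m\in\mathrm{Box}(c_{\tau_n})$ iff $\langle n,m\rangle=0$, since all $a_\rho\geq 0$ and $\langle n,\rho\rangle\geq 0$)---is exactly the ``explicit computation'' the paper leaves unwritten, and that part is correct; note also that these representatives are $\nabla$-closed because $d\log\underline{x}^m$ and $dw$ only involve directions in $L(\tau)$, which die against $\mathrm{Vol}(L(\tau))$, so computing residues on forms does compute them on classes.

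The gap is in your concluding case analysis. The hypothesis $\min_{m\in\tau[1]}\langle n,m\rangle=0$ does not prevent two or more vertices of $\tau$ from pairing strictly positively with $n$; in that case $L(\tau_n)=\mathrm{span}\{\rho\in\tau[1] : \langle n,\rho\rangle=0\}$ has codimension $\geq 2$ in $L(\tau)$, so your claim that $\mathrm{Vol}(L(\tau))=v\wedge\mathrm{Vol}(L(\tau_n))$ for a vector $v\in L(\tau)$ fails (one needs $v\in\wedge^{c}L(\tau)$ with $c$ the codimension), and your assertion that for $\tau_n\subsetneq\tau$ ``the wedge degree $l$ is preserved and $\dim L(\tau_n)=\dim L(\tau)-1$'' is false there. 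Two points then need an argument. First, that $(\wedge n)^*$ really maps $\wedge^\bullet M\wedge\mathrm{Vol}(L(\tau))$ into $\wedge^\bullet n^\perp\wedge\mathrm{Vol}(L(\tau_n))$: write a class as $\gamma\wedge\mathrm{Vol}(L(\tau))$ with $\gamma\in\wedge^\bullet C$ for a complement $C$ of $L(\tau)$ chosen inside $n^\perp$, so $\iota_n(\gamma\wedge\mathrm{Vol}(L(\tau)))=\pm\,\gamma\wedge\iota_n\mathrm{Vol}(L(\tau))$; then $\iota_n\mathrm{Vol}(L(\tau))$ is divisible by $\mathrm{Vol}(L(\tau_n))$, and since the whole expression lies in $\ker\iota_n=\wedge^\bullet n^\perp$ it lies in $\wedge^\bullet n^\perp\wedge\mathrm{Vol}(L(\tau_n))$. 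Second, the filtration bookkeeping should be done uniformly by total form degree rather than by your dichotomy: the representative $[m]\otimes(\alpha\wedge\mathrm{Vol}(L(\tau)))$ sits in level $l+\dim L(\tau)-\iota(m)$, contraction drops the total degree by exactly one while the age of $m$ is unchanged (its coefficients on the discarded rays vanish), so the level drops by one in every case, matching the Tate twist $(-1)$. With these repairs (and fixing signs by a compatible choice of volume forms), your proof goes through; as written, the codimension $\geq 2$ case is not covered.
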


We emphasize that, {\em a priori}, the irregular Hodge filtration does not have a preferred splitting, however, the isomorphism in Proposition \ref{p:adolphson-sperber} determines a choice of splitting which is identified with the splitting that induces $\mathsf{F}^\bullet$ and which commutes with the residue map.

\section{Toric and tropical Landau--Ginzburg models}\label{s:thyper}

In this section we discuss toric Landau--Ginzburg models over the field of Puissieux series $\mathbb{C}\{\!\{t\}\!\}$ and we assign to these Landau--Ginzburg models both the data of a tropicalization and, in certain cases, a filtered sheaf on the tropicalization.

\subsection{Tropical hypersurfaces in $\mathbb{R}^d$}\label{s:tropgeo}
We start reviewing basic constructions in tropical geometry. The standard references are \cite{mikhalkin2014tropical,MikhalkinRau, Payne2009trop}. 

Let $w \in \mathbb{C}\{\!\{t\}\!\}[M]$ be a Laurent polynomial with coefficients in the field $\mathbb{C}\{\!\{t\}\!\}$ of formal Puiseux series. By choosing an integral basis of $M$ and an isomorphism $M_\mathbb{R}\cong \mathbb{R}^d$, one can express $w$ as before: 
\[
w=\sum_{m \in A} u_m \underline{x}^m=\sum_{m \in A} u_m(t)x_1^{m_1}x_2^{m_2}\cdots x_d^{m_d},\qquad u_m(t) \in \mathbb{C}\{\!\{t\}\!\}^\times
\]
where $A$ is a finite collection of integral points in $M$, and $m=(m_1, \dots, m_d)$. The \emph{tropicalization of $w$} is a piecewise affine linear function on $N_\mathbb{R} \cong \mathbb{R}^d$ given by 
\[
\mathrm{Trop}(w):=\min_{m \in A}\{\nu(u_m(t)) + m\cdot \check{x} \}
\]
where we simply write $\check{x}=(\check{x}_1, \dots, \check{x}_d)$ as the coordinates on the dual $N_\mathbb{R}$ and $\nu :\mathbb{C}\{\!\{t\}\!\} \rightarrow \mathbb{Z} \cup \{\infty\}$ is the usual valuation at 0. 

\begin{remark}
    Between the two opposite conventions, \emph{min} and \emph{max}, we adopt the \emph{min} convention, following \cite{Payne2009trop}. This choice simplifies the handling of signs in many of the proofs in this section.
\end{remark}

We define the tropical hypersurface of $w$, denoted by $\mathsf{X}_w$, to be the non-differentiable locus of $\mathrm{Trop}(w)$. Namely, 
\[\mathsf{X}_w=\left\{\left. \check{x}\in \mathbb{R}^d \,\, \right| \,\, \mathrm{Trop}(w)= \nu(u_m(t))+m\cdot \check{x}=\nu(u_{m'}(t))+m'\cdot \check{x} \text{ for }m \neq m' \in A\right\}\]
Then $\mathsf{X}_w$ admits a polyhedral complex structure. Recall that a \emph{polyhedral complex} in a real vector space $\mathbb{R}^d$ is a finite non-empty set $\mathsf{P}$ of convex polyhedra, called \emph{faces}, such that for any pair of faces $\sigma, \sigma' \in \mathsf{P}$, any face of $\sigma$ belongs to $\mathsf{P}$ and the intersection $\sigma \cap \sigma'$ is either empty or is a common face of $\sigma$ and $\sigma'$. Moreover, $\mathbb{R}^d$ admits the induced polyhedral complex structure whose $(d-1)$-skeleton is $\mathsf{X}_w$. We denote it by $\mathrm{PC}(w)$. Cells in $\mathrm{PC}(w)$ are parametrized by $m\in A$. Let $\sigma_m$ be the locus where $\nu(u_m(t))+m\cdot \check{x}$ reaches the minimum. For each $\sigma \in \mathrm{PC}(w)$, we introduce some terminology:
\begin{itemize}
    \item $L(\sigma)$ is the linear span of $\sigma$ and $\dim\sigma:=\dim L(\sigma)$. In other words, this is the smallest sublattice of $N$ containing all differences of elements of $\sigma$. 
    \item $\mathrm{rec}(\sigma)$ is the recession cone of $\sigma$, that is $\{v \in \mathbb{R}^d \mid \exists \check{x} \in \sigma \text{ such that } \check{x}+sv \in \sigma,  \forall s>0\}$
\end{itemize}

\noindent On the other hand, there is a dual description of the polyhedral structure of $\mathrm{PC}(w)$. We set 
\[
\widetilde{A}:=\{(m, \nu(u_m(t))) \in \mathbb{Z}^d\times \mathbb{R}\mid m \in A, \nu(u_m(t)) \neq \infty \}
\]
and consider the convex hull $\mathrm{Conv}(\widetilde{A})$. The projection of the upper faces of $\mathrm{Conv}(\widetilde{A})$ provides a subdivision $\mathrm{SD}(w)$ of the Newton polytope $\Delta(w)$. For each cell $\sigma \in \mathrm{PC}(w)$, we define $A_\sigma=\{m \in A \mid \sigma \subseteq \sigma_m\}$, that is a set of monomials which are the minimum on $\sigma$. Then there is an inclusion--reversing bijection between $\mathrm{PC}(w)$ and $\mathrm{SD}(w)$. 

\begin{theorem}\cite[Theorem 2.3.7]{MikhalkinRau}\label{t:subdiv}
    There is an inclusion-reversing bijection between subdivisions $\mathrm{PC}(w)$ and $\mathrm{SD}(w)$ given by 
    \[
    \begin{aligned}
        \mathrm{PC}(w) & \longrightarrow \mathrm{SD}(w) \\
        \sigma & \longmapsto \tau_\sigma:=\mathrm{Conv}(A_\sigma)
    \end{aligned}
    \]
    such that $\dim \sigma + \dim \tau_\sigma =d$ and $L(\sigma)=L(\tau_\sigma)^\perp$. Furthermore, let $f_{\tau_\sigma}$ be the minimal face of $\Delta(w)$ containing $\tau_\sigma \in \mathrm{SD}(w)$. Then $\mathrm{rec}(\sigma) = \mathrm{nc}(f_{\tau_\sigma})$.
\end{theorem}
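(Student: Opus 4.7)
The plan is to exploit the standard ``lifted'' picture in $\mathbb{R}^{d+1}$ that simultaneously controls both sides of the correspondence. Consider the lifted set $\widetilde{A} \subset \mathbb{R}^{d+1}$ and its lower convex hull (since we are using the min convention); the projection of the lower faces of $\mathrm{Conv}(\widetilde A)$ recovers $\mathrm{SD}(w)$, while the values of $\mathrm{Trop}(w)$ on $N_\mathbb{R}$ are exactly the minimum of the linear functionals $\check x \mapsto m\cdot \check x + \nu(u_m(t))$ determined by the lifted points. First, I would check that both constructions produce genuine polyhedral complexes subdividing $\Delta(w)$ and $N_\mathbb{R}$ respectively, so that statements about cells make sense.

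Next, I would define the candidate inverse and verify bijectivity. To each $\tau \in \mathrm{SD}(w)$, let $\sigma^\tau := \{\check x \in N_\mathbb{R} \mid m \cdot \check x + \nu(u_m(t)) \text{ is minimal exactly on } A \cap \tau\}$, or more cleanly, the intersection $\bigcap_{m \in \tau \cap A} \sigma_m$ minus the lower-dimensional strata it contains. It is essentially tautological that $A_\sigma = \tau_\sigma \cap A$ and $\tau_{\sigma^\tau} = \tau$, so the two assignments are mutually inverse. The inclusion-reversing property then follows: if $\sigma' \subseteq \sigma$ then every $m$ which is minimal on $\sigma$ is also minimal on $\sigma'$, hence $A_\sigma \subseteq A_{\sigma'}$, giving $\tau_\sigma \subseteq \tau_{\sigma'}$.

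For the dimension and linear-span statements, the key observation is that $\check x, \check x' \in \sigma$ lie in the same affine cell if and only if the functionals $m \mapsto m \cdot (\check x - \check x')$ agree on $A_\sigma$ while still agreeing with the minimum on the complement. Thus a vector $v \in N_\mathbb{R}$ belongs to $L(\sigma)$ exactly when $m \cdot v = m' \cdot v$ for all $m, m' \in A_\sigma$, i.e.\ when $v$ annihilates all differences of elements of $A_\sigma$; this is precisely $L(\tau_\sigma)^\perp$. Taking dimensions yields $\dim\sigma + \dim\tau_\sigma = d$. A small argument is needed to check that $\sigma$ really has the full dimension $\dim L(\sigma)$, which follows from genericity/full-dimensionality of the cell in $\mathrm{PC}(w)$ and the fact that the minimum cell of the tropicalization is open relative to its affine span.

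The main technical step is the recession-cone identity $\mathrm{rec}(\sigma) = \mathrm{nc}(f_{\tau_\sigma})$. I would argue: a vector $v$ lies in $\mathrm{rec}(\sigma)$ iff for some (equivalently every) $\check x \in \sigma$ and all $s>0$ one has $\check x + sv \in \sigma_m$ for all $m \in A_\sigma$. Expanding $\min_{m\in A}\{m\cdot(\check x+sv) + \nu(u_m(t))\} = m_0\cdot(\check x+sv) + \nu(u_{m_0}(t))$ for $m_0 \in A_\sigma$, the coefficient of $s$ forces $m \cdot v \ge m_0 \cdot v$ for every $m \in A$, with equality on $A_\sigma$. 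Because $f_{\tau_\sigma}$ is the minimal face of $\Delta(w)$ containing $\tau_\sigma$, this inequality precisely characterizes $v \in \mathrm{nc}(f_{\tau_\sigma})$ (using that $A \cap f_{\tau_\sigma}$ generates $f_{\tau_\sigma}$ affinely and that only the linear part of the tropical functional matters at infinity). The hard part I anticipate is handling the interaction of the linear inequalities from $A \setminus A_\sigma$ with those from $A_\sigma$ carefully enough to distinguish $f_{\tau_\sigma}$ from $\tau_\sigma$ itself; this requires the observation that $v \in \mathrm{rec}(\sigma)$ is only sensitive to the asymptotic behaviour of the tropical functional, which depends on the face of $\Delta(w)$ rather than on the finer cell $\tau_\sigma$ of the subdivision.
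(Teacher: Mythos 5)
The paper offers no proof of Theorem \ref{t:subdiv}: it is quoted directly from Mikhalkin--Rau (their Theorem 2.3.7), so there is no in-paper argument to compare yours against; I can only judge your proposal on its own merits. Your overall strategy (work with the lifted set $\widetilde{A}$ and the lower hull -- which, despite the paper's phrase ``upper faces'', is indeed the convention consistent with $\min$ and with the paper's own later use in Proposition \ref{p:simplicial}) is the standard and correct one, and your arguments for $L(\sigma)=L(\tau_\sigma)^\perp$, the dimension count, and $\mathrm{rec}(\sigma)=\mathrm{nc}(f_{\tau_\sigma})$ (extract the coefficient of $s$, then use that the argmin of a linear functional on $\Delta(w)$ is a face containing $\tau_\sigma$, hence contains the minimal such face $f_{\tau_\sigma}$) are sound.

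The genuine gap is in the bijectivity step. You call it ``essentially tautological'' that $A_\sigma=\tau_\sigma\cap A$, and you define the inverse of $\sigma\mapsto\tau_\sigma$ by sending $\tau$ to the locus where the minimum is attained exactly on $A\cap\tau$, equivalently $\bigcap_{m\in A\cap\tau}\sigma_m$ (up to boundary). Both statements fail whenever some $m\in A$ has its lift $(m,\nu(u_m(t)))$ strictly above the lower hull of $\widetilde{A}$: such an $m$ never attains the tropical minimum, so $\sigma_m=\emptyset$ and $m$ lies in no $A_\sigma$, yet it may still lie inside a cell $\tau$ of $\mathrm{SD}(w)$. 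Concretely, for $d=1$, $A=\{0,1,2\}$ with valuations $0,5,0$, the cell $\tau=[0,2]$ of $\mathrm{SD}(w)$ contains $1\in A$, but no $\check{x}$ makes all three monomials minimal, so your inverse is empty on $\tau$, and the dual $0$-cell $\sigma=\{0\}$ has $A_\sigma=\{0,2\}\neq \tau_\sigma\cap A$. Nothing in the hypotheses excludes this: genericity of $u_m\in\mathbb{C}\{\!\{t\}\!\}$ puts no constraint on the valuations, so ``inactive'' monomials can occur. The repair is routine but must be made: identify the cells of $\mathrm{SD}(w)$ with the lower faces $F$ of $\mathrm{Conv}(\widetilde{A})$, define the inverse by $\tau\mapsto\{\check{x}\mid \langle m,\check{x}\rangle+\nu(u_m(t))=\mathrm{Trop}(w)(\check{x})\ \text{for all } m \text{ with } (m,\nu(u_m(t)))\in F_\tau\}$, and read $A_\sigma$ as the set of points of $A$ whose lifts lie on the corresponding lower face (which has the same convex hull as the vertex set of $F_\tau$), rather than as $\tau_\sigma\cap A$. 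With that substitution the rest of your argument, including the inclusion-reversal and the recession-cone identity, goes through.
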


\begin{example}\label{eg:trop}
    Let $d=2$ and $A=\{(0,0), (0,1), (1,1), (1,2)\}$. Choose a Laurent polynomial 
    \[
    w=1+x_2+tx_1x_2+t^4x_1^2x_2
    \]
    The induced subdivision $\mathrm{SD}(w)$ of $\Delta(w)$ is described in Figure \ref{fig:ex}. 
    The tropicalization of $w$ is given by 
    \[
    \mathrm{Trop}(w)=\min\{0, \check{x}_2, \check{x}_1+\check{x}_2+1, 2\check{x}_1+\check{x}_2+4\}
    \]
    and the associated tropical variety $\mathsf{X}_w$ is described in Figure \ref{fig:ex}. For example, the $0$-cell $\sigma_1$ corresponds to $\tau_{\sigma_1}=\mathrm{Conv}(\{(0,0), (0,1), (1,1)\})$, and the $2$-cells $\sigma_0$ and $\sigma_2$ correspond to $\tau_{\sigma_0}=(0,0)$ and $\tau_{\sigma_2}=\{(1,1)\}$, respectively.
    
    \begin{figure}[h]
    \centering 
      \begin{tikzpicture}[scale=1]
\foreach \x in {0,1,2}
   \foreach \y in {0,1} 
      \draw[fill] (4/3*\x,4/3*\y) circle (1.5pt) coordinate (m-\x-\y);

\draw[thick] (m-0-0) -- (m-0-1);
\fill (0,-0.3) node {$(0,0)$};
\fill (0,1.6) node {$(0,1)$};
\fill (1.3,1.6) node {$(1,1)$};
\fill (2.6,1.6) node {$(2,1)$};
\draw[thick] (m-0-0) -- (m-1-1);
\draw[thick] (m-0-0) -- (m-2-1);
\draw[thick] (m-0-1) -- (m-1-1);
\draw[thick] (m-1-1) -- (m-2-1);
\end{tikzpicture} 
\qquad \qquad 
\begin{tikzpicture}[scale=0.5]

\draw[thick] (-1,0) -- (1,0);
\draw[thick] (-1,0) -- (-1,-2);
\draw[thick] (-1,0) -- (-3,2);
\draw[thick] (-3,2) -- (-4,4);
\draw[thick] (-3,2) -- (-3,-2);
\fill (-1,2) node {$\sigma_0$};
\fill (-0.7,0.3) node {$\sigma_1$};
\fill (-2, -1) node {$\sigma_2$};
\fill (-1,0) circle(.15);
\end{tikzpicture} 
\caption{\scshape $\mathrm{SD}(w)$ (Left) and $\mathrm{PC}(w)$ (Right) in Example \ref{eg:trop}. \label{fig:ex}}
\end{figure}
\end{example}

We extend the previous discussion to a more general tropical space, called a \emph{tropical toric variety}. Let $\Sigma \subset N$ be a $d$-dimensional fan. Following \cite{Payne2009trop}, we may abstractly define the tropical variety $\mathrm{Trop}(\Sigma)$ to be the union of all cells
\begin{equation}\label{e:trop}
\mathrm{Trop}(\Sigma) =  \coprod_{c \in \Sigma} \left(N_\mathbb{R}/\mathrm{span}(c)\right)
\end{equation}
equipped with the finest topology such that the inclusion maps of $N_\mathbb{R}/\mathrm{span}(c)$ are continuous and so that $\{(nv + r)\mid n \in \mathbb{N}\}$ converges to a point in $N_\mathbb{R}/\mathrm{span}(c)$ if and only if $v \in c$. Note that $\mathrm{Trop}(\Sigma)$ comes with a canonical stratification by the fan structure of $\Sigma$ and we write $\mathrm{Trop}(\Sigma)_c=N_\mathbb{R}/\mathrm{span}(c)$ for the stratum corresponding to a cone $c \in \Sigma$. The theory of tropical toric varieties is entirely analogous to the classical theory discussed above. We refer to \cite{Payne2009trop} for details.

Let $w$ be a nondegenerate Laurent polynomial and consider the tropicalization $\mathsf{X}_w$ in $N_\mathbb{R}$. By canonically embedding $\mathrm{PC}(w)$ into $\mathrm{Trop}(\Sigma)$ and taking the closure of the image, we have the induced polyhedral decomposition of $\mathrm{Trop}(\Sigma)$, which allows faces to be the closure of convex polyhedra in $N_\mathbb{R}$. We call this a \emph{tropical polyhedral complex} and often refer to it simply as a polyhedral complex if it is clear from the context. To obtain a well-behaved tropical polyhedral structure, we assume that $\Sigma$ is a subfan of a refinement of the normal fan $\mathrm{nf}(\Delta(w))$ of the Newton polytope $\Delta(w)$ and that $w$ is a regular function on $T(\Sigma)$. The induced polyhedral complex structure on $\mathrm{Trop}(\Sigma)$ is simply denoted by $\mathsf{T}(\Sigma, w)$ or $\mathsf{T}$ if the choice of $\Sigma$ and $w$ is clear from the context. Cells in $\mathsf{T}$ are parametrized by pairs $(c,\sigma) \in \Sigma \times \mathrm{PC}(w)$ with $c \subseteq \mathrm{rec}(\sigma)$. Equivalently, they are parametrized by pairs $(c,\tau) \in \Sigma \times \mathrm{SD}(w)$ with $c \subseteq \mathrm{nc}(f_{\tau})$, as stated in Theorem \ref{t:subdiv}. Therefore, we define a poset 
\begin{equation}\label{e:cellsinT}
\mathsf{S}_\mathsf{T}:=\left\{(c,\tau) \in \Sigma \times \mathrm{SD}(w) \mid c\subseteq \mathrm{nc}(f_\tau) \right\}
\end{equation}
where ${(c,\tau)} \preceq (c',\tau')$ if and only if 
$c'\subseteq c$ and $\tau' \subseteq \tau$.

\begin{example}\label{eg:trop2}
    Continuing from Example \ref{eg:trop}, let us choose $\Sigma$ as the cone generated by ${(1,0), (0,1)}$. This cone is a subfan of a refinement of $\mathrm{nf}(\Delta(w))$ (see Figure \ref{fig:ex2}). The resulting tropicalization $\mathsf{T}(\Sigma, w)$ is depicted in Figure \ref{fig:ex3}, where the bold faces should be considered as the infinity boundaries. For instance, the cell $C$ in Figure \ref{fig:ex3} corresponds to the pair $(\mathrm{cone}((0,1)), \mathrm{Conv}((0,0), (0,1)))$. 

        \begin{figure}[h]
    \centering 
\begin{tikzpicture}
\fill (-2,0) node {$\mathrm{nf}(\Delta(w))=$};
\draw[thick, ->] (0,0) -- (1,0);
\draw[thick, ->] (0,0) -- (0,-1);
\draw[thick, ->] (0,0) -- (-1,2);
\draw[dotted, ->] (0,0) -- (0,1);
\fill (3,0) node {$\Sigma=$};
\draw[thick, ->] (4,0) -- (5,0);
\draw[thick, ->] (4,0) -- (4,1);
\end{tikzpicture} 
\caption{\scshape $\mathrm{nf}(\Delta(w))$ (Left) and $\Sigma$ (Right) in Example \ref{eg:trop2}. \label{fig:ex2}}
\end{figure}

\begin{figure}
\begin{tikzpicture}[scale=0.6]
\draw[thick] (-1,0) -- (1,0);
\draw[thick] (1,0) -- (2,0);
\draw[very thick] (2,-2) -- (2, 5);
\draw[very thick] (2,5) -- (-2, 5);
\draw[dotted] (-2,5) -- (-3, 5);
\draw[thick] (-1,0) -- (-1,-2);
\draw[thick] (-1,0) -- (-3,2);
\draw[thick] (-3,2) -- (-4,4);
\draw[dotted] (-4,4) -- (-4.5,5);
\draw[thick] (-3,2) -- (-3,-2);
\fill (2,0) circle(.15);
\fill (2.5,0) node {$C$};
\end{tikzpicture} 
\caption{\scshape $\mathrm{Trop}(\Sigma, w)$ in Example \ref{eg:trop}. \label{fig:ex3}}
\end{figure}

\end{example}


\subsection{Tropical sheaves and their cohomology}\label{s:tropsheaves}

We do not cover the most general form of the theory of tropical sheaves and their cohomology, referring the reader to \cite{curry, itenberg2019tropical} for further details. Instead, we introduce only the minimal amount of theory necessary for our context. Consider the tropical polyhedral complex $\mathsf{X}$ introduced above.

\begin{defn}
    The {\em Alexandrov topology} of $\mathsf{X}$ is the topology whose open sets are sets of faces $\mathsf{U}$ so that if $\mathsf{f}_1 \in \mathsf{U}$ and $\mathsf{f}_1 \preceq \mathsf{f}_2$ then $\mathsf{f}_2 \in \mathsf{U}$. For any $\mathsf{f}\in \mathsf{X}$, the open star of $\mathsf{f}$ is 
        \[
        \mathrm{St}(\mathsf{f}) = \bigcup_{\mathsf{f}\preceq \mathsf{g}} \mathsf{g}.
        \]
    \end{defn}

    \begin{remark}\label{r:fanalexandrov}
        The Alexandrov topology exists for any poset, defined in precisely the same way as above. For instance, if $\Sigma$ is a fan, then $\Sigma$ has the structure of a poset where $\sigma \preceq \sigma'$ if $\sigma'$ is a face of $\sigma$. The resulting topological space is called the {\em fan space} of $\Sigma$, or sometimes a {\em fanfold}.
    \end{remark}
From now on, we consider $X$ equipped with the Alexandrov topology. Let $\mathcal{A}$ be a category; for instance, we primarily focus on the category of (complexes of) filtered vector spaces over $\mathbb{C}$.
    \begin{defn}\label{d:tropsh}
          A \emph{tropical $\mathcal{A}$-sheaf} on $\mathsf{X}$ is a contravariant functor $\mathbf{F}$ from $\mathsf{X}$ to a category $\mathcal{A}$. The cohomology of a tropical $\mathcal{A}$-sheaf is then computed as usual, with respect to the Alexandrov topology.
    \end{defn}

    \begin{remark}\label{r:alexandrov-sheaf}
        Properly speaking, the definition of a tropical sheaf in Definition \ref{d:tropsh} describes a presheaf. However, in the tropical setting, we canonically construct a sheaf on $\mathsf{X}$ using the given data as follows. For any closed cell $\mathsf{f}$ of $\mathsf{X}$ in $\mathsf{U}$ we must also have $\mathrm{St}(\mathsf{f}) \subseteq \mathsf{U}$, so $\mathsf{U} = \cup_{\mathsf{f}\in \mathsf{U}}\mathrm{St}(\mathsf{f})$. Define 
        \[
        \mathbf{F}(\mathsf{U}) := \lim_{\substack{\longleftarrow \\ \mathsf{f}\in \mathsf{U}}} \mathbf{F}(\mathrm{St}(\mathsf{f})) = \lim_{\substack{\longleftarrow \\ \mathsf{f}\in \mathsf{U}}} \mathbf{F}(\mathsf{f}).
        \]
        Curry \cite[Theorem 4.2.10]{curry} shows that this is in fact a sheaf on $\mathsf{X}$.
    \end{remark}

    We now wish to describe how one computes the cohomology of a tropical sheaf on $\mathsf{X}$. In many places (e.g., \cite{itenberg2019tropical}) the cohomology of tropical sheaves is described in terms of a cellular complex under certain conditions outlined below. 
    
    \begin{defn}\label{d:regcell}
        A {\em regular cell complex} is a topological space $\mathfrak{X}$ consisting of a locally finite decomposition $\cup_{i\in I} \sigma_i$ satisfying the following axioms. 
        \begin{enumerate}
        \item If $\sigma_i \cap \overline{\sigma}_j \neq \emptyset$ then $\sigma_i \subseteq \overline{\sigma}_j$,
        \item Each $\sigma_i$ is homeomorphic to $\mathbb{R}^k$ for some $k$,
        \item Each pair $(\sigma_i, \overline{\sigma}_i)$ is homeomorphic to $(\mathrm{int}(\mathbb{B}^k), \mathbb{B}^k)$ where $\mathbb{B}^k$ denotes the closed $k$-ball.
        \end{enumerate}
        If $\sigma$ satisfies (1), (2), and the one-point compactification ${\mathfrak{X}} \cup \{\infty\}$ of $\mathfrak{X}$ satisfies (3) with cell decomposition given by $\cup_{i\in I} (\sigma_i \cup \{\infty\})$ is a regular cell complex, we say  that $\mathfrak{X}$ is a {\em cell complex}.
    \end{defn}
   
    \begin{defn}
        Given a cell complex, a {\em signed incidence relation} is a choice of integer $[{\bm x}:{\bm y}]\in \{0,\pm 1\}$ for each pair of cells ${\bm x},{\bm y}$ of $\mathfrak{X}$  so that 
        \begin{enumerate}
        \item If ${\bm x}$ is not a facet of ${\bm y}$ then $[{\bm x}: {\bm y}] = 0$.
        \item For each pair of cells ${\bm x},{\bm y}$,  we have $\sum_{{\bm x} \preceq {\bm z} \preceq {\bm y}} [{\bm x}: {\bm z}][{\bm z}:{\bm y}] = 0 $.
        \end{enumerate}
        Signed incidence relations always exist for cell complexes and can be obtained by choosing orientations on each cell and letting $[\sigma:\gamma] = 1$ if orientations agree and $-1$ if they do not agree.
     \end{defn}
     Given any choice of signed incidence relation, the sheaf cohomology of a cellular sheaf ${\bf F}$ on $\mathfrak{X}$ is isomorphic to its cellular cohomology. Let $\mathfrak{X}^{[i]}$ denote the set of all $i$-dimensional cells of $\mathfrak{X}$. Then:
     \[
    C^i_\mathrm{cell}(\mathfrak{X},{\bf F}) =   \bigoplus_{\substack{{\bm x} \in \mathfrak{X}^{[i]} \\ {\bm x} \text{ compact}}} \mathbf{F}({\bm x}),\qquad \begin{array}{rl}
        d: C^i_\mathrm{cell}(\mathfrak{X},{\bf F}) & \longrightarrow C^{i+1}_\mathrm{cell}(\mathfrak{X},{\bf F}) \\
        (s \in {\bf F}({\bm x})) & \longmapsto \left(\sum_{{\bm y} \preceq {\bm x}}[{\bm x}:{\bm y}] {\bf F}({\bm x},{\bm y})(s)\in {\bf F}({\bm y})\right).
    \end{array}
     \]
    The cellular cohomology of $\mathfrak{X}$ is the cohomology of $(C_\mathrm{cell}^\bullet(\mathfrak{X},{\bf F}), d)$, which is independent of choice of signed incidence relation. 

    Note that {\em compact} tropical varieties are cell complexes in a strong sense, therefore, a result of Curry \cite{curry} says that we can compute their cohomology using the cellular complex. Noncompact tropical varieties frequently fail to be cell complexes. However, if they admit a well-behaved compactification, the same results can be applied. In this article, we focus on such cases, as in Proposition \ref{p:cellcomplex}, and refer to the cellular cohomology as the \emph{tropical cohomology}. The following result shows that tropical cohomology can be computed using \v{C}ech cohomology.
    
\begin{proposition}\label{p:Cech-stars}
    Suppose $\mathbf{F}$ is a tropical sheaf on a tropical polyhedral complex $\mathsf{X}$. For each face $\mathsf{x} \in \mathsf{X}$, $H^i(\mathrm{St}(\mathsf{x}),{\bf F}) = 0$  if $i \neq 0$ and $H^0(\mathrm{St}(\mathsf{x}),{\bf F}) \cong {\bf F}(\mathsf{x})$.  
\end{proposition}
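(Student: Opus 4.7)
The plan is to exploit the fact that in the Alexandrov topology, the open star $\mathrm{St}(\mathsf{x})$ is the \emph{minimal} open neighborhood of $\mathsf{x}$, which forces sections of any sheaf over $\mathrm{St}(\mathsf{x})$ to coincide with stalks at $\mathsf{x}$; the cohomological statement then follows from a standard resolution argument.

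First I would verify the $H^0$ statement. By the definition of the Alexandrov topology, any open set $U \subseteq \mathsf{X}$ containing $\mathsf{x}$ must contain every $\mathsf{y} \succeq \mathsf{x}$, so $\mathrm{St}(\mathsf{x}) \subseteq U$; thus $\mathrm{St}(\mathsf{x})$ is the minimum open neighborhood of $\mathsf{x}$. Applying the sheafification construction of Remark \ref{r:alexandrov-sheaf} to the presheaf $\mathbf{F}$,
\[ \mathbf{F}(\mathrm{St}(\mathsf{x})) = \varprojlim_{\mathsf{f} \in \mathrm{St}(\mathsf{x})} \mathbf{F}(\mathsf{f}) = \mathbf{F}(\mathsf{x}), \]
since $\mathsf{x}$ is the unique minimum of the poset $\mathrm{St}(\mathsf{x})$. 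The same reasoning shows that for any sheaf $\mathbf{G}$ on the Alexandrov space, $\mathbf{G}_{\mathsf{x}} = \mathbf{G}(\mathrm{St}(\mathsf{x}))$.

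For the vanishing of higher cohomology, I would choose an injective resolution $\mathbf{F} \to \mathbf{I}^\bullet$ of sheaves on $\mathsf{X}$. By definition, $H^i(\mathrm{St}(\mathsf{x}), \mathbf{F})$ is the $i$-th cohomology of $\mathbf{I}^\bullet(\mathrm{St}(\mathsf{x}))$, which by the previous paragraph coincides with the stalk complex $\mathbf{I}^\bullet_{\mathsf{x}}$. The stalk functor is exact, so $\mathbf{I}^\bullet_{\mathsf{x}}$ is an exact resolution of $\mathbf{F}_{\mathsf{x}}$, giving $H^i(\mathbf{I}^\bullet_{\mathsf{x}}) = 0$ for $i > 0$ and $H^0 \cong \mathbf{F}_{\mathsf{x}} \cong \mathbf{F}(\mathsf{x})$.

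The only delicate point is ensuring that sufficient injectives exist in the target category $\mathcal{A}$. For sheaves of filtered vector spaces this is standard, but one can also sidestep the issue with a \v{C}ech argument: any open cover of $\mathrm{St}(\mathsf{x})$ must contain an open set $U \ni \mathsf{x}$, and by the observation above $U = \mathrm{St}(\mathsf{x})$, so the trivial cover $\{\mathrm{St}(\mathsf{x})\}$ is cofinal among refinements and the same vanishing follows directly.
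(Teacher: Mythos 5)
Your argument is correct, but it proceeds along a genuinely different route from the paper. You exploit the fact that in the Alexandrov topology $\mathrm{St}(\mathsf{x})$ is the \emph{minimal} open neighbourhood of $\mathsf{x}$, so that $\Gamma(\mathrm{St}(\mathsf{x}),-)$ coincides with the stalk functor at $\mathsf{x}$; since $\mathsf{x}$ is the initial object of the poset $\mathrm{St}(\mathsf{x})$, the limit defining $\mathbf{F}(\mathrm{St}(\mathsf{x}))$ collapses to $\mathbf{F}(\mathsf{x})$, and exactness of the stalk functor kills all higher derived functors. The paper instead argues through the cellular model: for a vertex $\mathsf{x}$ it invokes the cellular cochain complex of the star, in which $\mathsf{x}$ is the only compact cell, and for a higher-dimensional cell it identifies the poset of $\mathrm{St}(\mathsf{x})$ with that of the pointed fan $\Sigma_{\mathsf{x}}/\mathrm{lin}(\mathsf{x})$ and transports $\mathbf{F}$ to a cellular sheaf there. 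Your approach is more elementary and more general — it needs no regular cell structure, no compactness bookkeeping, and no reduction to fan spaces, and it works verbatim for any sheaf on any poset with its Alexandrov topology — while the paper's approach has the virtue of staying inside the cellular framework (Curry's comparison theorem) that it uses for all subsequent computations.

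Two small cautions. First, your closing \v{C}ech ``sidestep'' does not by itself prove the vanishing of derived-functor cohomology: the observation that every cover of $\mathrm{St}(\mathsf{x})$ is refined by the trivial cover shows $\check{H}^{i}(\mathrm{St}(\mathsf{x}),\mathbf{F})=0$ for $i>0$, but identifying \v{C}ech with sheaf cohomology requires an additional comparison (Leray-type or paracompactness hypotheses), so the injective-resolution argument should remain the primary one. Second, the worry about injectives in the coefficient category (filtered vector spaces is only quasi-abelian) is legitimate but is exactly the same issue the paper elides; the usual remedy — working degreewise with the underlying vector spaces and carrying the filtration along, as the paper implicitly does — applies equally to your proof, so it does not affect the comparison.
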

\begin{proof}
    Suppose $\mathsf{x}$ is a point, then this is observed in \cite[Remark 2.15]{aksnes2023tropical}. We will explain briefly. If $\mathsf{x}$ is a point, $\mathsf{x}$ is the only compact cell in $\mathrm{St}(\mathsf{x})$ therefore $C^i_\mathrm{cell}(\mathrm{St}(\mathsf{x}),{\bf F}) = 0$ if $i\neq 0$ and $C^0_\mathrm{cell}(\mathrm{St}(\mathsf{x})) = {\bf F}(\mathsf{x})$. If $\mathsf{x}$ is not a point, then $\mathrm{St}(\mathsf{x})$ is not a cell complex in the sense of Definition \ref{d:regcell}, so we cannot apply \cite[Remark 2.15]{aksnes2023tropical} directly. Instead, we observe that if $\mathrm{St}(\mathsf{x})$ is a union of faces whose common intersection is $\mathsf{x}$. The poset structure on $\mathrm{St}(\mathsf{x})$ agrees with the the poset structure on the local fan $\Sigma_\mathsf{x}$ of $\mathsf{x}$. Let $\mathrm{lin}(\mathsf{x})=\{x \in \mathbb{R}^n \mid x + \Sigma_{\mathsf{x}} = \Sigma_\mathsf{x}\}$. The poset structure on $\Sigma_\mathsf{x}$ agrees with that of $\Sigma_\mathsf{x}/\mathrm{lin}(\mathsf{x})$ which is a pointed fan whose cones correspond to the cells in $\mathrm{St}(\mathsf{x})$. Then 
    \[
    H^i(\mathrm{St}(\mathsf{x}),\mathbf{F}) \cong H^i(\Sigma_{\mathsf{x}}/\mathrm{lin}(\mathsf{x}), \widetilde{\bf F})
    \]
    for a cellular sheaf $\widetilde{\bf F}$ so that $\widetilde{\bf F}(\mathsf{y}/\mathrm{lin}(\mathsf{x})) = {\bf F}(\mathsf{y})$. Applying the observation above, the claim follows. 
\end{proof}

\subsection{Tropical Jacobian sheaves for Laurent polynomials}\label{t:thomsheaf}

Following \cite{ruddat2014skeleta}, we will identify a particular class of triangulations.
\begin{defn}
    The collection of points $A$ is pointed if it contains the origin. We view $0_M$ as a distinguished point. A coherent triangulation $S$ of ${\Delta}$ is a star triangulation based at $0_M$ if each simplex in $S$ contains $0_M$ as a vertex, and every in  $A$ is the vertex of a simplex in $S$
\end{defn}
After translation, any set of points $A \subseteq M$ is pointed. Equivalently, a pointed set is a finite set of points $A\subseteq M$ in which one point is distinguished. Not every pointed set admits a pointed triangulation. However, for every polytope $\Delta$ there is a set of points $A$ which admits a coherent star triangulation based at $0_M$ and $\Delta = \mathrm{Conv}(A)$. Let us assume that $A$ is a pointed collection of points in $M$ and define $A'$ so that $A = A' \cup \{0_M\}$. As before, consider a Laurent polynomial 
\[
w = 1+\sum_{m\in {A'}}u_m(t) \underline{x}^m \in \mathbb{C}\{\!\{t\}\!\}[M]
\]
where $u_m(t)$ are chosen generically from the field of Puiseux series $\mathbb{C}\{\!\{t\}\!\}$. We let $\mathrm{SD}(w)$, $\Sigma$, and $\mathsf{T}(\Sigma,w)$ be as defined in the previous section. Note that $\mathrm{SD}(w)$ is a star triangulation based at $0_M$, and we define $\mathrm{SD}(w)_0$ to be those simplices which contain $0_M$. Let $\mathsf{T}(\Sigma,w)_0$ denote the subset consisting of cells indexed by $(c,\tau) \in \Sigma\times \mathrm{SD}(w)_0$ with $c\subseteq \mathrm{nc}(f_\tau)$. For instance, in Examples \ref{eg:trop} and \ref{eg:trop2}, $\mathsf{T}(\Sigma,w)_0$ is given by the closure of $\sigma_0$ with the induced tropical structure. 
 
 For each face $\tau \in \mathrm{SD}(w)_0$ we obtain the induced Laurent polynomial
\[
w_\tau = 1+\sum_{m \in A'_\tau} u_{m}(t) \underline{x}^{m}.
\]
Note that the Newton polytope of $w_\tau$ is $\tau$, and let $T_{L(\tau)}=\mathrm{Spec}(\mathbb{C}[L(\tau) \cap M])$. Since coefficients are chosen generically, $H^i(T_{L(\tau)},w_\tau) =0$ unless $i = \dim \tau$. We choose a holomorphic volume form $\mathrm{Vol}(L(\tau))$ on $T_{L(\tau)}$ for each $\tau$ so that they are compatible under the residue maps. We view $c^\perp \otimes \mathbb{C}$ as a filtered vector space by identifying $c^\perp$ with $c^\perp\otimes \mathbb{Z}(-1)$ where $\mathbb{Z}(-1)$ denotes the Tate Hodge structure. Therefore, $\wedge^p c^\perp$ is equipped with an induced filtration. 
\begin{defn}\label{d:rejac}
    For $k \in \mathbb{Z}_{\geq 0}$, the {\em tropical Jacobian sheaf of degree $k$} on $\mathsf{T}(\Sigma, w)_0$ is the filtered tropical sheaf of $\mathbb{C}$-vector spaces for which
    \[
        {\bf J}^k(c,\tau) = H^{\dim \tau}(T_{L(\tau)},w_\tau)(\dim \tau) \otimes (\wedge^{k-\dim \tau} c^\perp \wedge \mathrm{Vol}(L(\tau))).
    \]
    Observe that $L(\tau) \subseteq L(f_\tau)$ and $c\subseteq \mathrm{nc}(f_\tau) \subseteq f_\tau^\perp$, so $L(\tau) \subseteq c^\perp$. 
    Let
    \begin{align*}
    {\bf J}^k((c_1,\tau_1),(c_2,\tau_2))  : {\bf J}^k(c_1,\tau_1) &\longrightarrow {\bf J}^k(c_2,\tau_2) \\
    \alpha \otimes (\gamma \wedge \mathrm{Vol}(L(\tau_1))) &\longmapsto \mathrm{res}_{T_{L(\tau_1)}}^{T_{L(\tau_2)}}(\alpha) \otimes \varpi(\gamma \wedge \mathrm{Vol}(L(\tau_1))).
    \end{align*}
    Here $\mathrm{res}$ is the residue map and $\varpi$ is defined in Section \ref{s:linalg}. We define the trivial complex ${\bf J}^\bullet:=\bigoplus_{k \in \mathbb{Z}_{\geq 0}}{\bf J}^k[-k]$, and call it the \emph{tropical Jacobian sheaf} on $\mathsf{T}(\Sigma, w)_0$. Because the sheaf morphisms preserve the irregular Hodge filtrations, we can equip ${\bf J}^\bullet$ with the corresponding decreasing filtration denoted by $\mathsf{F}^\bullet$. 
\end{defn}
\begin{remark}
    Note that this definition does not extend to give a sheaf on $\mathsf{T}(\Sigma,w)$; for instance, the definition of ${\bf J}^k(c,\tau)$ fails when $\tau$ does not contain $0_M$. 
\end{remark}

The following statement is a direct consequence of Proposition \ref{p:adolphson-sperber} and Definition \ref{d:rejac}.
\begin{proposition}
    We have the following filtered identifications:
\begin{align*}
({\bf J}^k(c,\tau),F_\mathrm{irr}^\bullet) &\cong ({\bf B}_{\tau} \otimes (\wedge^{k-\dim \tau} c^\perp \wedge \mathrm{Vol}(L(\tau))),\mathsf{F}^\bullet) \\
{\bf J}^k((c_1,\tau_1),(c_2,\tau_2)) &= p_{(\tau_1,\tau_2)} \otimes \varpi.
\end{align*}
In other words, we can rewrite the tropical Jacobian sheaf in a purely combinatorial way.
\end{proposition}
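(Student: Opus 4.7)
The plan is to invoke Proposition \ref{p:adolphson-sperber} simplex-by-simplex and then to translate the structure maps via the residue corollary that follows it together with Lemma \ref{l:linalg}. First, for each $\tau \in \mathrm{SD}(w)_0$, the Laurent polynomial $w_\tau$ has support equal to the vertex set of the simplex $\tau$ (which contains $0_M$), so Proposition \ref{p:adolphson-sperber} applied with $L(\tau)$ in place of $M$ yields a filtered isomorphism
\[
\xi_\tau : ({\bf B}_\tau \otimes \mathrm{Vol}(L(\tau)), \mathsf{F}^\bullet) \xrightarrow{\sim} (H^{\dim\tau}(T_{L(\tau)}, w_\tau), F^\bullet_\mathrm{irr}),
\]
all other cohomological degrees vanishing by Theorem \ref{t:as}. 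Tensoring $\xi_\tau^{-1}$ with the identity on the wedge factor produces the claimed identification of underlying vector spaces.

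To match the filtrations I would track a basis element $[m] \in {\bf B}^i_\tau$. On the cohomology side (inside ${\bf J}^k(c,\tau)$), $\xi_\tau$ sends $[m]\otimes \mathrm{Vol}(L(\tau))$ into $F^{\dim\tau-i}_\mathrm{irr} H^{\dim\tau}(T_{L(\tau)},w_\tau)$; the Tate twist $(\dim\tau)$ shifts its filtration index down to $-i$; and the wedge factor, as a subspace of $\wedge^k c^\perp$ of pure Hodge type $(k,k)$ (since $c^\perp$ is regarded as $c^\perp\otimes\mathbb{Z}(-1)$), contributes an additional $+k$, placing $[m]$ in $F^{k-i}$. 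On the combinatorial side, $[m]$ lies in $\mathsf{F}^{\dim\tau-i}{\bf B}_\tau$ and the wedge factor, treated combinatorially with $\mathrm{Vol}(L(\tau))$ as a fixed nonzero scalar rather than as a Hodge $(\dim\tau,\dim\tau)$-element, contributes only type $(k-\dim\tau, k-\dim\tau)$, giving total degree $k-i$ again. In other words, the Tate twist $(\dim\tau)$ in Definition \ref{d:rejac} is exactly what absorbs the extra Hodge weight carried by $\mathrm{Vol}(L(\tau))$ when it is viewed inside $\wedge^{\dim\tau}c^\perp$.

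For the morphism identification, given $(c_1,\tau_1)\preceq (c_2,\tau_2)$ so that $\tau_2\subseteq \tau_1$ and $c_2\subseteq c_1$, I would factor $\mathrm{res}^{T_{L(\tau_2)}}_{T_{L(\tau_1)}}$ as a composition of $r=\dim\tau_1-\dim\tau_2$ one-step residues along primitive vectors $n_1,\dots,n_r\in N$, chosen to progressively remove the vertices of $\tau_1\setminus \tau_2$ (this can be arranged so that the hypothesis of the corollary following Proposition \ref{p:adolphson-sperber} is satisfied at each stage). That corollary identifies each single residue combinatorially with $p_{(\cdot,\cdot_{n_j})}\otimes (\wedge n_j)^*$. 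Composing over all $r$ steps gives $p_{(\tau_1,\tau_2)}$ on the ${\bf B}$-factor and an iterated contraction by $n_1\wedge\dots\wedge n_r$ on the wedge factor; Lemma \ref{l:linalg}, applied to the nested orthogonal pairs $(L(\tau_i),c_i)$ together with the pre-fixed compatible system $\{\mathrm{Vol}(L(\tau))\}$, then identifies this iterated contraction with the intrinsic map $\varpi$ of Section \ref{s:linalg}, independently of the chosen $n_j$'s.

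The principal technical obstacle is precisely this final comparison between iterated single-ray residues and the basis-free operator $\varpi$. Lemma \ref{l:linalg} is constructed for exactly this purpose: it guarantees that contraction by any chosen volume element of $L(\tau_1)/L(\tau_2)$, once paired with $\mathrm{Vol}(L(\tau_2))$ via the prescribed compatibility of volume forms, equals $\varpi\bigl(-\wedge\mathrm{Vol}(L(\tau_1))\bigr)$, so that sign and basis ambiguities cancel and the sought-for filtered identification of structure maps drops out.
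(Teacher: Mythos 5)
Your proposal is correct and follows exactly the route the paper intends: the paper offers no written proof, declaring the statement a direct consequence of Proposition \ref{p:adolphson-sperber} and Definition \ref{d:rejac}, and your argument simply fleshes out those ingredients together with the residue corollary following Proposition \ref{p:adolphson-sperber} and Lemma \ref{l:linalg} for the structure maps. Your bookkeeping of how the Tate twist $(\dim\tau)$ offsets the weight of $\mathrm{Vol}(L(\tau))$ inside $\wedge^{k}c^\perp$, and your use of the compatible system of volume forms to turn the iterated one-step residues into $p_{(\tau_1,\tau_2)}\otimes\varpi$, match the conventions used later in the paper (e.g.\ in Section \ref{s:tsheaf}), so nothing essential is missing.
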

We write the tropical cohomology of the tropical Jacobian sheaf $\mathbf{J}^\bullet$ as 
\[
H^n(\mathsf{T}(\Sigma, w)_0):=H^n(\mathsf{T}(\Sigma, w)_0, \mathbf{J}^\bullet)=\oplus_{a+b =n} H^a(\mathsf{T}(\Sigma,w)_0,{\bf J}^b). 
\]
Later on, in Theorem \ref{t:descent}, we will explain that this is isomorphic to the associated graded pieces of the irregular Hodge filtration on $H^n(T(\Sigma),w)$. This motivates the following definition:
\[
h^{\lambda,\mu}(\mathsf{T}(\Sigma,w)_0) := \sum_{a+b=\lambda+\mu} \dim \gr_{\mathsf{F}}^\lambda H^a(\mathsf{T}(\Sigma,w)_0,{\bf J}^b).    
\]
Because all $\tau$ are simplicial, the gradings on ${\bf B}_\tau$ induces a grading on ${\bf J}^\bullet$ which induces $\mathsf{F}^\bullet$. We may let 
\[
{\bf J}^{\lambda,\mu}(c,\tau) := {\bf B}_\tau^{\lambda} \otimes \wedge^{\lambda+\mu-\dim \tau} c^\perp \wedge \mathrm{Vol}(L(\tau))
\]
for $\lambda,\mu \in \mathbb{Q}_{\geq 0}$. This grading is chosen so that ${\bf J}^{\lambda,\mu}(c,\tau)=\gr_{\mathsf{F}}^\lambda{\bf J}^{\lambda+\mu}(c,\sigma)$. Thus 
\[
h^{\lambda,\mu}(\mathsf{T}(\Sigma,w)_0) =\sum_{n} \dim H^n(\mathsf{T}(\Sigma,w)_0,{\bf J}^{\lambda,\mu-n}).    
\]
We define the \emph{tropical $(\lambda,\mu)$-Hodge space of $\mathsf{T}(\Sigma, w)_0$} to be
\begin{equation}\label{e:trophodge}
    H^{\lambda,\mu}(\mathsf{T}(\Sigma,w)_0) := \bigoplus_{n} H^n(\mathsf{T}(\Sigma,w)_0,{\bf J}^{\lambda,\mu-n})
\end{equation}
and refer to the union of such spaces as the \emph{tropical Hodge spaces of $\mathsf{T}(\Sigma, w)_0$}.

\subsection{Tropical orbifold cohomology}\label{s:torbsheaf}

Let $(T(\Sigma), w)$ be defined as in the previous section. Suppose we have a simplicial stacky fan ${\bf \Sigma}$ whose underlying fan is $\Sigma$, and let $T(\mathbf{\Sigma})$ be the associated smooth toric Deligne--Mumford stack (see Section \ref{s:tstack}). Then, we obtain a stacky Landau--Ginzburg model $(T(\mathbf{\Sigma}), w)$, where we abuse notation by using $w$ to denote the composition of the canonical morphism $T(\mathbf{\Sigma}) \to T(\Sigma)$ and $w$. We will extend the previous discussion to this stacky setting.

First, we simply define the tropicalization of the pair $(T({\bf \Sigma}), w)$ to be the tropicalization of the underlying pair $(T(\Sigma),w)$. In other words, $\mathsf{T}({\bf \Sigma}, w):=\mathsf{T}(\Sigma, w)$ and $\mathsf{T}({\bf \Sigma}, w)_0:=\mathsf{T}(\Sigma, w)_0$. Since the orbifold cohomology of $(T({\bf \Sigma}), w)$ depends on the stacky structure of ${\bf \Sigma}$, we use $\mathsf{T}(\mathbf{\Sigma}, w)$ to emphasize that we consider the stacky version. For each cone $c$ of ${\bf \Sigma}$, we let $\mathsf{T}({\bf \Sigma},w)_{0,c}$ denote the closure of the tropical toric stratum attached to $c$, and we let ${\bf J}^\bullet_c$ be the restriction of ${\bf J}^\bullet$ to $\mathsf{T}({\bf \Sigma},w)_{0,c}$. By imitating the definition of orbifold cohomology, we define the \emph{orbifold tropical cohomology} of the tropical Jacobian sheaf on $\mathsf{T}({\bf \Sigma}, w)_0$ to be 
\begin{equation}\label{eq:troppq}
    H_\mathrm{orb}^n(\mathsf{T}({\bf \Sigma},w)_0) = \bigoplus_{c \in {\bf \Sigma}}\bigoplus_{g \in \mathrm{Box}^\circ(c)} H^{n-2\iota(g)}(\mathsf{T}({\bf \Sigma},w)_{0,c},{\bf J}^\bullet_c).
\end{equation}
We remark that the index $n$ need not be an integer in~\eqref{eq:troppq}. 

As an orbifold analogue of \eqref{e:trophodge}, we define the \emph{orbifold tropical $(\lambda,\mu)$-Hodge space of $\mathsf{T}({\bf \Sigma},w)$} to be 
\begin{equation*}
    H^{\lambda,\mu}_\mathrm{orb}(\mathsf{T}({\bf \Sigma},w)_0) := \bigoplus_{c \in {\bf \Sigma}}\bigoplus_{g \in \mathrm{Box}^\circ(c)} H^{\lambda-\iota(g),\mu-\iota(g)}(\mathsf{T}({\bf \Sigma},w)_{0,c},{\bf J}^\bullet_c)
\end{equation*}
and refer to the union of such spaces as the \emph{orbifold tropical Hodge spaces of $\mathsf{T}({\bf \Sigma},w)_0$}.

\begin{remark}
    Given a simplicial stacky fan $\mathbf{\Sigma}=(\Sigma, \beta)$, one can also define another space 
    \[
    H_\mathrm{orb}^n(\mathsf{T}({\Sigma},w)_0):= \bigoplus_{c \in {\Sigma}}\bigoplus_{g \in \mathrm{Box}^\circ(c)} H^{n-2\iota(g)}(\mathsf{T}({\Sigma},w)_{0,c},{\bf J}^\bullet_c). 
    \]
   As noted in Remark \ref{r:orbgr}, the orbifold structures of $T(\Sigma)$ and $T(\mathbf{\Sigma})$ differ, and thus this space is distinct from the one described in \eqref{eq:troppq}. Specifically, $H_\mathrm{orb}^n(\mathsf{T}({\Sigma},w)_0)$ computes the orbifold cohomology of the underlying Landau–Ginzburg model $(T(\Sigma), w)$.
\end{remark}

Next, we present the orbifold tropical cohomology and orbifold tropical Hodge spaces of $\mathsf{T}({\bf \Sigma},w)$ as the usual tropical cohomology of a tropical sheaf on $\mathsf{T}({\bf \Sigma},w)_0$. We may also define a bigraded sheaf, ${\bf J}^{\bullet, \bullet}_\mathrm{orb}$, on $\mathsf{T}({\bf \Sigma},w)_0$:
\begin{align*}
{\bf J}^{\lambda,\mu}_\mathrm{orb}(c,\tau) & := \bigoplus_{\substack{k-m+\ell = \lambda \\ \ell+m=\mu}} {\bf B}^\ell_c\otimes {\bf B}_{\tau}^{m}\otimes \wedge^{k-\dim\tau} c^\perp \wedge \mathrm{Vol}(L(\tau)) \\
{\bf J}^{\lambda,\mu}_\mathrm{orb}((c_1,\tau_1),(c_2,\tau_2)) & := p_{(c_1,c_2)} \otimes p_{(\tau_1,\tau_2)} \otimes \varpi.
\end{align*}
The grading above is chosen so that it agrees with the grading on ${\bf J}(c,\tau)$ when ${\bf B}_c^\ell = \mathbb{C}$. Note that there is an extra grading, indexed by $\ell$, which corresponds to the age grading. We know that the morphisms $p$ and $\varpi$ preserve gradings, so we obtain a graded sheaf from this data and we denote it by ${\bf J}_\mathrm{orb}^\bullet$. Specifically, we have ${\bf J}_\mathrm{orb}^k=\bigoplus_{\lambda+\mu=k}{\bf J}^{\lambda,\mu}_{\mathrm{orb}}$ for $k \in \mathbb{Z}_{\geq 0}$.

We define 
\[
H^{\lambda,\mu}(\mathsf{T}({\bf \Sigma},w)_0,{\bf J}_\mathrm{orb}^\bullet) := \bigoplus_{n} H^n(\mathsf{T}({\bf \Sigma},w)_0,{\bf J}_\mathrm{orb}^{\lambda,\mu-n}).  
\]

\begin{proposition}\label{p:hodgeorb}
    There are isomorphisms of $\mathbb{C}$-vector spaces
    \begin{align*}
        H^n_\mathrm{orb}(\mathsf{T}({\bf \Sigma},w)_0) & \cong H^n(\mathsf{T}({\bf \Sigma},w)_0,{\bf J}^\bullet_\mathrm{orb}) \\
        H^{\lambda,\mu}_\mathrm{orb}(\mathsf{T}({\bf \Sigma},w)_0) & \cong H^{\lambda,\mu}(\mathsf{T}({\bf \Sigma},w)_0,{\bf J}^\bullet_\mathrm{orb}).
    \end{align*}
\end{proposition}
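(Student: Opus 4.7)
The strategy is to exhibit a direct-sum decomposition of ${\bf J}^{\bullet}_\mathrm{orb}$ indexed by the twisted sectors, so that the summation defining the orbifold cohomology becomes nothing more than the cohomological decomposition of a single sheaf on $\mathsf{T}({\bf \Sigma},w)_0$. The starting point is the set-theoretic identity
\[
\mathrm{Box}(c) \;=\; \coprod_{c'\preceq c} \mathrm{Box}^\circ(c'),
\]
valid for every cone $c\in{\bf \Sigma}$, which comes from the fact that each lattice point in $\mathrm{Box}(c)$ lies in the relative interior of a unique face. This induces a direct-sum decomposition of the graded vector space ${\bf B}_c^{\bullet}$ which is compatible with the projection maps $p_{(c_1,c_2)}\colon {\bf B}_{c_1}\to{\bf B}_{c_2}$ for any face inclusion $c_2\preceq c_1$: the summand corresponding to $g\in\mathrm{Box}^\circ(c')$ is carried identically into ${\bf B}_{c_2}$ if $c'\preceq c_2$ and to zero otherwise.

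Substituting this decomposition into the definition of ${\bf J}^{\bullet}_\mathrm{orb}$ and rearranging, we obtain
\[
{\bf J}^{\bullet}_\mathrm{orb} \;=\; \bigoplus_{c'\in {\bf \Sigma}}\bigoplus_{g\in\mathrm{Box}^\circ(c')} {\bf J}^{\bullet}_{\mathrm{orb},g},
\]
where each ${\bf J}^{\bullet}_{\mathrm{orb},g}$ is a cellular subsheaf supported on those cells $(c,\tau)$ with $c\supseteq c'$, i.e., on the closed stratum $\mathsf{T}({\bf \Sigma},w)_{0,c'}$. Inspection of stalks shows that the restriction of ${\bf J}^{\bullet}_{\mathrm{orb},g}$ to $\mathsf{T}({\bf \Sigma},w)_{0,c'}$ coincides with ${\bf J}^{\bullet}_{c'}$ shifted in cohomological degree by $2\iota(g)$ and in the Hodge grading by $\iota(g)$. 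That the restriction maps of ${\bf J}^{\bullet}_\mathrm{orb}$ respect this direct sum decomposition follows because $p_{(c_1,c_2)}$ respects the decomposition of ${\bf B}_c^{\bullet}$, while the remaining factors ${\bf B}_\tau^{\bullet}$ and $\wedge^{\bullet}c^\perp\wedge\mathrm{Vol}(L(\tau))$ transform exactly as in the definition of ${\bf J}^{\bullet}_{c'}$.

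To conclude, I would invoke the fact that pushforward along a closed inclusion of Alexandrov spaces is extension by zero, hence exact. Concretely, $\mathsf{T}({\bf \Sigma},w)_{0,c'}=\{(c,\tau)\mid c\supseteq c'\}$ is downward closed in the poset structure $(c_1,\tau_1)\preceq(c_2,\tau_2)\iff c_2\subseteq c_1$ and $\tau_2\subseteq\tau_1$, so it is closed in the Alexandrov topology; for any point $x$ outside this closed set, the star $\mathrm{St}(x)$ disjoint from $\mathsf{T}({\bf \Sigma},w)_{0,c'}$, so the pushforward vanishes there. The cellular complex of ${\bf J}^{\bullet}_{\mathrm{orb},g}$ on $\mathsf{T}({\bf \Sigma},w)_0$ therefore agrees cell by cell with the cellular complex of ${\bf J}^{\bullet}_{c'}$ on $\mathsf{T}({\bf \Sigma},w)_{0,c'}$, so
\[
H^n\bigl(\mathsf{T}({\bf \Sigma},w)_0,{\bf J}^{\bullet}_{\mathrm{orb},g}\bigr)\;\cong\; H^{n-2\iota(g)}\bigl(\mathsf{T}({\bf \Sigma},w)_{0,c'},{\bf J}^{\bullet}_{c'}\bigr).
\]
Summing over all pairs $(c',g)$ and tracking the induced Hodge shift by $\iota(g)$ recovers exactly the definitions of $H^n_\mathrm{orb}(\mathsf{T}({\bf \Sigma},w)_0)$ and $H^{\lambda,\mu}_\mathrm{orb}(\mathsf{T}({\bf \Sigma},w)_0)$, establishing both isomorphisms.

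The principal piece of bookkeeping is aligning the bigraded index $(\lambda,\mu)$ between the stacky definition and the direct sum over sectors: for each $g\in\mathrm{Box}^\circ(c')$, one verifies that the summand ${\bf B}^\ell_c\otimes {\bf B}_\tau^m\otimes \wedge^{k-\dim\tau}c^\perp\wedge\mathrm{Vol}(L(\tau))$ with $\ell=\iota(g)$ reproduces exactly ${\bf J}_{c'}^{\lambda-\iota(g),\mu-\iota(g)}(c,\tau)$ via the constraints $k-m+\ell=\lambda$, $\ell+m=\mu$. Once this matching is in place, the rest of the proof is purely formal; indeed the content of the proposition is essentially the statement that the orbifold definition as a sum over twisted sectors is no more than the expansion of a single globally defined sheaf on the underlying tropical complex.
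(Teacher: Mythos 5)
Your proposal is correct and follows essentially the same route as the paper: both decompose $\mathrm{Box}(c)=\coprod_{c'\preceq c}\mathrm{Box}^\circ(c')$, observe that the restriction maps $p_{(c_1,c_2)}$ respect this decomposition, and thereby split ${\bf J}^\bullet_\mathrm{orb}$ into summands supported on the closed strata $\mathsf{T}({\bf \Sigma},w)_{0,c'}$ identified with ${\bf J}^\bullet_{c'}$ shifted by $\iota(g)$, before taking cohomology. The only difference is presentational: the paper packages the splitting through the auxiliary grading $\gamma$ (by $\dim c'$) and leaves the final cohomological step implicit, while you decompose directly by pairs $(c',g)$ and spell out the closed-subset (extension-by-zero) argument, which is a harmless refinement of the same proof.
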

\begin{proof}
    For $g \in \mathrm{Box}(c)$ define $\gamma(g)$ to be the dimension of the minimal stratum which contains $g$. Then  
    \[
    \coprod_{\substack{c'\subseteq c \\ \dim c' = i}} \mathrm{Box}^\circ({c'}) = \{ g\in \mathrm{Box}(c) \mid \gamma(g) = \dim c'\}.
    \]
    The grading on ${\bf B}_c$ induced by $\gamma$ is preserved by $p_{(c_1,c_2)}$ therefore, we obtain yet another grading on ${\bf J}^\bullet_\mathrm{orb}$ itself induced by $\gamma$. In particular,
    \begin{align*}
    ({\bf B}_c^\ell \otimes {\bf B}^m_\tau \otimes \wedge^{k-\dim \tau}c^\perp \wedge \mathrm{Vol}(L(\tau)))_{i} &= \bigoplus_{\substack{c'\subseteq c \\ \dim c' = i}} \bigoplus_{\substack{g \in \mathrm{Box}^\circ({c'}) \\ \iota(g) = \ell}}{\bf B}^m_\tau\otimes \wedge^{k-\dim \tau} c^\perp \wedge \mathrm{Vol}(L(\tau))[\ell] \\
    &\cong \bigoplus_{\substack{c'\subseteq c \\ \dim c' = i}} \bigoplus_{\substack{g \in \mathrm{Box}^\circ({c'}) \\ \iota(g) =\ell}} {\bf J}^{k-m,m}(c,\tau)[\ell]
    \end{align*}
    Here, the subscript $i$ refers to the grading induced by $\gamma$. Taking direct sums on both sides leads to an isomorphism of sheaves
    \[
    ({\bf J}^{\lambda,\mu}_\mathrm{orb})_i \cong \bigoplus_{\dim c'=i}\bigoplus_{g\in \mathrm{Box}^\circ({c'})} ({\bf J}^{\lambda-\iota(g),\mu-\iota(g)}|_{\mathsf{T}({\bf \Sigma}, w)_{c'}})[\iota(g)].
    \]
    Both statements in the proposition then follow by taking cohomology.
\end{proof}
\begin{remark}
    The situation simplifies enormously if one assumes that ${\bf \Sigma}=(\Sigma, \underline{1})$ is smooth and all simplices in $\mathrm{SD}(w)$ are unimodular. Then ${\bf J}^{\lambda,\mu} = 0$ unless $\mu=0$ and ${\bf J}^{\lambda,0} = \wedge^{\lambda-\dim \tau} c^\perp \wedge \mathrm{Vol}(L(\tau))$ and $H^{\lambda,\mu}(\mathsf{T}(\Sigma,w)_0) = H^\mu(\mathsf{T}(\Sigma,w)_0,{\bf J}^{\lambda,0})$. 
\end{remark}

\section{Combinatorial, tropical, and geometric duality}\label{s:clamp}

In this section, we prove two combinatorial theorems regarding the tropical orbifold cohomology of Clarke dual pairs. The geometric interpretation for the induced Clarke mirror pairs of Landau–Ginzburg models is discussed, with the proof deferred to the last two sections of this article. Two applications are presented: Hodge number duality for weak Fano toric Deligne–Mumford stacks and Berglund–Hübsch–Krawitz mirrors.

\subsection{Clarke duality}

In \cite{clarke2016dual}, Clarke introduces a very broad and flexible form of mirror symmetry of dual fans which we slightly generalize here. Let $\Sigma$ and $\check{\Sigma}$ be fans consisting of strongly convex rational polyhedral fans in rank $d$-lattices $M$ and $N$, respectively. We say that ${\Sigma}$ and $\check{{\Sigma}}$ are \emph{Clarke dual} if 
\begin{equation}\label{e:clarke}
\langle m, n \rangle \geq 0 \text{ for all } m \in \mathrm{Supp}({\Sigma}), n \in \mathrm{Supp}(\check{{\Sigma}}).
\end{equation}

\noindent This definition can be easily extended to a pair of stacky fans. However, there are several conditions that we apply to our fans that allow us to strengthen our results and, ultimately, connect our results to Hodge numbers of {\em geometric} Landau--Ginzburg models. Recall that for each $c \in {\bf \Sigma}$, $\Delta_c=\mathrm{Conv}(c[1] \cup \{0\})$ where the ray generators of $c$ are the scaled ones in ${\bf \Sigma}[1]$. 

\begin{defn}
    We say that ${\bf \Sigma}$ is \emph{convex} if the polyhedral complex 
     \[
        \Delta_{\bf\Sigma} = \bigcup_{{c}\in \bf{\Sigma}}\Delta_{{c}}
    \]
    has support $\mathrm{Supp}(\Delta_{\bf \Sigma})$ which is a convex polytope. In this case, the simplices $\Delta_c,c\in \Sigma$ form a triangulation of $\Delta_{\bf \Sigma}$.
\end{defn}

\begin{defn}\label{d:adjectives}
    Let $({\bf \Sigma}, {\bf \check{\Sigma}})$ be a pair of quasiprojective, simplicial stacky fans. We say that $({\bf \Sigma}, {\bf \check{\Sigma}})$ is Clarke dual if the following two conditions hold:
    \begin{enumerate}
        \item The relation \eqref{e:clarke} holds. Equivalently,  $\langle m, n \rangle \geq 0 \text{ for all } m \in \mathrm{Supp}({\bf{\Sigma}}), n \in \mathrm{Supp}(\bf{\check{{\Sigma}}}).$
        \item Both ${\bf \Sigma}$ and ${\bf \check{\Sigma}}$ are convex. 
    \end{enumerate}
\end{defn}
Note that we explicitly do not require $\mathrm{Supp}({\bf \Sigma})$ to be the dual cone of $\mathrm{Supp}(\check{\bf \Sigma})$.

\begin{example}
From Example \ref{eg:trop} and Example \ref{eg:trop2}, let us take $\check{\Sigma}$ to be the fan generated by ${(0,1), (1,1), (2,1)}$, and let $\Sigma$ be as introduced earlier. With trivial stacky structures, the pair $(\Sigma, \check{\Sigma})$ forms a Clarke dual pair.
\end{example}

\begin{example}
    Suppose we have a lattice $M \cong \mathbb{Z}^2$ with basis $e_1,e_2$ and a dual basis $e_1^*, e_2^*$ of $N = \Hom_\mathbb{Z}(M,\mathbb{Z})$. 
    \[
    \Sigma = \{\mathrm{cone}(e_1, e_2), \mathrm{cone}(e_2,-e_1 + 3e_2)\},\qquad \check{\Sigma} = \{\mathrm{cone}(e^*_1+e_2^* ,e^*_1+2e_2),\mathrm{cone}(e^*_1+2e^*_2, e^*_2 )\} 
    \]
    These two fans satisfy Definition \ref{d:adjectives}(1) but $\Sigma$ is not convex, so Definition \ref{d:adjectives}(2) is not satisfied. On the other hand, the fans 
    \[
    {\bf \Sigma} = \{\mathrm{cone}(e_1, 3e_2), \mathrm{cone}(3e_2,-e_1 + 3e_2)\},\qquad \check{\Sigma} = \{\mathrm{cone}(e^*_1+e_2^* ,e^*_1+2e^*_2),\mathrm{cone}(e^*_1+2e^*_2, e^*_2 )\} 
    \]
    satisfy Definition \ref{d:adjectives}(1) and (2). Note that ${\bf \Sigma} = (\Sigma, \{1,3,1\})$. So this data provides a pair of Clarke dual stacks.
\end{example}

Given a Clarke dual pair of stacky fans, \eqref{e:clarke} implies that the Laurent polynomial
\[
w({\bf \Sigma}) = 1 + \sum_{m \in {\bf \Sigma}[1]} u_m \underline{x}^m \in \mathbb{C}[M]
\]
determines a regular function on $T(\check{\Sigma})$ and therefore a regular function on $T(\check{\bf \Sigma})$ obtained by composing the canonical morphism $T(\check{\bf \Sigma})\rightarrow T(\check{\Sigma})$ with $w(\bf{\Sigma})$. The pair of stacky Landau--Ginzburg models
\[
(T({\bf \Sigma}),w(\check{\bf \Sigma})),\qquad (T(\check{\bf \Sigma}), w({\bf \Sigma}))
\]
is called a \emph{Clarke mirror pair of stacky Landau--Ginzburg models}. The justification for this terminology, particularly the term "mirror," will be provided in the subsequent sections (see Theorem \ref{t:clarke}). Clarke mirror pairs encompass numerous well-known instances of mirror symmetry between toric objects.

\begin{remark}
In this article, we always assume that all fans are simplicial. There are two reasons for this. First without this assumption, we are forced to work with properly stringy Hodge numbers instead of orbifold Hodge numbers. Our geometric results rely on constructing toroidal degenerations of pairs. It is easy to see that orbifold Hodge numbers behave well under degeneration, however, we are not aware of any literature on how stringy Hodge numbers behave under degeneration. While this is an interesting question, we do not address it here. As a consequence, without the simplicial assumption, the geometric version of our results can only be made precise in the case already studied by Batyrev--Borisov and Borisov--Mavlyutov \cite{batyrev1996mirror,borisov2003string}. Second, any non-simplicial fan admits a simplicial, quasiprojective refinement obtained without altering $\Delta_{\bf \Sigma}$, so, ultimately, nothing will be lost. 

The combinatorial Clarke duality result in Section \ref{s:combcldual} relies on nothing but the simplicial assumption while our  tropical and geometric Clarke duality results relies further on the quasiprojective and convex assumptions (Section \ref{s:tropcldual}, \ref{s:topcldual}). 
\end{remark}

\subsection{Combinatorial Clarke pairs and duality}\label{s:combcldual}

Following the notation of Borisov \cite{borisov2014stringy}, let
\[
({\bf \Sigma} \oplus \check{\bf \Sigma})_0 = \{(c_1,c_2) \in {\bf \Sigma} \times \check{\bf \Sigma} \mid \langle c_1,c_2\rangle =0 \}.
\]
The underlying fan is a subfan of $\Sigma \times \check{\Sigma}$, and it is equipped with a pair of stacky structures $(\beta, \check{\beta})$. We define a graded sheaf on the underlying polyhedral space of $({\bf \Sigma} \oplus \check{\bf \Sigma})_0$. The topological space of this rational fan given by its Alexandrov topology (Remark \ref{r:fanalexandrov}) is often called a fanfold or a fan space (e.g. \cite{aksnes2023cohomologically,barthel2002combinatorial,brion1997structure}). A basis of the topology of the open stars on $({\bf \Sigma} \oplus \check{\bf \Sigma})_0$ is given by open stars of closed strata. 

We let $(c_1,c_2) \preceq (c_1',c_2')$ if and only if $c_1'\subseteq c_1$ and $c_2' \subseteq c_2$. To the pair $(c_1,c_2)$, we assign two graded vector spaces $(\wedge^\bullet c_1^\perp)\wedge \mathrm{Vol}(L(c_2))$ and $(\wedge^\bullet c_2^\perp) \wedge \mathrm{Vol}(L(c_1))$ for a fixed choice of volume forms on $c_2$ and $c_1$. If $(c_1,c_2) \preceq (c_1', c_2')$ then there is a canonical map
\[
\varpi_{(L(c_1),L(c_2))}^{(L(c_1'),L(c_2'))}: \wedge^{m-\dim c_2} c_1^\perp \wedge \mathrm{Vol}(L(c_2)) \longrightarrow \wedge^{m-\dim c_2' } (c_1')^\perp \wedge \mathrm{Vol}(L(c_2')).
\]
defined as in Section \ref{s:linalg}. This allows us to introduce the following definition. 
\begin{defn}\label{d:combsheaves}
    We define the following bigraded cellular sheaves of abelian groups on the polyhedral complex $({\bf \Sigma} \oplus \check{\bf \Sigma})_0$. Let 
    \begin{align*}
   {\bf \Xi}(c_1,c_2)^{\lambda,\mu} & := \bigoplus_{\substack{a-b+l =\lambda\\ b+l = \mu}}(\wedge^{a-\dim c_2} c_1^{\perp})\wedge \mathrm{Vol}(L(c_2)) \otimes {\bf B}_{c_1}^b \otimes {\bf B}_{c_2}^l \\ 
    \check{\bf \Xi}(c_1,c_2)^{\lambda,\mu} & := \bigoplus_{\substack{a-b+l =\lambda\\ b+l = \mu}}(\wedge^{a-\dim c_1} c_2^{\perp})\wedge \mathrm{Vol}(L(c_1)) \otimes {\bf B}_{c_2}^b \otimes {\bf B}_{c_1}^l.
    \end{align*}
    If $(c_1,c_2)\preceq (c_1',c_2')$ we define restriction maps
    \begin{align*}
    {\bf r}_{(c_1,c_2)}^{(c_1',c_2')} & = \varpi_{(L(c_1),L(c_2))}^{(L(c_1'),L(c_2'))} \otimes p_{(c_1,c_1')}\otimes p_{(c_2,c_2')} : {\bf \Xi}(c_1,c_2)^{\lambda,\mu}\longrightarrow {\bf \Xi}(c_1',c_2')^{\lambda,\mu} \\
    \check{\bf r}_{(c_1,c_2)}^{(c_1',c_2')} & = \varpi_{(L(c_2),L(c_1))}^{(L(c_2'),L(c_1'))} \otimes p_{(c_2,c_2')}\otimes p_{(c_1,c_1')} : \check{\bf \Xi}(c_1,c_2)^{\lambda,\mu}\longrightarrow \check{\bf \Xi}(c_1',c_2')^{\lambda,\mu}.
    \end{align*}
    As in Remark \ref{r:alexandrov-sheaf}, ${\bf \Xi}^{\lambda,\mu}$ and $ \check{\bf \Xi}^{\lambda,\mu}$ do not define sheaves on $({\bf \Sigma} \oplus \check{\bf \Sigma})_0$ {\em per se}, but they specify unique sheaves on the fan space of  $({\bf \Sigma} \oplus \check{\bf \Sigma})_0$.
\end{defn}
\begin{remark}
    Note that the underlying space $({\bf \Sigma}\oplus  \check{\bf \Sigma})_0$ does not depend on the stacky structure, but the sheaves ${\bf \Xi}^{\lambda,\mu}$ and $\check{\bf \Xi}^{\lambda,\mu}$ depend on the stacky structure, because ${\bf B}_{c_1}$ and ${\bf B}_{c_2}$ are determined by $\{\beta_\rho \rho \mid \rho \in \Sigma[1]\}$ and $\{\check{\beta}_{\check{\rho}} \check{\rho} \mid \check{\rho} \in \check{\Sigma}[1]\}$, respectively.
\end{remark}

We now construct complexes computing the cohomology of ${\bf \Xi}$ and $\check{\bf \Xi}$. In Section \ref{s:tropcldual}, they will be explicitly identified with particular \v{C}ech cohomology complexes of ${\bf J}_{\mathrm{orb}}$ and $\check{\bf J}_{\mathrm{orb}}$, respectively. First, index all cells $({\bf \Sigma} \oplus \check{\bf \Sigma})_0 = \{(\tau_i,\check{\tau}_i) \mid 1 \leq i \leq |({\bf \Sigma} \oplus \check{\bf \Sigma})_0|\}$. Define
\[
\mathfrak{U}_n := \{ (c_1,c_2) \in ({\bf \Sigma} \oplus \check{\bf \Sigma})_0 \mid (c_1,c_2) = \cap_{j \in  I} (\tau_j, \check{\tau}_j), |I| = n+1\}.
\]
Let
\[
\check{\mathcal{C}}^n(\mathfrak{U},{\bf \Xi}^{\lambda,\mu}) = \bigoplus_{(c_1,c_2) \in \mathfrak{U}_n} {\bf \Xi}(c_1,c_2)^{\lambda,\mu},\qquad \check{\mathcal{C}}^n(\mathfrak{U},\check{\bf \Xi}^{\lambda,\mu}) = \bigoplus_{(c_1,c_2) \in \mathfrak{U}_n} \check{\bf \Xi}(c_1,c_2)^{\lambda,\mu}
\]
and we define $\delta_{n} : \check{\mathcal{C}}^n(\mathfrak{U},{\bf \Xi}^{\lambda,\mu}) \rightarrow \check{\mathcal{C}}^{n+1}(\mathfrak{U},{\bf \Xi}^{\lambda,\mu})$ and $\check{\delta}_n: \check{\mathcal{C}}^n(\mathfrak{U},\check{\bf \Xi}^{\lambda,\mu}) \rightarrow \check{\mathcal{C}}^{n+1}(\mathfrak{U},\check{\bf \Xi}^{\lambda,\mu})$ by using the  restriction maps ${\bf r}$ and $\check{\bf r}$ along with the usual \v{C}ech sign conventions according to the indexing on cells of $({\bf \Sigma} \oplus \check{\bf \Sigma})_0$. Finally, we  define
\begin{equation}\label{eq:pqcomb}
H^{\lambda,\mu}({\bf \Sigma} \oplus \check{\bf \Sigma})_0 := \bigoplus_n H^{n}({\bf \Xi}^{\lambda,\mu-n}),\qquad \check{H}^{\lambda,\mu}({\bf \Sigma} \oplus \check{\bf \Sigma})_0 := \bigoplus_n H^{n}(\check{\bf \Xi}^{\lambda,\mu-n}).
\end{equation}
\begin{remark}
    If we replace $(c_1,c_2)$ with their open stars in  the definition of $\mathfrak{U}_n$, then $\mathfrak{U}_n$ becomes the finest possible \v{C}ech cover of $({\bf \Sigma} \oplus \check{\bf \Sigma})_0$ and $H^n({\bf \Xi}^{\lambda,\mu})$ is just the cohomology of the complex $(\check{\mathcal{C}}^\bullet(\mathfrak{U},{\bf \Xi}^{\lambda,\mu}),\delta_\bullet)$.
\end{remark}
The following result is rather simple, but fundamental to the computations that follow.
\begin{theorem}\label{t:combdual}
    Suppose $({\bf \Sigma,\check{\Sigma}})$ is a Clarke dual pair. Contraction with $\mathrm{Vol}(M)$ and switching  the components of $({\bf \Sigma} \oplus \check{\bf \Sigma})_0$ induce an isomorphism of bigraded pieces of cellular sheaves 
    \[
    \mathrm{Vol}(M) \otimes \mathrm{sw} : {\bf{\Xi}}^{\lambda,\mu} \xlongrightarrow{\cong} \check{\bf{\Xi}}^{d-\lambda,\mu}
    \]
    for all $\lambda,\mu \in \mathbb{Q}$. Therefore, we have the induced isomorphisms of $\mathbb{C}$-vector spaces, which will be identified with orbifold tropical Hodge spaces (see Theorem \ref{t:tropidual}):
    \[
    \mathrm{Vol}(M) \otimes \mathrm{sw} : H^{\lambda,\mu}({\bf \Sigma} \oplus \check{\bf \Sigma})_0\xlongrightarrow{\cong} \check{H}^{d-\lambda,\mu}({\bf \Sigma} \oplus \check{\bf \Sigma})_0.
    \]
    
\end{theorem}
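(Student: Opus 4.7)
The plan is to construct the claimed isomorphism cell by cell from Lemma~\ref{l:linalg}, check compatibility with the restriction maps ${\bf r}$ and $\check{\bf r}$, and then descend to cohomology through the associated \v{C}ech complexes. For each cell $(c_1,c_2) \in ({\bf\Sigma} \oplus \check{\bf\Sigma})_0$ the Clarke condition forces $\langle c_1, c_2\rangle = 0$, so taking $V = M_\mathbb{R}$, $W = L(c_1)\subseteq V$, $U = L(c_2)\subseteq V^*$ yields an orthogonal pair in the sense of Section~\ref{s:linalg}. Fixing a top form $\mathrm{Vol}(M)\in \wedge^d M_\mathbb{R}$ together with the volume forms $\mathrm{Vol}(L(c))$ already chosen to define ${\bf \Xi}$ and $\check{\bf \Xi}$, Lemma~\ref{l:linalg} supplies a grading-reversing isomorphism
\[
(\wedge^{a-\dim c_2} c_1^\perp)\wedge \mathrm{Vol}(L(c_2)) \xrightarrow{\;\cong\;} (\wedge^{d-a-\dim c_1} c_2^\perp)\wedge \mathrm{Vol}(L(c_1)),
\]
taking exterior degree $a$ in $\wedge^\bullet N_\mathbb{R}$ to degree $d - a$ in $\wedge^\bullet M_\mathbb{R}$. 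Tensoring this with the flip $\mathrm{sw}: {\bf B}_{c_1}^b \otimes {\bf B}_{c_2}^l \to {\bf B}_{c_2}^l \otimes {\bf B}_{c_1}^b$ and relabelling $(a', b', l') = (d - a, l, b)$, the defining constraints $a - b + l = \lambda$, $b + l = \mu$ transform exactly into $a' - b' + l' = d - \lambda$, $b' + l' = \mu$. Assembling over summands yields the cell-wise bigraded isomorphism ${\bf \Xi}(c_1,c_2)^{\lambda,\mu} \to \check{\bf \Xi}(c_1,c_2)^{d-\lambda,\mu}$.

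Compatibility with the restrictions is essentially formal. For $(c_1, c_2) \preceq (c_1', c_2')$, the inclusions $L(c_1')\subseteq L(c_1)$ and $L(c_2')\subseteq L(c_2)$ form a nested pair of orthogonal pairs, and the commutative square of Lemma~\ref{l:linalg} is precisely the statement that contraction with $\mathrm{Vol}(M)$ intertwines the two maps $\varpi_{(L(c_1),L(c_2))}^{(L(c_1'),L(c_2'))}$ and $\varpi_{(L(c_2),L(c_1))}^{(L(c_2'),L(c_1'))}$ appearing in ${\bf r}$ and $\check{\bf r}$. The ${\bf B}$-parts $p_{(c_1,c_1')} \otimes p_{(c_2,c_2')}$ and $p_{(c_2,c_2')} \otimes p_{(c_1,c_1')}$ are visibly intertwined by the flip $\mathrm{sw}$. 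Therefore $\mathrm{Vol}(M)\otimes \mathrm{sw}$ is an isomorphism of bigraded cellular sheaves on $({\bf\Sigma} \oplus \check{\bf\Sigma})_0$. Because both sides of \eqref{eq:pqcomb} are computed by \v{C}ech cohomology with respect to the same cover $\mathfrak{U}$ and the same sign conventions, this sheaf isomorphism induces an isomorphism of the corresponding \v{C}ech complexes, and passing to cohomology yields the required $H^{\lambda,\mu}({\bf\Sigma} \oplus \check{\bf\Sigma})_0 \cong \check{H}^{d-\lambda,\mu}({\bf\Sigma} \oplus \check{\bf\Sigma})_0$.

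The main obstacle, though essentially combinatorial, is the bookkeeping in the cell-wise step: one must simultaneously verify that contraction lands on the correct exterior degree in $c_2^\perp$, that the exterior shift accounts for the change $\dim c_2 \leftrightarrow \dim c_1$, that the overall grading shift realizes $\lambda \mapsto d - \lambda$, and that the two tensor positions of ${\bf B}$-gradings $(b,l)$ are interchanged. Once these four ingredients are aligned, the naturality statement of Lemma~\ref{l:linalg} delivers compatibility with restrictions for free and the passage to cohomology is immediate, so no further geometric input (convexity, quasiprojectivity, or irregular Hodge theory) is needed for the combinatorial statement itself.
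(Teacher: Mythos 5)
Your proposal is correct and follows essentially the same route as the paper's proof: a cell-wise application of Lemma \ref{l:linalg} (contraction with $\mathrm{Vol}(M)$ plus the swap of ${\bf B}$-factors), the same index relabelling showing $(\lambda,\mu)\mapsto(d-\lambda,\mu)$, compatibility with ${\bf r}$ and $\check{\bf r}$ via the commutative square of Lemma \ref{l:linalg}, and passage to the \v{C}ech complexes in \eqref{eq:pqcomb}. No gaps; the bookkeeping you flag is exactly the content of the paper's displayed computation.
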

\begin{proof}
    First, we observe that $\mathrm{Vol}(M) \otimes \mathrm{sw}$ induces an isomorphism between ${\bf \Xi}(c_1,c_2)$ and $\check{\bf \Xi}(c_1,c_2)$ for each $(c_1,c_2)$ and that this map commutes with ${\bf r}$ and $\check{\bf r}$ by Lemma \ref{l:linalg}. We see that:
    \begin{align*}
    {\bf \Xi}(c_1,c_2)^{\lambda,\mu} &=\bigoplus_{\substack{a-b+l=\lambda \\b+l =\mu}} (\wedge^{a-\dim c_2} c_1^\perp) \wedge \mathrm{Vol}(L(c_2)) \otimes {\bf B}^b_{c_1} \otimes {\bf B}^l_{c_2} \\ &\cong \bigoplus_{\substack{d-(a-l+b) = \lambda \\l+b=\mu}}(\wedge^{d-a-\dim c_1} c_2^\perp) \wedge \mathrm{Vol}(L(c_1)) \otimes {\bf B}^l_{c_1} \otimes {\bf B}^b_{c_2} \\ 
    & = \check{\bf \Xi}(c_1,c_2)^{d-\lambda,\mu}.
    \end{align*}
    This provides the required isomorphism of sheaves. Consequently,
    \begin{equation}\label{eq:hdual}
    H^{\lambda,\mu}({\bf \Sigma} \oplus \check{\bf \Sigma})_0 = \bigoplus_nH^n({\bf \Xi}^{\lambda,\mu-n}) \cong \bigoplus_n H^n(\check{\bf \Xi}^{d-\lambda,\mu-n}) = \check{H}^{d-\lambda,\mu}({\bf \Sigma} \oplus \check{\bf \Sigma})_0.
    \end{equation}
    This completes the proof.
\end{proof}
\begin{remark}
    In \cite{borisov2003string}, Borisov and Mavlyutov prove a similar result for dual pairs of Gorenstein cones. Their result is more general, in the sense that it does not require the simplicial assumption, however, it requires that $\Sigma$ and $\check{\Sigma}$ both be cones. It seems likely to us that it is possible to combine their approach with ours, however, we do not pursue this here.
\end{remark}
\begin{remark}
If both $\Sigma$ and $\check{\Sigma}$ are smooth fans with trivial stacky structure then ${\bf \Xi}^{\lambda,\mu} = 0$ unless $\mu=0$. Lemma \ref{l:linalg} tells us that ${\bf \Xi}^{\lambda,0} \cong \check{\bf \Xi}^{d-\lambda,0}$. The duality statement \eqref{eq:hdual} becomes 
\[
    H^{\lambda,\mu}({\bf \Sigma} \oplus \check{\bf \Sigma})_0 = H^\mu(\check{\mathcal{C}}^\bullet(\mathfrak{U},{\bf \Xi}^{\lambda,0}), \delta_\bullet) \cong H^\mu(\check{\mathcal{C}}^\bullet(\mathfrak{U},\check{\bf \Xi}^{d-\lambda,0}), \check{\delta}_\bullet) = \check{H}^{d-\lambda,\mu}({\bf \Sigma} \oplus \check{\bf \Sigma})_0. 
\]
\end{remark}

\subsection{Tropical Clarke pairs and duality}\label{s:tropcldual}

In this section, we will show that the combinatorial duality described in the previous section can be extended to give a duality between the tropical Hodge numbers of certain Landau--Ginzburg models attached to a pair of Clarke dual fans. Assume ${\bf \Sigma}$ is quasiprojective. We may find a strictly convex ${\bf\Sigma}$-linear function $\varphi : M_\mathbb{R} \rightarrow \mathbb{R}$ which is integral on ${\bf\Sigma}[1]$. If we assume that ${\bf \Sigma}$ is simplicial, $\varphi$ is equivalent to a function $\varphi: {\bf \Sigma}[1] \rightarrow \mathbb{Z}$. Attached to $\varphi$ there is a tropical regular function,
\[
   \mathsf{w}_\varphi : N_\mathbb{R} \longrightarrow \mathbb{R},\qquad n \longmapsto \min_{m \in {A}}\{\langle  n,m \rangle + \varphi(m)\}.
    \]
    where $A={\bf \Sigma}[1] \cup \{0\}$.
Observe that $\mathsf{w}_\varphi$ is the tropicalization of the Laurent polynomial
\[
w_\varphi=w({\bf \Sigma})_\varphi := \sum_{m\in {A}} u_mt^{\varphi(m)}\underline{x}^m
\]
for generic non-vanishing coefficients $u_m \in \mathbb{C}^*$ and $u_0 =1$. Our first result describes the relationship between the tropicalization of $w_\varphi$ and ${\bf \Sigma}$. Recall Definition \ref{d:adjectives} for the terminology used in the following proposition.

\begin{proposition}\label{p:simplicial}
    Let ${\bf \Sigma}$ be a simplicial, convex, quasiprojective stacky fan. Then $\Delta_{\bf \Sigma} = \mathrm{SD}(w_\varphi)$. Therefore, $\mathrm{SD}(w_\varphi)$ is a star triangulation based at $0_M$.
\end{proposition}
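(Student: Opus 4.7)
The plan is to show $\Delta_{\bf \Sigma} = \mathrm{SD}(w_\varphi)$ by matching their top-dimensional cells as subdivisions of $\Delta(w_\varphi) = \mathrm{Conv}(A)$ where $A = {\bf \Sigma}[1] \cup \{0_M\}$. Since $0_M$ lies in every cone and $\varphi$ is ${\bf \Sigma}$-linear, we have $\varphi(0_M) = 0$, so the coefficient valuations of $w_\varphi$ recover $\varphi|_A$; therefore $\mathrm{SD}(w_\varphi)$ is exactly the regular subdivision of $\mathrm{Conv}(A)$ induced by the height function $\varphi|_A$.

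First I would match the supports. The convexity hypothesis makes $\mathrm{Supp}(\Delta_{\bf \Sigma})$ a convex polytope containing $A$, hence containing $\mathrm{Conv}(A) = \Delta(w_\varphi)$; the reverse inclusion is immediate from $\Delta_c \subseteq \mathrm{Conv}(A)$.

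The key step is to show that for each maximal cone $c \in {\bf\Sigma}$, the simplex $\Delta_c$ is a cell of $\mathrm{SD}(w_\varphi)$. Since $c$ is simplicial with generators $c[1]$ and $\varphi$ is ${\bf\Sigma}$-linear with $\varphi(0_M)=0$, there is a unique linear function $\ell_c:M_\mathbb{R}\to\mathbb{R}$ agreeing with $\varphi$ on $c$. Strict convexity of $\varphi$ gives $\ell_c\leq \varphi$ on $\mathrm{Supp}({\bf\Sigma})$ with equality exactly on $c$. Because the rays of the simplicial cone $c$ are precisely $c[1]$, we have $A\cap c = c[1]\cup\{0_M\}$, so $\ell_c$ is an affine lower support for the lift $\{(m,\varphi(m))\}_{m\in A}$ whose contact set is $c[1]\cup\{0_M\}$. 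Projecting the corresponding facet of the lifted convex hull back to $M_\mathbb{R}$ realizes $\Delta_c$ as a cell of $\mathrm{SD}(w_\varphi)$. Since the simplices $\{\Delta_c\}$ for maximal $c\in{\bf\Sigma}$ already tile $\Delta(w_\varphi)$, they must comprise the full set of top-dimensional cells of $\mathrm{SD}(w_\varphi)$; a polyhedral subdivision is determined by its top-dimensional cells, so $\Delta_{\bf\Sigma}=\mathrm{SD}(w_\varphi)$. The star triangulation claim is then automatic: each $\Delta_c$ is a simplex with $0_M$ as a vertex, and every element of $A$ appears as a vertex of some $\Delta_c$.

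The one place where care is needed is aligning the sign conventions: the \emph{min}-convention tropicalization pairs with \emph{lower} supporting hyperplanes of the lifted convex hull in $M_\mathbb{R}\oplus\mathbb{R}$, which is consistent with the inequality $\ell_c\leq\varphi$ used above. Apart from this bookkeeping, the argument is a direct consequence of strict convexity.
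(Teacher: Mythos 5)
Your proof is correct and takes essentially the same route as the paper's: both identify $\mathrm{SD}(w_\varphi)$ with the regular subdivision induced by the heights $\varphi|_A$ on the lifted points, and use strict convexity of $\varphi$ together with the convexity of $\Delta_{\bf \Sigma}$ to realize each $\Delta_c$ as a cell and conclude the two subdivisions agree. Your supporting functionals $\ell_c$ and the contact-set/tiling step simply make explicit what the paper compresses into ``by strict convexity, each $\Gamma(\varphi|_{\Delta_c})$ must be a distinct face of the lower convex hull.''
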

\begin{proof}
    Let $A = {\bf \Sigma}[1] \cup \{0\}$ and $\widetilde{A} = \{(m,\varphi(m)) \in M_\mathbb{R} \times \mathbb{R} \mid m\in A\}\subseteq \Gamma(\varphi)$ where $\Gamma(\varphi)$ denotes the graph of $\varphi$. In this proof, the phrase {\em lower convex hull} indicates subset of $\mathrm{Conv}(\tilde{A})$
    \[
    \{(q,x) \in  \mathrm{Conv}(\tilde{A}) \mid x \leq x' \quad \forall (q,x') \in \mathrm{Conv}(\tilde{A})\}.
    \]
    The projection of faces of the lower convex hull of $\tilde{A}$ onto $M_\mathbb{R}$ is $\mathrm{SD}(w_\varphi)$.
    
    Let $(q,x) \in \mathrm{Conv}(\widetilde{A})$. Then $(q,x) = \sum_{m\in A}a_m(m,\varphi(m))$ for values $a_m \in [0,1]$ and $\sum_{m\in A} a_m = 1$. Because $\varphi$ is convex, $x=\sum_{m\in A} a_m \varphi(m) \geq \varphi(q)$. In other words, if $q \in \Delta_{\bf \Sigma}$ then  $\varphi(q)$ lies below the convex hull. If $q\in \Delta_c$ then, since $\varphi$ is linear on $\Delta_c$, along with the same calculation as above, $(q,\varphi(q)) \in \mathrm{Conv}(\widetilde{A})$. In other words, $\Gamma(\varphi|_{\Delta_c})$ is in the convex hull of $\mathrm{Conv}(\widetilde{A})$ thus it must be in the lower convex hull of $\mathrm{Conv}(\widetilde{A})$. By strict convexity, each $\Gamma(\varphi|_{\Delta_c})$ must be a distinct face of the lower convex hull. Since $\Delta_{\bf \Sigma}$ is convex, it is a triangulation of $\Delta(w_\varphi)$. This completes the proof.
\end{proof}

According to Theorem \ref{t:subdiv} and the following discussion, the cells of $\mathsf{T}(\check{{\bf \Sigma}},w_\varphi)_0$ are in bijection with
    \begin{equation*}
    \{(\check{c},\tau) \in \check{{\bf \Sigma}} \times \mathrm{SD}(w_\varphi)_0 \mid \check{c} \subseteq \mathrm{nc}(f_\tau)\}.
    \end{equation*}
    By Proposition \ref{p:simplicial}, $\mathrm{SD}(w_\varphi) =\Delta_{\bf \Sigma}$, so the cells of $\mathrm{SD}(w_\varphi)_0$ are of the form $\Delta_c = \mathrm{Conv}(c[1]\cup \{0\})$ for some cone $c$ of ${\bf \Sigma}$. 
    \begin{proposition}\label{p:cover}
    The following map is a bijection of posets.
    \[
    \mathsf{x}: (\check{\bf \Sigma} \oplus {\bf \Sigma})_0 \longrightarrow \mathsf{T}(\check{\bf \Sigma}, w_\varphi)_0,\qquad (\check{c},c) \longmapsto (\check{c}, {\Delta_c}).
    \]
    \end{proposition}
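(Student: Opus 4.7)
The plan is to establish the bijection $\mathsf{x}$ in three steps: well-definedness, bijectivity, and order compatibility. The key computational ingredient, which I expect to be the real content, is identifying when $\check{c} \subseteq \mathrm{nc}(f_{\Delta_c})$ in terms of the pairing $\langle c, \check{c}\rangle$; once this is in hand, the rest of the proof is essentially bookkeeping.

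First I will check that $(\check{c}, \Delta_c)$ is an honest cell of $\mathsf{T}(\check{\bf \Sigma}, w_\varphi)_0$. By Proposition \ref{p:simplicial}, the subdivision $\mathrm{SD}(w_\varphi)$ equals $\Delta_{\bf \Sigma}$, and every cell in this triangulation contains the base point $0$, so $\Delta_c \in \mathrm{SD}(w_\varphi)_0$. According to \eqref{e:cellsinT}, the remaining condition to verify is $\check{c} \subseteq \mathrm{nc}(f_{\Delta_c})$, where $f_{\Delta_c}$ is the minimal face of $\Delta(w_\varphi) = \Delta_{\bf \Sigma}$ containing $\Delta_c$. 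Here I will use the Clarke regularity hypothesis $\langle m, \check{n}\rangle \geq 0$ for $m \in \mathrm{Supp}({\bf \Sigma})$ and $\check{n} \in \mathrm{Supp}(\check{\bf \Sigma})$ together with $0 \in \Delta_{\bf \Sigma}$ to conclude that for each $\check{n} \in \mathrm{Supp}(\check{\bf \Sigma})$, the linear function $\langle \cdot, \check{n}\rangle$ attains its minimum $0$ on $\Delta_{\bf \Sigma}$ along the face $\{m \in \Delta_{\bf \Sigma} : \langle m, \check{n}\rangle = 0\}$. By the definition of the normal cone, $\check{n} \in \mathrm{nc}(f_{\Delta_c})$ amounts to requiring that every vertex of $f_{\Delta_c}$ achieves this minimum; since $f_{\Delta_c}$ is the \emph{minimal} face containing $\Delta_c$ and faces are closed under taking faces, this is the same as requiring that every vertex of $\Delta_c$, namely $0$ and the elements of $c[1]$, pair to zero with $\check{n}$. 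Thus $\check{c} \subseteq \mathrm{nc}(f_{\Delta_c})$ if and only if $\langle c, \check{c}\rangle = 0$, which is precisely the defining relation of $(\check{\bf \Sigma} \oplus {\bf \Sigma})_0$.

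This identification immediately gives bijectivity. Surjectivity follows because every cell of $\mathsf{T}(\check{\bf \Sigma}, w_\varphi)_0$ is of the form $(\check{c}, \tau)$ with $\tau \in \mathrm{SD}(w_\varphi)_0$, and by Proposition \ref{p:simplicial} any such $\tau$ is $\Delta_c$ for a unique $c \in {\bf \Sigma}$. Injectivity is equally clear: simpliciality of ${\bf \Sigma}$ lets us recover $c[1]$ as the set of nonzero vertices of $\Delta_c$, so the assignment $c \mapsto \Delta_c$ is injective, and the first coordinate $\check{c}$ is preserved on the nose.

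Finally, for the poset structure, I compare the two orderings: $(\check{c}_1, c_1) \preceq (\check{c}_2, c_2)$ in $(\check{\bf \Sigma} \oplus {\bf \Sigma})_0$ means $\check{c}_2 \subseteq \check{c}_1$ and $c_2 \subseteq c_1$, while $(\check{c}_1, \Delta_{c_1}) \preceq (\check{c}_2, \Delta_{c_2})$ in $\mathsf{T}(\check{\bf \Sigma}, w_\varphi)_0$ means $\check{c}_2 \subseteq \check{c}_1$ and $\Delta_{c_2} \subseteq \Delta_{c_1}$. The only equivalence to check is $c_2 \subseteq c_1 \Longleftrightarrow \Delta_{c_2} \subseteq \Delta_{c_1}$, which is immediate from simpliciality: a face of a simplicial cone is determined by a subset of its ray generators, so inclusion of cones corresponds exactly to inclusion of the associated simplices $\Delta_c = \mathrm{Conv}(c[1] \cup \{0\})$. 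The main obstacle, such as it is, lies in keeping the direction of the order correct around the normal cone argument and in carefully using minimality of $f_{\Delta_c}$ to pass back and forth between $f_{\Delta_c}$ and $\Delta_c$ itself.
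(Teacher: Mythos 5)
Your proof is correct and follows essentially the same route as the paper: the whole content is the equivalence $\langle c,\check{c}\rangle=0 \iff \check{c}\subseteq \mathrm{nc}(f_{\Delta_c})$, which you establish from Clarke regularity (forcing $\langle\cdot,\check{n}\rangle\geq 0$ on $\Delta_{\bf\Sigma}$ with minimum $0$ at the origin) together with minimality of $f_{\Delta_c}$, exactly the ingredients the paper uses, though you argue directly via the minimizing face $\{m\in\Delta_{\bf\Sigma}\mid\langle m,\check{n}\rangle=0\}$ where the paper phrases it as a contradiction. Your additional verification of bijectivity and of the poset orders, via Proposition \ref{p:simplicial} and simpliciality, is the bookkeeping the paper leaves implicit and is fine.
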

    \begin{proof}
   To see that this is well defined, suppose $\langle \check{c},c\rangle = 0$. This occurs if and only if $\langle \check{c},\Delta_c \rangle =0$. If there is some $n\in \check{c}$ and $m \in f_{\Delta_c}$ so that $\langle n, m \rangle >0$ then $\Delta_c \neq f_{\Delta_c}$. Since $\Delta_c$ is not a face of $f_{\Delta_c}$ there must also be some $m'\in f_{\Delta_c}$ so that $\langle n,m'\rangle <0$, contradicting the fact that ${\bf \Sigma}$ and ${\bf \check{\Sigma}}$ are Clarke dual. Thus $\langle \check{c}, f_{\Delta_c}\rangle = 0$ and $\check{c} \subseteq \mathrm{nc}(f_{\Delta_c})$ by definition. A similar argument proves the converse, that if $\check{c} \subseteq \mathrm{nc}(f_{\Delta_c})$ then $\langle \check{c},c\rangle = 0$.
\end{proof}
    The set
\[
\mathfrak{V} = \{\mathrm{St}(\mathsf{x}(\check{c},c)) \mid (\check{c},c) \in ({\bf \Sigma} \oplus \check{\bf \Sigma})_0\}
\]
is thus an open cover of $\mathsf{T}(\check{{\bf \Sigma}},w_\varphi)_0$. Observe that  $\mathrm{Vol}(L(\tau_c)) = \mathrm{Vol}(L(c))$. Then Proposition \ref{p:Cech-stars} allows us to identify
\begin{equation}\label{eq:jorb}
H^0(\mathrm{St}(\mathsf{x}(\check{c},{c})),{\bf J}^\bullet_\mathrm{orb})  = {\bf B}_{\check{c}} \otimes {\bf B}_{c} \otimes \wedge^\bullet \check{c}^\perp \wedge \mathrm{Vol}(L(c)).
\end{equation}
As a consequence we prove the main result of this section.

\begin{theorem}\label{t:tropidual}
    Suppose $({\bf \Sigma}, \check{\bf \Sigma})$ is a Clarke dual pair for which $\mathrm{rank}\, M = d$. Then there are canonical isomorphisms of orbifold tropical Hodge spaces
    \[
    H^{\lambda,\mu}_\mathrm{orb}(\mathsf{T}({\bf \Sigma},w_{\check{\varphi}})_0) \cong H^{d-\lambda,\mu}_\mathrm{orb}({\mathsf{T}}(\check{{\bf \Sigma}}, w_\varphi)_0)
    \]
    for $\lambda,\mu \in \mathbb{Q}$.
\end{theorem}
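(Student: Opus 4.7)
The strategy is to reduce the theorem to the combinatorial duality of Theorem \ref{t:combdual} by identifying each side of the desired isomorphism with the cohomology of one of the combinatorial sheaves on $({\bf \Sigma} \oplus \check{\bf \Sigma})_0$. Specifically, I aim to establish natural isomorphisms
\[
H^{\lambda,\mu}_\mathrm{orb}(\mathsf{T}(\check{\bf \Sigma},w_\varphi)_0) \cong \check{H}^{\lambda,\mu}({\bf \Sigma}\oplus\check{\bf \Sigma})_0, \qquad H^{\lambda,\mu}_\mathrm{orb}(\mathsf{T}({\bf \Sigma},w_{\check{\varphi}})_0) \cong H^{\lambda,\mu}({\bf \Sigma}\oplus\check{\bf \Sigma})_0,
\]
after which Theorem \ref{t:combdual} yields the desired duality. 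The principal computational tool is \v{C}ech cohomology with respect to the cover $\mathfrak{V}$ introduced immediately after Proposition \ref{p:cover}.

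Step 1 is to show that $\mathfrak{V}$ is an acyclic cover for ${\bf J}^\bullet_\mathrm{orb}$. Proposition \ref{p:Cech-stars} gives the vanishing of positive-degree cohomology on each open star $\mathrm{St}(\mathsf{x}(\check{c},c))$. For finite intersections, in the Alexandrov topology of the poset $(\check{\bf \Sigma}\oplus{\bf \Sigma})_0$ any nonempty intersection $\bigcap_{i} \mathrm{St}(\mathsf{x}(\check{c}_i,c_i))$ coincides with the star of the componentwise intersection $(\bigcap_i \check{c}_i, \bigcap_i c_i)$, which still lies in $(\check{\bf \Sigma}\oplus{\bf \Sigma})_0$ because the pairing of subcones remains zero. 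Consequently the indexing of the \v{C}ech complex on $\mathfrak{V}$ matches the family $\mathfrak{U}_n$ of Section \ref{s:combcldual}, and \v{C}ech cohomology on $\mathfrak{V}$ computes the sheaf cohomology.

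Step 2 identifies this \v{C}ech complex with the combinatorial complex computing $\check{H}^{\lambda,\mu}$. For each pair $(\check{c},c)$, the simplex $\Delta_c = \mathrm{Conv}(c[1]\cup\{0\})$ shares the same primitive vertex set, dimension, and linear span as the stacky cone $c$, so that ${\bf B}_{\Delta_c} = {\bf B}_c$, $\dim \Delta_c = \dim c$, and $L(\Delta_c) = L(c)$. Combined with \eqref{eq:jorb}, this yields a natural bigraded identification
\[
H^0(\mathrm{St}(\mathsf{x}(\check{c},c)),{\bf J}^{\lambda,\mu}_\mathrm{orb}) \cong \check{\bf \Xi}(c,\check{c})^{\lambda,\mu}
\]
matching Definition \ref{d:combsheaves}. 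The sheaf morphisms of ${\bf J}^\bullet_\mathrm{orb}$ are assembled from the projections $p$ on ${\bf B}_\tau$ and the contraction maps $\varpi$ of Section \ref{s:linalg}, which are precisely the data defining the combinatorial restrictions $\check{\bf r}$. Assembling these gives an isomorphism of \v{C}ech complexes, establishing the first identification. The second follows by the symmetric argument using the analogous bijection $({\bf \Sigma}\oplus\check{\bf \Sigma})_0 \to \mathsf{T}({\bf \Sigma}, w_{\check{\varphi}})_0$; the sheaf ${\bf \Xi}$ rather than $\check{\bf \Xi}$ appears because the roles of the two cone arguments are swapped.

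The main obstacle will be the careful identification of restriction morphisms in Step 2. The restrictions between tropical Jacobian stalks ultimately arise from geometric residue maps on twisted cohomology of lower-dimensional toric Landau--Ginzburg models, and verifying that they correspond to the purely algebraic maps $p \otimes \varpi$ requires invoking the functoriality of the isomorphism $\xi_\tau$ of Proposition \ref{p:adolphson-sperber} under residues. Sign conventions in passing from the \v{C}ech coboundary to $\check{\delta}$ must also be fixed consistently with a total ordering on the cells, but this is standard bookkeeping once the stalk identification is in place.
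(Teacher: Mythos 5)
Your proposal follows essentially the same strategy as the paper: reduce to the combinatorial duality of Theorem \ref{t:combdual} by showing (via Propositions \ref{p:cover} and \ref{p:Cech-stars}) that $\mathfrak{V}$ is a Leray cover, identifying the resulting \v{C}ech complex of ${\bf J}^\bullet_\mathrm{orb}$ with the complex computing the cohomology of the combinatorial sheaves $\bf{\Xi}$, $\check{\bf{\Xi}}$ on $({\bf\Sigma}\oplus\check{\bf\Sigma})_0$, and matching the bigradings via \eqref{eq:jorb}. Your Step 1 observation that nonempty finite intersections of stars are themselves stars of componentwise intersections of cones is a valid and slightly more explicit justification of the Leray property than the paper gives, but the argument as a whole is the one used in the paper.
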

\begin{proof}
    It is enough to identify $H^{\lambda,\mu}_\mathrm{orb}(\mathsf{T}({\bf \Sigma}, w_{\check{\varphi}})_0)$ with $H^{\lambda,\mu}({\bf \Sigma} \oplus \check{\bf \Sigma})_0$. Then the desired result follows from Theorem \ref{t:combdual}. By Propositions \ref{p:cover} and \ref{p:Cech-stars}, $\mathfrak{V}$ forms a Leray cover so the \v{C}ech cohomology of ${\bf J}^\bullet_\mathrm{orb}$ with respect to $\mathfrak{V}$ is isomorphic to $H^*(\mathsf{T}({\bf \Sigma}, w_{\check{\varphi}})_0, {\bf J}^\bullet_\mathrm{orb})$. Index $({\bf \Sigma} \oplus \check{\bf \Sigma})_0$ as in the discussion following Definition \ref{d:combsheaves}. Applying \eqref{eq:jorb} we  find that the \v{C}ech complex is identified with the \v{C}ech complex ${\bf \Xi}^{\lambda,\mu}$ and thus there is a graded isomorphism $H^*({\bf \Xi})\cong H^*(\mathsf{T}({\bf \Sigma}, w_{\check{\varphi}})_0, {\bf J}^\bullet_\mathrm{orb})$. Finally, comparing definitions of $H^{\lambda,\mu}({\bf \Sigma} \oplus \check{\bf \Sigma})_0$ \eqref{eq:pqcomb} and $H^{\lambda,\mu}_\mathrm{orb}(\mathsf{T}({\bf \Sigma},w_{\check{\varphi}})_0)$ \eqref{eq:troppq} we obtain the desired identification.
\end{proof}

\subsection{Irregular duality for Clarke mirror pairs}\label{s:topcldual}

In this section, we extend Theorem \ref{t:tropidual} to a geometric duality for Clarke mirror pairs of Landau--Ginzburg models. Suppose that we have a Clarke dual pair of fans $({\bf \Sigma},\check{\bf \Sigma})$. We obtain a regular function
\[
w({\bf\Sigma})_\varphi =  1+\sum_{m \in{\bf \Sigma}[1]} u_mt^{{\varphi}(m)} \underline{x}^m
\]
on $T(\check{\bf \Sigma}) \times \mathbb{D}^*$ where $\mathbb{D}^*$ denotes a sufficiently small open punctured disc and $u_m \in \mathbb{C}^*$ denote generic non-vanishing coefficients. Here, {\em generic} means that for all $\varepsilon \in \mathbb{D}^*$, the fiber at $\varepsilon$ is a nondegenerate Landau--Ginzburg model. We denote by $(T({\bf\check{\Sigma}}), w({\bf \Sigma}))$ the fibre of this family over an arbitrary value $\varepsilon \neq 0$. We show in Proposition \ref{p:GM} below that the graded dimensions are independent of $\varepsilon$.

In Section \ref{s:combdeg}, we will prove that one may complete the underlying family $(T(\check{\Sigma})\times \mathbb{D}^*, w({\bf \Sigma})_{{\varphi}})$ to a family of Landau--Ginzburg models over $\mathbb{D}$. The central fibre of this family will have orbifold normal crossings singularities. In Section \ref{s:degnby} we show that there is a limiting irregular Hodge filtration which has de Rham realization in terms of the fibre over 0, whose graded pieces have constant rank. We show that a spectral sequence coming from the irreducible components of the fibre over 0 degenerates at the $E_2$ page, and the $E_2$ page is essentially the cellular complex computing ${\bf J}^\bullet$. We  have the following result.

\begin{theorem}[Corollary \ref{c:nearbyfibcoh}]\label{t:descent}
    Let $(T(\check{\Sigma}), w({\bf \Sigma})_{{\varphi}})$ be a family of Landau--Ginzburg models as above and suppose $\varepsilon \in \mathbb{D}^*$. For all $n \in \mathbb{Z}$, there is a (non-canonical) isomorphism of filtered $\mathbb{C}$-vector spaces
    \[
    (H^{n}(T({\check{\Sigma}}), w({\bf \Sigma})), F_{\mathrm{irr}}^\bullet) \cong \bigoplus_{k \in \mathbb{Z}_{\geq 0}}(H^{n-k}(\mathsf{T}({\bf {\check{\Sigma}}},w({\bf \Sigma})_\varphi)_0,{\bf J}^k), \mathsf{F}^\bullet).
    \] 
\end{theorem}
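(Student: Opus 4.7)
The plan is to pass from the generic fiber $(T(\check{\Sigma}), w({\bf\Sigma}))$ to a well-chosen degeneration whose central fiber admits a stratification indexed by the tropical polyhedral complex $\mathsf{T}(\check{\bf\Sigma}, w({\bf\Sigma})_\varphi)_0$, then to show that the limiting irregular Hodge filtration on the nearby cohomology has associated graded computing $\bigoplus_k H^{n-k}(\mathsf{T}(\check{\bf\Sigma}, w({\bf\Sigma})_\varphi)_0, {\bf J}^k)$. First I would invoke the construction of Sections \ref{s:combdeg} and \ref{s:polysub} to complete $T(\check{\bf\Sigma})\times \mathbb{D}^*$ to a proper family $\pi\colon \mathfrak{Y}\to \mathbb{D}$ for which $w({\bf\Sigma})_\varphi$ extends to a rational function and $(\mathfrak{Y}, \pi^{-1}(0), w({\bf\Sigma})_\varphi)$ is a quasi-stable degeneration of Landau--Ginzburg models in the sense of Definition \ref{d:deglg}. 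The strata of the central fiber are naturally indexed by pairs $(\check{c}, \Delta_c) \in (\check{\bf\Sigma}\oplus {\bf\Sigma})_0$ via Proposition \ref{p:cover}, and on each such stratum one obtains a toric Landau--Ginzburg model whose twisted cohomology was described combinatorially in Proposition \ref{p:adolphson-sperber}, matching the stalks of ${\bf J}^\bullet$ on $\mathsf{T}(\check{\bf\Sigma}, w({\bf\Sigma})_\varphi)_0$.

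Using the log relative twisted de~Rham complex of Section \ref{s:nbf} together with Theorem \ref{t:constdim} and Proposition \ref{p:hnconst} (equivalently, Theorem \ref{t:nearbyfibcoh}(1)), the filtered vector space on the left-hand side of the theorem is non-canonically filtered-isomorphic to the hypercohomology of the limiting complex with its limiting irregular Hodge filtration. It therefore suffices to produce a filtered isomorphism between $\bigoplus_k H^{n-k}(\mathsf{T}(\check{\bf\Sigma}, w({\bf\Sigma})_\varphi)_0, {\bf J}^k)$ and the associated graded of the limit. I would construct a Mayer--Vietoris / weight spectral sequence attached to the stratification of $\pi^{-1}(0)$: its $E_1$ page consists of twisted cohomologies of strata, identified via Proposition \ref{p:adolphson-sperber} with the combinatorial data $({\bf B}_\tau \otimes \wedge^\bullet L(\tau) \wedge \mathrm{Vol}(L(\tau)),\mathsf{F}^\bullet)$, and the $d_1$ differentials, built from residue maps as in Proposition \ref{p:resles}, are precisely the maps $p_{(\tau,\tau')} \otimes \varpi$ appearing in the \v{C}ech complex for $({\bf J}^\bullet, \mathsf{F}^\bullet)$ with respect to the cover furnished by Proposition \ref{p:cover}.

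The main obstacle is proving that this spectral sequence degenerates at $E_2$; this is Theorem \ref{t:nearbyfibcoh}(2), and as emphasized in Section \ref{s:pfThm}, classical Hodge-theoretic tools such as Steenbrink's $E_2$-degeneration theorem are not available in the irregular setting. My plan would be to establish degeneration directly on the log relative twisted de~Rham complex by carefully tracking the interaction of the limiting irregular Hodge filtration with the weight filtration coming from the stratification, and deducing strictness of the differentials via a local orbifold-chart analysis combined with the constancy of graded dimensions guaranteed by Proposition \ref{p:hnconst}. Once the degeneration is in place, $E_\infty = E_2$ identifies the associated graded of the limit filtration with $\bigoplus_k H^{n-k}(\mathsf{T}(\check{\bf\Sigma}, w({\bf\Sigma})_\varphi)_0, {\bf J}^k)$ as filtered vector spaces, and an arbitrary splitting of the limit filtration produces the (non-canonical) isomorphism claimed in the statement.
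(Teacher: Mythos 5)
Your overall architecture (degenerate via Sections \ref{s:combdeg}--\ref{s:polysub}, use the relative twisted de~Rham complex and Theorem \ref{t:constdim}/Proposition \ref{p:hnconst} to reduce to the central fibre, then identify the limit with cellular cohomology of the Jacobian sheaf) matches the paper's, but the crucial step is missing and your sketch of it is circular. Proposition \ref{p:hnconst} is conditional on Property \ref{c:E1deg}, which is exactly the injectivity/strictness statement you still have to prove (it is Theorem \ref{t:nearbyfibcoh}(1)); yet your plan for proving the degeneration invokes ``the constancy of graded dimensions guaranteed by Proposition \ref{p:hnconst}'' as an input. You cannot use \ref{p:hnconst} before \ref{c:E1deg} is established. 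Moreover, your proposed mechanism --- tracking the interaction of the limiting irregular filtration with a weight filtration and deducing strictness by local orbifold-chart analysis --- is precisely the Steenbrink-type argument the paper points out is unavailable in the irregular setting; stated at this level of generality it is a restatement of the goal, not a proof.

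The idea that actually closes the gap in the paper is different and relies on simpliciality of the strata. Because each $w_\sigma$ has monomial support the vertex set of a simplex, Proposition \ref{p:adolphson-sperber} gives an explicit \emph{splitting} of the irregular Hodge filtration on each stratum cohomology, namely $\xi_{(c,\sigma)}$ identifying $\Gamma(\Lambda^\bullet_{(c,\sigma)})$ with ${\bf B}_{\tau_\sigma}\otimes \wedge^\bullet c^\perp\wedge \mathrm{Vol}(L(\tau_\sigma))$, and this splitting is compatible with the residue maps (the commutative diagram \eqref{e:cd}). Feeding these splittings into the explicit $F_\mathrm{irr}$-filtered resolution of $\Lambda^\bullet$ by the $\Lambda^\bullet_{(c,\sigma)}$ (Lemmas \ref{p:tropresn-cpt}, \ref{l:lesresn}, \ref{l:bigres} and Proposition \ref{lem:res-irregular}) shows that the entire double complex computing $\mathbb{H}^n(T,\Lambda^\bullet)$ decomposes into $\mathsf{F}$-graded pieces, so the differentials are automatically graded (hence strict); this yields Property \ref{c:E1deg} and the filtered identification with $\bigoplus_k H^{n-k}(\cdot,\tilde{\bf J}^k)$ simultaneously, with no spectral-sequence degeneration argument needed. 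You would also need the subdivision-invariance step (Proposition \ref{p:curry}) to pass from the refined sheaf $\tilde{\bf J}^\bullet$ on $\mathsf{T}_\Sigma(\Sigma_{0,D},w_\varphi)_0$ to ${\bf J}^\bullet$ on $\mathsf{T}(\check{\bf\Sigma},w({\bf\Sigma})_\varphi)_0$, and Corollary \ref{c:trivfib} to see that the choice of $\varepsilon\in\mathbb{D}^*$ is immaterial; both are minor omissions compared with the missing splitting argument.
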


The proof of Theorem \ref{t:descent} involves a significant amount of preliminary setup that is not directly relevant to the main part of the article, so we include it in the final sections of the paper (Sections \ref{s:degnby} and \ref{s:nearbyfib}). Theorem \ref{t:descent} implies that 
\begin{align*}
\dim \gr_{F_{\mathrm{irr}}}^\lambda H^{n}(T(\check{ \Sigma}), w({\bf \Sigma})) &  = \sum_k \dim \gr_{F_{\mathrm{irr}}}^\lambda H^{n-k}(\mathsf{T}(\check{\bf \Sigma},w({\bf \Sigma})_{\varphi})_0,{\bf J}^k)
\\ & = \dim H^{\lambda,n-\lambda}(\mathsf{T}(\check{\bf\Sigma}, w({\bf \Sigma})_{\varphi})_0).
\end{align*}
Since the dimensions of irregular Hodge-graded pieces are preserved in the limit (Proposition \ref{p:GM}), we have 
\[
\dim \gr_{F_{\mathrm{irr}}}^\lambda H^n(T(\check{\Sigma}), w({\bf \Sigma})) = \dim H^{\lambda,n-\lambda}(\mathsf{T}(\check{\bf\Sigma}, w({\bf \Sigma})_{\varphi})_0).
\]
The above arguments naturally extend to the orbifold cohomology of toric Deligne--Mumford stacks, as the computation of the irregular orbifold Hodge numbers ultimately reduce to those of the underlying coarse moduli space (see also Remark \ref{r:relinertia}). Therefore Proposition \ref{p:hodgeorb} implies that for $\lambda,n \in \mathbb{Q}$ we have the following relation:
\begin{equation}\label{eq:orbhodgetrop}
\dim \gr_{F_\mathrm{irr}}^\lambda H^n_\mathrm{orb}(T({\bf \check{\Sigma}}), w({\bf \Sigma})) = \dim H^{\lambda,n-\lambda}_\mathrm{orb}(\mathsf{T}({\bf \check{\Sigma}}, w({\bf \Sigma})_{\varphi})_0).
\end{equation}
This relation \eqref{eq:orbhodgetrop} leads to the main result of this article when combined with Theorem \ref{t:tropidual}.
\begin{theorem}\label{t:clarke}
    Suppose $({\bf \Sigma},{\bf \check{\Sigma}})$ form a Clarke dual pair of stacky fans of dimension $d$ and coefficients of $w({\bf \Sigma})$ and $w(\check{\bf \Sigma})$ are chosen generically. Then for all $\lambda,\mu \in \mathbb{Q}$
    \[
    f_{\mathrm{orb}}^{\lambda,\mu}(T({\bf \Sigma}), w(\check{\bf \Sigma}))= f_{\mathrm{orb}}^{d-\lambda,\mu}(T({\bf \check{\Sigma}}), w({\bf \Sigma})).
    \]
\end{theorem}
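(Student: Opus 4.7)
The plan is to assemble \Cref{t:clarke} out of two ingredients already developed earlier in the paper: the tropical Clarke duality of \Cref{t:tropidual} and the degeneration-comparison statement \eqref{eq:orbhodgetrop} that relates irregular orbifold Hodge numbers of a toric Landau--Ginzburg model to orbifold tropical Hodge spaces of the tropicalization of a one-parameter deformation.

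First, I would use quasiprojectivity of ${\bf \Sigma}$ and $\check{\bf \Sigma}$ to pick strictly convex piecewise-linear functions $\varphi : \mathrm{Supp}({\bf \Sigma}) \to \mathbb{R}$ and $\check{\varphi} : \mathrm{Supp}(\check{\bf \Sigma}) \to \mathbb{R}$, integral on ${\bf \Sigma}[1]$ and $\check{\bf \Sigma}[1]$ respectively. These yield one-parameter families of Laurent polynomials
\[
w({\bf \Sigma})_\varphi = 1+\sum_{m\in {\bf \Sigma}[1]} u_m t^{\varphi(m)} \underline{x}^m,\qquad w(\check{\bf \Sigma})_{\check{\varphi}} = 1+\sum_{n\in \check{\bf \Sigma}[1]} v_n t^{\check{\varphi}(n)} \underline{\check{x}}^n,
\]
with generic coefficients. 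Each of these restricts, over any $\varepsilon\in\mathbb{D}^*$, to a nondegenerate Landau--Ginzburg model in the original mirror pair $(T(\check{\bf \Sigma}),w({\bf \Sigma}))$, resp.\ $(T({\bf \Sigma}),w(\check{\bf \Sigma}))$; by \Cref{p:simplicial} the associated tropical subdivisions are $\Delta_{\bf \Sigma}$ and $\Delta_{\check{\bf \Sigma}}$, so both pairs satisfy the combinatorial hypotheses used in Sections \ref{s:thyper}--\ref{s:clamp}.

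Second, applying \eqref{eq:orbhodgetrop} to each side yields
\[
f_{\mathrm{orb}}^{\lambda,\mu}(T({\bf \Sigma}),w(\check{\bf \Sigma})) = \dim H^{\lambda,\mu}_{\mathrm{orb}}\bigl(\mathsf{T}({\bf \Sigma},w(\check{\bf \Sigma})_{\check{\varphi}})_0\bigr),
\]
\[
f_{\mathrm{orb}}^{d-\lambda,\mu}(T(\check{\bf \Sigma}),w({\bf \Sigma})) = \dim H^{d-\lambda,\mu}_{\mathrm{orb}}\bigl(\mathsf{T}(\check{\bf \Sigma},w({\bf \Sigma})_\varphi)_0\bigr).
\]
These identities translate the theorem into a purely combinatorial statement about the orbifold tropical Hodge spaces of the two tropicalizations. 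At this point \Cref{t:tropidual} applies verbatim and produces a canonical isomorphism
\[
H^{\lambda,\mu}_\mathrm{orb}\bigl(\mathsf{T}({\bf \Sigma},w(\check{\bf \Sigma})_{\check{\varphi}})_0\bigr) \;\cong\; H^{d-\lambda,\mu}_\mathrm{orb}\bigl(\mathsf{T}(\check{\bf \Sigma}, w({\bf \Sigma})_\varphi)_0\bigr),
\]
induced by contraction with $\mathrm{Vol}(M)$ and the swap $(c,\check c)\mapsto(\check c,c)$ under the identifications of \Cref{p:cover}. Comparing dimensions gives the required equality of irregular orbifold Hodge numbers.

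The only step that requires real work is invoking \eqref{eq:orbhodgetrop}; that identity is established in the last two sections of the paper via \Cref{t:descent} (irregular Hodge numbers of a toric Landau--Ginzburg model equal the graded dimensions of the Jacobian-sheaf cohomology on the central tropical fibre) and \Cref{p:hodgeorb} (passing from the stratified cohomology to the orbifold Jacobian sheaf), together with the constancy of irregular Hodge-graded dimensions in the family (\Cref{p:GM}). Once these are taken as given, the proof above is essentially a bookkeeping exercise, and the main obstacle---namely the Steenbrink-type analysis of the nearby twisted de Rham complex and the $E_2$ degeneration of its associated spectral sequence---has already been subsumed into \Cref{t:descent}.
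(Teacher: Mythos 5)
Your proposal is correct and follows essentially the same route as the paper: the paper also derives Theorem \ref{t:clarke} by combining the degeneration comparison \eqref{eq:orbhodgetrop} (itself obtained from Theorem \ref{t:descent}, the constancy statement of Proposition \ref{p:GM}, and Proposition \ref{p:hodgeorb}) with the tropical duality of Theorem \ref{t:tropidual}. No substantive difference or gap to report.
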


\subsection{Weak Fano toric Deligne--Mumford stacks}\label{eg:wfano}
It is instructive to understand the situation when $\check{\Sigma} = \{0_N\}$. The convex condition forces $\Delta := \mathrm{Conv}({\bf \Sigma}[1] \cup 0_M)$ to be equal to $\Delta_{{\bf\Sigma}}$, however, it makes no  further requirements. The Clarke mirror pair of Landau--Ginzburg models arising from this situation is  then:
\[
T({ {\bf \Sigma}}) = T({{\bf \Sigma}}),\quad w(\check{\Sigma}) = 0,\quad T({\check{\Sigma}}) = (\mathbb{C}^{*})^d,\quad w({\bf \Sigma}) =1+ \sum_{m \in {\bf {\Sigma}}[1]}u_m \underline{x}^m.
\]
As usual, we choose $u_m$ generically. We have the following result.
\begin{theorem}\label{t:KKPweakFano}
For any weak Fano toric Deligne--Mumford stack $T({\bf \Sigma})$, there are identifications 
\[
 f_{\mathrm{orb}}^{\lambda,\mu}(T({\bf \Sigma}))= f_{\mathrm{orb}}^{d-\lambda,\mu}((\mathbb{C}^*)^d, w({\bf \Sigma}))
\]
for $\lambda,\mu\in \mathbb{Q}$. 
\end{theorem}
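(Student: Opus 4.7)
The plan is to apply Theorem~\ref{t:clarke} directly to the Clarke dual pair $({\bf \Sigma}, \check{\bf \Sigma})$ with $\check{\bf \Sigma} = \{0_N\}$, as already set up in the discussion preceding the statement. With this choice $T(\check{\bf \Sigma}) = (\mathbb{C}^*)^d$ and $w(\check{\bf \Sigma}) = 0$, so Theorem~\ref{t:clarke} immediately yields
\[
f^{\lambda,\mu}_{\mathrm{orb}}(T({\bf \Sigma}), 0) = f^{d-\lambda,\mu}_{\mathrm{orb}}((\mathbb{C}^*)^d, w({\bf \Sigma})).
\]
It therefore suffices to verify two things: that the weak Fano hypothesis on $T({\bf \Sigma})$ ensures $({\bf \Sigma}, \{0_N\})$ is a Clarke dual pair in the sense of Definition~\ref{d:adjectives}, and that the left-hand side above coincides with $f^{\lambda,\mu}_{\mathrm{orb}}(T({\bf \Sigma}))$ as written in the statement.

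For the first point, the regularity condition is automatic since $\langle m, 0\rangle = 0$, and convexity of $\Delta_{\check{\bf \Sigma}} = \{0\}$ is trivial. The only nontrivial hypothesis is convexity of $\Delta_{\bf \Sigma}$. Since $T({\bf \Sigma})$ is compact, $\Sigma$ is complete, and $\Delta_{\bf \Sigma}$ can be written as $\{m \in M_\mathbb{R} \mid \psi(m) \leq 1\}$, where $\psi$ is the $\Sigma$-piecewise linear function satisfying $\psi(\beta_\rho \rho) = 1$ on every ray $\rho \in \Sigma[1]$. This $\psi$ is precisely the support function of the stacky anticanonical divisor $-K_{T({\bf \Sigma})} = \sum_\rho E_\rho$, and the standard dictionary between $T$-invariant divisors on a toric Deligne--Mumford stack and piecewise linear functions shows that $-K$ is nef if and only if $\psi$ is convex, which in turn holds if and only if $\Delta_{\bf \Sigma}$ is a convex polytope. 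Hence weak Fano translates exactly into the convexity condition. Quasiprojectivity of ${\bf \Sigma}$ follows from projectivity of $T({\bf \Sigma})$, which is implied by the weak Fano hypothesis via semi-ampleness of $-K$ (or, if needed, after slightly perturbing $\psi$ by a strictly convex auxiliary polarization without changing $\Delta_{\bf \Sigma}$).

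For the second identification, when the superpotential is zero the connection $\nabla = d + dw \wedge (-)$ reduces to $d$, so the twisted de Rham complex agrees with the usual de Rham complex, and the irregular Hodge filtration $F^\bullet_{\mathrm{irr}}$ coincides with the usual Hodge filtration, as already remarked after the definition of $F^\bullet_{\mathrm{irr}}$ in Section~\ref{s:lgcoh}. Applying this sector-by-sector on the inertia stack yields $f^{\lambda,\mu}_{\mathrm{orb}}(T({\bf \Sigma}), 0) = h^{\lambda,\mu}_{\mathrm{orb}}(T({\bf \Sigma}))$, which is the quantity $f^{\lambda,\mu}_{\mathrm{orb}}(T({\bf \Sigma}))$ in the statement, under the convention that the superpotential is suppressed from the notation when it equals zero.

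The only real subtlety in this proof is the equivalence between the weak Fano condition on $T({\bf \Sigma})$ and the combinatorial convexity of $\Delta_{\bf \Sigma}$ in the simplicial stacky setting; once that translation is recorded, the result is a transparent specialization of the main theorem. I expect no further obstacle beyond this standard combinatorial check.
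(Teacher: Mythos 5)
Your proposal is correct and follows essentially the same route as the paper: Theorem~\ref{t:KKPweakFano} is obtained there exactly by specializing Theorem~\ref{t:clarke} to the Clarke dual pair $({\bf \Sigma},\check{\bf\Sigma}=\{0_N\})$, with the constant superpotential reducing $F^\bullet_{\mathrm{irr}}$ to the usual Hodge filtration, so that $f^{\lambda,\mu}_{\mathrm{orb}}(T({\bf\Sigma}),0)=h^{\lambda,\mu}_{\mathrm{orb}}(T({\bf\Sigma}))$. Your explicit translation of the weak Fano hypothesis into convexity of $\Delta_{\bf\Sigma}$ via the support function $\psi$ with $\psi(\beta_\rho\rho)=1$ is a welcome elaboration of what the paper leaves implicit (note only that projectivity, hence quasiprojectivity of ${\bf\Sigma}$, should be taken as part of the definition of weak Fano rather than deduced from nefness of $-K$, since a convex but non-strictly-convex support function does not by itself yield a polarization).
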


The reader may compare this to results of Douai \cite{douai2018global,douai2018note,douai2024hodge} in the case where $\Delta_{{\bf{\Sigma}}}$ is itself simplicial, and Batyrev \cite{batyrev1993variations} (also the first named author \cite{harder2016geometry}) in the case where the stacky structure on ${\Sigma}$ is the canonical stacky structure, in which case, $\Delta_{{\Sigma}}$ must be reflexive.

\subsection{Berglund--H\"ubsch--Krawitz duality}\label{s:bhk}
We follow Clarke's exposition of Berglund--H\"ubsch--Krawitz duality as described in \cite{clarke2016dual}. Suppose we have an $(d+1)\times (d+1)$ invertible matrix $B$ with non-negative integer entries. To $B$ we may attach a polynomial in $(d+1)$ variables,
\[
w_B = \sum_{j=0} \prod_i x_i^{p_{ij}}    
\]
To such a polynomial there is a finite subgroup $S_B $ which is the kernel of the map 
\[
\mathcal{F}(B) : (\mathbb{C}^*)^{d+1} \rightarrow (\mathbb{C}^*)^{d+1},\qquad y_j \mapsto \prod_{i}x_i^{p_{ij}}
\]
We then choose a subgroup $Q_B \subseteq S_B$. Then we obtain a Landau--Ginzburg model
\[
w_B : \mathbb{A}^{d+1}/Q_B \rightarrow \mathbb{A}^1.
\] 
We may factor the morphism $\mathcal{F}(B)$ as 
\[
\begin{tikzcd}
(\mathbb{C}^*)^{d+1} \ar[rr, "\mathcal{F}(B)"]\ar[rd,"\mathcal{F}(C^t)"] & & (\mathbb{C}^*)^{d+1} \\ 
& (\mathbb{C}^*)^{d+1}/Q_B \cong (\mathbb{C}^*)^{d+1} \ar[ru,"\mathcal{F}(D)"] & 
\end{tikzcd}
\]
Let $Q_B^t$  denote the kernel of the map $\mathcal{F}(C)$. One can show that $Q_B^t \subseteq S_{B^t}$ and thus we get a dual Landau--Ginzburg model 
\[
w_{B^t} : \mathbb{A}^{d+1}/Q_B^t \rightarrow \mathbb{A}^1.    
\]
Now we let $\Sigma = \mathrm{Cone}(\mathrm{cols}(C))$ and let $\check{\Sigma} = \mathrm{Cone}(\mathrm{cols}(D))$ where $\mathrm{cols}(C), \mathrm{cols}(D)$ indicate the columns of $C$ and $D$ respectively. 
\begin{theorem}[{\cite[Proposition 3.14, Theorem 3.16]{clarke2016dual}}]
	The pair of fans $(\Sigma, \check{\Sigma})$ form a Clarke mirror pair of Landau--Ginzburg models. The corresponding stacky Landau--Ginzburg models are 
	\[
	T(\Sigma) = \mathbb{A}^{d+1}/Q_B, \qquad w(\check{\Sigma}) = w_B ,\qquad     T(\check{\Sigma}) = \mathbb{A}^{d+1}/Q_B^t, \qquad w({\Sigma}) = w_{B^t}.
	\]
\end{theorem}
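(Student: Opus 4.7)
The plan is to verify the two axioms of a Clarke dual pair in Definition \ref{d:adjectives} and then match the two induced stacky Landau--Ginzburg models with the BHK data on each side.

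First, I would unpack the factorization $\mathcal{F}(B) = \mathcal{F}(D)\circ\mathcal{F}(C^t)$ on the level of character lattices: applying it to the monomial $y_j$ and comparing exponents yields the matrix identity $B = C^T D$, where $B$, $C$, and $D$ are $(d+1)\times(d+1)$ integer matrices, all invertible. Viewing the columns of $C$ as lying in $M = \mathbb{Z}^{d+1}$, the columns of $D$ in $N = \mathbb{Z}^{d+1}$, and pairing them by the standard dot product gives
\[
\langle c_i, d_j \rangle = \sum_k C_{ki} D_{kj} = B_{ij} \geq 0,
\]
which is the Clarke condition on ray generators, and hence on all of $\mathrm{Supp}(\Sigma)\times\mathrm{Supp}(\check{\Sigma})$ by bilinearity. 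Since $\Sigma$ and $\check{\Sigma}$ are each a single simplicial cone (by invertibility of $C$ and $D$), the polytopes $\Delta_\Sigma$ and $\Delta_{\check{\Sigma}}$ are individual simplices, so the convexity axiom is automatic.

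Next, I would apply the Cox construction from Section \ref{s:tstack}. Because $\Sigma$ consists of a single maximal simplicial cone, the irrelevant ideal $J_\Sigma$ is trivial, and the associated smooth toric Deligne--Mumford stack is $[\mathbb{A}^{d+1}/G_\beta]$, where $G_\beta\subseteq(\mathbb{C}^*)^{d+1}$ is the kernel of the morphism of tori induced by the lattice map $e_i\mapsto \beta_i c_i$. After identifying the stacky weights $\beta$ with the primitivity vector of the columns of $C$, this morphism is precisely $\mathcal{F}(C^t)$, so $G_\beta = \ker\mathcal{F}(C^t) = Q_B$ by the definition of the factorization, giving $T(\Sigma)\cong[\mathbb{A}^{d+1}/Q_B]$. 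The mirror statement $T(\check{\Sigma})\cong[\mathbb{A}^{d+1}/Q_B^t]$ follows symmetrically by reversing the roles of $C$ and $D$.

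Finally, I would match superpotentials. By definition $w(\check{\Sigma}) = 1 + \sum_n a_n\, \underline{x}^n$, summed over the (scaled) columns of $D$; pulling each such monomial back along the Cox cover $\mathbb{A}^{d+1}\to T(\Sigma)$ via $\mathcal{F}(C^t)$ and substituting $B = C^T D$ turns it into $\prod_k x_k^{B_{kn}}$, which is exactly the BHK monomial indexed by the $n$-th column of $B$, thereby recovering $w_B$ up to generic coefficients and the choice of distinguished column contributing the constant term. The opposite identification $w(\Sigma) = w_{B^t}$ is entirely symmetric. The main subtlety I anticipate is the compatibility between the stacky weight data $\beta$ on $\Sigma$ and the primitivity of the columns of $C$: matching these requires tracking the Smith-normal-form data of $C$ through the factorization of $\mathcal{F}(B)$, so that the kernel appearing in the quotient presentation of $T(\Sigma)$ is literally $Q_B$ and not a finite isogenous cover of it.
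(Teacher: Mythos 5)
Your proposal is essentially correct, and it is worth pointing out that the paper itself offers no proof of this statement: it is quoted verbatim from Clarke \cite{clarke2016dual}, and the only verification the authors carry out themselves is of the Clarke-dual-pair axioms (simplicial, convex, quasiprojective) inside the proof of Corollary \ref{cor:Krawitz}. Your argument reproduces that verification (regularity via $B=C^{T}D$ and nonnegativity of the entries of $B$, convexity because each fan is a single simplicial cone and its faces --- you should also record quasiprojectivity, which is immediate for a fan with one maximal cone) and then supplies the Cox-construction unwinding that Clarke's proof consists of: $J_\Sigma$ is the unit ideal, $T_\beta=\mathcal{F}(C^{t})$, hence $G_\beta=\ker\mathcal{F}(C^{t})=Q_B$, and pulling the monomial attached to the $j$-th column of $D$ back along $\mathcal{F}(C^{t})$ gives $\prod_k x_k^{B_{kj}}$, i.e.\ the $j$-th monomial of $w_B$. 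The constant term $1$ and the generic coefficients are harmless discrepancies with the classical $w_B$, as you note.

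Two refinements. First, the step you dismiss "by symmetry" is the one place where the transposes actually matter: the Cox-construction kernel for $\check{\Sigma}=\mathrm{Cone}(\mathrm{cols}(D))$ is $\ker\mathcal{F}(D^{t})$, and identifying it with $Q_B^{t}$ requires invoking the transposed factorization $\mathcal{F}(B^{t})=\mathcal{F}(C)\circ\mathcal{F}(D^{t})$ coming from $B^{t}=D^{T}C$; with the paper's terse in-text description of $Q_B^{t}$ this does not follow from a purely formal symmetry, so spell out which kernel is meant (in a one-variable example with $C=(2)$, $D=(3)$ the kernels of $\mathcal{F}(C)$ and $\mathcal{F}(D^{t})$ differ, so the bookkeeping is not vacuous). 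Second, your anticipated Smith-normal-form subtlety is a red herring: $G_\beta=Q_B$ holds by the very definition of $C$, since the quotient of $(\mathbb{C}^{*})^{d+1}$ by the finite subgroup $Q_B$ is again a torus and $\mathcal{F}(C^{t})$ is, by construction, that quotient map written in chosen coordinates; no normal-form tracking is needed, only the observation that the stacky weights are the primitivity factors of the columns of $C$, which you already make.
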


 Now, we have the following result, directly from Theorem \ref{t:clarke}.
\begin{corollary}\label{cor:Krawitz}
	Suppose both $C$ and $D$ are matrices.  Then for any $\lambda,\mu \in \mathbb{Q}$,
\[
    f^{\lambda,\mu}_\mathrm{orb}(\mathbb{A}^{d+1}/Q_B, w_B) = f^{d+1-\lambda,\mu}_\mathrm{orb}(\mathbb{A}^{d+1}/Q_{B^t}, w_{B^t}).
    \]
    
\end{corollary}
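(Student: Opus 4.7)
The plan is to recognize the Berglund--H\"ubsch--Krawitz data as producing a Clarke dual pair of stacky fans satisfying the hypotheses of Theorem \ref{t:clarke}, and then to apply that theorem in dimension $d+1$.

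First I would verify the conditions of Definition \ref{d:adjectives} for $({\bf \Sigma}, \check{\bf \Sigma})$. The regularity condition \eqref{e:clarke} is exactly what the cited results of Clarke provide. Because $C$ and $D$ are invertible $(d+1)\times(d+1)$ integer matrices, their columns are linearly independent, so the maximal cones $\mathrm{Cone}(\mathrm{cols}(C))$ and $\mathrm{Cone}(\mathrm{cols}(D))$ are simplicial, and the associated fans are simplicial. Each fan has a single maximal cone, so quasiprojectivity is immediate: any linear function strictly positive on the primitive ray generators furnishes the required strictly convex piecewise-linear function. Finally, $\Delta_{\bf \Sigma} = \mathrm{Conv}({\bf \Sigma}[1]\cup\{0\})$ and $\Delta_{\check{\bf \Sigma}} = \mathrm{Conv}(\check{\bf \Sigma}[1]\cup\{0\})$ are simplices, hence convex. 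Thus $({\bf \Sigma},\check{\bf \Sigma})$ is a Clarke dual pair in the sense of Definition \ref{d:adjectives}.

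Next I would reconcile the coefficient conventions. Clarke's result identifies $w(\check{\bf \Sigma}) = w_B$ and $w({\bf \Sigma}) = w_{B^t}$, while the standard BHK polynomial has every coefficient equal to $1$, whereas Theorem \ref{t:clarke} is stated for \emph{generic} coefficients. To bridge this gap I would invoke Proposition \ref{p:GM} on constancy of the graded dimensions of the irregular Hodge filtration across a nondegenerate family of Landau--Ginzburg models with fixed Newton polytope. Together with nondegeneracy of $w_B$ (a standard consequence of the invertibility of $B$, verified chart-by-chart on each twisted sector), this shows that the orbifold irregular Hodge numbers for any allowed generic coefficient choice coincide with those for the specific polynomial $w_B$.

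With these pieces in place, Theorem \ref{t:clarke} applied to $({\bf \Sigma}, \check{\bf \Sigma})$ in ambient dimension $d+1$ delivers the stated identity
\[
f^{\lambda,\mu}_{\mathrm{orb}}(\mathbb{A}^{d+1}/Q_B, w_B) = f^{d+1-\lambda,\mu}_{\mathrm{orb}}(\mathbb{A}^{d+1}/Q_{B^t}, w_{B^t}).
\]
The main obstacle is the coefficient-constancy step: one must know that the orbifold irregular Hodge numbers do not jump at the BHK locus inside the space of nondegenerate Laurent polynomials supported on ${\bf \Sigma}[1]\cup\{0\}$. Should Proposition \ref{p:GM} as formulated cover only sufficiently general deformations, I would instead fall back on a direct computation via Proposition \ref{p:adolphson-sperber} restricted to each twisted sector of $\mathbb{A}^{d+1}/Q_B$, where the filtered dimension formula depends only on the combinatorial data of the simplex $\Delta_{\bf \Sigma}$ and not on the specific values of the nonzero coefficients.
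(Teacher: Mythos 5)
Your proposal is correct and follows essentially the same route as the paper: check that $({\bf \Sigma},\check{\bf \Sigma})$ is simplicial, quasiprojective and convex (regularity being supplied by Clarke's results), and then apply Theorem \ref{t:clarke} in ambient dimension $d+1$. Your extra step about coefficient genericity is a point the paper's proof silently glosses over; the quickest justification is that, since $B$ is invertible, rescaling coordinates by the torus (which commutes with the diagonal group $Q_B$) converts $w_B$ into the same polynomial with any prescribed nonzero coefficients, so no deformation argument is needed — and note that Proposition \ref{p:GM} alone would only give constancy of the total rank, not of the $F_{\mathrm{irr}}$-graded dimensions, so your fallback via Proposition \ref{p:adolphson-sperber} (or the rescaling argument) is the right way to close that step.
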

\begin{proof}
	The singularities of $\mathbb{A}^{(d+1)}/Q_B$ and $\mathbb{A}^{(d+1)}/Q_{B^t}$ are orbifold singularities, thus $\Sigma$ and $\check{\Sigma}$ satisfy the simplicial condition. The convex condition is a direct consequence of the fact that both $\Sigma$ and $\check{\Sigma}$ are cones over simplices. Similarly, since $\Sigma$ and $\check{\Sigma}$ are cones over simplices and thus any choices of linear functions $\varphi$ and $\check{\varphi}$ is strongly convex, both $\Sigma$ and $\check{\Sigma}$ are quasi-projective.
\end{proof}
In the following discussion, we assume that $w_B$ and $w_{B^t}$ have at worst isolated singularities at $0$. In this case, it is traditional in the literature for authors to study the vanishing cohomology at 0, simply denoted by $H^*(\phi_{w_B}\mathbb{C})$ and $H^*(\phi_{w_{B^t}}\mathbb{C})$, respectively. This vanishing cohomology carries a mixed Hodge structure, and a semi-simple operator, $T$, whose eigenspaces are also equipped with mixed Hodge structures. From this, one may define $\mathbb{Q}$-graded Hodge numbers similar to ours:
\begin{equation}\label{eq:lim0}
h^{\lambda,d-\lambda}_\mathrm{orb}(\phi_{w_B}\mathbb{C}) := \dim \gr^{\lfloor p \rfloor}_F H^{d}(\phi_{w_B}\mathbb{C})_{\exp(2\pi {\tt i}p)},
\end{equation}
and orbifold versions by following the usual recipe. When both matrices $C$ and $D$ have determinant $\pm 1$, the corresponding toric varieties have Gorenstein singularities and $\log T$ is nilpotent. Krawitz \cite{krawitz2010fjrw} and Borisov \cite{borisov2013berglund} prove that there is a state space isomorphism,
\begin{equation}\label{eq:krabo}
h^{\lambda,\mu}_\mathrm{orb}(\phi_{w_B}\mathbb{C}) = h^{d-\lambda,\mu}_\mathrm{orb}(\phi_{w_{B^t}}\mathbb{C}).
\end{equation}
In the case where $\det C, \det D \neq \pm 1$, Ebeling, Gusein-Zade, and Takahashi \cite{Ebeling2016Efunction} prove results similar to Corollary \ref{cor:Krawitz}. Namely, the authors of \cite{Ebeling2016Efunction} construct an $E$-function for $(\mathbb{A}^{d+1}/Q_B,w_B)$, which is a polynomial whose coefficients are signed sums of the numbers in~\eqref{eq:lim0}, and they show that these $E$-functions satisfy a relation similar to~\eqref{eq:krabo}. 

It is not obvious that the invariants studied in \cite{krawitz2010fjrw,borisov2013berglund,Ebeling2016Efunction} are the same as ours, so we will sketch a proof that they are equal. First, we mention that, by construction, $w_B$ has singularities only occurring in the fibre over $0$. Therefore, the vanishing cohomology may be viewed as a limit of the relative cohomology $H^*_\mathrm{orb}(\mathbb{A}^{d+1}/Q_B,w_B^{-1}(t))$ as $t$ approaches 0. The irregular Hodge numbers, on the other hand, can be obtained as the limit of of $H^*_\mathrm{orb}(\mathbb{A}^{d+1}/Q_B,w_B^{-1}(t))$ as $t$ approaches $\infty$, thanks to a result of Saito \cite[Theorem E.1]{esnault20171} (see also discussion following Proposition \ref{p:har}). Since the monodromy automorphisms around 0 and $\infty$ agree, we have that $h^{\lambda,\mu}_\mathrm{orb}(\phi_{w_B}\mathbb{C}) = f_\mathrm{orb}^{\lambda,\mu}(\mathbb{A}^{d+1}/Q_B,w_B)$. 

Notably, Corollary \ref{cor:Krawitz} does not require any nondegeneracy assumptions about the singularities so it is strictly more general than the work of Krawitz, Borisov, and of Ebeling, Gusein-Zade, and Takahashi.

\section{A conjecture of Katzarkov, Kontsevich, and Pantev}\label{s:smooth}

In this section, we will specialize the constructions in the previous section to the case of Gorenstein non-stacky Clarke dual pairs and show how our main result generalizes results of Batyrev and Borisov \cite{batyrev1996mirror} to {\em log} Calabi--Yau mirror pairs and proves a general form of a conjecture of Katzarkov, Kontsevich, and Pantev \cite{katzarkov2017bogomolov}. 

\begin{conjecture}[{Katzarkov--Kontsevich--Pantev \cite{katzarkov2017bogomolov}}]\label{c:kkp}
    If $X$ is a non-singular Fano variety of dimension $d$ and $(\check{U},w)$ is a homological mirror Landau--Ginzburg model to $X$, then for $p,q \in \mathbb{Z}$
\[
h^{p,q}(X)=f^{d-p,q}(\check{U},w).
\]
\end{conjecture}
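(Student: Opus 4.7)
The plan is to prove Conjecture~\ref{c:kkp} in the toric setting promised in the introduction, namely when $X$ is a quasi-smooth toric complete intersection $X_A$ arising from a nef partition of a reflexive polytope. The strategy is to reduce the statement to the Clarke duality established in Theorem~\ref{t:clarke}, via the Cayley trick reformulation sketched in Section~1.3.

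The first step is to encode $X_A$ Hodge-theoretically in a higher-dimensional toric Landau--Ginzburg model. Given the nef partition $A_1,\dots,A_{k+1}$ with associated toric divisors $E_i$ on $T_\Delta$, I would form $V_A := \bigoplus_{i=1}^{k}\mathrm{Tot}(\mathcal{O}_{T_\Delta}(-E_i))$ together with the Cayley superpotential $g=g_1+\cdots+g_k$, where each $g_i$ is the tautological function on $\mathrm{Tot}(\mathcal{O}_{T_\Delta}(-E_i))$ determined by the section $s_i$. The key technical input is an $F_\mathrm{irr}$-filtered isomorphism
\[
H^{p,q}_\mathrm{orb}(X_A)\;\cong\;\mathrm{gr}^{\,p+k}_{F_\mathrm{irr}} H^{p+q+2k}_\mathrm{orb}(V_A,g)
\]
up to Tate twist by $k$ (Proposition~\ref{p:tj2} and Corollary~\ref{c:tj2}). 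I would prove this by combining the Koszul resolution of $\mathcal{O}_{X_A}$ with the residue calculus on the Kontsevich-type complex $\cOmega^\bullet(\log D,g)$, checking that the comparison is compatible with the irregular Hodge filtration. The same reduction is applied to the log Calabi--Yau pair $(X_A,D_A)$, giving an open version $(V_A^\circ,g)$ over $T^\circ=T_\Delta\setminus E_{k+1}$ that captures the cohomology of $U_A$.

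Next, I would identify $V_A$ with a simplicial Gorenstein toric variety whose fan $\Sigma_A$ sits inside $N\oplus\mathbb{Z}^k$: its rays consist of the rays of $\mathrm{nf}(\Delta)$ together with the standard generators of $\mathbb{Z}^k$ (the latter coming from the Cayley coordinates). Performing the dual construction on the mirror side yields a fan $\Sigma_{\check{A}}^\circ\subseteq M\oplus\mathbb{Z}^k$ realizing $(U_{\check{A}},w_{\check{A}})$ as a toric Landau--Ginzburg model. The Borisov duality of nef partitions translates directly into the pairing $\langle m,n\rangle\ge 0$ on $\mathrm{Supp}(\Sigma_A)\times\mathrm{Supp}(\Sigma_{\check{A}}^\circ)$, and the convexity of $\Delta_{\Sigma_A}$ and $\Delta_{\Sigma_{\check{A}}^\circ}$ follows from the convexity of the polytopes $\Delta_i$ and $\check{\Delta}_i$ involved in the nef partition. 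Quasi-projectivity is inherited from any projective orbifold crepant resolution of $T_\Delta$ and its mirror. Thus $(\Sigma_A,\Sigma_{\check{A}}^\circ)$ is a Clarke dual pair, and Theorem~\ref{t:clarke} applied to it, combined with the Cayley-trick isomorphism of the previous step, produces
\[
h^{p,q}_\mathrm{orb}(X_A)\;=\;f^{d-p,q}_\mathrm{orb}(U_{\check{A}},w_{\check{A}}),
\]
which is the desired instance of Conjecture~\ref{c:kkp}. For the log Calabi--Yau identity $f^{p,q}_\mathrm{orb}(U_A)=f^{d-p,q}_\mathrm{orb}(U_{\check{A}})$, I would rerun the argument with the Clarke dual pair $(\Sigma_A^\circ,\Sigma_{\check{A}}^\circ)$.

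The main obstacle is establishing the filtered Cayley isomorphism. One must first produce a suitable compactification of $V_A$ to which $g$ extends as a rational function with tame (or at worst controlled) pole divisor, so that $H^*_\mathrm{orb}(V_A,g)$ and its irregular Hodge filtration are defined in the sense of Section~\ref{s:lgcoh}; then the Koszul-to-de~Rham comparison must be upgraded from a quasi-isomorphism of coherent complexes to a \emph{filtered} quasi-isomorphism, which requires careful bookkeeping of pole orders along the boundary and along the zero locus of $g$. A secondary issue is to check simpliciality and quasi-projectivity of $\Sigma_A$ and $\Sigma_{\check{A}}^\circ$ after passing to an orbifold crepant refinement, in such a way that the Clarke dual structure is preserved. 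Once these two technical points are settled, the rest of the argument is essentially formal from Theorem~\ref{t:clarke}.
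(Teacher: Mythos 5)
Your proposal is correct and follows essentially the same route as the paper: encode $X_A$ (and $U_A$) via the Cayley trick in the higher-dimensional toric Landau--Ginzburg models $(V_A,g)$, $(V_A^\circ,g)$, check that $(\Sigma_A,\Sigma^\circ_{\check A})$ and $(\Sigma_A^\circ,\Sigma^\circ_{\check A})$ are Clarke dual (the paper's Proposition \ref{p:npdc}, where convexity is verified by exhibiting $\Delta_{\Sigma_A}$ as the region $\phi_1,\dots,\phi_{k+1}\ge 0$ rather than deduced directly from convexity of the $\Delta_i$), and conclude from Theorem \ref{t:clarke}; this is exactly how Theorem \ref{t:kkp} is proved. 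The only real deviation is your Koszul-resolution plan for the filtered Cayley isomorphism, which the paper obtains more directly from the residue long exact sequence (Propositions \ref{p:resles}, \ref{p:har}) applied one hypersurface at a time in Proposition \ref{p:tj2} and Corollary \ref{c:tj2}.
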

In this section, we show that this relation also applies to varieties which have nef anticanonical divisors as well as varieties with orbifold singularities. The following is the orbifold generalization of Conjecture \ref{c:kkp}.

\begin{conjecture}[Orbifold KKP conjecture]\label{c:orbkkp}
    If $X$ is an orbifold which has nef anticanonical divisor and $(\check{U},w)$ is a homological mirror Landau--Ginzburg model to $X$, then for $p,q \in \mathbb{Z}$
    \[
        h_{\mathrm{orb}}^{p,q}(X)=f^{d-p,q}_{\mathrm{orb}}(\check{U},w).
   \]
\end{conjecture}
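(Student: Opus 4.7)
The main difficulty is that Conjecture \ref{c:orbkkp} is stated for arbitrary orbifolds $X$ with nef anticanonical divisor and any homological mirror $(\check{U},w)$, so the very first step must be to pin down what ``homological mirror'' means in this generality and then translate a categorical equivalence into a filtered isomorphism of cohomology. I would split the proof into a categorical step (matching total cohomology) and a Hodge-theoretic step (matching the bigradings), with the option of reducing to the Clarke-dual setting whenever a combinatorial model is available.

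The plan for the categorical step is to assume that homological mirror symmetry supplies a split-closed pre-triangulated $A_\infty$-equivalence between a suitable enlargement of $D^b\mathrm{Coh}(X)$ and the Fukaya--Seidel category $\mathcal{FS}(\check{U},w)$, and to pass to Hochschild (or periodic cyclic) homology. On the $X$-side, an orbifold Hochschild--Kostant--Rosenberg theorem (in the style of Baranovsky and C{\u a}ld{\u a}raru) identifies $HH_*(D^b\mathrm{Coh}(X))$ with $\bigoplus_{g \in I_X} H^*(X_{(g)},\Omega^\bullet_{X_{(g)}})$, which, up to the age shift of Section \ref{s:orbcoh}, is $H^*_\mathrm{orb}(X;\mathbb{C})$. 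On the Landau--Ginzburg side, a version for $\mathcal{FS}(\check{U},w)$ (extending the closed-string calculations of Segal and Shklyarov) should identify $HH_*$ with the orbifold twisted cohomology $H^*_\mathrm{orb}(\check{U},w)$ defined in Section \ref{s:lgcoh}. Matching the $\mathbb{Z}/2$-gradings with the orientation data on both sides yields the sum $\sum_{p+q = n}h_\mathrm{orb}^{p,q}(X) = \sum_{p+q=n}f_\mathrm{orb}^{d-p,q}(\check{U},w)$.

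The plan for the Hodge-theoretic step is to refine this underlying isomorphism to match the Hodge filtration on the left with the irregular Hodge filtration on the right. The strategy is to equip both $HH_*$ groups with compatible non-commutative Hodge structures (TERP/nc Hodge structures in the sense of Katzarkov--Kontsevich--Pantev and Sabbah--Yu) and to show that the categorical equivalence preserves this extra structure. The nef anticanonical hypothesis is expected to enter here to guarantee, via the results of Shamoto and of Sabbah--Yu, that the nc Hodge structure on $H^*_\mathrm{orb}(\check{U},w)$ admits a limiting splitting whose gradeds compute $\gr^\bullet_{F_\mathrm{irr}}$. Combined with the HKR isomorphism on the commutative side, this would upgrade the degree-level identification to the bigraded one claimed in the conjecture. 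In concrete geometric classes one can short-circuit this general machinery: when $X$ is a toric complete intersection whose mirror is of Clarke type, the Cayley-trick reduction outlined after Theorem \ref{t:functor-intro} converts the conjecture into an instance of Theorem \ref{t:clintro}, and a parallel reduction using root-stack modifications as in Section \ref{s:non-convex} handles more general toric hypersurfaces with nef anticanonical.

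The main obstacle is the Hodge-theoretic step, because it requires working tools that do not yet exist in full generality: a definition of $\mathcal{FS}$ for stacky Landau--Ginzburg models (as flagged in the introduction), a comparison of its Hochschild homology with irregular Hodge cohomology compatible with the Gauss--Manin connection, and a proof that the categorical equivalence respects nc Hodge structures. Even for smooth Fanos, the compatibility of the irregular Hodge filtration with the nc Hodge filtration on $HH_*(\mathcal{FS}(\check{U},w))$ is only partially established. I therefore expect that a complete proof of Conjecture \ref{c:orbkkp} in full generality is currently out of reach, and that the realistic near-term target is to prove the conjecture under additional hypotheses (such as the existence of a Clarke-dual toric model or a tame semistable degeneration of $(\check{U},w)$) by combining the combinatorial Clarke duality of Theorem \ref{t:clarke} with the descent result Theorem \ref{t:descent}, as is carried out in Section \ref{s:smooth} for toric complete intersections.
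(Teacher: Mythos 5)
The statement you were asked about is labelled a \emph{conjecture} in the paper, and the paper does not prove it in the generality in which it is stated: for an arbitrary orbifold $X$ with nef anticanonical divisor and an arbitrary \emph{homological} mirror $(\check U,w)$, no argument is given or claimed. What the paper actually establishes is Theorem \ref{t:kkp}: the identity $h^{p,q}_{\mathrm{orb}}(X_A)=f^{d-p,q}_{\mathrm{orb}}(U_{\check A},w_{\check A})$ for quasi-smooth toric complete intersections built from a nef partition, where the ``mirror'' is not an abstract homological mirror but the explicit Batyrev--Borisov/Clarke construction. Your closing paragraph correctly identifies this: the proven cases are obtained by the Cayley trick (Proposition \ref{p:tj2}, Corollary \ref{c:tj2}) to trade $X_A$ for a higher-dimensional toric Landau--Ginzburg model, the verification that the resulting fans form a convex Clarke dual pair (Proposition \ref{p:npdc} --- this is precisely where the nef hypothesis enters, via convexity of $\Delta_{\Sigma_A}$, not via any limiting nc-Hodge splitting), and then Theorem \ref{t:clarke}, which itself rests on the tropical descent result Theorem \ref{t:descent} and the combinatorial duality Theorem \ref{t:combdual}.

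Your proposed two-step program (Hochschild homology plus orbifold HKR to match total dimensions, then nc/irregular Hodge structures to match the bigrading) is a genuinely different route from anything in the paper, and you are right that it is currently blocked: the paper itself flags that the Fukaya--Seidel category of a stacky Landau--Ginzburg model is not adequately defined, and no comparison of its Hochschild homology with irregular Hodge theory is available. The paper's approach buys unconditional results precisely by refusing to invoke any categorical equivalence: ``mirror'' is taken to mean the combinatorial Clarke dual, and the duality of irregular Hodge numbers is proved directly by degeneration and tropical/\v{C}ech computations. The cost is that nothing is said about mirrors not arising from this construction, which is why the general statement remains a conjecture. So your assessment is accurate; just be careful not to present the categorical step as something the nef hypothesis can rescue --- in the paper the nef condition is used only combinatorially, to make the Clarke pair convex so that Theorem \ref{t:clintro} applies.
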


\begin{remark}
   One can also introduce a stacky generalization of the KKP conjecture. For instance, in Section \ref{eg:wfano}, we have already shown the case where $X$ is a $d$-dimensional weak Fano toric stack $T({\bf \Sigma})$ and $(\check{U},w)=((\mathbb{C}^*)^d, w({\bf \Sigma}))$ (Theorem \ref{t:KKPweakFano}). However, this does not imply the KKP conjecture for the underlying coarse moduli space $T(\Sigma)$, as the construction of a mirror Landau--Ginzburg model for $T(\Sigma)$ does not typically arise from a Clarke dual pair. We address this issue further in Section \ref{s:non-convex}.
\end{remark}

From the perspective of mirror symmetry, a choice of nef anticanonical divisor $D$ is also reflected in the mirror Landau--Ginzburg model. When $D$ is an orbifold normal crossings divisor, the complement $U := X \setminus D$ becomes log Calabi--Yau, and it is expected to be mirror to $\check{U}$. This leads to the Hodge number duality conjecture between $U$ and $\check{U}$.
\begin{conjecture}[Hodge number duality for log Calabi--Yau varieties]\label{c:logcal}
    Let $U$ and $\check{U}$ be homologically mirror dual $d$-dimensional log Calabi--Yau varieties at worst Gorenstein orbifold singularities. Then for $p,q \in \mathbb{Z}$, 
    \[
    f^{p,q}_{\mathrm{orb}}(U)=f^{d-p,q}_{\mathrm{orb}}(\check{U}).
    \]
\end{conjecture}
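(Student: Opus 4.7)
The plan is to reduce the statement to an instance of Theorem \ref{t:clintro} applied to a Clarke dual pair of fans of dimension $d+k$ built from the dual nef partition data, via the Cayley trick. Concretely, in the toric setting of the introduction where $U_A = X_A\setminus D_A$ with $X_A = V(s_1,\dots,s_k)\subset T_\Delta$ and $D_A = E_{k+1}|_{X_A}$ (and similarly for $\check{U}_{\check{A}}$), the first step is to replace $U_A$ by a higher-dimensional Landau--Ginzburg model. Each section $s_i$ of $\mathcal{O}_{T_\Delta}(E_i)$ lifts to a fibrewise-linear regular function $g_i$ on $V_i = \mathrm{Tot}(\mathcal{O}_{T_\Delta}(-E_i))$; setting $V_A = V_1\oplus\dots\oplus V_k$, summing gives $g : V_A \to \mathbb{A}^1$, and restricting over $T^\circ = T_\Delta\setminus E_{k+1}$ produces $(V_A^\circ, g)$. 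The same recipe on the mirror side gives $(V_{\check{A}}^\circ, \check{g})$.

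Next, I would invoke the $F$-filtered Cayley identification (Proposition \ref{p:tj2}/Corollary \ref{c:tj2}) to relate the orbifold cohomology of $U_A$ to the orbifold twisted cohomology of $(V_A^\circ, g)$ up to a $k$-fold Tate twist, i.e.\ $f^{p,q}_{\mathrm{orb}}(U_A) = f^{p+k,q+k}_{\mathrm{orb}}(V_A^\circ, g)$, and likewise on the mirror side. This converts the statement into a duality between irregular Hodge numbers of a pair of $(d+k)$-dimensional stacky Landau--Ginzburg models, which is precisely the kind of statement Theorem \ref{t:clintro} addresses.

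The third step is to verify that the fans $(\Sigma_A^\circ, \Sigma_{\check{A}}^\circ)$ underlying $(V_A^\circ, \check{V}_{\check{A}}^\circ)$ form a Clarke dual pair. The rays of $\Sigma_A^\circ$ split into the rays of $\mathrm{nf}(\Delta)$ meeting $T^\circ$ together with the ``vertical'' fibre directions of each $V_i$; the regularity condition $\langle m,n\rangle\geq 0$ then follows from Borisov's duality of nef partitions, since the supports of $w(\Sigma_A^\circ)$ and $w(\Sigma_{\check{A}}^\circ)$ are the Cayley sums $\Delta_1\ast\cdots\ast\Delta_k$ and $\check{\Delta}_1\ast\cdots\ast\check{\Delta}_{k+1}$ which pair non-negatively by construction. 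The convexity condition on $\Delta_{\Sigma_A^\circ}$ follows from the convexity of the Cayley polytope. Simpliciality and quasiprojectivity can be arranged by taking maximal projective crepant partial resolutions of $T_\Delta$ and $T_{\check\Delta}$ at the outset. Applying Theorem \ref{t:clintro} to $(\Sigma_A^\circ, \Sigma_{\check{A}}^\circ)$ and unwinding the two $k$-fold Tate twists then yields $f^{p,q}_{\mathrm{orb}}(U_A) = f^{d-p,q}_{\mathrm{orb}}(U_{\check{A}})$.

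The main obstacle I anticipate is establishing the Cayley-trick $F$-filtered isomorphism in the \emph{open} log Calabi--Yau case, since one loses the compactness of $X_A$ that makes the analogous statement for $(X_A, D_A)$ versus $(V_A, g)$ straightforward. Specifically, I would need a careful compactification of $V_A^\circ$ to a Landau--Ginzburg model in the sense of Definition \ref{d:lgmodel}, controlling both the poles of $g$ at the horizontal boundary over $E_{k+1}$ and the fibrewise boundary of each $V_i$, and verifying that the induced irregular Hodge filtration on the comparison morphism is strictly $F$-filtered with the correct shift by the rank $k$ of the Cayley bundle. Once this comparison is in place, everything else is formal: the duality is delivered directly by Theorem \ref{t:clintro}, and the orbifold bookkeeping goes through because the stacky structure on $(\Sigma_A^\circ, \Sigma_{\check{A}}^\circ)$ is trivial and the Gorenstein hypothesis guarantees that all ages that appear are integers.
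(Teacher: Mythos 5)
Your proposal is correct in exactly the scope in which the paper establishes this conjecture (the toric nef-partition mirror pairs of Section \ref{s:smooth}) and follows essentially the same route as the paper's proof of Theorem \ref{t:kkp}: the Cayley-trick comparison of Proposition \ref{p:tj2}/Corollary \ref{c:tj2} (packaged as Proposition \ref{t:isohodge}), the verification that $(\Sigma_A^\circ,\Sigma_{\check{A}}^\circ)$ is a simplicial, quasiprojective, convex Clarke dual pair (Proposition \ref{p:npdc}), and then Theorem \ref{t:clarke} with the two Tate twists cancelling. The ``open case'' obstacle you anticipate is already absorbed by the paper's formulation of Proposition \ref{p:tj2} for Landau--Ginzburg models with a boundary divisor $D$, so no additional compactification argument beyond that statement is needed.
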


The remainder of this section is devoted to constructing prospective mirrors to a class of toric complete intersections with nef anticanonical divisors and to proving that Conjecture \ref{c:orbkkp} and Conjecture \ref{c:logcal} hold for the resulting pairs. We first gather some general facts about the cohomology of Landau--Ginzburg models, which play a crucial role in the proof. 

\subsection{Subvarieties and Landau--Ginzburg models}\label{s:cayley} 
We encounter cases where complete intersections and their mirrors do not fit into a Clarke mirror pair. To address this, we replace complete intersections of potentially high codimension with higher-dimensional (toric) Landau–Ginzburg models, which is often called \emph{Cayley's trick}. In this section, we establish several results that enable us to interpret the cohomology of complete intersections and complete intersection Landau–Ginzburg models in terms of Landau–Ginzburg models whose ambient spaces have higher dimensions.

 
 Suppose $(X,D,w)$ is a $d$-dimensional nondegenerate orbifold Landau--Ginzburg model and $Z$ is a hypersurface in $X$ so that $Z\cup D\cup w^{-1}(0)$ has orbifold normal crossings. Let $D_Z = Z\cap D$, then $(Z,D_Z,w|_Z)$ is also a nondegenerate orbifold Landau--Ginzburg model. Specifying such a hypersurface $Z$ is equivalent to taking a section $s$ of the bundle $\mathcal{O}_X(Z)$ with $s^{-1}(0)=Z$. This gives a regular morphism $g_s:\mathrm{Tot}(\mathcal{O}_X(-Z)) \to \mathbb{A}^1$ which may extend to a rational function on $\mathbb{P}_X(\mathcal{O}_X\oplus \mathcal{O}_X(-Z))$ which we also denote by $g_s$. Let $\pi:\mathrm{Tot}(\mathcal{O}_X(-Z)) \to X$ and $V_Z =\mathrm{Tot}(\mathcal{O}_X(-Z))\setminus \pi^{-1}D$.

 The following result is relatively well known and easy to prove, however, we were not able to find the precise statement that we need in the literature. See e.g., \cite{dimca2000dwork, Fresan2022, chiodo2018hybrid} for related results.

\begin{proposition}\label{p:tj2}
	Let notation be as above. Then, for each $p\in \mathbb{Z}$, there is an isomorphism of $F_\mathrm{irr}$-filtered vector spaces:
	\[
	H^{p-1}(Z\setminus D_Z, w|_Z)(-1) \cong H^{p+1}(V_Z, g_s + \pi^*w).  
	\]
\end{proposition}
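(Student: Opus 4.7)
My plan is to prove the isomorphism by analyzing the derived push-forward of the twisted de Rham complex along the line-bundle projection $\pi : V_Z \to X \setminus D$. Since $g_s + \pi^*w$ is linear in the fiber direction with linear coefficient equal to the section $s$ cutting out $Z$, the fiberwise cohomology is concentrated over $Z \setminus D_Z$, and the infinity section of the natural compactification of $V_Z$ will account for the Tate twist $(-1)$. This is a Cayley/Dwork-type maneuver, close in spirit to the arguments of \cite{dimca2000dwork, Fresan2022, chiodo2018hybrid} cited above.

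I would first compactify by setting $\overline{V}_Z := \mathbb{P}_X(\mathcal{O}_X \oplus \mathcal{O}_X(-Z))$ with infinity section $X_\infty$, so that $(\overline{V}_Z, D_V)$ with $D_V := X_\infty \cup \pi^{-1}D$ is an orbifold normal-crossings compactification of $V_Z$. The function $g_s + \pi^*w$ extends to a rational function on $\overline{V}_Z$ with pole divisor contained in $X_\infty \cup \pi^{-1}P$. By Proposition~\ref{p:Yusub}, the twisted cohomology $H^{p+1}(V_Z, g_s + \pi^*w)$ is then the hypercohomology of the filtered complex $(\cOmega^\bullet_{\overline{V}_Z}(\log D_V)(*P_V), \nabla, F^\bullet_{\mathrm{irr}})$.

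The key is a local computation. In an orbifold chart $U \times \mathbb{A}^1 \subset V_Z$ with fiber coordinate $t$, where $s$ is given by a local equation $\tilde{s}$ of $Z$, one has $g_s = \tilde{s}\,t$ and a splitting $\cOmega^\bullet_{V_Z}|_{U \times \mathbb{A}^1} = \pi^*\cOmega^\bullet_X \oplus \pi^*\cOmega^{\bullet-1}_X \wedge dt$. The twisted differential $\nabla = d + d(\tilde{s}t + w)\wedge(-)$ then assembles into a double complex in which the vertical differential, the one producing a $dt$, is the Fourier--Laplace operator $\pm(\partial_t + \tilde{s})$ acting on polynomials in $t$. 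Its cohomology vanishes where $\tilde{s}$ is invertible and is rank one in the $dt$-row over $\{\tilde{s} = 0\} = Z \cap U$. Globalizing, $R\pi_*$ applied to the twisted de Rham complex is quasi-isomorphic to $\iota_{Z*}(\cOmega^\bullet_Z(\log D_Z)(*P_Z), \nabla_{w|_Z})[-1]$, where $\iota_Z : Z \setminus D_Z \hookrightarrow X \setminus D$ denotes the closed embedding. Combined with the Leray spectral sequence for $\pi$, this yields the numerical identification $H^{p+1}(V_Z, g_s + \pi^*w) \cong H^{p-1}(Z \setminus D_Z, w|_Z)$.

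The main obstacle will be upgrading this to a filtered isomorphism producing the correct Tate twist. The shift by $-1$ in the irregular Hodge filtration must be produced by the residue along the pole divisor $X_\infty$: the $dt$ factor picked up in the previous step corresponds, after a logarithmic change of coordinate $u = 1/t$ near $X_\infty$, to the residue along $X_\infty$, and Yu's pole-order description of $F^\lambda_{\mathrm{irr}}$ shifts the filtration index by one under such a residue, parallel to the argument for Proposition~\ref{p:resles}. Matching the filtrations globally will therefore require a careful pole-order analysis along $X_\infty \cup \pi^{-1}P$ together with a direct verification that the vertical cohomology of the above double complex is compatible with the irregular Hodge filtration on both sides.
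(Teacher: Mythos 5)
Your route — computing $R\pi_*$ of the twisted de Rham complex along the bundle projection, Dwork/Cayley style — is genuinely different from the proof in the paper, and it can in principle be made to work (it is the strategy of the references you cite). But as written there is a concrete gap in the key computational step, visible already in the degree bookkeeping: your claimed quasi-isomorphism $R\pi_*(\Omega^\bullet_{V_Z}(\log D_V)(*P_V),\nabla)\simeq \iota_{Z*}(\Omega^\bullet_Z(\log D_Z)(*P_Z),\nabla_{w|_Z})[-1]$ would give $H^{p+1}(V_Z,g_s+\pi^*w)\cong H^{p}(Z\setminus D_Z,w|_Z)$, not the required $H^{p-1}$; the statement needs a total shift of two. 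The source of the discrepancy is that your fiberwise argument ("cohomology vanishes where $\tilde s$ is invertible and is rank one over $\{\tilde s=0\}$") is a pointwise heuristic, not a computation of the coherent pushforward: what one actually gets is the cokernel of $\partial_t+\tilde s$ on polynomial sections along the fibers, namely $\mathcal{O}_X/(\tilde s)$ placed in relative degree one, so that the vertical cohomology is the complex $\bigl(\Omega^\bullet_X(\log D)(*P)\big|_Z\otimes dt,\ d+dw\wedge\bigr)$ — forms of the \emph{ambient} $X$ restricted to $Z$, not forms of $Z$. Passing from this to the twisted de Rham complex of $Z$ itself requires a further Koszul/conormal reduction (tracking the $t\,d\tilde s$-terms that survive the reduction modulo $\mathrm{im}(\partial_t+\tilde s)$ and quotienting by $d\tilde s\wedge$), and it is exactly this second step that produces the missing shift and part of the Tate twist. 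Relatedly, attributing the twist $(-1)$ to a residue along the infinity section $X_\infty$ is not substantiated; in any version of this argument the twist is tied to the fiber direction $dt$ (equivalently, in the paper's argument, to the residue along $\pi^{-1}(Z)$, not $X_\infty$). Finally, you flag the filtered upgrade as "the main obstacle" and only sketch it, so the $F_\mathrm{irr}$-filtered statement — which is the actual content of the proposition — is not established.

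For comparison, the paper's proof avoids the pushforward entirely. It applies the residue long exact sequence of Proposition \ref{p:resles} to the divisor $\pi^{-1}(Z)\subset V_Z$, on which $g_s+\pi^*w$ restricts to $\pi^*(w|_Z)$; it then kills the terms $H^*(V_Z\setminus\pi^{-1}(Z),g_s+\pi^*w)$ by observing that the zero locus of $g_s+\pi^*w$ over $X\setminus(D\cup Z)$ is the graph $t=-w/s$, so that by the $\mathbb{A}^1$-bundle structure and Proposition \ref{p:har} the relative (twisted) cohomology there vanishes. The connecting homomorphism is then an isomorphism, it already carries the twist $(-1)$ and is a morphism of $F_\mathrm{irr}$-filtered spaces, so the filtration comes along for free. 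If you want to salvage your approach, you must (i) redo the local computation as an honest relative de Rham/Koszul computation with the correct shift $[-2]$, (ii) check compatibility of that quasi-isomorphism with Yu's pole-order filtration along $X_\infty\cup\pi^{-1}P$, and (iii) verify that the resulting filtration shift is by exactly one, none of which is automatic from what you have written.
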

\begin{proof}
	First, note that in a local orbifold chart  $U$ on $X$ with coordinates $(x_1,\dots, x_d)$ and orbifold group $G_U$, we have orbifold coordinates $(x_1,\dots, x_d,t)$ on $\mathrm{Tot}(\mathcal{O}_X(-Z))$ and orbifold group $G_U$ acting trivially on the $t$-coordinate. In these coordinates, $g_s = ts$ for a $G_U$-invariant function $s$ whose vanishing locus is $Z \cap U$. Then, $g_s + \pi^*w = ts + w$ in these coordinates. Then, on the complement of $Z$, we may define the map 
	\[
	(x_1,\dots, x_d) \mapsto (x_1,\dots, x_d, -w(x_1,\dots,x_d)/s(x_1,\dots, x_d)).
	\]
	Patching these maps together, we obtain an isomorphism between $V(g_s+\pi^*w ) \cap \pi^{-1}(X \setminus (D\cup Z))$ and $X\setminus Z$. Furthermore, $g_s$ vanishes on $\pi^{-1}(Z)$ so $(g_s +\pi^*w)|_{\pi^{-1}(Z)} = \pi^*(w|_Z)$. By Proposition \ref{p:resles} we have a long exact sequence:
	\[
	\dots  \longrightarrow H^p(V_Z, g_s + \pi^*w )  \longrightarrow H^p(V_Z \setminus \pi^{-1}(Z),g_s + \pi^*w) \longrightarrow H^{p-1}(\pi^{-1}(Z), \pi^*w)(-1) \longrightarrow \cdots 
	\]
	Since $V(g_s + \pi^*w) \cong X\setminus Z$ and $\pi^{-1}(X\setminus Z)$ is an $\mathbb{A}^1$ bundle over $X\setminus Z$, it follows from Proposition \ref{p:har} that the middle term in the sequence above vanishes. Furthermore, the fact that $\pi$ is an $\mathbb{A}^1$ bundle also implies that $H^{p-1}(\pi^{-1}(Z\setminus D_Z), \pi^*w) \cong H^{p-1}(Z\setminus D_Z,w)$. Thus we obtain an isomorphism of vector spaces
	\[
	H^{p-1}(Z\setminus D_Z,w)(-1) \cong H^{p+1}(V_Z, g_s + \pi^*w).
	\]
	This is also a morphism of filtered vector spaces, therefore it is a filtered isomorphism.
\end{proof}

Proposition \ref{p:tj2} can be extended to the orbifold cohomology as follows. By construction, $V$ is an orbifold as well, and has twisted sectors in bijection with those of $X$. In particular, $V_{(g)} = \pi^{-1}X_{(g)}$ for all $g \in I_X$. Similarly, if $Z\subseteq X$ is a quasi-smooth hypersurface, then $Z$ is also equipped with a natural orbifold structure as well, and $Z_{(g)} = X_{(g)} \cap Z$ is a union of disjoint twisted sectors whose age-grading is identical. Similarly, if $w^{-1}(1)$ is quasi-smooth and the intersection between $Z$ and $w^{-1}(1)$ is quasi-smooth, then the same thing is true for $w^{-1}(1) \cap Z$. 

\begin{remark}
	Properly speaking, distinct irreducible components of $Z_{(g)}$ represent different twisted sectors in $Z$, and $Z_{(g)}$ could be empty. In other words, the set of twisted sectors of $X$ need not be in bijection with those of $Z$. We will abuse notation, however, and treat the twisted sectors of $Z_{(g)}$ as if they are in bijection with those of $X_{(g)}$ because it simplifies our arguments.
\end{remark}

\begin{corollary}\label{c:tj2}
	Let assumptions be as in Proposition \ref{p:tj2}. Then for all $n \in \mathbb{Q}$,
	\[
	H_\mathrm{orb}^{n-2}(Z\setminus D_Z,w|_Z)(-1) \cong H_\mathrm{orb}^n(V_Z, g_s + \pi^*w).     
	\]
\end{corollary}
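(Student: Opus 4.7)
The strategy is to decompose both sides of the claimed isomorphism into their twisted sector components and apply Proposition \ref{p:tj2} separately on each sector. As noted in the proof of Proposition \ref{p:tj2}, in a local orbifold chart $(\mathbb{D}^d,G_U)$ on $X$ the group $G_U$ acts trivially on the fibre coordinate of $\pi : \mathrm{Tot}(\mathcal{O}_X(-Z))\to X$. Consequently, the twisted sectors of $V_Z$ are in canonical bijection with those of $X$: one has $V_{Z,(g)} = \pi^{-1}(X_{(g)}) \cap V_Z$, and the age of $g$ at points of $V_{Z,(g)}$ equals the age of $g$ at the image point in $X_{(g)}$. By the definition of orbifold cohomology we therefore have
\begin{align*}
H^n_\mathrm{orb}(V_Z,\, g_s + \pi^*w) \;=\; \bigoplus_{g \in I_X} H^{n-2\iota(g)}\bigl(V_{Z,(g)},\, (g_s + \pi^*w)|_{V_{Z,(g)}}\bigr).
\end{align*}

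On each twisted sector the restriction is of the form treated by Proposition \ref{p:tj2}. The line bundle $\mathcal{O}_X(-Z)$ restricts to $\mathcal{O}_{X_{(g)}}(-Z_{(g)})$ on $X_{(g)}$ and the section $s$ restricts to a section $s_{(g)}$ cutting out $Z_{(g)} = Z \cap X_{(g)}$, so under the identification $V_{Z,(g)} = V_{Z_{(g)}}$ we have $g_s|_{V_{Z,(g)}} = g_{s_{(g)}}$ and $(\pi^*w)|_{V_{Z,(g)}} = \pi^*(w|_{X_{(g)}})$. Applying Proposition \ref{p:tj2} to the nondegenerate Landau--Ginzburg model $(X_{(g)}, D_{(g)}, w|_{X_{(g)}})$ with hypersurface $Z_{(g)}$ yields, for every $g \in I_X$, a filtered isomorphism
\begin{align*}
H^{p-1}\bigl(Z_{(g)} \setminus D_{Z_{(g)}},\, w|_{Z_{(g)}}\bigr)(-1) \;\cong\; H^{p+1}\bigl(V_{Z_{(g)}},\, g_{s_{(g)}} + \pi^*(w|_{X_{(g)}})\bigr).
\end{align*}

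Setting $p = n - 2\iota(g) - 1$ in each sector and summing, we obtain
\begin{align*}
H^n_\mathrm{orb}(V_Z,\, g_s + \pi^*w) \;\cong\; \bigoplus_{g \in I_X} H^{(n-2)-2\iota(g)}\bigl(Z_{(g)} \setminus D_{Z_{(g)}},\, w|_{Z_{(g)}}\bigr)(-1) \;=\; H^{n-2}_\mathrm{orb}(Z \setminus D_Z,\, w|_Z)(-1),
\end{align*}
where the Tate twist commutes with the direct sum since it only shifts the irregular Hodge filtration. The main technical point is ensuring the compatibility of age gradings across the three stacks $X$, $V_Z$, and $Z$. For $V_Z$ this is immediate from the triviality of the isotropy action on the fibres of $\pi$; for $Z$ one invokes the convention of Section \ref{s:orbcoh}, adopted because $Z$ is quasi-smooth, that the twisted sectors of $Z$ inherit their ages from those of $X$. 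Once these identifications are made, the argument is a formal combination of Proposition \ref{p:tj2} with the twisted sector decomposition.
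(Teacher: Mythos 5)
Your proof is correct and follows essentially the same route as the paper: decompose both sides into twisted sectors using $V_{Z,(g)} = \pi^{-1}(X_{(g)})\cap V_Z$ and $Z_{(g)} = X_{(g)}\cap Z$ with matching ages, apply Proposition \ref{p:tj2} sector by sector to the nondegenerate Landau--Ginzburg models $(X_{(g)}, D_{(g)}, w|_{X_{(g)}})$, and sum with the degree shift and Tate twist. The careful verification of the age-grading compatibilities is the same point the paper handles in the discussion preceding the corollary.
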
 
\begin{proof}
	The discussion preceding the corollary implies that for each twisted sector $g\in I_X$, the triple  $(Z_{(g)},D_{Z,(g)},w|_{Z_{(g)}})$ is a nondegenerate Landau--Ginzburg model and therefore by Proposition \ref{p:tj2} we have 
	\[
	H^{k-2}(Z_{(g)}\setminus D_{Z,(g)},w|_{Z_{(g)}})(-1) \cong H^k(V_{Z,(g)}, (g_s + \pi^*w)|_{V_{Z,(g)}})
	\]
	for each twisted sector $g \in I_X$ and $k \in \mathbb{Z}$. In other words,
	\begin{align*}
	H_\mathrm{orb}^{n-2}(Z\setminus D_Z,w|_Z)(-1) &= \bigoplus_{g \in I_X} H^{n-2-2\iota{(g)}}(Z_{(g)} \setminus D_{Z,(g)}, w|_{Z_{(g)}})(-1-\iota{(g)})  \\ & \cong \bigoplus_{g \in I_X} H^{n-2\iota{(g)}}(V_{Z,(g)}, (g_s + \pi^*w)|_{V_{Z,(g)}})(-\iota{(g)})  \\ & = H^n_\mathrm{orb}(V_{Z}, (g_s + \pi^{*}w)|_{V_{Z,(g)}}).
	\end{align*}
\end{proof}

\subsection{Nef partitions and reflexive polytopes}\label{s:mirror-constr} 
In this section, we construct relevant toric mirror pairs. 

\begin{defn}
	We say that an integral polytope $\Delta$ containing the origin in $M$ is {\em reflexive} if its polar dual
	\[
	\check{\Delta} = \{ n \in N_\mathbb{R} \mid \langle n,m \rangle \geq -1\quad  \forall m \in \Delta\}
	\]
	is also an integral polytope. 
\end{defn}

 Let $\Delta$ be a reflexive polytope and let $\Sigma_\Delta$ denote its spanning fan and let $\Delta[0]$ denote the vertices of $\Delta$.
\begin{defn}\label{d:nefpar}
	A {\em nef partition} of $\Delta$ is a partition ${A}_1,\dots, {A}_{k+1}$ of the vertex set $\Delta[0]$ of $\Delta$ along with integral $\Sigma_\Delta$-linear convex functions $\varphi_j : M_\mathbb{R} \rightarrow \mathbb{R}$ so that $\varphi_j(A_i) = -\delta_{ij}$ for all $i,j$. We let $\Delta_i = \mathrm{Conv}(A_i,0)$.
\end{defn}
 To any nef partition of a reflexive polytope, there is a dual nef partition obtained by taking 
\[
\check{\Delta}_i = \{ n \in N_\mathbb{R} \mid \langle n,m\rangle \geq \varphi_i(m)\}
\]
It is known \cite{borisov1993towards} that the dual nef partition is itself a nef partition of $\check{\Delta} = \mathrm{Conv}(\check{\Delta}_1,\dots, \check{\Delta}_{k+1})$, and that $\check{\Delta}$ is reflexive itself.

A projective, simplicial, crepant refinement of $\Sigma_\Delta$ is determined by a strongly convex integral function $\varphi: \Sigma_{\Delta}[1]\rightarrow \mathbb{Z}$ such that the normal fan of the polytope
\[
\Delta_\varphi = \{ n \in N \mid \langle n,m\rangle \geq \varphi(m),\,\, \forall \,\, m \in \Delta\}
\]
is simplicial. We fix $\varphi$ and $\check{\varphi}$ with this property. The normal fan of $\Delta_\varphi$ is a projective orbifold resolution of the toric variety attached to $\Delta$. To simplify notation, we use the notation $T$ and $\check{T}$ to denote $T(\mathrm{nf}(\Delta_\varphi))$ and $T(\mathrm{nf}(\Delta_{\check{\varphi}}))$ respectively. 
\begin{remark}
	Projective crepant orbifold toric resolutions always exist in the context discussed above, and in fact one may demand that the singularities of $T$ are even terminal -- such a partial resolution is called a maximal projective crepant partial (MPCP) resolution. We do not require that $T$ and $\check{T}$ be MPCP resolutions.
\end{remark}

\subsection{Mirrors for nef anticanonical complete intersections}\label{s:mp}

Assuming that we have chosen $\Delta_1,\dots, \Delta_{k+1}$ as above, then we obtain nef toric divisors $E_1,\dots, E_{k+1}$ on $T$ and, dually, semi-ample toric divisors $\check{E}_1,\dots , \check{E}_{k+1}$ on $\check{T}$.  We obtain from this data a family of complete intersections in $T$ with semi-ample or trivial anticanonical divisors along with a family of Landau--Ginzburg models:

\begin{enumerate}
	\item Choose generic sections $s_i \in \mathcal{O}_{T}(E_i)$, we let $X_A$ denote $V(s_1,\dots, s_{k})$. The line bundle $\mathcal{O}_{T}(E_{k+1})$ remains nef on $X_A$ thus it has nef anticanonical bundle. Alternatively, one can let $D_A$ denote $X_A\cap E_{k+1}$. Then $(X_A,D_A)$ is a log Calabi--Yau pair.
	\item Choose $\check{s}_i \in \mathcal{O}_{\check{T}}(\check{E}_i)$, we let $U_{\check{A}} = X_{\check{A}} \setminus D_{\check{A}}$ as above. In homogeneous coordinates, we define a rational function
	\begin{equation}\label{eq:potential}
	w_{\check{A}} = \dfrac{\check{s}_{k+1}}{\prod_{\rho \in \check{A}_{k+1}} \check{x}_\rho } : \check{T} \dashrightarrow \mathbb{C}.
	\end{equation}
	Then $w_{\check{A}}$ has polar divisor $\check{E}_{k+1}$ hence it is a regular function when restricted to $\check{T}^\circ:=\check{T} \setminus \check{E}_{k+1}$. In particular, it is a regular function when restricted to $U_{\check{A}}.$
\end{enumerate}

\begin{proposition}
	The following statements hold.
	\begin{enumerate}
		\item The pair $(X_A,D_A)$ is an orbifold normal crossings log Calabi--Yau pair.
		\item The triple $(X_{\check{A}},D_{\check{A}},w_{\check{A}})$ is a tame, nondegenerate orbifold Landau--Ginzburg model. 
	\end{enumerate}
\end{proposition}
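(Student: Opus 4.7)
The plan is to verify both statements simultaneously by combining a toric orbifold Bertini-type transversality argument with a direct adjunction computation for the anticanonical class of $T$. First, I would set up a common transversality input: each line bundle $\mathcal{O}_T(E_i)$ on the toric orbifold $T$ (resp.\ $\mathcal{O}_{\check T}(\check E_i)$ on $\check T$) is globally generated by monomials, and the torus acts on the space of global sections. A Kleiman-Bertini argument applied stratum by stratum over the toric stratification shows that, for generic choices of the sections $s_1,\dots,s_{k+1}$ (resp.\ $\check s_1,\dots,\check s_{k+1}$), each $V(s_i)$ meets every torus orbit of $T$ quasi-smoothly, and the resulting configuration together with the toric boundary has orbifold normal crossings.

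For part (1), the orbifold normal crossings property of $(X_A,D_A)$ is then immediate. For the log Calabi-Yau property, I would apply adjunction iteratively to obtain
\[
K_{X_A} + D_A \;=\; \bigl(K_T + E_1 + \cdots + E_{k+1}\bigr)\big|_{X_A}.
\]
Since $A_1,\dots,A_{k+1}$ is a nef partition of the reflexive polytope $\Delta$, every primitive ray generator of the fan of $T$ belongs to exactly one $A_i$, so $\sum_{i=1}^{k+1} E_i$ equals the full toric boundary $\sum_{\rho\in T[1]} E_\rho = -K_T$. Combining these shows $K_{X_A} + D_A \sim 0$, as required.

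For part (2), the orbifold normal crossings property of $X_{\check A}\cup D_{\check A}\cup V(\check s_{k+1})$ again follows from the common transversality input. Next I would analyze the rational function $w_{\check A}$ defined in~\eqref{eq:potential}: its denominator $\prod_{\rho\in \check A_{k+1}}\check x_\rho$ cuts out $\check E_{k+1}$ with multiplicity one along each toric component, while the numerator $\check s_{k+1}$ is regular. Hence $w_{\check A}$ has pole divisor exactly $\check E_{k+1}\cap X_{\check A} = D_{\check A}$ with pole order $1$, which gives $P = P_\mathrm{red}$ and thus tameness. The zero divisor $Z = V(\check s_{k+1})\cap X_{\check A}$ is irreducible for generic $\check s_{k+1}$ by a standard toric Bertini/irreducibility argument, and $Z\cup D_{\check A}$ has orbifold normal crossings by the common input; this verifies nondegeneracy in the sense of Definition~\ref{d:lgmodel}.

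The main obstacle is the toric orbifold Bertini input. The classical Bertini theorem handles smooth ambient varieties and yields generic smoothness, but here I need simultaneous transversality of several sections along every torus orbit together with quasi-smoothness in the orbifold sense (the intersection with each torus stratum should again be a quasi-smooth substack). I would address this by adapting the Kleiman argument via the torus action on each $H^0(T,\mathcal{O}_T(E_i))$, orbit by orbit, following the strategy used by Borisov-Mavlyutov \cite{borisov2003string} in the closely related Gorenstein cone setting.
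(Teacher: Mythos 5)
Your proposal is correct and follows essentially the same route as the paper: generic transversality/quasi-smoothness of the sections along the toric stratification, adjunction together with the fact that $\sum_i E_i$ is the full toric boundary for the log Calabi--Yau property in (1), and pole order one in the denominator of \eqref{eq:potential} for tameness in (2). The only difference is one of packaging: where you sketch an orbit-by-orbit Kleiman--Bertini argument for the quasi-smoothness/transversality input, the paper simply cites \cite[Theorem 2.6]{batyrev2011calabi} for generic sections of the semi-ample bundles $\mathcal{O}_T(E_i)$.
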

\begin{proof}
	Restricting our attention to the toric variety $T$, the divisors $E_i$ are semi-ample so that for generic sections $s_i \in \Gamma(T,\mathcal{O}_{T}({E}_i))$, the vanishing locus $X_A=V(s_i,\dots, s_{k})$ in $T_P$ is quasi-smooth by \cite[Theorem 2.6]{batyrev2011calabi}. Furthermore, our construction ensures that the intersection between $V(s_1,\dots, s_{k})$ and each toric stratum of $T$ is transversal. By adjunction, $D_A$ is anti-canonical in $X_A$. This proves item (1).
	
	Item (2) is similar. By the same facts as were used above, the polar locus of $w_{\check{A}}$ is $\check{D}_{k+1}$ which is orbifold normal crossings by construction and \cite[Theorem 2.6]{batyrev2011calabi}. The zero locus of $w_{\check{A}}$, $V(\check{s}_1,\dots, \check{s}_{k+1}) \subseteq V(\check{s}_1,\dots, \check{s}_{k})$ is a quasi-smooth hypersurface which meets $D_{\check{A}}$ transversally. Therefore $(X_{\check{A}}, D_{\check{A}}, w_{\check{A}})$ form a nondegenerate orbifold Landau--Ginzburg model. The fact that this is a tame Landau--Ginzburg model follows from the fact that the exponents in the denominator of~\eqref{eq:potential} are all 1.
\end{proof}

\noindent These pairs induce two expected mirror pairs: A semi--Fano mirror pair $(X_A, (U_{\check{A}}, w_{\check{A}}))$, and a log Calabi--Yau mirror pair $(U_A, U_{\check{A}})$. However, these mirror pairs are not Clarke mirror pairs in their current form, so Theorem \ref{t:clarke} cannot be applied. The idea is to use the Cayley trick, as explained in Section \ref{s:cayley}, to replace these pairs with certain Clarke mirror pairs of Landau--Ginzburg models without losing Hodge--theoretic information.

\subsection{Clarke mirror pairs from nef partitions} 
The next step is to attach pairs of toric Landau--Ginzburg models to $(X_A, (U_{\check{A}}, w_{\check{A}}))$ and  $(U_A, U_{\check{A}})$. We first deal with the semi--Fano mirror pair $(X_A, (U_{\check{A}}, w_{\check{A}}))$.
\begin{enumerate}
	\item Let $V_A = \mathrm{Tot}(\mathcal{O}_{T}(-E_1)\oplus \dots \oplus \mathcal{O}_{T}(-E_k))$ and let $g_1,\dots, g_k$ be the regular functions on $V_A$ corresponding to the regular sections $s_1,\dots, s_k$.
	\item Let $V^\circ_{\check{A}} = \mathrm{Tot}(\mathcal{O}_{\check{T}^\circ}(-\check{E}_1) \oplus \dots \oplus \mathcal{O}_{\check{T}^\circ}(-\check{E}_k))$ and let $\check{g}_1,\dots, \check{g}_k$ be regular functions attached to the regular sections $\check{s}_1,\dots, \check{s}_k$ on $U_{\check{A}}$. Let $\pi : V^\circ_{\check{A}} \rightarrow {\check{T}^\circ}$ be the canonical projection map.
\end{enumerate}
Then we have a pair of Landau--Ginzburg models
\begin{equation}\label{e:hidimLG}
    (V_A, g_1 + \dots + g_k),\qquad (V_{\check{A}}^\circ, \check{g}_1 + \dots + \check{g}_k + \pi^*w_{\check{A}})
\end{equation}
which corresponds to $(X_A, (U_{\check{A}}, w_{\check{A}}))$. 

To put the pair \eqref{e:hidimLG} into the framework of Clarke duality, we briefly recall the construction of the toric fans attached to a direct sum of line bundles on a toric variety associated to a fan $\Sigma$ (see \cite{Cox2011toricbook} for more details). Suppose we have a line bundle $\mathcal{L} = \mathcal{O}_{T(\Sigma)}(-\sum_{i=1}^n a_iE_i)$ where $E_1,\dots, E_n$ are toric boundary divisors of $T(\Sigma)$ and $a_i \in \mathbb{Z}$. Then $V_\mathcal{L} = \mathrm{Tot}(\mathcal{L})$ is a toric variety whose fan, denoted $\Sigma_\mathcal{L}$ is in $M_\mathbb{R} \times \mathbb{R}$. For each cone $c$ of $\Sigma$ there is a cone
\begin{equation*}
\widetilde{c} = \mathrm{Cone}(\{(\rho_i, a_i) \mid \rho_i \in c[1]\} \cup (0,1)) \in \Sigma_\mathcal{L}.
\end{equation*}
The fan $\Sigma_\mathcal{L}$ is the union of the cones $\widetilde{c}$ and their faces.

If we have a direct sum of line bundles $\mathcal{L}_j=\mathcal{O}_{T(\Sigma)}(-\sum_{i=1}^na_i^{(j)}E_i)$ for $j=1, \dots, k$, the total space has fan described as follows: For each cone $c$ of $\Sigma$ we construct the cone
\begin{equation}\label{eq:raygun}
\widetilde{c} = \mathrm{Cone}\{ 0\times e_j, (\rho_i,a_i^{(1)},\dots, a_i^{(k)}) \mid \forall \rho_i \in c[1] , \forall j = 1,\dots, k\}.
\end{equation}
The union of all such cones forms a fan $\Sigma_A$. This fan is simplicial if $\Sigma$ is simplicial and quasiprojective if $\Sigma$ is quasiprojective. The ray generators of this fan are simply
\begin{equation*}
\{0_M \times e_j\mid j=1,\dots, k \} \cup \{ (\rho_i,a_i^{(1)},\dots, a_i^{(k)}) \mid \rho_i \in \Sigma[1]\}.
\end{equation*}
In the particular case of interest to us, when $E_1,\dots, E_k$ are determined by a nef partition then for $\rho \in \Delta_{i,\mathbb{Z}}:= \Delta_i \cap M$ we have 
\[
(\rho, a_\rho^{(1)},\dots, a_{\rho}^{(k)}) = \begin{cases} \rho \times e_i & \text{ if } i \leq k \\
\rho\times 0 & \text{ if } i = k+1
\end{cases}
\]
To summarize, the fan $\Sigma_A$ has cones which are faces of 
\begin{equation*}
\widetilde{c} = \mathrm{Cone}( \{ (\rho,\varphi_1(\rho),\dots, \varphi_k(\rho)) \mid \rho \in c[1]\} \cup \{(0,e_1),\dots, (0,e_k)\})
\end{equation*}
The fan for $V_{\check{A}}^\circ$ can be determined similarly to be
\[
{\Sigma}^\circ_{\check{A}}[1] = \bigcup_{i=1}^{k} (\check{\Delta}_{i,\mathbb{Z}} \times \check{e}_i)   \subseteq N \times \mathbb{Z}^{k}
\]
where $\check{e}_i$ is dual to $e_i$ under any identification between $\mathbb{Z}^k$ and its dual lattice.
\begin{proposition}\label{p:npdc}
	Suppose $\Delta_1,\dots, \Delta_{k+1}$ is a nef partition of a polytope $\Delta$ and let $\check{\Delta}_1,\dots, \check{\Delta}_{k+1}$ be the corresponding dual nef partition. The fans $(\Sigma_A,\Sigma^\circ_{\check{A}})$ form a Clarke dual pair and $\Sigma_A$ and $\Sigma_{\check{A}}^\circ$ are simplicial, Gorenstein, quasiprojective, convex fans.
\end{proposition}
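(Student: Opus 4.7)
The proof is symmetric under swapping $(\Sigma_A, \Sigma_{\check A}^\circ)$, so I focus on $\Sigma_A$; the argument for $\Sigma_{\check A}^\circ$ is parallel after restricting to the subfan $\check\Sigma^\circ$ that discards rays in $\check A_{k+1}$. Three combinatorial facts from the nef partition setup underlie the argument: (i) the defining inequality $\langle \check n, m\rangle \geq \varphi_j(m)$ for every $\check n\in \check\Delta_j$ and $m\in\Delta$; (ii) the identity $\sum_{j=1}^{k+1}\varphi_j(\rho) = -1$ for every $\rho\in\partial\Delta\cap M$, which follows from $\Sigma_\Delta$-linearity of $\sum_j\varphi_j$ together with its value $-1$ on $\Delta[0]$; and (iii) integrality of each $\varphi_j$ on $\Sigma[1]$, guaranteed by our choice of refinement $\varphi$. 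I use the convention $a^{(j)}_\rho = -\varphi_j(\rho)$, so that each nonvertical ray of $\Sigma_A$ has the form $(\rho, -\varphi_1(\rho),\dots,-\varphi_k(\rho))$ and the description $(\rho,e_i)$ for $\rho\in A_i$, $i\le k$ is recovered.

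Clarke duality reduces to a pairing check on ray generators. For $(0,e_i)$ versus $(\check n,\check e_j)$ the pairing is $\delta_{ij}\ge 0$; for $(\rho,-\varphi_1(\rho),\dots,-\varphi_k(\rho))$ with $\rho\in\Sigma[1]$ versus $(\check n,\check e_j)$ with $\check n\in\check\Delta_{j,\mathbb{Z}}$, the pairing equals $\langle\rho,\check n\rangle - \varphi_j(\rho)\ge 0$ by (i). Simpliciality is immediate: each maximal cone $\widetilde c$ has $d+k$ linearly independent generators in $M\oplus\mathbb{Z}^k$, namely the $d$ lifts of a simplicial maximal cone $c\in\Sigma$ together with the $k$ vertical rays $(0,e_j)$. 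For Gorenstein, I would guess $\widetilde m_{\widetilde c} = (m_c,1,\dots,1)\in N\oplus\mathbb{Z}^k$; pairing with the vertical rays is automatic, and pairing with a lifted ray collapses via (ii) to $\langle\rho,m_c\rangle = -\varphi_{k+1}(\rho)$, which by (iii) is solved by an integral $m_c\in N$ representing $-\varphi_{k+1}|_c$.

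For quasi-projectivity one may argue conceptually: $V_A$ is an algebraic vector bundle over the projective orbifold $T$, hence quasi-projective, so its fan is quasi-projective. Equivalently, a strictly convex $\Sigma_A$-linear function can be built by pulling back a strictly convex $\Sigma$-linear function via the projection to $M_\mathbb{R}$ and adding $\sum_j y_j$ along the vertical coordinates. The subtlest property is convexity. I would identify the support of $\Delta_{\Sigma_A}$ with the Cayley polytope
\[
P := \mathrm{Conv}\Bigl(\bigcup_{i=1}^k (\Delta_i\times\{e_i\}) \cup (\Delta_{k+1}\times\{0\})\Bigr),
\]
which is convex by construction. The inclusion $\Delta_{\Sigma_A}\subseteq P$ is transparent from the description of ray generators. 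For the reverse inclusion, the triangulation $\{\Delta_c\}_{c\in\Sigma}$ of $\Delta$ lifts cone-by-cone to a candidate triangulation $\{\Delta_{\widetilde c}\}_{c\in\Sigma}$ of $P$; checking that for each maximal $c$ the simplex $\Delta_{\widetilde c}$ covers the fiber of $P$ over $\Delta_c$ along the projection to $M_\mathbb{R}$ reduces to a barycentric computation in which identity (ii) forces the slack coordinate $\mu_0$ to remain nonnegative.

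The main obstacle is this final convexity step: although the Cayley identification is combinatorially natural, verifying that the $\Delta_{\widetilde c}$ cover $P$ exactly requires care with the distribution of refinement rays of $\Sigma$ along faces shared by several $\Delta_i$'s, and a concrete barycentric check using (ii). The remaining risks are bookkeeping: maintaining consistent sign conventions for the $\varphi_j$ throughout, and confirming that the vectors $m_c$ produced in the Gorenstein step are genuinely integral rather than merely rational, which is where Cartier-ness of $E_{k+1}$ on the refined fan $T(\Sigma)$ is invoked.
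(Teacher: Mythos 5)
Your proposal is correct and, at its core, runs parallel to the paper's argument. The regularity check is the same pairing computation on ray generators (the paper phrases it as $\langle (m,e_i),(n,\check{e}_j)\rangle \geq 1-\delta_{ij}\geq 0$ for $m\in\Delta_i$, $n\in\check{\Delta}_j$), and simpliciality, Gorensteinness and quasiprojectivity are dispatched in the paper with the one-line remark that $\Sigma_A$ is the fan of a direct sum of line bundles over a simplicial, Gorenstein, quasiprojective toric variety; your explicit witnesses (the functional $(m_c,1,\dots,1)$ with $m_c$ an integral representative of $-\varphi_{k+1}|_c$, and the strictly convex function $\psi(m)+\sum_j x_j$) simply spell out what that remark leaves implicit, including the Cartier/integrality point you flag. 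The only place you genuinely reorganize the argument is convexity: the paper proves it by an inequality description, showing $\mathrm{Supp}(\Delta_{\Sigma_A})$ equals the region $\{\phi_1\geq 0,\dots,\phi_{k+1}\geq 0\}$ cut out by the piecewise-linear data of the nef partition and checking cone by cone that the preimage of $\Delta_c$ there is exactly $\Delta_{\widetilde{c}}$, whereas you use the convex-hull (Cayley polytope) description $P$. Both routes reduce to the same barycentric computation over each $\Delta_c$, using $\sum_j\varphi_j(\rho)=-1$ on boundary rays to control the slack coordinate; but note that your reverse inclusion $P\subseteq\bigcup_c \Delta_{\widetilde{c}}$ needs one ingredient beyond linearity of the $\varphi_j$ on the fixed cone $c$: a point of $P$ over $m\in\Delta_c$ arrives as a convex combination $\sum_i t_i(m_i,e_i)$ with $m_i\in\Delta_i$ possibly lying in other cones of $\Sigma$, so you must invoke convexity of each $\varphi_j$ (i.e., nefness of the $E_i$) to get $t_j\geq -\varphi_j(m)$ for $j\leq k$ and $\sum_{j\leq k}t_j\leq 1+\varphi_{k+1}(m)$, after which membership in $\Delta_{\widetilde{c}}$ follows exactly as in the paper's computation. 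With that clause added your plan closes; the paper's inequality description buys immediate convexity of the ambient region and avoids the convex-combination bookkeeping, while yours makes the Cayley-trick geometry more transparent.
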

\begin{proof}
    That the fans are simplicial, Gorenstein, and quasiprojective follows by construction, since both $\Sigma_A$ and $\Sigma^\circ_{\check{A}}$ are the fans of total spaces of line bundles over simplicial, Gorenstein, and quasiprojective toric varieties. Also the regularity condition follows from the definition of the dual nef partition. Namely, for $m \in \Delta_i$ and $n \in \check{\Delta}_j$, we have $\langle (m,e_i), (n,\check{e}_j) \rangle \geq -\delta_{ij}+1 \geq 0$. Therefore, we only need to prove that $\Delta_{\Sigma_A}$ and $\Delta_{\Sigma^\circ_{\check{A}}}$ are convex. 

    We show that $\Delta_{\Sigma_A}$ is the intersection of $(k+1)$ convex regions, hence $\Delta_{\Sigma_A}$ is convex. Let 
    \[
    \begin{array}{llll}
    \phi_i: M_\mathbb{R} \times \mathbb{R}^k  & \longrightarrow \mathbb{R} & \qquad  \phi_{k+1} : M_\mathbb{R} \times \mathbb{R}^k & \longrightarrow \mathbb{R} \\
    (m,x_1,\dots,x_k) & \longmapsto  x_i - \varphi_i(m)&\qquad  (m,x_1,\dots, x_k)& \longmapsto  1 - \varphi_{k+1}(m)-\sum x_i
    \end{array}\]
    These functions are convex because $\varphi_i$ are convex functions, as is the linear function $(x_1,\dots, x_k) \mapsto x_i $ or $\sum x_i$. We claim that $\Delta_{\Sigma_A}$ is the subset of $M_\mathbb{R}\times \mathbb{R}^k$ where $\phi_1, \dots ,\phi_{k+1} \geq 0$. First, observe that if $\phi_1,\dots, \phi_{k+1} \geq 0$ then
    \begin{equation*}
    1 - \varphi_{k+1}(m) \geq \sum x_i \geq \sum_{i=1}^k\varphi_i(m).
    \end{equation*}
    This happens only if 
   $1 \geq \sum_{i=1}^{k+1}\varphi_i(m)$. Thus $m \in\Delta$ and there is a projection from $\Delta_{\Sigma_A}$ onto $\Delta$. We show that the preimage of $\Delta_c = \mathrm{Conv}\{c[1]\cup 0\}$ in $\Delta_{\Sigma_A}$ is $\Delta_{\widetilde{c}}$ as constructed in \eqref{eq:raygun}. 

    Suppose $(m,x_1,\dots, x_k)$ and write $m = \sum_{\rho \in c[1]} y_\rho \rho$ for $y_\rho \geq 0$ and $\sum y_\rho \leq 1$. By assumption $\varphi_i$ are all linear on $c$ and for each $\rho \in \Sigma[1]$, $\sum_{i=1}^{k+1}\varphi_i(\rho) = 1$. Therefore, $\sum_{i=1}^{k+1}\varphi_i(m) = \sum_{\rho \in c[1]} y_\rho$. Then $(m,x_1,\dots, x_k)$ is in $\widetilde{c}$ means we can write
    \begin{equation}\label{e:ineq3}
    (m,x_1,\dots, x_k) = (m, \varphi_1(m),\dots, \varphi_k(m)) + \sum_{i=1}^k z_i(0,e_i)
    \end{equation}
    for some $0 \leq z_i$ and $\sum y_\rho + \sum z_i \leq 1$. From \eqref{e:ineq3} $0\leq z_i$ is equivalent to $x_i - \varphi_i(m) \geq 0$ (i.e., $\phi_1,\dots, \phi_k \geq 0$) and $\sum y_\rho + \sum z_i \leq 1$ is equivalent to  
    \[
    \sum_{i=1}^{k+1}\varphi_i(m) + \sum_{i=1}^kx_i - \sum_{i=1}^k \varphi_i(m) \leq 1
    \]
    which is simply the statement that $\phi_{k+1} \geq 0$. In other words, the convex region bounded by $\phi_1,\dots, \phi_{k+1} \geq 0$ is the union of cells $\Delta_{\widetilde{c}}$  ranging over all cones $c$ of $\Sigma_\Delta$. Since all cones of $\Sigma_A$ are contained in a cone $\widetilde{c}$, the union of all $\Delta_{\widetilde{c}}$ is $\Delta_{\Sigma_A}$.

    We  omit the proof that $\Sigma_{\check{A}}^\circ$ is convex since it follows a similar line of reasoning.
\end{proof}
Therefore, the pair of Landau--Ginzburg models \eqref{e:hidimLG} is the Clarke mirror pair of Landau--Ginzburg model associated to $(\Sigma_A, \Sigma^\circ_{\check{A}})$. In other words, 
\[
\begin{aligned}
    T(\Sigma_A)=V_A, &\quad w(\Sigma^\circ_{\check{A}})=g_1+\cdots+g_k \\
    T(\Sigma^\circ_{\check{A}})=V^\circ_{\check{A}}, &\quad w(\Sigma_A)=\check{g}_1+\cdots \check{g}_k+\pi^*w_{\check{A}}.
\end{aligned}
\]
For the log Calabi--Yau pair $(U_A, U_{\check{A}})$, one can easily see that the corresponding pair of toric Landau--Ginzburg models can be obtained by the Clarke mirror pair of Landau--Ginzburg model associated to the Clarke dual pair $(\Sigma^\circ_A, \check{\Sigma}^\circ_{\check{A}})$:
\[
\begin{aligned}
    T(\Sigma^\circ_A)=V^\circ_A, &\quad w(\Sigma^\circ_{\check{A}})=g_1+\cdots+g_k \\
    T(\Sigma^\circ_{\check{A}})=V^\circ_{\check{A}}, &\quad w(\Sigma^\circ_A)=\check{g}_1+\cdots \check{g}_k.
\end{aligned}
\]

The following result follows directly from Propositions \ref{p:tj2} and Corollary \ref{c:tj2}. Similar results appear in work of  Chiodo and Nagel \cite{chiodo2018hybrid}.
\begin{proposition}[cf. \cite{chiodo2018hybrid}]\label{t:isohodge}
	There are isomorphisms of filtered vector spaces:
	\begin{align*}
	H^{*+2k}_\mathrm{orb}(V_A,g_1+\dots + g_k) &\cong H_\mathrm{orb}^{*}(X_A)(-k),\\ 
    H^{*+2k}_\mathrm{orb}(V^\circ_A,g_1+\dots + g_k) &\cong H_\mathrm{orb}^{*}(U_A)(-k),\\
    H_\mathrm{orb}^{*+2k}(V_{\check{A}},\check{g}_1 + \dots + \check{g}_k + \check{\pi}^*w_{\check{A}}) &\cong H_\mathrm{orb}^{*}(U_{\check{A}},w_{\check{A}})(-k).
	\end{align*}
\end{proposition}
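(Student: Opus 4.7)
The plan is to prove all three isomorphisms by iterating the single-section Cayley trick of Corollary \ref{c:tj2}. The key observation is that for line bundles $\mathcal{L}_1, \dots, \mathcal{L}_j$ on an orbifold $Y$, one has a canonical identification
\[
\mathrm{Tot}(\mathcal{L}_1 \oplus \dots \oplus \mathcal{L}_j) \;\cong\; \mathrm{Tot}\!\left(\pi_{j-1}^*\mathcal{L}_j\right),
\]
where $\pi_{j-1}: \mathrm{Tot}(\mathcal{L}_1 \oplus \dots \oplus \mathcal{L}_{j-1}) \to Y$ is the projection. Under this identification, the Cayley potential $g_{\pi^*\sigma_j}$ attached to the pullback of a section $\sigma_j$ of $\mathcal{L}_j^{\vee}$ agrees with the function $g_{\sigma_j}$.

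To make the induction work, I would formulate a single statement that encompasses all three cases: Let $Y$ be a quasi-smooth toric orbifold, $B\subset Y$ an orbifold normal crossings divisor, and $w$ a rational function on $Y$ with pole divisor contained in $B$ so that $(Y,B,w)$ is a nondegenerate Landau--Ginzburg model in the sense of Definition \ref{d:lgmodel}. Suppose $\sigma_1,\dots,\sigma_j$ are generic sections of nef line bundles $\mathcal{O}_Y(F_1),\dots,\mathcal{O}_Y(F_j)$ such that $V(\sigma_1,\dots,\sigma_j)\cup B \cup (w)_0$ has orbifold normal crossings. Setting $W_j = \mathrm{Tot}(\bigoplus_{i=1}^{j}\mathcal{O}_Y(-F_i))$, $\pi: W_j\to Y$, and $G_j = g_{\sigma_1}+\dots +g_{\sigma_j}+\pi^*w$, the claim is
\[
H^{n+2j}_{\mathrm{orb}}\!\bigl(W_j\setminus \pi^{-1}B,\,G_j\bigr) \;\cong\; H^{n}_{\mathrm{orb}}\!\bigl(V(\sigma_1,\dots,\sigma_j)\setminus B,\,w\bigr)(-j)
\]
as filtered vector spaces. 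The base case $j=0$ is trivial. For the inductive step, apply Corollary \ref{c:tj2} with ambient data $(W_{j-1},\pi_{j-1}^{-1}B,G_{j-1})$ and hypersurface $Z = \pi_{j-1}^{-1}V(\sigma_j)$, using the fact that the Cayley total space for $Z$ is $W_j$ and that $G_{j-1}+g_{\pi^*\sigma_j} = G_j$ under the identification above. This yields $H^{n}_{\mathrm{orb}}(W_j\setminus\pi^{-1}B,G_j) \cong H^{n-2}_{\mathrm{orb}}(Z\setminus \pi_{j-1}^{-1}B,G_{j-1}|_Z)(-1)$. But $(Z,\pi_{j-1}^{-1}B\cap Z,G_{j-1}|_Z)$ is exactly the $(j-1)$-section situation for the ambient LG model $(V(\sigma_j),B\cap V(\sigma_j),w|_{V(\sigma_j)})$ with restricted sections $\sigma_i|_{V(\sigma_j)}$, so the inductive hypothesis completes the step.

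The three stated isomorphisms then correspond to specializations of this general claim: for (1) take $(Y,B,w)=(T,\emptyset,0)$ with sections $s_1,\dots,s_k$; for (2) take $(T,E_{k+1},0)$ with the same sections; for (3) take $(\check T,\check E_{k+1},w_{\check A})$ with sections $\check s_1,\dots,\check s_k$ (so that in the statement $V_{\check A}$ should be read as $V_{\check A}^\circ$, i.e.\ the total space restricted over $\check T^\circ$).

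The main technical obstacle is ensuring that the nondegenerate Landau--Ginzburg hypothesis of Corollary \ref{c:tj2} is preserved at every stage of the induction: one must check that $V(\sigma_1,\dots,\sigma_j)$ is quasi-smooth, that it meets $B$ and the zero divisor of $w$ transversally in the orbifold sense, and that all relevant unions remain orbifold normal crossings after each restriction. This follows from genericity of the coefficients together with the nef/semi-ample property of the bundles, exactly by the Bertini-type argument of \cite[Theorem 2.6]{batyrev2011calabi} already invoked in the proof of Proposition \ref{p:npdc}; the essential point is that a generic choice of $\sigma_j$ makes $V(\sigma_j)$ transverse to every stratum of the orbifold normal crossings divisor already present, so that restricting the entire data to $V(\sigma_j)$ preserves the standing hypotheses.
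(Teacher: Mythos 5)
Your proposal is correct and takes essentially the same route as the paper: the paper offers no detailed argument, simply asserting that the proposition ``follows directly'' from Proposition \ref{p:tj2} and Corollary \ref{c:tj2}, and your inductive iteration of the single-section Cayley isomorphism over the summands of the bundle is exactly that argument made explicit. Your reading of $V_{\check{A}}$ as the total space over $\check{T}^\circ$ (i.e.\ $V^\circ_{\check{A}}$) and your appeal to genericity plus \cite[Theorem 2.6]{batyrev2011calabi} for the transversality hypotheses match the paper's conventions.
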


 As a corollary, we may deduce that Conjecture \ref{c:orbkkp} and Conjecture \ref{c:logcal} holds for the toric complete intersection mirror pairs that were constructed in Section \ref{s:mp}.
\begin{theorem}\label{t:kkp}
	Let notation be as above. The following relations hold: for $p,q \in \mathbb{Z}$,
	\[
	f^{p,q}_\mathrm{orb}(X_A) = f_\mathrm{orb}^{d-p,q}(U_{\check{A}}, w_{\check{A}}),\qquad f^{p,q}_\mathrm{orb}(U_A)  = f^{d-p,q}_\mathrm{orb}(U_{\check{A}})
	\]
\end{theorem}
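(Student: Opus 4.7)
The plan is to combine Theorem \ref{t:clarke} with the Clarke dual pairs from Proposition \ref{p:npdc} and the Cayley isomorphisms of Proposition \ref{t:isohodge}. All the substantive content resides in those previously established results, so the present theorem is essentially bookkeeping.

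By Proposition \ref{p:npdc}, the pairs $(\Sigma_A, \Sigma^\circ_{\check A})$ and $(\Sigma^\circ_A, \Sigma^\circ_{\check A})$ are Clarke dual pairs of simplicial, Gorenstein, quasiprojective, convex fans with trivial stacky structures, in an ambient lattice of rank $n := d+2k$, where $d = \dim X_A = \dim U_A = \dim U_{\check A}$ and $k$ is the number of equations cut out by the nef partition. Applying Theorem \ref{t:clarke} to each pair yields the orbifold irregular Hodge symmetries
\begin{align*}
f^{\lambda,\mu}_\mathrm{orb}(V_A,\, g_1+\cdots+g_k) &= f^{n-\lambda,\,\mu}_\mathrm{orb}(V^\circ_{\check A},\, \check g_1+\cdots+\check g_k + \pi^*w_{\check A}), \\
f^{\lambda,\mu}_\mathrm{orb}(V^\circ_A,\, g_1+\cdots+g_k) &= f^{n-\lambda,\,\mu}_\mathrm{orb}(V^\circ_{\check A},\, \check g_1+\cdots+\check g_k).
\end{align*}
To descend these to $X_A$, $U_A$, and $U_{\check A}$, invoke Proposition \ref{t:isohodge}: each listed filtered isomorphism carries a Tate twist by $(-k)$, which shifts the Hodge bigrading by $(k,k)$; consequently, for instance, $f^{\lambda,\mu}_\mathrm{orb}(V_A,\, g_1+\cdots+g_k) = f^{\lambda-k,\,\mu-k}_\mathrm{orb}(X_A)$, and analogously for the other pairs. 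Substituting $\lambda = p+k$ and $\mu = q+k$ into both symmetries, the $+k$ shifts on the left are absorbed by the Cayley dictionary, while on the right $n - \lambda - k = (d+2k) - (p+k) - k = d-p$, producing the stated identities.

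There is no genuine obstacle in this argument: all of the depth is packaged into Theorem \ref{t:clarke}, whose proof is deferred to the technical final sections of the paper, while the Clarke-duality verification of Proposition \ref{p:npdc} and the Cayley trick of Proposition \ref{t:isohodge} are already in hand. The only issue demanding attention is the numerology: one must check that the ambient lattice rank $d+2k$ and the two Tate-twist shifts by $-k$ combine, under the substitution $\lambda = p+k$, $\mu = q+k$, to yield precisely $d-p$ in the final answer.
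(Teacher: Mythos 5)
Your proposal is correct and takes essentially the same route as the paper, whose proof consists precisely of citing Proposition \ref{p:npdc} for the two Clarke dual pairs, Theorem \ref{t:clarke}, and Proposition \ref{t:isohodge}. Your explicit bookkeeping (ambient lattice rank $d+2k$ with $d=\dim X_A$, the two Tate twists by $k$, and the substitution $\lambda=p+k$, $\mu=q+k$ yielding $d-p$) is exactly the arithmetic the paper leaves implicit.
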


\begin{proof}
	We have shown (Proposition \ref{p:npdc}) that the pair $(\Sigma_A$  ${\Sigma}_{\check{A}})$ is a Clarke dual pair of fans. This also implies that $(\Sigma_A^\circ, {\Sigma}^\circ_{\check{A}})$. Now the conclusion follows from Theorem \ref{t:clarke} and Proposition \ref{t:isohodge}.
\end{proof}
\begin{corollary}[Batyrev--Borisov \cite{batyrev1996mirror}]\label{rem:BB}
    If $X_A$ and $X_{\check{A}}$ are a Batyrev--Borisov dual pair of Calabi--Yau complete intersections of dimension $d$ then 
    \[
    h^{p,q}_\mathrm{st}(X_A) = h^{d-p,q}_\mathrm{st}(X_{\check{A}}).
    \]
\end{corollary}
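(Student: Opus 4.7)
The plan is to derive this as a direct specialization of Theorem \ref{t:kkp}. First, I would invoke the example in the paper noting that in the Batyrev--Borisov setting, where $A_{k+1}=\{0\}$, the trivial face forces the divisor $E_{k+1}$ to be zero, so $D_A = D_{\check{A}} = \emptyset$ and hence $U_A = X_A$, $U_{\check{A}} = X_{\check{A}}$. Adjunction then gives that both $X_A$ and $X_{\check{A}}$ are quasi-smooth Calabi--Yau varieties with trivial canonical class, sitting inside the Gorenstein toric orbifolds $T$ and $\check{T}$. With this identification, the second equation of Theorem \ref{t:kkp} specializes to
\[
f^{p,q}_\mathrm{orb}(X_A) = f^{d-p,q}_\mathrm{orb}(X_{\check{A}})
\]
for all $p,q \in \mathbb{Z}$.

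Next I would remove the ``$f$'' in favour of ``$h$'' by exploiting compactness. Since $X_A$ and $X_{\check{A}}$ are compact, the Landau--Ginzburg potential is trivial and the irregular Hodge filtration on $H^*_\mathrm{orb}$ reduces to the usual Hodge filtration coming from the Chen--Ruan mixed Hodge structure on the inertia stack; so $f^{p,q}_\mathrm{orb}(X_A) = h^{p,q}_\mathrm{orb}(X_A)$, and similarly for $X_{\check{A}}$. Finally, I would invoke the standard equality between orbifold and stringy Hodge numbers for Gorenstein orbifolds. Because $T$ and $\check{T}$ are Gorenstein toric orbifolds and $X_A$, $X_{\check{A}}$ meet all torus strata transversely, both complete intersections inherit Gorenstein (i.e.\ $SL$-type) quotient singularities. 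By the theorem of Batyrev (and Yasuda in greater generality) this yields
\[
h^{p,q}_\mathrm{st}(X_A) = h^{p,q}_\mathrm{orb}(X_A), \qquad h^{p,q}_\mathrm{st}(X_{\check{A}}) = h^{p,q}_\mathrm{orb}(X_{\check{A}}),
\]
and chaining these three identifications produces the claim.

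The main subtlety in the plan is not in the mirror duality itself, which is carried entirely by Theorem \ref{t:kkp}, but in two auxiliary bookkeeping points: first, verifying that the ambient toric orbifold $T$ can be taken to be any simplicial crepant partial resolution of the toric Fano $T_\Delta$ without altering either side of the equality (which follows because stringy Hodge numbers are invariant under crepant birational modifications); and second, matching the Chen--Ruan orbifold Hodge numbers of the quasi-smooth Calabi--Yau $X_A$ with Batyrev's combinatorial definition of $h^{p,q}_\mathrm{st}(X_A)$. Both points are well-documented in the stringy Hodge number literature, so no genuinely new argument is needed beyond citing them.
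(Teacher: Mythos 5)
Your proposal is correct and follows essentially the same route as the paper: specialize Theorem \ref{t:kkp} with the last piece of the nef partition trivial (so $D_A=D_{\check A}=\emptyset$, $w_{\check A}=0$, $U_A=X_A$, $U_{\check A}=X_{\check A}$), use compactness to identify $f_{\mathrm{orb}}^{p,q}$ with $h_{\mathrm{orb}}^{p,q}$, and then convert orbifold to stringy Hodge numbers. The only difference is bookkeeping in the last step: the paper compares the quasi-smooth $X_A$ in the crepant partial resolution $T$ with the singular Batyrev--Borisov complete intersection $X'_A$ in $T_\Delta$ and cites Borisov--Mavlyutov for $h^{p,q}_{\mathrm{st}}(X'_A)=h^{p,q}_{\mathrm{orb}}(X_A)$, whereas you reach the same identification via Yasuda's stringy-equals-orbifold theorem for Gorenstein quotient singularities together with crepant-birational invariance of stringy Hodge numbers, which is an equivalent argument.
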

\begin{proof}
    Take $\Delta_{k+1} = \emptyset$. Then $X_{A}$ is a crepant orbifold resolution of singularities of the complete intersection $X'_A$ in associated to the nef partition $\Delta_1,\dots, \Delta_k$. Furthermore, $w_{\check{A}} = 0$ and $U_{\check{A}} = X_{\check{A}}$ is also an orbifold crepant resolution of singularities of the complete intersection $X'_{\check{A}}$ attached to $\check{\Delta}_1,\dots, \check{\Delta}_k$. By a result of Borisov--Mavlyutov \cite{borisov2003string}, 
	\[
	h_\mathrm{st}^{p,q} (X_A') = h_\mathrm{orb}^{p,q}(X_A),\qquad    h_\mathrm{st}^{p,q} (X_{\check{A}}') = h_\mathrm{orb}^{p,q}(X_{\check{A}}).\footnote{Here, $f_\mathrm{orb}^{p,q}(X)=h_\mathrm{orb}^{p,q}(X)$ for $X=X_A, X_{\check{A}}$ because they are both compact.}
	\]
	Thus we obtain the desired result.
\end{proof}

\begin{remark}\label{r:Rossi}
   In \cite{Rossi2023framed}, Rossi extends the duality of polytopes and introduces a new mirror construction for not necessarily nef complete intersections in toric varieties. For simplicity, consider the hypersurface case. Take an effective divisor $\mathcal{L} = \sum_{\rho \in \Sigma[1]} a_\rho E_\rho$ in $T(\Sigma)$, and let $\Delta_{\Sigma, \mathcal{L}}$ be $\mathrm{Conv}(\beta_\rho u_\rho)$ where $u_\rho$ is a primitive vector corresponding to $E_\rho$. For such data $(\Sigma, \beta)$, the author introduces the so-called \emph{calibrated $f$-process} to produce dual data $(\Sigma^\vee, \beta^\vee)$, where the convex hull of $\Sigma^\vee$ is $\mathrm{Conv}(\beta_\rho u_\rho)$, and $\beta^\vee$ is chosen so that applying the same process recovers $(\Sigma, \beta)$. By applying Cayley's trick as done in this section, we can fit such an $f$-dual pair into the Clarke dual framework, but the convexity condition does not generally hold. We discuss how to manage this type of phenomenon using stacky structures in Section \ref{s:non-convex}.
\end{remark}

\section{Hodge number duality for non-convex Clarke mirror pairs}\label{s:non-convex}

The convexity condition used in our main examples is rather stringent and we would like to relax it if possible. The goal of this section is to provide a Hodge-theoretic mirror to (1) smooth projective toric varieties and (2) {\em arbitrary} smooth hypersurfaces in smooth toric projective varieties. This construction makes use of the stacky structures that we have introduced, but at the cost of identifying Hodge numbers with a part of the irregular Hodge filtration on the mirror Landau--Ginzburg model. The mirror that we describe is a modification of the mirrors found in the literature \cite{gross2017towards,abouzaid2016lagrangian}. 
\begin{proposition}\label{p:hdst}
    Let $(\bf{\Sigma}, \bf{\check{\Sigma}})$ be a Clarke dual pair such that $\Sigma$ is unimodular and ${\bf \check{\Sigma}}=(\check{\Sigma},\underline{1})$. Assume that for any cone $c\in {\bf \Sigma}[i], i>1$, the integers in $\beta(c)$ are relatively prime. Then the following statements hold. 
    \begin{enumerate}
        \item The integer-graded orbifold Hodge numbers of $(T({\bf \Sigma}),w(\check{\Sigma}))$ are the Hodge numbers of 
    $(T({\Sigma}),w(\check{\Sigma}))$: For $p,q \in \mathbb{Z}$,
    \[
    f^{p,q}_\mathrm{orb}(T({\bf \Sigma}),w(\check{\Sigma}))=f^{p,q}(T({\Sigma}),w(\check{\Sigma})).
    \]
    \item Furthermore, Theorem \ref{t:clarke} implies that 
    \[ f^{p,q}(T(\Sigma), w(\check{\Sigma})) = f^{d-p,q}_\mathrm{orb}(T(\check{\Sigma}),w({\bf \Sigma}))_1  
    \]
    where the subscript $1$ denotes the Hodge numbers of the unipotent limit mixed Hodge structure on the cohomology of the pair.
    \end{enumerate} 
\end{proposition}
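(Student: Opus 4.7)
To prove (1), I would expand the orbifold irregular Hodge numbers as
\[
f^{p,q}_\mathrm{orb}(T({\bf \Sigma}),w(\check{\Sigma})) = \sum_{g \in I_{T({\bf \Sigma})}} f^{p-\iota(g),\,q-\iota(g)}\bigl(T({\bf \Sigma})_{(g)},\,w(\check{\Sigma})|_{T({\bf \Sigma})_{(g)}}\bigr),
\]
and show that for $p,q \in \mathbb{Z}$ only the trivial sector $g=0$ contributes. The key observation is that, because $\Sigma$ is unimodular, the underlying variety of each twisted sector $T({\bf \Sigma})_{(g)}$ (for $g\in\mathrm{Box}^\circ(c)$) is the closure of the torus orbit $T_c\subset T(\Sigma)$, which is smooth. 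Hence $(T({\bf \Sigma})_{(g)},w(\check{\Sigma})|_{T({\bf \Sigma})_{(g)}})$ is a non-orbifold Landau--Ginzburg model, whose irregular Hodge numbers are integer-graded; in particular $f^{p-\iota(g),\,q-\iota(g)}$ vanishes unless $\iota(g)\in\mathbb{Z}$.

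It remains to check that the hypothesis forces $\iota(g)\in\mathbb{Z}\Rightarrow g=0$. By unimodularity, any $g\in\mathrm{Box}^\circ(c)$ may be written $g=\sum_i k_i\rho_i$, where $\rho_i$ are the primitive generators of $c$ and $k_i\in\{1,\dots,\beta_{\rho_i}-1\}$, so $\iota(g)=\sum_i k_i/\beta_{\rho_i}$. For $\dim c=1$ this is $k/\beta\notin\mathbb{Z}$ outright. For $\dim c>1$, if $\sum k_i/\beta_{\rho_i}\in\mathbb{Z}$, then multiplying by $\prod_j\beta_{\rho_j}$ and reducing modulo $\beta_{\rho_i}$ gives $k_i\prod_{j\neq i}\beta_{\rho_j}\equiv 0\pmod{\beta_{\rho_i}}$; the pairwise coprimality of $\beta(c)$ then forces $\beta_{\rho_i}\mid k_i$, contradicting $0<k_i<\beta_{\rho_i}$. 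This establishes (1).

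For (2), Theorem \ref{t:clarke} applied to the Clarke dual pair $({\bf \Sigma},\check{\bf \Sigma})$ yields $f^{p,q}_\mathrm{orb}(T({\bf \Sigma}),w(\check{\bf \Sigma}))=f^{d-p,q}_\mathrm{orb}(T(\check{\bf \Sigma}),w({\bf \Sigma}))$. Since $\check{\bf \Sigma}=(\check{\Sigma},\underline{1})$, we have $w(\check{\bf \Sigma})=w(\check{\Sigma})$, and combining with (1) gives $f^{p,q}(T(\Sigma),w(\check{\Sigma}))=f^{d-p,q}_\mathrm{orb}(T(\check{\bf \Sigma}),w({\bf \Sigma}))$ for $p,q\in\mathbb{Z}$. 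To reinterpret the right-hand side as the unipotent limit Hodge numbers of $(T(\check{\Sigma}),w({\bf \Sigma}))$, I would invoke the topological realization \eqref{e:irrtop}: the fractional part $\alpha$ of the $F_\mathrm{irr}$-grading encodes the monodromy eigenvalue $\exp(2\pi i\alpha)$ on the limit mixed Hodge structure of a smooth fibre of $w({\bf \Sigma})$, while the orbifold decomposition $f^{\lambda,\mu}_\mathrm{orb}=\sum_g f^{\lambda-\iota(g),\mu-\iota(g)}$ implements precisely a decomposition indexed by the ages $\iota(g)$. Restricting to integer $(p,q)$ then isolates the eigenvalue-$1$ summand, giving the subscript-$1$ reading.

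The main technical obstacle is this last step: \eqref{e:irrtop} is stated in the non-orbifold setting in \cite{esnault20171}, and its orbifold extension requires identifying the twisted sectors of the canonical stack $T(\check{\bf \Sigma})$ with the (possibly non-unipotent) monodromy eigenspaces of $w({\bf \Sigma})$ around $\infty$. Such an identification is natural, since the canonical stack is obtained by taking roots along the orbifold (and hence, effectively, non-tame pole) divisors, but making it precise—so that the age-shifted summands at integer $(\lambda,\mu)$ collectively reproduce exactly the unipotent part of the limit MHS on $(T(\check{\Sigma}),w({\bf \Sigma}))$—is the technical heart of part (2).
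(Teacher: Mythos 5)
Your overall strategy for (1) is the paper's: expand over twisted sectors and show the hypothesis on $\beta$ forces $\iota(g)\notin\mathbb{Z}$ for every $g\neq 0$. Your mod-$\beta$ computation of that fact (which the paper simply asserts) is fine, and the pairwise coprimality you use is legitimately available because the hypothesis is imposed on \emph{every} cone of dimension $>1$, hence on all two-dimensional faces. The genuine problem is the bridge from ``non-integral age'' to ``no contribution in integer bidegree.'' You justify it by claiming that each twisted sector, being a smooth non-orbifold Landau--Ginzburg model (unimodularity of $\Sigma$), has integer-graded irregular Hodge numbers. That implication is false in general: integrality of the irregular Hodge grading comes from tameness of the potential, not smoothness of the underlying space, and the paper itself emphasizes (in the discussion surrounding Theorem \ref{t:gkr}) that a smooth toric Landau--Ginzburg model with a non-tame potential can have non-integer irregular Hodge numbers. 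So you still owe an argument that $f^{p-\iota(g),\,q-\iota(g)}$ of the sector vanishes for $p,q\in\mathbb{Z}$ when $\iota(g)\notin\mathbb{Z}$; in the paper's applications (Sections \ref{s:weakFano} and \ref{s:gentype}) this is clear because $w(\check{\Sigma})$ restricts to a constant on every nontrivial twisted sector, so the sector numbers are classical Hodge numbers. The paper's one-line proof elides this point as well, but as written your stated reason is the one step of your argument that does not hold up.

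For (2), your route --- Theorem \ref{t:clarke}, part (1), and the topological characterization \eqref{e:irrtop} of irregular Hodge numbers via the limit mixed Hodge structure --- is exactly the paper's, which cites the discussion following Proposition \ref{p:har}. However, you stop short of a proof: you declare the orbifold upgrade of \eqref{e:irrtop} to be ``the technical heart'' and leave it unresolved, so part (2) remains incomplete in your proposal. Moreover the framing of that difficulty is off: $T(\check{\bf \Sigma})$ carries the trivial stacky structure $\underline{1}$, so no root construction along pole divisors enters; the possible non-tameness of $w({\bf \Sigma})$ comes from the scaled exponents ${\bf \Sigma}[1]$, and all that is required is the sector-by-sector application of \eqref{e:irrtop} with the age shift (the kind of bookkeeping the paper handles as in Remark \ref{r:relinertia}), after which restricting to integer bidegrees isolates the eigenvalue-$1$ (unipotent) part. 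Writing out that short reduction, rather than flagging it as an open technical problem, is what is missing.
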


\begin{proof}
    As explained in Section \ref{s:cohdmst}, the twisted sectors are parametrized by elements in the union of $\mathrm{Box}^\circ({c})$ over all $c \in {\bf \Sigma}$. For $g=\sum_{\rho \in c[1]} a_\rho \rho \in \mathrm{Box}^\circ(c)$ with $a_\rho \in (0,1)$, the age is $\iota(g)=\sum a_\rho$. Due to our assumptions on ${\bf \Sigma}$, $\iota(g)$ becomes an integer only when $c=0$, which corresponds to the coarse moduli space $T(\Sigma)$.
    
    The second statement directly follows from the characterization of irregular Hodge numbers in terms of the limit mixed Hodge structure (see discussion following Proposition \ref{p:har}).
\end{proof}

\begin{remark}
   If one allows ${\bf \Sigma}$ to be Gorenstein simplicial, then the age grading on each twisted sector in $\mathfrak{I}_{T({\bf \Sigma})}$ needs to be further shifted to match with the age grading on the corresponding twisted sector of its coarse moduli space. As noted in Remark \ref{r:orbgr}, this is because the orbifold cohomology of $T(\Sigma)$ is not a direct summand of the orbifold cohomology of $T({\bf \Sigma})$, except after taking appropriate shift in age gradings. This indicates that we may not be able to characterize the orbifold cohomology of $T(\Sigma)$ in the orbifold cohomology of $T({\bf \Sigma})$ in terms of the integrality of the age gradings. 
\end{remark}

\subsection{KKP conjecture for smooth weak Fano toric varieties}\label{s:weakFano}

Let $\Sigma$ be a unimodular complete\footnote{This condition can be weakened slightly, but we impose it for the sake of simplicity.} fan and therefore that $T(\Sigma)$ is smooth projective toric variety. In general, the Clarke mirror pair associated to Clarke dual pair $(\Sigma, \check{\Sigma}=0)$ is not expected to be mirror \cite{abouzaid2016lagrangian, AKO2008weighted}. Our proof of Hodge number duality also cannot be applied as the convexity condition does not hold in general so that the tropicalizations of the corresponding Landau--Ginzburg models have auxiliary parts where the duality of tropical Jacobian sheaves breaks. We can resolve this issue by using stacky fans (Remark \ref{r:Rossi}).

Suppose there is a convex stacky fan ${\bf \Sigma}=(\Sigma, \beta)$ such that $\beta$ is relatively prime. Then we have a unimodular, quasiprojective, convex Clarke dual pair $({\bf \Sigma}, {\bf \check{\Sigma}}=0)$ such that the assumption in Proposition \ref{p:hdst} holds. The following theorem is a direct corollary of Proposition \ref{p:hdst}.  

\begin{theorem}\label{t:stmir1}
    Let $({\bf \Sigma}, {\bf \check{\Sigma}}=0)$ be as above. Then there are identification of Hodge numbers: For $p,q \in \mathbb{Z}$,
    \[ h^{p,q}(T(\Sigma)) = f^{d-p,q}_\mathrm{orb}(T(\check{\Sigma}),w({\bf \Sigma}))_1    
    \]
    where the subscript $1$ denotes the Hodge numbers of the unipotent limit mixed Hodge structure on the cohomology of the pair.
\end{theorem}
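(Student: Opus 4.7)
The plan is to recognize Theorem \ref{t:stmir1} as the direct specialization of Proposition \ref{p:hdst} to the case $\check{\bf\Sigma}=0$. Essentially all of the hard work is absorbed into Proposition \ref{p:hdst}, which in turn rests on Theorem \ref{t:clarke} and the topological realization \eqref{e:irrtop}; what remains is to confirm that our degenerate Clarke dual pair meets the hypotheses and to identify the constant-potential side of the resulting identity with ordinary Hodge numbers.

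First I will verify that $({\bf\Sigma},\,\check{\bf\Sigma}=0)$ is a Clarke dual pair in the sense of Definition \ref{d:adjectives} and that it satisfies the additional hypotheses of Proposition \ref{p:hdst}. The regularity condition $\langle m,n\rangle\geq 0$ is vacuous since $\mathrm{Supp}(\check{\bf\Sigma})=\{0\}$; the complex $\Delta_{\bf\Sigma}$ is convex by assumption while $\Delta_{\check{\bf\Sigma}}=\{0\}$ is trivially convex. Quasiprojectivity of ${\bf\Sigma}$ follows from the completeness of $\Sigma$ together with the convexity of $\Delta_{\bf\Sigma}$, which supplies a strictly convex ${\bf\Sigma}$-linear support function; the trivial fan is quasiprojective for free. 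Unimodularity of $\Sigma$, triviality of the stacky structure on $\check{\bf\Sigma}$, and relative primality of the $\beta$-weights on cones of dimension $>1$ are exactly the remaining hypotheses of Proposition \ref{p:hdst}.

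Applying part (2) of Proposition \ref{p:hdst} with $\check{\Sigma}=0$ then gives
\[
f^{p,q}\bigl(T(\Sigma),\,w(\check{\Sigma})\bigr)\;=\;f^{d-p,q}_{\mathrm{orb}}\bigl(T(\check{\Sigma}),\,w({\bf\Sigma})\bigr)_{1}
\]
for all $p,q\in\mathbb{Z}$. To finish it is enough to identify the left hand side with $h^{p,q}(T(\Sigma))$. Because $\check{\Sigma}=\{0\}$ has no rays, the Laurent polynomial $w(\check{\Sigma})=1$ is constant, so the twisted de Rham differential $d+dw\wedge(-)$ reduces to the ordinary de Rham differential on $T(\Sigma)$, and Yu's irregular Hodge filtration $F^\bullet_{\mathrm{irr}}$ collapses to the usual Hodge filtration (as noted in Section \ref{s:lgcoh}). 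Since $T(\Sigma)$ is smooth and projective, this yields $f^{p,q}(T(\Sigma),w(\check{\Sigma}))=h^{p,q}(T(\Sigma))$, completing the argument.

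There is no serious obstacle. The only subtle point worth highlighting is the implicit role of the relative-primality hypothesis on $\beta$: it is what forces, via Proposition \ref{p:hdst}(1), every non-trivial twisted sector of $T({\bf\Sigma})$ to carry a non-integer age, so that only the untwisted sector contributes to the integer-graded orbifold irregular Hodge numbers on the ${\bf\Sigma}$-side, enabling the identification with the usual Hodge numbers of the coarse moduli space $T(\Sigma)$. The generic coefficients required by Theorem \ref{t:clarke} can be chosen so that $w({\bf\Sigma})$ is nondegenerate on $T(\check{\Sigma})=(\mathbb{C}^*)^d$, which is automatic.
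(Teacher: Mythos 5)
Your proposal is correct and follows essentially the same route as the paper, which obtains Theorem \ref{t:stmir1} as a direct corollary of Proposition \ref{p:hdst}(2), using that $w(\check{\Sigma})$ is constant so the irregular Hodge filtration on $(T(\Sigma),w(\check{\Sigma}))$ collapses to the usual Hodge filtration of the smooth projective variety $T(\Sigma)$. One small caveat: quasiprojectivity of ${\bf\Sigma}$ does \emph{not} follow from completeness of $\Sigma$ together with convexity of $\Delta_{\bf\Sigma}$ (there exist complete non-projective simplicial fans, and convexity of the support does not force the triangulation to be coherent); it is simply part of the standing hypotheses of the section, where $T(\Sigma)$ is assumed projective, so your argument stands once that assumption is invoked rather than derived.
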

\begin{remark}
 We leave it as an open problem to prove the existence or non-existence of such a stacky fan in general. It is not even clear to us whether this can be carried out when $d=2$. On the other hand, there are many interesting cases (e.g., Hirzebruch surfaces, Example \ref{ex:Hirz}) where this does in fact work.
\end{remark}
\begin{example}\label{ex:Hirz}
Let us take $\Sigma$ to be the (non-stacky) fan of $\mathbb{F}_3$, the Hirzebruch surface of degree 3, which has ray generators $(1,0),(0,1), (-1,0)$, and $(3,-1)$. The polytope $\Delta_\Sigma$ is not convex, so we cannot apply Theorem \ref{t:clintro}. However, if we take the stacky fan with $\beta = (2,1,1,1)$ we obtain a toric stack $T({\bf \Sigma})$ whose corresponding inertia stack is $\mathfrak{I}_{T({\bf \Sigma})} = \mathbb{F}_3 \coprod \mathbb{P}^1$ with the age $\iota(\mathbb{P}_1) = 1/2$ (See Section \ref{s:orbcoh} for definition of orbifold cohomology). A Clarke mirror of $T({\bf \Sigma})$ is $(({\mathbb{C}^*})^2, w({\bf \Sigma})=1+x^2 + y + 1/x + x^3/y)$. Since ${\bf \Sigma}$ satisfies our convexity condition, the orbifold Hodge numbers of $T({\bf \Sigma})$ are
\[
f^{0,0} = f^{1/2,1/2} = f^{3/2,3/2} = f^{2,2} = 1,\qquad f^{1,1} = 2.
\]
One can see that the integer-graded parts are the same as the Hodge numbers of $T(\Sigma)=\mathbb{F}_3$. 

On the other hand, one can compute the vanishing cycles of the Laurent polynomial $x^2 + y + 1/x + x^3/y$. There are six vanishing cycles and the Seifert matrix of this Laurent polynomial is
\[
\chi = \left[ \begin{matrix} 1 & 2 & 1 &2 & 1& 2 \\ 0 & 1 & 2& 1& 1& 1\\ 0 & 0 & 1 & 0 & 2 & 1 \\ 0 & 0 & 0 & 1 & 0 & 1 \\ 0& 0 & 0 & 0& 1 & 2 \\ 0 & 0 & 0 & 0 & 0 & 1 \end{matrix}\right].
\]
From this, one computes the monodromy matrix on $H^2(\mathbb{C}^{\times 2}, w^{-1}(t))$ when $t$ varies in a small loop around $\infty$ by the formula $\chi^{-1}\chi^\mathrm{T}$. Its Jordan form is; 
\[
\left[ \begin{matrix}
    -1 & 1 & 0 & 0 & 0 & 0 \\ 0 & -1 & 0 & 0 & 0 & 0 \\ 0 & 0 & 1 & 1 & 0 & 0 \\
    0 & 0 & 0 & 1 & 1 & 0 \\ 
    0 & 0 & 0 & 0 & 1 & 0 \\ 
    0 & 0 & 0 & 0 & 0 & 1
\end{matrix} \right]
\]
This direct approach can be pushed further to deduce the irregular Hodge numbers from this matrix: The orbifold Hodge numbers of $((\mathbb{C}^*)^2, w({\bf \Sigma}))$ are 
\[
f^{0,2} = f^{1/2,3/2} = f^{1/2,3/2} = f^{2,0} = 1,\qquad f^{1,1} = 2.
\]
Here the integer--graded part are coming from the unipotent part of the matrix above. We will simply observe that the Jordan blocks of eigenvalue 1 and ranks 1 and 3 reflects the fact that $\mathbb{F}_3$ is a surface of Picard rank 2, while the Jordan block of eigenvalue $(-1)$ and rank 2 reflects the fact that $\mathbb{P}^1$ is a curve whose age is $1/2$.
\end{example}

\begin{remark}
   We expect that the construction of an auxiliary stacky mirror pair would provide another tool for addressing homological mirror symmetry for the smooth projective toric variety $T(\Sigma)$. On the B-side, there is a fully faithful embedding of the derived category of coherent sheaves on $T(\Sigma)$ into that of $T({\bf \Sigma})$. Mirror symmetry then predicts that there is a full subcategory of the Fukaya--Seidel category of $(T(\check{\Sigma}), w({\bf \Sigma}))$, and Theorem \ref{t:stmir1} suggests that the generating set of Lagrangians somehow implicitly reflects the shadow of unipotent monodromy around infinity. Additionally, it is interesting to compare this construction with that in the literature \cite{Hanlon2022fuctor}, where the mirror object is described as a pair consisting of a torus $(\mathbb{C}^*)^d$ and a certain FLTZ skeleton $f_\Sigma$.
\end{remark}

\subsection{Smooth hypersurfaces in smooth projective toric varieties}\label{s:gentype}
 
We discuss the Hodge number duality of smooth hypersurfaces in smooth projective toric varieties. The case of complete intersections will be analogous, so for the sake of simplicity, we will not present it here and will leave it to the reader.
 
Let $\Sigma$ be a unimodular fan and suppose $\varphi: \Sigma \rightarrow \mathbb{R}$ is a $\Sigma$-linear convex function. From this data, we obtain a line bundle 
\[
    \mathcal{L} = \mathcal{O}_{T(\Sigma)}\left(-\bigoplus_{\rho \in \Sigma[1]}\varphi(\rho)E_\rho\right)
    \]
The toric variety $\mathrm{Tot}(\mathcal{L})$ has fan $\Sigma_\mathcal{L}$ contained in $N_\mathbb{R}\times \mathbb{R}$ with ray generators $(\rho,\varphi(\rho))$ and $(0_N,1)$. On the other hand, we obtain a polytope $P_\mathcal{L} = \{m \in M_\mathbb{R} \mid m(\rho) \leq \varphi(\rho), \forall \rho \in \Sigma[1]\}$. Assume that $P_\mathcal{L}$ is an integral polytope and choose a projective triangulation of $P_\mathcal{L}$, and let $\check{\Sigma}_\mathcal{L}$ be the simplicial fan over $P_\mathcal{L}\times 1$ determined by this triangulation. The polytope $\Delta_{\check{\Sigma}_\mathcal{L}}$ is convex, essentially by assumption, however, if $\varphi(\rho) \geq 2$ for any $\rho$, the polytope $\Delta_{\Sigma_\mathcal{L}}$ need not be convex. That the fans are Clarke dual is a direct consequence of the definitions. 

\begin{defn}
    Define ${\bf \Sigma}_\mathcal{L}$ to be the stacky fan whose stacky structure is given by $\beta_{(\rho,\varphi(\rho))}=1$ and $\beta_{(0,1)}= \min_{\rho \in \Sigma[1]}\{\varphi(\rho)\}$.
\end{defn}
\begin{proposition}
    The fan ${\bf \Sigma}_\mathcal{L}$ satisfies the convexity condition.
\end{proposition}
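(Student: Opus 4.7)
My plan is to prove that $\Delta_{{\bf \Sigma}_\mathcal{L}} = \bigcup_{c \in \Sigma} \Delta_{\widetilde{c}}$ has convex support by exhibiting it as an explicit convex polytope. The maximal cones $\widetilde{c}$ of ${\bf \Sigma}_\mathcal{L}$ are the lifts of the maximal cones $c$ of $\Sigma$, with scaled ray generators $\{(\rho, \varphi(\rho)) : \rho \in c[1]\}$ together with the stacky fibre generator $(0, m)$ where $m := \min_{\rho \in \Sigma[1]} \varphi(\rho)$, so each $\Delta_{\widetilde{c}}$ is the simplex spanned by these generators and the origin. The natural candidate for the support is
\[
P := \mathrm{Conv}\bigl(\{0\} \cup \{(\rho, \varphi(\rho)) : \rho \in \Sigma[1]\} \cup \{(0, m)\}\bigr),
\]
which is manifestly convex, and the inclusion $\Delta_{{\bf \Sigma}_\mathcal{L}} \subseteq P$ is immediate since each $\Delta_{\widetilde{c}}$ is the convex hull of a subset of the vertices of $P$.

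The substance of the proof is the reverse inclusion $P \subseteq \Delta_{{\bf \Sigma}_\mathcal{L}}$. Given $(n, h) \in P$, I would produce a maximal cone $c \in \Sigma$ such that $(n, h) \in \Delta_{\widetilde{c}}$. Since $\Sigma$ is complete (we are working with a smooth projective toric variety $T$), choose any top cone $c$ containing $n$; by simpliciality, write $n = \sum_{\rho \in c[1]} a_\rho \rho$ uniquely with $a_\rho \geq 0$. The candidate expression is then
\[
(n, h) = \sum_{\rho \in c[1]} a_\rho\, (\rho, \varphi(\rho)) + b \cdot (0, m) + e \cdot 0
\]
with $b := (h - \varphi|_c(n))/m$ and $e := 1 - \sum a_\rho - b$. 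The remaining task is to verify $b \geq 0$ and $e \geq 0$.

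The lower bound $b \geq 0$, equivalently $h \geq \varphi|_c(n)$, follows easily from the convexity of $\varphi$: its piecewise-linear extension to $|\Sigma| = N_\mathbb{R}$ is a convex function, every vertex of $P$ lies on or above its graph, and hence so does every point of $P$. The main obstacle is the upper bound $e \geq 0$, i.e. $h \leq \varphi|_c(n) + m(1 - \sum a_\rho)$, and this is where the choice $m = \min_\rho \varphi(\rho)$ is essential. My plan is to take an arbitrary convex decomposition $(n, h) = \sum_\rho \alpha_\rho (\rho, \varphi(\rho)) + \beta (0, m) + \gamma \cdot 0$ certifying membership in $P$, then rebalance it to one supported on $c[1] \cup \{(0, m), 0\}$: any weight $\alpha_{\rho'}$ placed on a ray $\rho' \notin c[1]$ contributes height at least $m \alpha_{\rho'}$ because $\varphi(\rho') \geq m$, while the lattice difference $\sum_{\rho' \notin c[1]} \alpha_{\rho'} \rho'$ gets re-expressed inside $c[1]$ using the convexity inequality $\sum \alpha_\rho \varphi(\rho) \geq \varphi(\sum \alpha_\rho \rho)$. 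The excess mass is then absorbed by transferring it to the vertex $(0, m)$ (which itself has height exactly $m$) and the apex $0$. The hardest step will be to carry out this rebalancing in a way that simultaneously preserves the projection to $N_\mathbb{R}$, stays inside the simplex constraint $\sum a_\rho + b + e = 1$, and turns the minimality of $m$ into the precise bound needed for $e \geq 0$; I expect it to hinge on the convex $\Sigma$-linearity of $\varphi$ through exactly this comparison inequality between the two decompositions.
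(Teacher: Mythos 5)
Your overall strategy is sound and is, in essence, the paper's: convexity of $\Delta_{{\bf \Sigma}_\mathcal{L}}$ is indeed equivalent to the identity $\Delta_{{\bf \Sigma}_\mathcal{L}}=P$, and your fibrewise criterion over a maximal cone $c$ (the conditions $b\ge 0$ and $e\ge 0$, i.e.\ $\varphi|_c(n)\le h\le \varphi|_c(n)+m\bigl(1-\textstyle\sum_\rho a_\rho\bigr)$) is exactly the ``region between two graphs'' description that the paper's own (very terse) proof invokes. The lower bound $b\ge 0$ is fine.

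The genuine gap is the step $e\ge 0$: it is not just the hard part, it is false when $m=\min_{\rho}\varphi(\rho)$, and the minimality of $m$ works against you rather than for you. Take $\Sigma$ the fan of $\mathbb{P}^1$ with $\varphi(1)=1$, $\varphi(-1)=2$, so $m=1$ and the scaled generators are $(1,1)$, $(-1,2)$, $(0,1)$. The midpoint $(0,3/2)$ of the two generators $(1,1)$ and $(-1,2)$ lies in $P$, but the fibre of $\bigcup_c\Delta_{\widetilde{c}}$ over $n=0$ is $[0,m]=[0,1]$, so $(0,3/2)\notin\Delta_{{\bf \Sigma}_\mathcal{L}}$ (in your coordinates: $a_\rho=0$, $b=3/2$, $e=-1/2$); in particular $\Delta_{{\bf \Sigma}_\mathcal{L}}$ is not convex here and no rebalancing can rescue the inclusion $P\subseteq\Delta_{{\bf \Sigma}_\mathcal{L}}$. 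The directional problem in your sketch is that bounding $h$ from above requires an \emph{upper} bound on the heights $\varphi(\rho')$ of rays outside $c$, i.e.\ $\varphi(\rho')\le\beta_{(0,1)}$, whereas $\varphi(\rho')\ge m$ bounds them from below. What the argument actually needs is concavity of the upper graph $\varphi+\beta_{(0,1)}(1-\varphi_\Sigma)$ (with $\varphi_\Sigma$ the piecewise linear function taking value $1$ on every ray), which across a wall with relation $\rho+\rho'=\sum_v k_v v$ reads $\beta_{(0,1)}(2-\sum_v k_v)\ge\varphi(\rho)+\varphi(\rho')-\sum_v k_v\varphi(v)$; this forces $\beta_{(0,1)}\ge\max_\rho\varphi(\rho)$ rather than the minimum, and also quietly uses convexity of the projection $\bigcup_c\mathrm{Conv}(\{0\}\cup c[1])$ (a weak-Fano-type condition on $\Sigma$), which appears in your scheme as the possibility $\sum_\rho a_\rho>1$. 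To be fair, the paper's own sketch has the same defect: its bounding region contains the vertex $(\rho,\varphi(\rho))$ only when $\beta_{(0,1)}\ge\varphi(\rho)$, so the ``$\min$'' in the definition of ${\bf \Sigma}_\mathcal{L}$ (and in the remark that any value at least the minimum works) is evidently a slip for ``$\max$''/``sufficiently large''. With that correction, and the additional convexity of the projection, your fibrewise argument closes; as written, the key inequality you would need to prove is false.
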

\begin{proof}
    As in Proposition \ref{p:npdc}, we prove that $\Delta_{{\bf \Sigma}_\mathcal{L}}$ is bounded between two convex regions, hence that it is convex. Let $\varphi$ be a $\Sigma$-linear convex function whose associated nef line bundle is $\mathcal{L}$ and let $\varphi_\Sigma$ denote the convex function attached to the canonical line bundle $\omega_{T(\Sigma)}$. Then we take the region in $N_\mathbb{R}\times \mathbb{R}$ given by the inequalities 
    \[
    \{(n,x) \in N_\mathbb{R}\times \mathbb{R} \mid \varphi(n) \leq x \leq \beta_{(0,1)}(1+\varphi_\Sigma(n)) - \varphi(n)  \}.
    \]
    From this the argument is straightforward. 
\end{proof}
Applying Propositions \ref{p:tj2} and \ref{p:hdst}, we have the following theorem. 

\begin{theorem}\label{t:gkr}

    Suppose $Z$ is a generic hypersurface in a Gorenstein smooth projective toric variety $T(\Sigma)$ determined by the vanishing locus of a nef divisor $\mathcal{L}$. Then for $p,q \in \mathbb{Z}$
    \[
    h^{p-1,q-1}(Z) = f^{d-p,q}(T(\check{\Sigma}_\mathcal{L}),w({\bf \Sigma}_\mathcal{L}))_1 
    \]
    where $({\bf \Sigma_\mathcal{L}}, {\bf \check{\Sigma}_\mathcal{L}}=\check{\Sigma}_\mathcal{L})$ is the Clarke dual pair introduced as above. Here, the subscript $1$ also denotes the Hodge numbers of the unipotent limit mixed Hodge structure on the cohomology of the pair.
\end{theorem}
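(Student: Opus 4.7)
The plan is to combine three ingredients already developed in the paper: Cayley's trick (Proposition~\ref{p:tj2}) to trade the cohomology of the hypersurface $Z$ for twisted cohomology of a Landau--Ginzburg model on the total space of the dual line bundle; Proposition~\ref{p:hdst} to compare Hodge numbers of the underlying toric Landau--Ginzburg model with the orbifold Hodge numbers of its stacky refinement; and Theorem~\ref{t:clarke} to invoke Clarke duality for the convex Clarke dual pair $({\bf\Sigma}_\mathcal{L},\check{\Sigma}_\mathcal{L})$ whose convexity was just established.

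First I would choose a generic section $s$ of $\mathcal{L}$ with $Z=V(s)$, and let $g_s$ be the induced regular function on $V_Z=\mathrm{Tot}(\mathcal{L}^{-1})=T(\Sigma_\mathcal{L})$. Viewing $T(\Sigma)$ as the trivial Landau--Ginzburg model $(T(\Sigma),\emptyset,0)$ with $Z$ as its distinguished hypersurface, Proposition~\ref{p:tj2} yields a filtered isomorphism $H^{p-1}(Z)(-1)\cong H^{p+1}(V_Z,g_s)$, hence the Hodge number identity
\[
h^{p-1,q-1}(Z)=f^{p,q}(T(\Sigma_\mathcal{L}),g_s).
\]
Since $g_s$ and $w(\check{\Sigma}_\mathcal{L})$ are nondegenerate Laurent polynomials with the same Newton polytope $P_\mathcal{L}\times\{1\}$, Theorem~\ref{t:as} (and a standard deformation argument using generic coefficients) identifies the right-hand side with $f^{p,q}(T(\Sigma_\mathcal{L}),w(\check{\Sigma}_\mathcal{L}))$.

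Next I would verify the hypotheses of Proposition~\ref{p:hdst} for the pair $({\bf\Sigma}_\mathcal{L},\check{\Sigma}_\mathcal{L})$. The fan $\Sigma_\mathcal{L}$ is unimodular, because adjoining $(0,1)$ to a unimodular cone of $\Sigma$ produces a unimodular cone in $N\times\mathbb{Z}$; the dual fan $\check{\Sigma}_\mathcal{L}$ is non-stacky by construction; and on every cone of ${\bf\Sigma}_\mathcal{L}$ the stacky weight vector is either $(1,\dots,1)$ or $(\beta_{(0,1)},1,\dots,1)$, both of which are relatively prime. Applying Proposition~\ref{p:hdst}(1) then upgrades the previous identity to
\[
h^{p-1,q-1}(Z)=f^{p,q}_\mathrm{orb}(T({\bf\Sigma}_\mathcal{L}),w(\check{\Sigma}_\mathcal{L})).
\]

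Finally, Theorem~\ref{t:clarke} applied to the convex Clarke dual pair $({\bf\Sigma}_\mathcal{L},\check{\Sigma}_\mathcal{L})$ supplies the orbifold irregular Hodge duality between $T({\bf\Sigma}_\mathcal{L})$ and $T(\check{\Sigma}_\mathcal{L})$, and Proposition~\ref{p:hdst}(2) reinterprets this integer-graded orbifold Hodge number of the mirror as the Hodge number of the unipotent monodromy summand, yielding the stated formula. I expect the main obstacle to be the simultaneous bookkeeping of the Tate twist $(-1)$ in Cayley, the convention $F^\lambda V(-1)=F^{\lambda-1}V$, and the ambient rank of the Clarke dual pair (which is $d+1$, one more than the dimension of $T$); these must be reconciled with the displayed index shift. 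The conceptual heart of the argument, however, is simply that the relatively-prime condition on the stacky weights of ${\bf\Sigma}_\mathcal{L}$ forces every non-identity twisted sector to have non-integer age, so the integer-graded part of $H^*_\mathrm{orb}(T(\check{\Sigma}_\mathcal{L}),w({\bf\Sigma}_\mathcal{L}))$ is precisely the unipotent monodromy locus --- which is what makes the appearance of the subscript~$1$ inevitable.
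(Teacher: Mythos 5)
Your proposal is correct and follows essentially the same route as the paper, whose proof of Theorem \ref{t:gkr} is exactly the combination you describe: Proposition \ref{p:tj2} (Cayley's trick with trivial potential on the base, so that $h^{p-1,q-1}(Z)=f^{p,q}(T(\Sigma_\mathcal{L}),w(\check{\Sigma}_\mathcal{L}))$), Proposition \ref{p:hdst} applied to the pair $({\bf\Sigma}_\mathcal{L},\check{\Sigma}_\mathcal{L})$ once its convexity is established, and Theorem \ref{t:clarke}. The bookkeeping you flag resolves as you expect: the $d$ in the displayed formula is the $d$ of Proposition \ref{p:hdst}, i.e.\ the rank of the lattice carrying the Clarke dual pair (so $\dim T(\Sigma_\mathcal{L})=\dim T(\Sigma)+1$), which makes your $(d+1)$-shift consistent with the statement.
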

\begin{remark}
    We choose $\beta_{(0,1)} = \min_{\rho \in \Sigma[1]}\{\varphi(\rho)\}$ for simplicity but any value greater than and equal to $\min_{\rho \in \Sigma[1]}\{\varphi(\rho)\}$ serves the same function.
\end{remark}

    In this case, we can compute the orbifold Hodge numbers of $(T({\bf \Sigma}_\mathcal{L}),w(\check{\Sigma}_\mathcal{L}))$ more explicitly. In fact, the toric Deligne--Mumford stack determined by ${\bf \Sigma}_\mathcal{L}$ is a toric stack with orbifold structure along the zero section of the line bundle. 
By construction, $T({\bf \Sigma}_\mathcal{L})$ has inertia stack 
    \[
       \mathfrak{I}_{T({\bf \Sigma}_\mathcal{L})} \cong \mathfrak{I}_{T(\Sigma_\mathcal{L})} \coprod \underbrace{\mathfrak{I}_{T(\Sigma)} \coprod \dots \coprod \mathfrak{I}_{T(\Sigma)}}_{(\beta_{(0,1)}-1)\text{-times}}
    \]
    where the copies of $\mathfrak{I}_{T(\Sigma)}$ have age $k/\beta_{(0,1)}$ for $0 <k < \beta_{(0,1)}$ and are embedded into $T(\Sigma_\mathcal{L})$ as the zero-section when $T(\Sigma_\mathcal{L})$ is viewed as the total space of a line bundle. Since $ w(\check{\Sigma}_\mathcal{L})$ is determined by a section  $s$ of $\mathcal{L}$, it is identically 0 along the zero section of $T(\Sigma_\mathcal{L})$. Consequently, 
    \[
        H_\mathrm{orb}^*(T({\bf \Sigma}_\mathcal{L}), w(\check{\Sigma}_\mathcal{L})) \cong H^*(T({\Sigma}_\mathcal{L}), w(\check{\Sigma}_\mathcal{L}))\bigoplus_{0< k < \beta_{(0,1)}} H^*(T(\Sigma))(- k/\beta_{0,1})
        \]
    so that we have
    \[
    f^{p,q}_\mathrm{orb}   (T({\bf \Sigma}_\mathcal{L}),w(\check{\Sigma}_\mathcal{L})) = \begin{cases}
    h^{p-1,q-1}(Z) & \text{ if }p,q\in \mathbb{Z} \\
    h^{p-k/\beta_{(0,1)},p-k/\beta_{(0,1)}}(T(\Sigma)) & \text{ if } 0 < k< \beta_{(0,1)}, p,q \in \mathbb{Z} \\
    0 & \text{ otherwise }
    \end{cases} 
    \]

    The authors of \cite{gross2017towards} prove a similar result when $Z$ is a hypersurface of general type in $T(\Sigma)$. They take the non-stacky mirror pair $(T({ \Sigma}_\mathcal{L}),w(\check{\Sigma}_\mathcal{L}))$ and show that the Hodge numbers of $Z$ agree with the Hodge numbers of the sheaf of vanishing cycles of the zero-fibre. It would be very interesting to understand how these Landau--Ginzburg models differ. We expect the  following picture. The  Landau--Ginzburg model $(T({\bf \Sigma}_\mathcal{L}),w(\check{\Sigma}_{\mathcal{L}}))$ has a degenerate fibre over 0 whose singularities and vanishing cycles are identical to those of the degenerate fibre studied by Gross, Katzarkov, and Ruddat, and there should be $(\beta_{(0,1)}-1)$ clusters of $\chi_\mathrm{orb}(T(\Sigma_\mathcal{L}))$ nodal fibres over points away from the origin.

\section{Degenerations of Landau--Ginzburg models and their nearby cycles}\label{s:degnby}

The goal of this section and the next section is to prove Theorem \ref{t:descent}. In this section, we introduce and study degenerations of Landau--Ginzburg models, we will prove that if $(X,D,w,\pi)$ is a good degeneration of Landau--Ginzburg models (see Definition \ref{d:deglg} below), then $\mathbb{R}^p\pi_*(\Omega_{X/\mathbb{A}^1}^\bullet(\log D\cup X_0)(*P),d+dw)$ forms a trivial bundle over the disc centred at 0 and we will establish conditions under which the irregular Hodge numbers of a generic  fibre of $\pi$ can be recovered from a limiting irregular Hodge filtration at the fibre over 0.

\subsection{Quasi-stable degeneration of Landau--Ginzburg models}

Let us characterize the class of families of Landau--Ginzburg models to which our results apply.

\begin{defn}\label{d:deglg}

Let $X$ be an analytic orbifold and let $D$ be an orbifold normal crossings divisor. Let $(X,D,w:X \to \mathbb{A}^1)$ be a nondegenerate Landau--Ginzburg model. Suppose we have a proper morphism $\pi : X \rightarrow \mathbb{A}^1$ such that the irreducible components of $X_0 = \pi^{-1}(0)$ have multiplicity 1 with respect to $\pi$, multiplicity 0 with respect to $w$. If $X_0 \cup D$ is an orbifold normal crossings divisor, we call this data a {\em quasi-stable degeneration of Landau--Ginzburg models}. 
\end{defn}

\begin{example}
    Suppose $(X,D,w)$ is a nondegenerate orbifold Landau--Ginzburg model. For simplicity, assume that $P$ is irreducible. Construct the family of Landau--Ginzburg models $(X\times \mathbb{A}^1, D\times \mathbb{A}^1,tw)$ where $t$ is a parameter on $\mathbb{C}$ and $\pi$ is projection onto $\mathbb{A}^1$. This is not a quasistable degeneration of Landau--Ginzburg models however, after blowing up $X\times \mathbb{A}^1$ in the subvariety $0\times P$, we obtain a quasi-stable degeneration of Landau--Ginzburg models. The degenerate fibre is a normal crossings union of a copy of $X$ and $(P\times \mathbb{P}^1)$ meeting along a copy of $P$. The function $wt$ restricts to a constant function on $X$ and projection onto the second factor of $P\times \mathbb{P}^1$. For arbitrary $P$, one may construct a similar quasi-stable degeneration of Landau--Ginzburg models by blowing up irreducible components of $D$ in $X\times \mathbb{A}^1
    $ sequentially. Similar degenerations of Landau--Ginzburg models appear in the work of Sabbah in relation to the Brieskorn lattice. See \cite[\S 4]{sabbah2023singularities} and the references therein for a detailed overview.
\end{example}

If $(X,D,w,\pi)$ is a quasi-stable degeneration of Landau--Ginzburg models, then for each stratum $X_I$ of $X_0 = \pi^{-1}(0)$, the triple $(X_I,D_I = X_I \cap D, w_I = w|_{X_I})$ is an orbifold Landau--Ginzburg model. Similarly, for any $1 \gg |\varepsilon| > 0$ and $X_\varepsilon:=\pi^{-1}(\varepsilon)$, the triple $(X_\varepsilon, D_\varepsilon = D\cap X_\varepsilon, w_\varepsilon =  w|_{X_\varepsilon})$ is also a nondegenerate  Landau--Ginzburg model. The remainder of this section is devoted to constructing degenerations of Landau--Ginzburg models coming from combinatorial data, similar to Example \ref{ex:nondeg}.

\subsection{Relative twisted de Rham complex} \label{s:nbf}

In this section, we will explain how to extract irregular Hodge numbers from a limiting family of Landau--Ginzburg models. In the next section, this will be combined with the construction in Section \ref{s:combdeg} to give a tropical description of irregular Hodge numbers of a toric Landau--Ginzburg model. 

The approach that we take combines techniques of Steenbrink \cite{steenbrink1974mixed} and Yu \cite{yu2014irregular}. Suppose $(X,D,w,\pi)$ is a quasi-stable degeneration of Landau--Ginzburg models. Define the relative twisted de Rham complex of a quasi-stable degeneration of Landau--Ginzburg models to be:
\begin{align*}
    \cOmega^p_{X/\mathbb{A}^1}(\log D\cup X_0)(*P) &:= \dfrac{\cOmega_{X}^p(\log D\cup X_0)(*P)}{\cOmega_{X}^{p-1}(\log D\cup X_0)(*P)\wedge d\log \pi}\\
\cOmega^p_{X/\mathbb{A}^1}(\log D, \nabla) & := \dfrac{\cOmega_{X}^p(\log D\cup X_0, \nabla)}{\cOmega_{X}^{p-1}(D\cup X_0, \nabla)\wedge d\log \pi}.
\end{align*}
Here $X_0$ denotes $\pi^{-1}(0)$. A direct local computation shows that $\nabla$ induces differentials on the complex $\cOmega^\bullet_{X/\mathbb{A}^1}(\log D \cup X_0)(*P)$ and $\cOmega_{X/\mathbb{A}^1}^\bullet(\log D, \nabla)$ which we call the twisted relative de Rham complexes which we denote using the same notation. We also notice that the irregular Hodge filtrations extend to filtrations on the complexes above. Precisely,
\[
F_\mathrm{irr}^\lambda \cOmega^p_{X/\mathbb{A}^1}(\log D\cup X_0)(*P) = \dfrac{F^\lambda_\mathrm{irr} \Omega_{X}^p(\log D\cup X_0)(*P) + \cOmega_{X}^{p-1}(\log D\cup X_0)(*P)\wedge d\log \pi}{\cOmega_{X}^{p-1}(\log D\cup X_0)(*P)\wedge d\log \pi}.
\]
The following statement is a standard local computation.
\begin{proposition}
    Let $(X,D,w,\pi)$ be a quasi-stable degeneration of  Landau--Ginzburg models. Suppose  $t \neq 0$. Then 
    \[
    (F_\mathrm{irr}^\lambda \cOmega^p_{X/\mathbb{A}^1}(\log D\cup X_0)(*P)) \otimes \mathcal{O}_{X_t} \cong F_\mathrm{irr}^\lambda \cOmega^p_{X_t}(\log D_t)(*P_t).
    \]
\end{proposition}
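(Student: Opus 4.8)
The statement is local on $X$, so the plan is to reduce to a computation in a single orbifold chart. Fix a point $x \in X_t$ (so $\pi(x) = t \neq 0$). Choose local orbifold coordinates $(\mathbb{D}^d, G_x)$ centered at $x$ with variables $(x_1,\dots,x_d)$, chosen so that $D$ is a union of coordinate hyperplanes near $x$, say $D = V(x_1 \cdots x_r)$, and so that $P$ (the pole divisor of $w$) is a subset of these, say $P = V(x_1 \cdots x_s)$ with $s \le r$. The crucial point is that since $t \neq 0$, the divisor $X_0 = \pi^{-1}(0)$ does not pass through $x$, so $\pi$ is a nonvanishing holomorphic function on this chart, $d\log\pi$ is a holomorphic $1$-form with no log poles, and $\pi$ is in fact a unit in $\mathcal{O}_{\mathbb{D}^d}$ near $x$. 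Thus locally $\cOmega_X^\bullet(\log D \cup X_0)(*P) = \cOmega_X^\bullet(\log D)(*P)$, and the quotient defining $\cOmega^\bullet_{X/\mathbb{A}^1}$ is simply modding out the ideal generated (via wedge with $d\log\pi$) by lower-degree forms.

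\textbf{Key steps.} First I would recall that $\pi: X \to \mathbb{A}^1$ is a proper morphism whose fibers are orbifolds, and since $X_t$ is quasi-smooth and $x \notin X_0$, after possibly shrinking the chart we may take the local coordinates so that $x_d = \pi$ (up to a unit); more precisely, $d\pi$ is nowhere zero near $x$ because the fiber $X_t$ is smooth of codimension one there, so we can complete a local coordinate system by $\pi$ itself. With this choice, $\mathcal{O}_{X_t}$ near $x$ is $\mathcal{O}_{\mathbb{D}^{d-1}}$ (with its residual $G_x$-action fixing the last coordinate trivially, since $X_0 \cup D$ being orbifold normal crossings forces the $G$-action to be diagonal and compatible with the strata), and the restriction map $(-) \otimes \mathcal{O}_{X_t}$ is just setting $\pi = t$. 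Then $\cOmega^p_{X/\mathbb{A}^1}(\log D \cup X_0)(*P)$ near $x$ is the free $\mathcal{O}_{\mathbb{D}^d}(*P)$-module on wedges of $d\log x_1,\dots,d\log x_r, dx_{r+1},\dots,dx_{d-1}$ (omitting $d\log\pi = d\log x_d$ in the quotient), and tensoring with $\mathcal{O}_{X_t}$ gives precisely the free $\mathcal{O}_{\mathbb{D}^{d-1}}(*P_t)$-module $\cOmega^p_{X_t}(\log D_t)(*P_t)$, since $P_t = P \cap X_t = V(x_1 \cdots x_s)$ inside $X_t$ and $D_t = D \cap X_t = V(x_1 \cdots x_r)$. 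Second, I would check the compatibility with the irregular Hodge filtration: the filtration $F^\lambda_\mathrm{irr}$ on degree-$p$ forms is controlled by the allowed order of pole $\lfloor (p - \lambda) P \rfloor$ along each component of $P$, and since restricting to $X_t = V(\pi - t)$ does not change the multiplicity of $w$ along any component of $P$ (the components of $P_t$ are just $P_i \cap X_t$ and $w|_{X_t}$ has the same multiplicity there, because $\pi - t$ is transverse to $P$ and a unit on it), the pole-order bounds match up. Concretely, $F^\lambda_\mathrm{irr}\cOmega^p_X(\log D)(*P) \otimes \mathcal{O}_{X_t} = \cOmega^p_X(\log D)(\lfloor (p-\lambda)P\rfloor) \otimes \mathcal{O}_{X_t} = \cOmega^p_{X_t}(\log D_t)(\lfloor (p-\lambda)P_t\rfloor) = F^\lambda_\mathrm{irr}\cOmega^p_{X_t}(\log D_t)(*P_t)$; the extra summand $\cOmega^{p-1}_X(\log D \cup X_0)(*P) \wedge d\log\pi$ in the definition of the relative filtered piece dies after $\otimes\mathcal{O}_{X_t}$ because $d\log\pi$ becomes zero (constant) on the fiber.

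\textbf{Main obstacle.} The only genuinely delicate point is the behavior of the $G_x$-action and the orbifold structure: one must verify that in the presence of the finite group $G_x$, the ``relative'' quotient and the restriction functor $(-) \otimes \mathcal{O}_{X_t}$ still commute, i.e. that taking $G_x$-invariants is compatible with both operations. This follows because $X_0 \cup D$ is orbifold normal crossings, which by definition means the $G_x$-action is diagonal in suitable coordinates and permutes (in our case, fixes, after possibly relabeling) the strata, so $\pi$ can be taken $G_x$-invariant and $X_t$ inherits a genuine orbifold chart $\mathbb{D}^{d-1}/G_x'$ with $G_x'$ a quotient of $G_x$; the invariant-sections description of $\cOmega^\bullet$ from Remark \ref{r:derhamorb} then makes everything go through coefficient-wise. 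I would phrase the argument uniformly by working with $G_x$-invariant local sections throughout, so the orbifold case requires no separate treatment beyond this observation. Everything else is the standard fact that for a smooth divisor $V(\pi - t)$ transverse to a normal crossings boundary, relative log-de Rham forms restrict to absolute log-de Rham forms on the divisor.
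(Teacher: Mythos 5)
You are filling in what the paper itself leaves unproved (it calls the statement ``a standard local computation''), and your chart-by-chart argument is exactly that computation, carried out essentially correctly: since $t\neq 0$ the divisor $X_0$ misses the chart, $d\log\pi$ is holomorphic there, and---because $(X_t,D_t,w_t)$ is again a nondegenerate Landau--Ginzburg model, so $\pi-t$ cuts out a reduced quasi-smooth fiber transverse to $D$ and $P$---$\pi$ completes a local orbifold coordinate system, after which the relative complex is free on the logarithmic generators omitting $d\log\pi$ and restricts, filtration and all, to the fiberwise complex because the pole multiplicities of $w$ along $P$ agree with those of $w_t$ along $P_t$. One cosmetic slip: the displayed chain $F^\lambda_\mathrm{irr}\Omega^p_X(\log D)(*P)\otimes\mathcal{O}_{X_t}=\Omega^p_{X_t}(\log D_t)(\lfloor (p-\lambda)P_t\rfloor)$ is false for \emph{absolute} forms (the left-hand side has rank $\binom{d}{p}$, the right-hand side $\binom{d-1}{p}$, and $d\pi\otimes 1$ does not vanish in $\Omega^1_X\otimes\mathcal{O}_{X_t}$); the equality only holds after first passing to the quotient by $\wedge\, d\log\pi$, i.e.\ working with the relative filtered pieces throughout, which is what the rest of your argument actually does, so this is an imprecision of wording rather than a gap.
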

For later use, we will check that $\Omega_{X/\mathbb{A}^1}^\bullet(\log D\cup X_0)(*P)$ may be replaced with a complex of coherent sheaves.

\begin{proposition}\label{p:yu}
    The following inclusion of filtered complexes is a filtered quasi-isomorphism:
    \[ 
    (\cOmega_{X/\mathbb{A}^1}^\bullet(\log D,\nabla), F_\mathrm{irr}^\bullet) \hookrightarrow (\cOmega_{X/\mathbb{A}^1}^\bullet (\log D\cup X_0)(*P), F_\mathrm{irr}^\bullet).
    \]
\end{proposition}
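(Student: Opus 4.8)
The plan is to reduce this statement to the absolute case already handled by Yu (Proposition \ref{p:Yusub}) by working locally and exploiting the fact that the relative complexes are quotients of the absolute ones by $d\log\pi$. First I would observe that the question is local on $X$, so I may pass to an orbifold chart $(\mathbb{D}^d,G)$ adapted simultaneously to $D\cup X_0$ and to the pole divisor $P$. Since $\pi$ has reduced fibre over $0$ and $X_0\cup D$ is orbifold normal crossings, in such a chart we may take coordinates $(x_1,\dots,x_d)$ so that $X_0 = V(x_1\cdots x_r)$ for some $r$ (multiplicity one), $D$ and $P$ are unions of further coordinate hyperplanes, and $w$ has multiplicity $0$ along $X_0$. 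The differential $d\log\pi = \sum_{i=1}^r d\log x_i$ is then a well-defined logarithmic $1$-form, nowhere a zero-divisor in $\Omega^\bullet_X(\log D\cup X_0)(*P)$, so wedging with it gives an injection of complexes and the quotient is exactly $\Omega^\bullet_{X/\mathbb{A}^1}(\log D\cup X_0)(*P)$, with the analogous statement for $F_w(0)=\Omega^\bullet_X(\log D\cup X_0,\nabla)$.

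The key step is then to compare the two short exact sequences of filtered complexes
\[
0 \to \Omega^{\bullet-1}_{X}(\log D\cup X_0,\nabla)\wedge d\log\pi \to \Omega^\bullet_X(\log D\cup X_0,\nabla) \to \Omega^\bullet_{X/\mathbb{A}^1}(\log D,\nabla) \to 0
\]
and the corresponding one with $(*P)$ in place of $(\nabla)$. The middle and left terms of the first sequence map to those of the second by Yu's quasi-isomorphism (Proposition \ref{p:Yusub}): for the middle term this is Proposition \ref{p:Yusub} applied to $(X,D\cup X_0,w)$ directly, and for the left term it is the same statement twisted — since wedging with $d\log\pi$ is an isomorphism of complexes onto its image, $\Omega^{\bullet-1}_X(\log D\cup X_0,\nabla)\wedge d\log\pi \hookrightarrow \Omega^{\bullet-1}_X(\log D\cup X_0)(*P)\wedge d\log\pi$ is a filtered quasi-isomorphism with a shift in the irregular grading that matches on both sides. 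By the five lemma applied to the long exact hypercohomology sequences (or, more carefully, by a spectral sequence / mapping cone argument respecting the filtration $F_\mathrm{irr}^\bullet$), the induced map on the relative quotient complexes is also a filtered quasi-isomorphism. One checks that the filtration $F_\mathrm{irr}^\lambda$ on the quotient is precisely the image filtration, so the comparison of associated graded pieces reduces to the comparison for the absolute complexes, where it is Yu's theorem.

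The main obstacle I anticipate is bookkeeping with the irregular filtration under the quotient: one must verify that $F_\mathrm{irr}^\lambda$ of the relative complex, defined as the image of $F_\mathrm{irr}^\lambda$ of the absolute complex, is compatible with the quasi-isomorphism — i.e. that $d\log\pi$ has "weight zero" for the irregular filtration (it is a genuine logarithmic form, not twisted, so this should hold) and that taking $\gr_{F_\mathrm{irr}}$ commutes with the relevant quotient. This is where a naive application of Proposition \ref{p:Yusub} could fail if the filtration jumps were not matched; concretely, one needs that $\gr_{F_\mathrm{irr}}^\lambda$ of the short exact sequence above remains short exact, which follows because wedging with $d\log\pi$ strictly preserves the pole-order bookkeeping in the definition of $F_w(\lambda)$. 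Once this compatibility is in hand, the five-lemma argument is formal. A secondary, purely technical point is to confirm that all the sheaves involved are still coherent after passing to the relative quotient — this is immediate since $\Omega^p_{X/\mathbb{A}^1}(\log D,\nabla)$ is a quotient of the coherent sheaf $\Omega^p_X(\log D\cup X_0,\nabla)$ by a coherent subsheaf.
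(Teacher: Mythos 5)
Your overall strategy---apply Proposition \ref{p:Yusub} to $(X,D\cup X_0,w)$ and transfer the statement across the quotient by $\wedge\, d\log\pi$---is the same as the paper's, which phrases it as a commutative square of absolute complexes with vertical arrows $\wedge\, d\log\pi$ and horizontal arrows the filtered quasi-isomorphisms of Proposition \ref{p:Yusub}. However, your execution rests on a false claim: $d\log\pi$ \emph{is} a zero-divisor in the exterior algebra, since $d\log\pi\wedge d\log\pi=0$. More precisely, because $d\log\pi$ can locally be completed to a basis of the free module $\Omega^1_X(\log D\cup X_0)$, in each degree the kernel of $\wedge\, d\log\pi$ on $\Omega^{p}_X(\log D\cup X_0)(*P)$ is exactly the image of $\wedge\, d\log\pi$ from degree $p-1$; wedging is neither injective nor an isomorphism onto its image.

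This is not cosmetic. The left-hand terms of your two short exact sequences, $\Omega^{\bullet-1}\wedge d\log\pi$, are (after dividing out $d\log\pi$) not absolute complexes to which Proposition \ref{p:Yusub} applies ``twisted,'' but shifted copies of the \emph{relative} complexes themselves --- and on the $(\nabla)$-side even with a mismatched pole-order bookkeeping, since there the quotient is by $\Omega^{p-2}(\log)(\lfloor (p-1)P\rfloor)\wedge d\log\pi$ rather than $\Omega^{p-2}(\log)(\lfloor (p-2)P\rfloor)\wedge d\log\pi$. So the asserted quasi-isomorphism of the left terms is essentially the proposition you are trying to prove, and the five-lemma step is circular: knowing the middle comparison is a quasi-isomorphism, the induced triangle only identifies the cone of the relative comparison with a shift of itself, which gives no information. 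To close the gap you need an independent input at exactly this point, e.g.\ a stalkwise verification of the relative statement in the style of Yu's proof of the absolute one (reducing pole order along $P$ with the twisted differential in directions transverse to $\pi$, in the local normal-crossings coordinates you set up), rather than deducing it from the absolute case by the sequence you wrote; the paper itself argues directly with the cokernels of the vertical arrows and never invokes injectivity of $\wedge\, d\log\pi$.
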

\begin{proof}
    One has a commutative diagram of filtered complexes 
    \[
    \begin{tikzcd}
    \cOmega_{X}^\bullet(\log D\cup X_0,\nabla) \ar[r] \ar[d,"\wedge d\log \pi"] & \cOmega_X^\bullet(\log D\cup X_0)(*P)  \ar[d, "\wedge d\log \pi"] \\
    \cOmega_{X}^{\bullet+1}(\log D\cup X_0,\nabla) \ar[r]  & \cOmega_X^{\bullet+1}(\log D\cup X_0)(*P) 
    \end{tikzcd}
    \]
    whose horizontal  arrows are filtered quasi-isomorphisms by Proposition \ref{p:Yusub}. It follows that cokernels of the vertical arrows are quasi-isomorphic. 
\end{proof}

 Suppose $D' = D \cup E$ is an orbifold normal crossings divisor and that $(X,D',w)$ and $(X,D,w)$ are both nondegenerate Landau--Ginzburg models. We let $E = \cup_{i=1}^l E_i$ where $E_i$ are irreducible. For each $I \subseteq [1,l]$, define
\begin{equation*}
E_I = \cap_{i\in I} E_i,\qquad P_I = E_I \cap P,\qquad D_I = E_I \cap D.
\end{equation*}
Let $w_I = w|_{D_I}$. Then the triple $(E_I,D_I,w_I,\pi|_{D_I})$ is a quasi-stable degeneration of Landau--Ginzburg models by construction. The induced residue maps  $\mathrm{Res}_{E_{I\cup \{j\}}}^{E_I}$ can be extended to filtered morphisms 
\[
\mathrm{Res}_{E_{I\cup \{j\}}}^{E_I}: \Omega^\bullet_{E_I/\mathbb{A}^1}(\log D_I\cup E_{I,0})(*P_{I}) \longrightarrow \Omega^{\bullet-1}_{E_{I\cup \{j\}}/\mathbb{A}^1}(\log D_{I\cup j}\cup E_{I\cup \{j\},0})(*P_{I\cup \{j\}}).
\]
Here the dual intersection complex of $E$ forms a simplicial complex. If one chooses a signed incidence rule for this simplicial complexes we may let 
\begin{align*}
\mathrm{Res}^{E_I} :  \cOmega^{\bullet}_{E_I/\mathbb{A}^1}(\log D_I\cup E_{I,0})(*P_I) &\longrightarrow \oplus_{j \notin I} \cOmega^{\bullet-1}_{E_{I\cup \{j\}}/\mathbb{A}^1}(\log D_{I\cup \{j\}}\cup E_{I \cup \{j\}, 0})(*P_{I\cup \{j\}}) \\
\omega & \longmapsto \oplus_{j \notin  I}[I\cup \{j\}:I] \cdot \mathrm{Res}_{E_I}^{E_{I\cup \{j\}}}(\omega). 
\end{align*}
and 
\begin{equation*}
    \mathrm{Res}_m : \bigoplus_{|I| = m} \mathrm{Res}^{E_I} : \bigoplus_{|I| = m} \cOmega^{\bullet}_{E_I/\mathbb{A}^1}(\log D_I \cup E_{I,0})(*P_I) \longrightarrow \bigoplus_{|I| = m+1} \cOmega^{\bullet-1}_{E_I}(\log D_I \cup E_{I,0})(*P_I)
\end{equation*}

\begin{proposition}\label{p:residue-resolution2}
    There is a filtered resolution of complexes 
    \begin{align*}
    0 \longrightarrow \cOmega^\bullet_{X/\mathbb{A}^1}(\log D'\cup X_0)(*P) &\xrightarrow{\mathrm{Res}_{0}} \bigoplus_{|I| =1}\cOmega^{\bullet-1}_{E_I/\mathbb{A}^1}(\log  D_I \cup E_{I_0})(*P_I)  \\ &  \xrightarrow{\mathrm{Res}_{1}} \bigoplus_{|I| =2}\cOmega^{\bullet-2}_{E_I/\mathbb{A}^1}(\log  D_I \cup E_{I,0})(*P_I) \xrightarrow{\mathrm{Res}_{2}} \dots 
    \end{align*}
\end{proposition}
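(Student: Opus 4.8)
The statement is a relative, twisted analogue of Steenbrink's classical residue resolution of the logarithmic de Rham complex, and the plan is to prove it by the standard local-to-global method: reduce to a purely local statement on a polydisc chart and verify exactness there by an explicit computation, then glue. The key point is that everything is a ``relative over $\mathbb A^1$'' and ``twisted by $dw$'' deformation of a situation where the un-twisted, absolute version is well known, and that these deformations do not affect exactness.

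First I would set up the local model. Fix a point $a\in X$ and an orbifold chart $(\mathbb D^N,G)$; by the orbifold normal crossings hypothesis we may choose coordinates so that $E = V(x_1\cdots x_l)$ (after reindexing, only the first $l$ of the $E_i$ pass through $a$), $D$ is cut out by further coordinate hyperplanes, $P$ is a subset of those, and $X_0=\pi^{-1}(0)$ is another union of coordinate hyperplanes with multiplicity one — crucially disjoint in its defining variables from $P$, since $w$ has multiplicity $0$ along $X_0$. Because all the operations in the statement (taking $\cOmega^\bullet(\log -)(*P)$, quotienting by $\wedge\, d\log\pi$, taking residues) commute with passing to $G$-invariants, it suffices to prove exactness of the sequence of $G$-modules of germs at $a$, and then take invariants — so the orbifold structure contributes nothing beyond bookkeeping, exactly as in Remark \ref{r:derhamorb}. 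In these coordinates $\cOmega^\bullet_{X/\mathbb A^1}(\log D'\cup X_0)(*P)_a$ is a free module over the relevant local ring on the wedge-monomials in $d\log x_1,\dots,d\log x_l$, $d\log$ of the $D$-variables, $d\log\pi$ (which we kill), and the genuine $dx_j$ in the remaining directions; and $\mathrm{Res}_i$ is, up to the signed incidence coefficients $[I\cup\{j\}:I]$, the operation of deleting a $d\log x_j$ factor and restricting to $x_j=0$.

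The heart of the argument is then the observation that the residue sequence splits off the twist and the relative quotient. Concretely, filtering the local complex by the number of $d\log x_j$ ($1\le j\le l$) factors appearing, the $\mathrm{Res}_\bullet$ differential is the associated Koszul-type complex for the $l$ variables $x_1,\dots,x_l$; its exactness (in positive residue-degree) and the identification of its kernel with $\cOmega^\bullet_{X/\mathbb A^1}(\log D'\cup X_0)(*P)$ being replaced by $\cOmega^\bullet_{X/\mathbb A^1}(\log D\cup X_0)(*P)$ — i.e. ``no $d\log x_j$'' — is precisely the classical computation behind Steenbrink's resolution. The only new features are (i) the differentials $\nabla$ versus $d$ play no role here because $\mathrm{Res}_i$ is $\mathcal O_X(*P)$-linear and the claim is about the rows of the residue bicomplex, not its columns; and (ii) quotienting each term by $\wedge\, d\log\pi$ is compatible with $\mathrm{Res}_i$ (this is the ``direct local computation'' already invoked just after Definition of the relative complex) and with taking the Koszul cohomology, because $d\log\pi$ is one of the ``$D$-type'' logarithmic generators, disjoint from $x_1,\dots,x_l$. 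Finally the filtered statement: I would check that $\mathrm{Res}_i$ carries $F^\lambda_\mathrm{irr}$ of the $|I|=m$ term to $F^{\lambda}_\mathrm{irr}$ of the $|I|=m+1$ term — here one uses that a residue in a variable not contributing to $P$ sends $\cOmega^p(\log)(\lfloor(p-\lambda)P\rfloor)$ to $\cOmega^{p-1}(\log)(\lfloor((p-1)-(\lambda-1))P_I\rfloor)$ with the {\em same} pole order along $P_I$, exactly the compatibility established in the paragraph preceding Proposition \ref{p:resles} — so the resolution respects $F^\bullet_\mathrm{irr}$ strictly, and one concludes $\mathrm{gr}^\lambda_{F_\mathrm{irr}}$ of the sequence is again exact by the same local Koszul computation applied to each graded piece.

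\textbf{Main obstacle.} The genuinely delicate point is not the Koszul exactness but the strictness of the residue maps for the irregular Hodge filtration at the orbifold level: one must be sure that in a chart where $G$ acts, the floor functions $\lfloor(p-\lambda)P\rfloor$ and $\lfloor(p-1-(\lambda-1))P_I\rfloor$ interact correctly with the $G$-invariance and with the $\wedge\, d\log\pi$ quotient, so that $F^\lambda_\mathrm{irr}$ of the quotient complex is really the image of $F^\lambda_\mathrm{irr}$ of $\cOmega^\bullet_X(\log D'\cup X_0)(*P)$ and not something larger. I expect this to be handled exactly as in the residue discussion before Proposition \ref{p:resles}, but it is the step that requires care; once it is in place, gluing the local resolutions into a global one of complexes of sheaves on $X$ is routine, since the construction is manifestly functorial in the chart.
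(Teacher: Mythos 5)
Your proposal is correct in substance and, for the unfiltered backbone, follows the same route as the paper: both reduce to the classical relative logarithmic residue resolution in the normal crossings case (which you reprove locally via the Koszul-type computation in a chart, while the paper simply cites it as well known) and both pass to the orbifold setting by taking local $G$-invariants, which is exact in characteristic zero. Where you genuinely diverge is in the filtered statement. You propose to track $F^\bullet_{\mathrm{irr}}$ through the residue maps by hand — checking pole orders along $P_I$, worrying about the $\lambda\mapsto\lambda-1$ reindexing and about strictness in the presence of the $G$-action and the $\wedge\,d\log\pi$ quotient — and you flag this as the main obstacle. The paper sidesteps all of that with one observation: for fixed form degree $p$, the row of the (unfiltered, logarithmic) resolution is an exact sequence of sheaves, and the $F^\lambda_{\mathrm{irr}}$-piece in degree $p$ is obtained from it by tensoring with the locally free (hence projective) $\mathcal{O}_X$-module $\mathcal{O}_X(\lfloor(p-\lambda)P\rfloor)$; tensoring with a projective module preserves exactness of the rows, so the filtered resolution follows immediately, with no pole-order bookkeeping, no strictness discussion, and no interaction to check between the floor functions, the $G$-invariants, and $d\log\pi$. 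Your residue-compatibility argument does go through (the $E_i$ along which one takes residues are not components of $P$, so only the formal reindexing by the Tate twist occurs, exactly as in the discussion before Proposition \ref{p:resles}, and your phrase ``carries $F^\lambda$ to $F^\lambda$'' should be read with that twist built in), so your route buys a self-contained local proof at the cost of precisely the delicate step you identified; the paper's tensoring trick buys brevity and makes the ``main obstacle'' evaporate.
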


\begin{proof}
    It is well known that such a resolution exists for the underlying relative logarithmic sheaves in the normal crossings case. The orbifold normal crossings case follows by the same argument and taking local $G$-invariants. Tensoring the  $p$th row with the projective $\mathcal{O}_X$-module $\mathcal{O}_X(\lfloor (p-\lambda)P\rfloor)$ preserves exactness in rows, therefore we obtain the desired result.
\end{proof}

\subsection{The Gauss--Manin connection for smooth families}

In this subsection, we prove that the induced irregular Hodge filtration on $\mathbb{H}^p(X,\Omega_{X/\mathbb{A}^1}^\bullet(\log D\cup X_0)(*P)\otimes \mathcal{O}_{X_0})$ has graded dimensions equal to the graded dimensions of $\mathbb{H}^p(X,\Omega_{X/\mathbb{A}^1}^\bullet(\log D\cup X_0)(*P)\otimes \mathcal{O}_{X_t})$ for $t\neq 0$. Following, e.g., Steenbrink \cite{steenbrink1974mixed}, the usual approach to this is to construct a logarithmic Gauss--Manin connection on the filtered complex $\mathbb{R}^p\pi_*\Omega_{X/\mathbb{A}^1}^\bullet(\log D\cup X_0)(*P)$. Then general results ensure that the desired result holds. However, this requires that $\mathbb{R}^p\pi_*\Omega_{X/\mathbb{A}^1}^\bullet(\log D\cup X_0)(*P)$ is a coherent sheaf. The easiest way to ensure this is to replace the (non-coherent) complex $\Omega_{X/\mathbb{A}^1}^\bullet(\log D\cup X_0)(*P)$ with the filtered quasi-isomorphic coherent complex $\Omega_{X/\mathbb{A}^1}^\bullet(\log D, \nabla)$ constructed above.

By construction, there is a short exact sequence of complexes,
\[
0 \longrightarrow \cOmega^{\bullet-1}_{X/\mathbb{A}^1}(\log D,  \nabla)\otimes \pi^{-1}\Omega_{\mathbb{A}^1}^1(\log 0)  \longrightarrow \cOmega^{\bullet}_{X}(\log D, \nabla)  \longrightarrow \cOmega^{\bullet}_{X/\mathbb{A}^1}(\log D, \nabla) \longrightarrow 0 
\]
to which we may apply the functor $\mathbb{R}^p\pi_*$ to obtain a connecting homomorphism 
\[
\nabla_{\mathrm{GM}} : \mathbb{R}^p\pi_*    \cOmega^{\bullet}_{X/\mathbb{A}^1}(\log D,\nabla) \longrightarrow (\mathbb{R}^p\pi_* \cOmega^{\bullet}_{X/\mathbb{A}^1}(\log D, \nabla)) \otimes \Omega_{\mathbb{A}^1}^1(\log 0).
\]

\begin{proposition}\label{p:GM}
    The map $\nabla_{\mathrm{GM}}$ is a logarithmic connection on $\mathbb{R}^p\pi_*(\Omega_{X/\mathbb{A}^1}^\bullet(\log D\cup X_0)(*P))$ which is  holomorphic at all points except possibly 0.  Therefore $\mathbb{R}^p\pi_*(\Omega_{X/\mathbb{A}^1}^\bullet(\log D\cup X_0)(*P))$ has constant rank except possibly at 0. 
\end{proposition}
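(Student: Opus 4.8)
The plan is to establish the two claimed properties of $\nabla_{\mathrm{GM}}$ --- that it is a logarithmic connection and that it is holomorphic (flat, with no pole) away from $0$ --- and then invoke the standard fact that a holomorphic vector bundle with a holomorphic connection on $\mathbb{D}^* = \mathbb{D}\setminus\{0\}$ has locally constant rank, and that the same holds over all of $\mathbb{D}$ once one knows $\mathbb{R}^p\pi_*$ is a coherent sheaf and $\nabla_{\mathrm{GM}}$ is at worst logarithmic at $0$. The first reduction is to replace the non-coherent complex $\cOmega^\bullet_{X/\mathbb{A}^1}(\log D\cup X_0)(*P)$ with the coherent complex $\cOmega^\bullet_{X/\mathbb{A}^1}(\log D,\nabla)$, which is legitimate by Proposition \ref{p:yu}: the inclusion is a filtered quasi-isomorphism, hence induces an isomorphism on all $\mathbb{R}^p\pi_*$ compatible with the Leibniz-type connecting maps. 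So it suffices to work with $\mathbb{R}^p\pi_*\cOmega^\bullet_{X/\mathbb{A}^1}(\log D,\nabla)$, which is a coherent $\mathcal{O}_{\mathbb{A}^1}$-module because $\pi$ is proper and the complex is a bounded complex of coherent sheaves.

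Next I would verify that the connecting homomorphism $\nabla_{\mathrm{GM}}$ arising from the short exact sequence
\[
0 \longrightarrow \cOmega^{\bullet-1}_{X/\mathbb{A}^1}(\log D,\nabla)\otimes \pi^{-1}\Omega^1_{\mathbb{A}^1}(\log 0) \longrightarrow \cOmega^\bullet_X(\log D,\nabla) \longrightarrow \cOmega^\bullet_{X/\mathbb{A}^1}(\log D,\nabla)\longrightarrow 0
\]
genuinely satisfies the Leibniz rule, i.e. is a logarithmic connection in the sense of Deligne. This is a local computation: choosing a local lift of a relative form, applying $\nabla = d + dw\wedge(-)$, and comparing with multiplication by a local function pulled back from the base; the point is that $dw$ has no $d\log\pi$ component (the multiplicity of $w$ along each component of $X_0$ is zero by Definition \ref{d:deglg}), and $\pi^{-1}\mathcal{O}_{\mathbb{A}^1}$ acts by honest functions, so the usual Katz--Oda / Steenbrink argument goes through verbatim once one takes local $G$-invariants in the orbifold charts. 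Exactness of the short exact sequence itself follows from the fact that, étale-locally on $X$, one can split off $d\log\pi$; the orbifold case reduces to the smooth normal-crossings case by working $G$-equivariantly, exactly as in Proposition \ref{p:residue-resolution2}.

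Then I would argue holomorphy (absence of pole, and flatness) away from $0$: over $\mathbb{D}^*$ the divisor $X_0$ and the form $d\log\pi$ become the trivial relative situation, so $\cOmega^\bullet_{X/\mathbb{A}^1}(\log D,\nabla)|_{X\setminus X_0}$ is just the fibrewise twisted de Rham complex and $\nabla_{\mathrm{GM}}$ restricts to the ordinary Gauss--Manin connection of the smooth proper family $\pi: X\setminus X_0 \to \mathbb{D}^*$ of Landau--Ginzburg models (again the key input is that $w$ restricts to a regular function on each fibre with proper critical locus, so $\mathbb{R}^p\pi_*$ of the fibrewise twisted de Rham complex is locally free of the expected rank given by Theorem \ref{t:as}, or more precisely its fibrewise analogue). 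A holomorphic connection on a coherent sheaf over a reduced curve forces the sheaf to be locally free in a neighbourhood of any point where the connection is holomorphic --- this is the classical statement that $(\mathcal{E},\nabla)$ with $\nabla$ holomorphic is automatically a vector bundle with flat connection, hence has locally constant rank. Combining: $\mathbb{R}^p\pi_*$ is coherent everywhere, locally free with constant rank on $\mathbb{D}^*$, so in particular it has constant rank away from $0$, which is the assertion. The mild subtlety is only at $0$, where the connection is merely logarithmic; but the statement as phrased only claims constancy ``except possibly at $0$,'' so this causes no difficulty, and in fact the logarithmic extension plus coherence shows the rank can only drop at $0$ (by upper semicontinuity of fibre dimension, and the fact that a logarithmic connection extends the local system), which is exactly what is needed downstream.

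\textbf{Main obstacle.} I expect the genuinely delicate point to be the verification that $\nabla_{\mathrm{GM}}$ is well-defined and Leibniz on the \emph{twisted} (irregular) complex rather than the classical de Rham complex --- one must check that the presence of $dw\wedge(-)$ in $\nabla$ does not introduce an unwanted pole along $X_0$ or spoil the short exact sequence, and that the connecting map descends correctly through the quasi-isomorphism of Proposition \ref{p:yu} compatibly with $F_{\mathrm{irr}}^\bullet$. This is where the hypothesis that $w$ has multiplicity $0$ along every component of $X_0$ (part of Definition \ref{d:deglg}) is essential and must be used carefully; the rest is a routine adaptation of Steenbrink's argument to the twisted, orbifold setting.
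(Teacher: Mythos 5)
Your proposal is correct and follows essentially the same route as the paper: the paper likewise reduces to the coherent complex $\cOmega^\bullet_{X/\mathbb{A}^1}(\log D,\nabla)$ via Proposition \ref{p:yu} (in the setup just before the proposition), verifies the Leibniz rule for the connecting homomorphism by a direct Katz--Oda style stalk computation (with flatness from $\nabla^2=0$), and then concludes constant rank away from $0$ from the standard fact that a coherent sheaf with a holomorphic connection on a curve is locally free. The only stray remark is your appeal to a "fibrewise analogue" of Theorem \ref{t:as} for local freeness over $\mathbb{D}^*$, which is unnecessary (and that theorem concerns Laurent polynomials on tori), but it is parenthetical and does not affect the argument.
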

\begin{proof}
    We check directly that $\nabla_\mathrm{GM}$ is a connection. The method is well known (e.g. \cite{katz-oda}). For instance, replace the complexes above with their Godement resolutions (denoted by $\mathcal{C}_\mathrm{Gd}$), then the calculation becomes one on individual stalks. If $\alpha \in \mathcal{C}_\mathrm{Gd}\cOmega^p_{X/\mathbb{A}^1}(\log D,\nabla)$ is $\nabla$-closed, 
    \[
        \nabla_\mathrm{GM}(\alpha\bmod d\log \pi) = d\alpha + dw\wedge \alpha = \beta \wedge d\log \pi.
        \]
If $g \in \mathcal{O}_{\mathbb{A}^1}$ a calculation shows that $\nabla_\mathrm{GM}(\pi^*g\alpha) = d\pi^*g\wedge \alpha + \pi^*g\nabla_\mathrm{GM}(\alpha)$ hence it is indeed a connection. Flatness comes directly from the fact that $\nabla^2=0$. .
    \end{proof}
The following result is, of course, true more generally but we state it in its simplest form.
\begin{corollary}\label{c:trivfib}
    Suppose $(X,D,w,\pi)$ is a 1-dimensional analytic family of nondegenerate Landau--Ginzburg models over a simply connected base. Let $(X_t, D_t,w_t)$ be the fibre of $\pi$ over $t$. Then $H^*(X\setminus D,w) \cong H^*(X_t\setminus D_t,w_t)$ for any point $t$.
\end{corollary}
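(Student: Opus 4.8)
\textbf{Proof proposal for Corollary \ref{c:trivfib}.}

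The plan is to deduce this from Proposition \ref{p:GM} together with the quasi-isomorphism of Proposition \ref{p:yu}. First, I would replace the non-coherent complex $\cOmega_{X/\mathbb{A}^1}^\bullet(\log D\cup X_0)(*P)$ by the filtered quasi-isomorphic coherent complex $\cOmega_{X/\mathbb{A}^1}^\bullet(\log D,\nabla)$, so that $\mathcal{H}^p := \mathbb{R}^p\pi_*\cOmega_{X/\mathbb{A}^1}^\bullet(\log D,\nabla)$ is a coherent $\mathcal{O}_{\mathbb{A}^1}$-module (here I use that $\pi$ is proper, as in Definition \ref{d:deglg}). By Proposition \ref{p:GM}, the Gauss--Manin connection $\nabla_\mathrm{GM}$ is a holomorphic (integrable) connection on $\mathcal{H}^p$ away from $0$, and in the present statement there is no degenerate fibre at all — the family is everywhere a family of nondegenerate Landau--Ginzburg models — so in fact $\nabla_\mathrm{GM}$ is holomorphic on the entire simply connected base. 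A coherent sheaf carrying an integrable holomorphic connection on a connected complex manifold is automatically locally free, and on a simply connected base its flat sections trivialize it; hence $\mathcal{H}^p$ is a trivial bundle and its fibres are all canonically identified via parallel transport.

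Next I would identify the fibre of $\mathcal{H}^p$ at a point $t$ with $H^p(X_t\setminus D_t, w_t)$. For this one invokes the base-change compatibility: the restriction $\cOmega_{X/\mathbb{A}^1}^\bullet(\log D,\nabla)\otimes_{\mathcal{O}_{\mathbb{A}^1}}\mathcal{O}_t$ is filtered quasi-isomorphic to $\cOmega_{X_t}^\bullet(\log D_t,\nabla_t)$, which by Proposition \ref{p:Yusub} computes $H^p(X_t\setminus D_t, w_t)$. Since $\mathcal{H}^p$ is locally free, the cohomology-and-base-change theorem gives $\mathcal{H}^p\otimes\mathcal{O}_t \cong \mathbb{H}^p(X_t, \cOmega_{X_t}^\bullet(\log D_t,\nabla_t)) = H^p(X_t\setminus D_t,w_t)$ for every $t$. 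Combining this with the previous paragraph, all the spaces $H^p(X_t\setminus D_t,w_t)$ are abstractly isomorphic; moreover the total space $H^p(X\setminus D,w) = \mathbb{H}^p(X,\cOmega_X^\bullet(\log D)(*P),\nabla)$ maps to each fibre, and one checks (again via the Leray spectral sequence for $\pi$ and the triviality of $\mathcal{H}^p$, or directly since $\mathbb{A}^1$ is Stein so $H^{>0}(\mathbb{A}^1,\mathcal{H}^p)=0$ and $H^0(\mathbb{A}^1,\mathcal{H}^p)$ is a free module) that this map is an isomorphism onto each fibre.

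The main obstacle I anticipate is the base-change / coherence bookkeeping in the orbifold setting: one must be careful that $\mathbb{R}^p\pi_*$ of the coherent complex $\cOmega_{X/\mathbb{A}^1}^\bullet(\log D,\nabla)$ commutes with restriction to fibres, which requires either flatness of the complex over $\mathcal{O}_{\mathbb{A}^1}$ (true because the relative differentials are locally free over the base away from $X_0$, and here there is no $X_0$) or a spectral-sequence degeneration argument. Since the hypothesis of Corollary \ref{c:trivfib} excludes degenerate fibres, all of the subtleties that make Proposition \ref{p:GM} delicate at $0$ simply do not arise, and the argument is genuinely routine once the coherent replacement is in place. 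A secondary, purely cosmetic point is that the isomorphism so produced is canonical only up to choice of path, but the base is simply connected so parallel transport is path-independent and the identification is in fact canonical, which I would remark on at the end.
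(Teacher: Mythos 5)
Your proposal is correct and follows essentially the same route as the paper, whose proof is simply to combine Proposition \ref{p:GM} with the Leray spectral sequence for $\pi$; your write-up spells out the same ingredients (coherent replacement via Proposition \ref{p:yu}, holomorphicity of $\nabla_{\mathrm{GM}}$ everywhere since there is no degenerate fibre, local freeness and trivialization over the simply connected Stein base, and base change to identify fibres with $H^p(X_t\setminus D_t,w_t)$). No gap to report.
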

\begin{proof}
    Apply the Leray spectral sequence along with Proposition \ref{p:GM}.
\end{proof}

\subsection{Constancy over 0}
We would like to show that the rank of $\mathbb{R}\pi_*\Omega_{X/\mathbb{A}^1}^\bullet(\log D\cup X_0)(*P)$ is constant over 0 as well. A coherent sheaf with a logarithmic connection need not be a vector bundle. This section addresses this question by showing that the complex $\Omega_{X/\mathbb{A}^1}^\bullet(\log D\cup X_0)(*P)\otimes \mathcal{O}_{X_0}$ has hypercohomology whose  rank agrees with that of $\Omega_{X/\mathbb{A}^1}^\bullet(\log D\cup X_0)(*P)\otimes \mathcal{O}_{X_t}$ for $t \neq 0$. The result will then follow directly from Grauert's Theorem. Our approach is an adaptation of Steenbrink's approach in \cite{steenbrink1974mixed} (see also \cite{peters2008mixed}). Without loss of generality, we may restrict $\pi$ over a small analytic disc $\mathbb{D}$ at $0$.

We have the following diagram where $\mathfrak{h}$ denotes the complex upper half plane:
\[
\begin{tikzcd}
    X_\infty \ar[r,"k"] \ar[d] & X \ar[d,"\pi"] & X_0 \ar[l,"i",swap] \ar[d] \\
    \mathfrak{h} \ar[r,"\exp(2\pi{\tt i} -)"] & \mathbb{D} & 0 \ar[l]
\end{tikzcd}
\]
Let $X^* = \pi^{-1}(\mathbb{D}\setminus 0)$ and let $D^* = X^* \cap D, P^* = P \cap X^*$ and let $D_\infty = k^{-1}D^*, P_\infty = k^{-1}P^*$. As usual, the nearby cycles complex will be defined by 
\begin{equation*}
\psi_\pi (\Omega^\bullet_{X^*}(\log  D^*)(*P^*),d+dw) := i^{*}\mathbb{R}k_*k^{*}(\Omega_{X^*}^\bullet(\log  D^*)(*P^*), d+ dw).
\end{equation*}
Since $\Omega_{X^*}^\bullet(\log D^*)(*P^*)$ is quasi-isomorphic to the coherent complex $\Omega_{X^*}^\bullet(\log D^*,w)$, the fact that $k :X_\infty\rightarrow X^*$ is Stein implies that $\mathbb{R}k_*\Omega_{X_\infty}^\bullet(\log D_\infty)(*P_\infty)$ is isomorphic to $\Omega_{X_\infty}^\bullet(\log D_\infty)(*P_\infty)$. It is a direct check to see that $k^*\Omega_X^p(\log D^*)(*P^*)$ is isomorphic to $\Omega_{X_\infty}^\bullet(\log D_\infty)(*P_\infty)$, therefore
\begin{equation*}
    \psi_\pi (\Omega^\bullet_{X^*}(\log  D^*)(*P^*),d+dw) \cong i^{*}k_*(\Omega_{X^*}^\bullet(\log  D^*)(*P^*), d+ dw).
\end{equation*}
Our goal will be to show that $\psi_\pi (\Omega_{X^*}^\bullet(\log D\cup X_0)(*P)\otimes \mathcal{O}_E)$ is quasi-isomorphic to the complex $(\Lambda_{X,D}^\bullet, d+ dw)$ defined in the previous section. The analogous fact for the logarithmic de Rham complex was proved by Steenbrink \cite{steenbrink1974mixed}, following ideas of Deligne \cite{katz1973sga}. In Steenbrink's proof, an explicit description of the stalks of the log de Rham complex is used in a crucial way. We are able to circumvent explicit computation, but we need the following fundamental fact. 

\begin{lemma}\label{l:fdstalk}
    Let $(X,D,w)$ be a nondegenerate analytic Landau--Ginzburg model and let $0$ be a point in $X$. Then $\mathcal{H}^p(\Omega_X^\bullet(\log D)(*P))_0$ is a finite dimensional $\mathbb{C}$-vector space.
\end{lemma}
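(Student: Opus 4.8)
The plan is to reduce this to the known finiteness of the twisted cohomology stalks and to the fact that the twisted de Rham complex $(\Omega_X^\bullet(\log D)(*P),\nabla)$ is quasi-isomorphic to a coherent complex. First I would work locally: pick local orbifold coordinates $(\mathbb{D}^d,G)$ centered at $0$ with $D$ a $G$-invariant normal crossings divisor. Since the stalk $\mathcal{H}^p(\Omega_X^\bullet(\log D)(*P))_0$ is computed by taking $G$-invariants of the analogous stalk on $\mathbb{D}^d$ (using Remark \ref{r:derhamorb}), it suffices to treat the smooth case $(\mathbb{D}^d,E)$ with $E = \bigcup_{i=1}^k V(x_i)$ a coordinate normal crossings divisor and $P$ a sub-divisor, then pass to $G$-invariants at the end. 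This reduces the problem to a purely local computation on a polydisc.

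Next, the key step is to replace the non-coherent complex $\Omega_{\mathbb{D}^d}^\bullet(\log E)(*P)$ with the quasi-isomorphic coherent Kontsevich-type complex $\Omega_{\mathbb{D}^d}^\bullet(\log E,\nabla)$ (Proposition \ref{p:Yusub}), whose terms are coherent $\mathcal{O}_{\mathbb{D}^d}$-modules. For a bounded complex of coherent analytic sheaves on a polydisc, the cohomology sheaves are coherent, hence their stalks at $0$ are finitely generated $\mathcal{O}_{\mathbb{D}^d,0}$-modules. To conclude finite-\emph{dimensionality} over $\mathbb{C}$, I would show that the cohomology sheaves of $(\Omega_{\mathbb{D}^d}^\bullet(\log E,\nabla),\nabla)$ are supported at the origin (or more precisely, that after localizing, the stalk at $0$ is a finite-dimensional vector space). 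The relevant input is the nondegeneracy of the Landau--Ginzburg model: away from the critical locus of $w$, the operator $\nabla = d + dw\wedge(-)$ is, up to the $d$-part, multiplication by a nowhere-vanishing $1$-form, and a Poincaré-lemma-type argument for the twisted de Rham complex shows the cohomology sheaves vanish on a punctured neighborhood of $0$. Thus the cohomology sheaves are coherent with zero-dimensional (in fact point) support, so their stalks at $0$ are finite-dimensional $\mathbb{C}$-vector spaces.

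The main obstacle I anticipate is the local acyclicity statement: showing that $\mathcal{H}^p(\Omega_{\mathbb{D}^d}^\bullet(\log E)(*P),\nabla)$ vanishes on a punctured neighborhood of $0$. This is a twisted, logarithmic version of the holomorphic Poincaré lemma, complicated both by the log poles along $E$ and the essential poles along $P$. One clean route is to invoke the comparison with the Kontsevich complex $\Omega_{\mathbb{D}^d}^\bullet(\log E,w)$ from \cite{esnault20171} (recalled after Proposition \ref{p:har}) and the fact that $dw$ being a nowhere-zero section of $\Omega_{\mathbb{D}^d}^1(\log E)$ on the complement of $V(dw)$ forces the Koszul-type complex $(\Omega^\bullet_{\mathbb{D}^d}(\log E,w), dw\wedge(-))$ to be exact there; combined with an induction on the de Rham differential via the standard two-spectral-sequence argument this yields acyclicity off the zero locus of $dw$, which, after possibly shrinking $\mathbb{D}^d$, is contained in the origin by nondegeneracy. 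Alternatively, one can cite that $H^*(\mathbb{D}^{d*},w)$ for a small punctured polydisc is computed by the algebraic twisted de Rham cohomology of a nondegenerate Laurent-type function, which is finite-dimensional by Theorem \ref{t:as}, and then use a Mayer--Vietoris / stalk argument to descend; but the direct Koszul approach is more self-contained. Taking $G$-invariants is exact in characteristic $0$, so finite-dimensionality is preserved, completing the argument.
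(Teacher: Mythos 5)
There is a genuine gap, and it sits at the heart of your argument. The chain ``coherent terms $\Rightarrow$ coherent cohomology sheaves $\Rightarrow$ (point support) finite-dimensional stalks'' breaks at the first step: the differential of $(\Omega_{\mathbb{D}^d}^\bullet(\log E,\nabla),\nabla)$ is $\nabla = d + dw\wedge(-)$, which is a first-order differential operator and not $\mathcal{O}$-linear, so the cohomology sheaves of this complex of coherent sheaves need not be coherent at all (already in the untwisted case $w=\mathrm{const}$, $\mathcal{H}^0$ is the constant sheaf $\underline{\mathbb{C}}$, which is not a coherent $\mathcal{O}$-module). The second step fails as well: the claimed acyclicity off the critical locus of $w$ is false in the analytic setting. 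Away from $D$ (hence away from $P$), multiplication by the invertible holomorphic function $e^{w}$ identifies $(\Omega^\bullet,d+dw\wedge)$ with the ordinary holomorphic de Rham complex, so $\mathcal{H}^0\cong \mathbb{C}\cdot e^{-w}\neq 0$ on a whole open set; more generally the cohomology sheaves are constructible with support typically all of $P$, not a point. This is also why the ``Koszul plus two-spectral-sequence'' argument does not apply: the filtration that makes $dw\wedge(-)$ the $E_0$-differential is not bounded on the analytic local ring, the spectral sequence does not converge, and the convergent series $e^{-w}$ is exactly the class your argument would wrongly kill. (This is the familiar algebraic-vs-analytic dichotomy: the algebraic twisted de Rham cohomology of $\mathbb{A}^d$ is Jacobian-ring-like, while the analytic one is just a twisted constant sheaf.) Your fallback via Theorem \ref{t:as} plus Mayer--Vietoris is also not a proof as stated: Adolphson--Sperber computes global cohomology of a nondegenerate Laurent polynomial on the torus, not stalks of the analytic log-twisted complex at boundary points, and the ``descend'' step is precisely what would need an argument.

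The paper avoids all of this by going topological rather than coherent: by \cite[Lemma C.2]{esnault20171} the complex $\Omega_X^\bullet(\log D)(*P)$ with differential $\nabla$ is quasi-isomorphic to $\mathbb{R}\varpi_*\beta_!\alpha_*\underline{\mathbb{C}}_{X\setminus D}$ for suitable semi-analytic maps, i.e.\ to a constructible complex, and stalks of constructible complexes are finite-dimensional; finiteness of the stalk is then ``essentially a topological computation.'' If you want to keep a more sheaf-theoretic flavour, the correct coherence framework is over $\mathcal{D}_X$ (holonomicity of $\mathcal{O}_X(*P)e^{w}$ and Kashiwara-type constructibility), not over $\mathcal{O}_X$; your reduction to the smooth chart and the exactness of taking $G$-invariants are fine and would carry over to either route.
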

\begin{proof}
    We use \cite[Lemma C.2]{esnault20171} which says that there is a triple of semi-analytic morphisms $\alpha,\beta,$ and $\varpi$ so that $\Omega_X^\bullet(\log D)(*P)$ is quasi-isomorphic to $\mathbb{R}\varpi_*\beta_! \alpha_*\underline{\mathbb{C}}_{X\setminus D}$. Therefore, it is enough to prove that the statement of the lemma holds for $\mathbb{R}\varpi_*\beta_! \alpha_*\underline{\mathbb{C}}_{X\setminus D}$, but this is essentially a topological computation whose details we omit.
\end{proof}

\begin{lemma}\label{l:redstalk}
    The natural morphism on stalks 
    \[
    \mathscr{H}^i(\Omega_{X}^\bullet(\log D\cup X_0)(*P)\otimes \mathcal{O}_{X_0})_0 \longrightarrow \mathscr{H}^i(\Omega_{X/\mathbb{A}^1}^\bullet(\log D \cup X_0)(*P)\otimes \mathcal{O}_{X_0})_0
    \]
    is surjective with kernel $\mathscr{H}^i(\Omega_{X/\mathbb{A}^1}^\bullet(\log D \cup {X_0})(*P) \otimes \mathcal{O}_{X_0})_0\wedge d\log t$.
\end{lemma}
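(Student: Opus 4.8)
The statement is a standard ``relative de Rham'' computation adapted to the twisted setting, so the plan is to reduce to a local calculation on stalks and then exploit the fact that $d\log t$ is a nonzerodivisor in the relevant local picture. First I would fix a point $0\in X_0$ and choose local orbifold coordinates $(\mathbb{D}^n,G)$ in which $X_0 = V(x_1\cdots x_r)$, $D$ is a union of further coordinate hyperplanes, $\pi = x_1\cdots x_r$, and $w$ is a $G$-invariant function which, by the definition of quasi-stable degeneration, has multiplicity $0$ along every component of $X_0$ (and the pole divisor $P$ lies in $D$, so $dw$ has no pole along $X_0$). In these coordinates $d\log t = d\log \pi = \sum_{i=1}^r d\log x_i$, and the relative complex is by definition the quotient of $\Omega_X^\bullet(\log D\cup X_0)(*P)$ by the subcomplex $\Omega_X^{\bullet-1}(\log D\cup X_0)(*P)\wedge d\log \pi$. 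Restricting to $\mathcal{O}_{X_0}$ (i.e. tensoring with $\mathcal{O}_X/(\pi)$) I would then write down the two-term short exact sequence of complexes of $\mathcal{O}_{X_0}$-modules
\[
0 \longrightarrow \Omega_{X/\mathbb{A}^1}^{\bullet-1}(\log D\cup X_0)(*P)\otimes\mathcal{O}_{X_0}\wedge d\log t \longrightarrow \Omega_{X}^{\bullet}(\log D\cup X_0)(*P)\otimes\mathcal{O}_{X_0} \longrightarrow \Omega_{X/\mathbb{A}^1}^{\bullet}(\log D\cup X_0)(*P)\otimes\mathcal{O}_{X_0}\longrightarrow 0
\]
where the left-hand term is to be interpreted as the image of $(-)\wedge d\log t$; the content of the lemma is exactly that the long exact cohomology sequence splits into short exact pieces, i.e. that the connecting map vanishes, which is what ``surjective with the stated kernel'' encodes.

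The key algebraic point, which I would isolate as the heart of the argument, is that wedging with $d\log t$ is \emph{injective} on the relative complex tensored with $\mathcal{O}_{X_0}$. This is where the hypotheses matter: on $\mathcal{O}_{X_0}$ the class $d\log t = \sum d\log x_i$ is a nowhere-vanishing section of $\Omega_X^1(\log D\cup X_0)\otimes\mathcal{O}_{X_0}$ in the sense that $\Omega_X^1(\log D\cup X_0)\otimes\mathcal{O}_{X_0}$ admits a local direct-sum decomposition $\mathcal{O}_{X_0}\cdot d\log t \oplus \mathcal{R}$ compatible with the wedge product, essentially because one can complete $d\log t$ to a local frame of the logarithmic cotangent bundle (this is the orbifold-normal-crossings version of the familiar fact used by Steenbrink). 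Consequently $\Omega_X^p(\log D\cup X_0)(*P)\otimes\mathcal{O}_{X_0} = \Omega_{X/\mathbb{A}^1}^p(\log D\cup X_0)(*P)\otimes\mathcal{O}_{X_0}\oplus \Omega_{X/\mathbb{A}^1}^{p-1}(\log D\cup X_0)(*P)\otimes\mathcal{O}_{X_0}\wedge d\log t$ as complexes (the differential $\nabla = d + dw\wedge(-)$ respects this because $dw$ has no pole along $X_0$, so $\nabla(d\log t) = dw\wedge d\log t$ stays in the appropriate piece), and taking cohomology of a direct sum of complexes gives precisely the asserted surjectivity and the identification of the kernel. I would phrase the injectivity of $(-)\wedge d\log t$ as the one computation to be done carefully, citing the analogous step in \cite{steenbrink1974mixed} and noting that passing to $G$-invariants in the orbifold chart changes nothing since $d\log t$ is $G$-invariant and the decomposition is $G$-equivariant.

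Finally I would assemble: the local statement above gives, on each stalk at a point of $X_0$, a short exact sequence of cohomology sheaves
\[
0 \longrightarrow \mathscr{H}^{i-1}(\Omega_{X/\mathbb{A}^1}^\bullet(\log D\cup X_0)(*P)\otimes\mathcal{O}_{X_0})_0\wedge d\log t \longrightarrow \mathscr{H}^i(\Omega_X^\bullet(\log D\cup X_0)(*P)\otimes\mathcal{O}_{X_0})_0 \longrightarrow \mathscr{H}^i(\Omega_{X/\mathbb{A}^1}^\bullet(\log D\cup X_0)(*P)\otimes\mathcal{O}_{X_0})_0\longrightarrow 0,
\]
which is exactly the claim once one observes the indexing: the kernel of the natural map is the image of $d\log t$ from degree $i$ of the relative complex (the statement as written says kernel $=\mathscr{H}^i(\dots)\wedge d\log t$, matching the convention that $\Omega^{i}_{X/\mathbb{A}^1}\wedge d\log t\subseteq \Omega^{i+1}_X$). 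I expect the only real obstacle to be bookkeeping: checking that $\nabla$ preserves the splitting (which reduces to $\nabla(\alpha\wedge d\log t) = (\nabla\alpha)\wedge d\log t \pm \dots$ with no stray pole terms, using quasi-stability), and making sure the $*P$ twist and the orbifold charts do not interfere — both of which are routine given Lemma \ref{l:fdstalk} and Proposition \ref{p:yu}. Everything else is formal homological algebra of a direct sum of complexes.
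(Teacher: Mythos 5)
Your reduction to the short exact sequence of complexes and your identification of the real content (the connecting homomorphism on stalk cohomology must vanish) are on target, but the step you isolate as "the heart of the argument" is where the proposal breaks. Completing $d\log t$ to a local frame of $\Omega^1_X(\log D\cup X_0)$ gives a splitting of $\mathcal{O}$-\emph{modules} only; it does not give a splitting of \emph{complexes}, and the parenthetical justification (that $dw$ has no pole along $X_0$) addresses only the $dw\wedge(-)$ part of $\nabla$, not the part coming from $d$ itself, which is where the problem sits. Concretely, take $X=\mathbb{D}^2$, $\pi=y_1y_2$, $D=\emptyset$, $w$ constant, and the frame $\{d\log t,\,d\log y_1\}$ with $d\log t=d\log y_1+d\log y_2$. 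For $f=y_2$ one has
\[
df \;=\; y_2\,d\log y_2 \;=\; y_2\,d\log t \;-\; y_2\,d\log y_1 ,
\]
and $y_2\neq 0$ in $\mathcal{O}_{X_0}=\mathbb{C}\{y_1,y_2\}/(y_1y_2)$, so $d$ of a function acquires a nonzero $d\log t$-component modulo $\mathcal{I}_{X_0}$; the same happens for any choice of frame completion whenever $X_0$ has at least two branches through the point (the case $r=1$, where your argument does work, is exactly the uninteresting one). So the asserted decomposition $\Omega^p_X(\log D\cup X_0)(*P)\otimes\mathcal{O}_{X_0}\cong \Omega^p_{X/\mathbb{A}^1}\otimes\mathcal{O}_{X_0}\oplus\Omega^{p-1}_{X/\mathbb{A}^1}\otimes\mathcal{O}_{X_0}\wedge d\log t$ is not a decomposition of complexes, and there is no "orbifold-normal-crossings version of the familiar fact used by Steenbrink" of this kind — Steenbrink's argument avoids precisely such a chain-level splitting because it does not exist.

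What the paper does instead, and what your proof would need, is an actual computation of both stalk cohomologies: in local coordinates one separates the $y$-directions (those cutting out $X_0$) from the $x,z$-directions and writes each stalk as the total complex of a double complex with differentials $d_y$ and $d_{x,z}+dw\wedge(-)$; a Koszul/Poincar\'e-lemma computation in the $y$-variables identifies the $E_1$-page, one checks $d_2=0$ so the spectral sequence degenerates at $E_2$ for both the absolute and the relative complexes, and then the vanishing of the connecting homomorphism in the long exact sequence is deduced by a dimension count, using the finite-dimensionality of the stalks (Lemma \ref{l:fdstalk}). Your overall skeleton (short exact sequence, vanishing connecting map, careful indexing of the kernel as the image of $\wedge\,d\log t$) is compatible with this, but the splitting you rely on must be replaced by this explicit local computation; as written, the key claim is false and the proof does not go through.
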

\begin{proof}
This is a calculation on stalks. The orbifold case follows from the normal crossings case after taking orbifold coordinates and local invariants, and using the fact that $V \mapsto V^G$ is an exact functor when $G$ is a finite group and $V$ is a vector space over a field of characteristic 0. Therefore, we will only address the normal crossings case. 

We choose local coordinates $(\underline{x},\underline{y},\underline{z})$ for which $\pi = y_1\dots y_a$, $D= x_1\dots x_bz_1\dots z_c$ and $w = 1/(x_1^{e_1}\dots x_b^{e_b})$. We define 
    \[
        \begin{array}{ll}
        S_i := \bigwedge^i\left\{\dfrac{dx_1}{x_1},\dots, \dfrac{dx_b}{x_b}, \dfrac{dz_1}{z_1},\dots, \dfrac{dz_c}{z_c}\right\} &\\
        T_j := \bigwedge^j \left\{\dfrac{dy_1}{y_1},\dots, \dfrac{dy_a}{y_a}\right\}  & \tilde{T}_j = T_j \bmod d\log \pi \\
        {\Omega}_{(\mathrm{abs})}^{i,j} := \mathbb{C}\{\underline{x},\underline{y},\underline{z}\}[z^{-1}]\otimes_\mathbb{C} S_i \otimes {T}_j  & {\Omega}_{(\mathrm{rel})}^{i,j} := \mathbb{C}\{\underline{x},\underline{y},\underline{z}\}[x^{-1}]\otimes_\mathbb{C} S_i \otimes \tilde{T}_j 
        \end{array}
    \]
We have differentials $d_{x,z} : \Omega_*^{i,j} \rightarrow \Omega_*^{i+1,j}$ and $d_y : \Omega_*^{i,j}\rightarrow \Omega_*^{i,j+1}$. Here and later $* \in \{(\mathrm{abs}),(\mathrm{rel})\}$. Note that $dw \wedge (-) :\Omega_*^{i,j}\rightarrow \Omega_*^{i,j+1}$ as well. Then the stalk of $\Omega_{X}^\bullet(\log D\cup X_0)(*P) \otimes \mathcal{O}_{X_0}$ is isomorphic to the total complex of the double complex 
\begin{equation}\label{e:stalkres}
\begin{tikzcd}
\vdots & \vdots & \vdots & \\
\Omega_*^{0,2} \ar[u,"d_y"] \ar[r,"d_{x,z}+dw"] & \Omega_*^{1,2} \ar[u,"d_y"] \ar[r,"d_{x,z}+dw"]& \Omega_*^{2,2} \ar[u,"d_y"] \ar[r,"d_{x,z}+dw"] & \dots \\
\Omega_*^{0,1} \ar[u,"d_y"] \ar[r,"d_{x,z}+dw"] & \Omega_*^{1,1} \ar[u,"d_y"] \ar[r,"d_{x,z}+dw"]& \Omega_*^{2,1} \ar[u,"d_y"] \ar[r,"d_{x,z}+dw"] & \dots \\
\Omega_*^{0,0} \ar[u,"d_y"] \ar[r,"d_{x,z}+dw"] & \Omega_*^{1,0} \ar[u,"d_y"] \ar[r,"d_{x,z}+dw"]& \Omega_*^{2,0} \ar[u,"d_y"] \ar[r,"d_{x,z}+dw"] & \dots 
\end{tikzcd}
\end{equation}
Now suppose that $* = (\mathrm{rel})$. Fixing any monomial form $A_{\alpha,\beta,\omega} = \underline{x}^\alpha\underline{z}^\beta \otimes \omega$ where $\omega \in S_i$, we may apply $d_y$ to $A_{\alpha,\beta,\omega} \otimes \tilde{T}_\bullet$. We obtain a complex $\tilde{C}_{\alpha,\beta,\omega}^\bullet$:
\begin{align*}
A_{\alpha,\beta,\omega} \otimes  \mathbb{C}\{\underline{y}\}\otimes \tilde{T}_0 &\xrightarrow{d_y}A_{\alpha,\beta,\omega} \otimes \mathbb{C}\{\underline{y}\}\otimes \tilde{T}_1\xrightarrow{d_y} A_{\alpha,\beta,\omega} \otimes \mathbb{C}\{\underline{y}\}\otimes \tilde{T}_2 \xrightarrow{d_y} \dots \\
& \cong A_{\alpha,\beta,\omega} \otimes ( \mathbb{C}\{\underline{y}\}\otimes \tilde{T}_0 \xrightarrow{d_y} \mathbb{C}\{\underline{y}\}\otimes \tilde{T}_1\xrightarrow{d_y}  \mathbb{C}\{\underline{y}\}\otimes \tilde{T}_2 \xrightarrow{d_y} \dots )
\end{align*}
The argument in the proof of \cite[Theorem 11.13]{peters2008mixed} applies directly to show that the $i$th cohomology of $\tilde{C}_{\alpha,\beta,\omega}^\bullet$ is isomorphic to, and in fact, spanned by, $\mathbb{C}\{\pi\} \otimes \tilde{T}_i$. Taking the second spectral sequence of the double complex \eqref{e:stalkres} we get 
\[
    \prescript{}{(\mathrm{rel})}E_1^{p,q} = \mathbb{C}\{\underline{x},\underline{z},\pi\}[x^{-1}] \otimes_\mathbb{C} S_p \otimes_\mathbb{C}\tilde{T}_q
\]
and $d_1 = d_{x,z} + dw$. Consequently,
\[
\prescript{}{(\mathrm{rel})}E_2^{p,q} = \mathcal{H}^p(\mathbb{C}\{\underline{x},\underline{z}\}[x^{-1}]\otimes_\mathbb{C} S_\bullet,d_{x,z}+dw)\otimes_\mathbb{C} \tilde{T}_q \otimes_\mathbb{C} \mathbb{C}\{\pi\}.
\]
We now show that $d_2=0$. To compute $d_2$, choose a representative of a class $[\xi]$ in $E_2^{p,q}$ in $E_{p,q}^0 = \Omega^{p,q}$. We first assume $\xi$ is $d_y$-closed. From our argument above (or more precisely, \cite[Theorem 11.13]{peters2008mixed}), $\xi$ is $d_y$-closed if and only if the only monomials $\underline{y}^\tau$ appearing are of the form $\pi^d$ for some integer $d$, and no such form is $d_y$-exact. 

Let us choose such a representative $\xi$ for $[\xi]$. In order for $\xi \bmod \mathrm{im}(d_{x,z}+  dw)$ to represent an element of $E_2^{p,q}$, we must have that $(d_{x,z} + dw)(\xi) \equiv 0 \bmod \mathrm{im}(d_y)$. However, since we have assumed that the only $\underline{y}$ monomials appearing in $\xi$ are of the form $\pi^d$, the same is true for $(d_{x,z} + dw)\xi$. So, in order for $(d_{x,z}+ dw)\xi$ to be $d_y$-exact, it must be 0. As a consequence, $d_2=0$ and hence the second spectral sequence degenerates at the $E_2$-term, by the usual definition of $d_2$ for the second spectral sequence of a double complex. 

If we suppose $* = (\mathrm{abs})$ the argument as in the previous paragraphs can be adapted slightly to compute the cohomology of ${\bm s}\Omega^{\bullet,\bullet}_{(\mathrm{abs})}$. To do this, we construct a complex $C_{\alpha,\beta,\omega}^\bullet$ of the form
\[
    A_{\alpha,\beta,\omega} \otimes ( \mathbb{C}\{\underline{y}\}\otimes T_0 \xrightarrow{d_y} \mathbb{C}\{\underline{y}\}\otimes T_1\xrightarrow{d_y}  \mathbb{C}\{\underline{y}\}\otimes T_2 \xrightarrow{d_y} \dots )
\]
By the Poincar\'e lemma, $H^i(C^\bullet_{\alpha,\beta,\omega}) \cong T_i$. The argument above then goes through exactly as above to show that spectral sequence induced by \eqref{e:stalkres} degenerates at the $E_2$ term and 
\[
\prescript{}{(\mathrm{abs})}E_2^{p,q} = \mathcal{H}^p(\mathbb{C}\{\underline{x},\underline{z}\}[x^{-1}]\otimes_\mathbb{C} S_\bullet,d_{x,z}+dw)\otimes_\mathbb{C} {T}_q.
\]
To prove the final statement, we look at the long exact sequence on stalks in cohomology, coming from the short exact sequence of stalks of complexes,
\[
0\longrightarrow \Omega^\bullet_{X/\mathbb{A}^1}(\log D\cup X_0)(*P)[-1]_0 \xrightarrow {\wedge d\log \pi}\Omega^\bullet_X(\log D\cup X_0)(*P)_0 \longrightarrow \Omega^\bullet_{X/\mathbb{A}^1}(\log D\cup X_0)(*P)_0 \longrightarrow 0.
\]
By a simple dimension count using the calculations above, along with Lemma \ref{l:fdstalk}, we see that the connecting homomorphism in the induced long exact sequence on stalks of cohomology must be 0.  
\end{proof}

Now we note that $\log \pi$ is a global section of $\mathcal{O}_{X_\infty}$. There is a canonical morphism 
\[
  \alpha': \Omega^\bullet_{X}(\log D)(*P) \longrightarrow  k_*k^*\Omega_{X^*}^\bullet(\log D^*)(*P^*) \cong k_*\Omega_{X_\infty}^\bullet(\log D_\infty)(*P_\infty)
\]
and we may define the complex of sheaves $\Omega_X^\bullet(\log D)(*P)[\log \pi]$ whose local sections are, formally, polynomials $\sum_{i=1}^k \omega_i(\log \pi)^i$. The differential takes $d\log \pi = d\pi/\pi$. There is a canonical injective morphism of complexes
\[
\alpha :  \Omega_X^\bullet(\log D)(*P)[\log \pi]  \longrightarrow k_*\Omega_{X_\infty}^\bullet(\log D_\infty)(*P_\infty)
\]
which sends local sections $\sum_{i=1}^k \omega_i(\log \pi)^i$ to $\sum_{i=1}^k \alpha'(\omega_i)(\log \pi)^i$. Notice that there is also a natural morphism of complexes 
\[
    \eta: \Omega_X^\bullet(\log D\cup X_0)(*P) \longrightarrow \Omega_X^\bullet(\log D\cup X_0)(*P)[\log \pi]
\]
obtained by inclusion.
\begin{lemma}\label{l:qis2}
    There is a quasi-isomorphism between $i^*\Omega_X^\bullet(\log D \cup E)(*P)[\log \pi]$ and $\Omega_{X/\mathbb{D}}^\bullet(\log D\cup X_0)(*P)\otimes \mathcal{O}_{X_0}$.
\end{lemma}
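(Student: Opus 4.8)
\textbf{Proof proposal for Lemma \ref{l:qis2}.}

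The plan is to reduce everything to a local computation on stalks, as in the proof of Lemma \ref{l:redstalk}, since both complexes in question are complexes of sheaves on $X_0$ and the assertion is local there. First I would set up the comparison map. By definition of $\eta$ there is a natural inclusion $\Omega_X^\bullet(\log D\cup X_0)(*P) \hookrightarrow \Omega_X^\bullet(\log D\cup X_0)(*P)[\log \pi]$, and composing with restriction $i^*$ gives a morphism of complexes $i^*\Omega_X^\bullet(\log D \cup E)(*P)[\log\pi]$ receiving the relative complex. More precisely, the element $\log\pi$ is killed by passing to $i^*$ except for its differential $d\log\pi$, so one checks that the subcomplex generated by $d\log\pi$ splits off and the quotient is identified with the relative complex $\Omega_{X/\mathbb{D}}^\bullet(\log D\cup X_0)(*P)\otimes \mathcal{O}_{X_0}$. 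Concretely, the map I want is: a local section $\sum_i \omega_i (\log\pi)^i$ maps to $\omega_0 \bmod (d\log\pi)$; this is a morphism of complexes because $d$ sends $(\log\pi)^i \mapsto i(\log\pi)^{i-1} d\log\pi$, so applying $i^*$ and reducing modulo $d\log\pi$ only the $i=0$ term survives, and the twisted differential $\nabla = d + dw\wedge(-)$ does not interact with $\log\pi$ since $dw$ has no pole along $X_0$ by the quasi-stable hypothesis (multiplicity $0$ with respect to $w$).

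Next I would verify this is a quasi-isomorphism on stalks, reusing the explicit local model from Lemma \ref{l:redstalk}. Choosing local orbifold coordinates $(\underline{x},\underline{y},\underline{z})$ with $\pi = y_1\cdots y_a$, $D = x_1\cdots x_b z_1\cdots z_c$ and $w = 1/(x_1^{e_1}\cdots x_b^{e_b})$, the stalk of $\Omega_X^\bullet(\log D\cup X_0)(*P)[\log\pi]\otimes \mathcal{O}_{X_0}$ is the total complex of a double complex just like \eqref{e:stalkres} but where the $\underline{y}$-direction factor $\mathbb{C}\{\underline{y}\}\otimes T_\bullet$ is replaced by $\mathbb{C}\{\underline{y}\}[\log\pi]\otimes T_\bullet$. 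The key input is the analogue of the computation cited from \cite[Theorem 11.13]{peters2008mixed}: the complex $(\mathbb{C}\{\underline{y}\}[\log\pi]\otimes T_\bullet, d_y)$ (with $d_y \log\pi = d\log\pi$) is a resolution that computes, in each cohomological degree, exactly $\mathbb{C}\{\pi\}[\log\pi]\otimes \tilde T_\bullet$ — this is the standard fact underlying the construction of nearby cycles à la Steenbrink, namely that adjoining $\log\pi$ makes the relative logarithmic Koszul complex acyclic onto the "constant in $\pi$ after twisting" part. Granting this, running the second spectral sequence of the double complex exactly as in the proof of Lemma \ref{l:redstalk} shows it degenerates and the abutment is $\mathcal{H}^p(\mathbb{C}\{\underline{x},\underline{z}\}[x^{-1}]\otimes S_\bullet, d_{x,z}+dw)\otimes \tilde T_q \otimes \mathbb{C}\{\pi\}$, matching exactly the stalk computation of $\mathcal{H}^\bullet(\Omega_{X/\mathbb{D}}^\bullet(\log D\cup X_0)(*P)\otimes\mathcal{O}_{X_0})$ obtained in the $*=(\mathrm{rel})$ case of Lemma \ref{l:redstalk}. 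One then checks the comparison map above induces precisely this identification on $E_2$-pages, hence is a quasi-isomorphism. As always, the orbifold case follows from the normal crossings case by taking local $G$-invariants, using exactness of $V\mapsto V^G$ in characteristic $0$.

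The main obstacle I anticipate is the bookkeeping around $\log\pi$ in the $\underline{y}$-direction: one must carefully justify that $(\mathbb{C}\{\underline{y}\}[\log\pi]\otimes T_\bullet, d_y)$ has cohomology concentrated as $\mathbb{C}\{\pi\}[\log\pi]\otimes \tilde T_\bullet$ and, more subtly, that the comparison map $\eta$ followed by $i^*$ and the projection modulo $d\log\pi$ is compatible with the filtration by $T$-degree in a way that makes the induced map on $E_1$ (and hence $E_2$) an isomorphism rather than merely a map of the right total dimension. This requires spelling out that $i^*$ of a section $\sum_i \omega_i(\log\pi)^i$ is insensitive to the constant term of $\omega_i$ along $X_0$ only after we have correctly identified which monomials $\underline{y}^\tau$ survive — precisely those of the form $\pi^d$, as established in Lemma \ref{l:redstalk}. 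Once this is pinned down, the degeneration argument and the identification of abutments are routine. A secondary point to be careful about is that $\Omega_X^\bullet(\log D\cup X_0)(*P)[\log\pi]$ is a complex of sheaves of infinite-dimensional stalks, so "quasi-isomorphism" must be checked as stated via the finite-dimensionality of cohomology sheaves, which is guaranteed by Lemma \ref{l:fdstalk} together with the explicit stalk computations above.
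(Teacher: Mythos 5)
The central computational input you rely on is misstated, and with it the comparison collapses. You claim that $(\mathbb{C}\{\underline{y}\}[\log \pi]\otimes T_\bullet, d_y)$ has cohomology $\mathbb{C}\{\pi\}[\log\pi]\otimes \tilde{T}_\bullet$ in each degree. Test this with $a=1$, $\pi=y$: for $F=\sum_{k}f_k(y)(\log y)^k$ one has $d_yF=\sum_j\bigl(yf_j'+(j+1)f_{j+1}\bigr)(\log y)^j\,\tfrac{dy}{y}$, and holomorphy of the $f_j$ forces the kernel to be just the constants $\mathbb{C}$, not $\mathbb{C}\{y\}[\log y]$. The correct statement (the actual content of Steenbrink's computation) is that the cohomology is finite-dimensional, represented by constant-coefficient classes in $\tilde{T}_\bullet$; this is exactly what makes the $[\log\pi]$-complex match $\Omega^\bullet_{X/\mathbb{D}}(\log D\cup X_0)(*P)\otimes\mathcal{O}_{X_0}$, whose stalk cohomology carries no $\mathbb{C}\{\pi\}[\log\pi]$ factor. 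Indeed, if your formula were true the lemma would be false. There is a second inconsistency downstream: you then state the $E_2$-abutment as $\mathcal{H}^p(\cdots)\otimes\tilde{T}_q\otimes\mathbb{C}\{\pi\}$ (the $[\log\pi]$ silently disappears) and say this "matches" the $(\mathrm{rel})$ computation of Lemma \ref{l:redstalk}; but the $\mathbb{C}\{\pi\}$ factor there pertains to the relative complex \emph{before} tensoring with $\mathcal{O}_{X_0}$, whereas the target of the present lemma is the tensored complex, for which that factor must be $\mathbb{C}$. So even taking your key claim at face value, the two objects you compare do not agree.

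For contrast, the paper's proof avoids any new $\underline{y}$-direction computation: it filters $i^*\Omega_X^\bullet(\log D\cup X_0)(*P)[\log\pi]$ by the degree in $\log\pi$, so every graded piece is a copy of the absolute complex whose stalk cohomology is already controlled by Lemma \ref{l:redstalk}; the $d_1$-differential of the resulting spectral sequence is $\wedge\, d\log\pi$, and the surjectivity/injectivity established in Lemma \ref{l:redstalk} forces the $E_2$-page to be concentrated in $\log\pi$-degree zero, identifying both complexes with the mapping cone of $\wedge\, d\log\pi$ on the $\mathcal{O}_{X_0}$-tensored complexes. Your direct comparison map $\sum_i\omega_i(\log\pi)^i\mapsto\omega_0\bmod d\log\pi$ is a legitimate chain map, and your strategy could be repaired by replacing the false key fact with the correct finite-dimensional one and by separately computing the stalk cohomology of the $\mathcal{O}_{X_0}$-tensored relative complex (coefficient $\mathbb{C}$, not $\mathbb{C}\{\pi\}$); but as written the proof has a genuine gap at its core.
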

\begin{proof}
     We filter the stalks of $i^*\Omega_X^\bullet(\log D \cup X_0)(*P)[\log \pi]$ by letting 
    \[
    \mathcal{G}_a = \left\{\left. \sum_{i=0}^a \omega_i (\log \pi)^i \,\, \right| \,\, \omega_i \in i^*\Omega_X^\bullet(\log D\cup X_0)(*P)_0\right\}.    
    \]
    It is clear that $\mathcal{G}_0\subseteq \mathcal{G}_1\subseteq \dots $ forms an increasing filtration, and that $\mathcal{H}^p(\gr^q_\mathcal{G})_0 \cong \mathcal{H}^{p}(i^*\Omega_X^\bullet(\log D\cup X_0)(*P))_0$. Therefore, the filtration spectral sequence has 
    \[
    E_1^{p,q}  \cong    \mathcal{H}^{p}(i^*\Omega_X^\bullet(\log D\cup X_0)(*P))_0
    \] 
    whenever $q\geq 0$. Direct calculation shows that 
    \[
    d_1: E_1^{p,q} \longrightarrow E_1^{p+1,q},\quad    \mathcal{H}^{p}(i^*\Omega_X^\bullet(\log D\cup X_0)(*P))_0 \xrightarrow{\wedge d\log \pi} \mathcal{H}^{p+1}(i^*\Omega_X^\bullet(\log D\cup X_0)(*P))_0 
    \]
    Now we have the commutative diagram,
    \[
    \begin{tikzcd}
        \mathcal{H}^p(i^*\Omega_X^\bullet(\log D\cup X_0)(*P))_0 \ar[d] \ar[rd,"\wedge d\log \pi"]& \\
        \mathcal{H}^p(i^*\Omega_{X/\mathbb{A}^1}^\bullet(\log D\cup X_0)(*P))_0 \ar[r,"\wedge d\log \pi"] & \mathcal{H}^{p+1}(i^*\Omega_X^\bullet(\log D\cup X_0)(*P))_0
    \end{tikzcd}
    \]
    By the proof of Lemma \ref{l:redstalk}, the vertical map is surjective, and the horizontal map is injective. Thus $d_1$ is exact except the map $d_1: E^{p,0}_1\rightarrow E^{p,-1}_1$, which is 0. Consequently, 
    \[
    E_2^{p,q} \cong \begin{cases} \mathcal{H}^p(\Omega_{X/\mathbb{A}^1}^\bullet(\log D\cup X_0)(*P) \otimes \mathcal{O}_{X_0})_0 & \text{ if } q =0 \\
        0 & \text{ otherwise} \end{cases}
    \]
    We immediately have $E_2$ degeneration and more precisely, that $E_2^{p,0} \cong \mathcal{H}^p(i^*\Omega_X^\bullet(\log D \cup X_0)(*P)[\log \pi])_0$. Furthermore, the natural map $i^*\eta$ described above is identified with the surjective morphism 
    \[
    \mathcal{H}^p(i^*\Omega_X^\bullet(\log D\cup X_0)(*P))_0 \longrightarrow \mathrm{coker}(\wedge d\log \pi)_0.
    \]
    Consequently, we see that both $i_*i^*\Omega_{X}^\bullet(\log D\cup X_0)(*P)$ and $\Omega_{X/\mathbb{D}}^\bullet(\log D\cup X_0)(*P)\otimes \mathcal{O}_{X_0}$ are quasi-isomorphic to the mapping cone of 
    \[
    \Omega_{X/\mathbb{D}}^{\bullet}(\log D\cup X_0)(*P)[-1]\otimes \mathcal{O}_{X_0} \xrightarrow{d\log \pi} \Omega_{X}^{\bullet}(\log D\cup X_0)(*P) \otimes \mathcal{O}_{X_0}.
    \]
    Hence they are mutually quasi-isomorphic.
\end{proof}

\begin{lemma}\label{l:qis3}
    The morphism $i^*\alpha : i^*\Omega_X^\bullet(\log D)(*P)[\log \pi]  \longrightarrow i^*k_*\Omega_{X_\infty}^\bullet(\log D_\infty)(*P_\infty)$ is a quasi-isomorphism of complexes of sheaves.
\end{lemma}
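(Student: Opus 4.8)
\textbf{Proof proposal for Lemma \ref{l:qis3}.}

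The plan is to adapt the classical argument of Steenbrink (see \cite{steenbrink1974mixed}, or \cite[\S 11]{peters2008mixed}) showing that the logarithmic de Rham complex tensored with polynomials in $\log \pi$ computes the nearby cycles, now in the twisted setting. Since the statement is local near a point $0 \in X_0$, I would work entirely on stalks and reduce to the normal crossings case by the usual device of passing to an orbifold chart and taking $G$-invariants (using exactness of $V \mapsto V^G$ in characteristic $0$), exactly as in the proof of Lemma \ref{l:redstalk}. So fix local coordinates $(\underline{x},\underline{y},\underline{z})$ with $\pi = y_1\cdots y_a$, $D = x_1\cdots x_b z_1 \cdots z_c$, and $w = 1/(x_1^{e_1}\cdots x_b^{e_b})$, matching the setup of Lemma \ref{l:redstalk}.

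First I would identify the right-hand stalk $(i^*k_*\Omega_{X_\infty}^\bullet(\log D_\infty)(*P_\infty))_0$ concretely. The key classical input is that the stalk of $k_* \underline{\mathbb{C}}_{X_\infty}$ over $0$ is computed by the a-torus worth of loops around the components $y_j = 0$; more precisely, after the base change $\exp(2\pi {\tt i}-)$, the de Rham complex of $X_\infty$ near the preimage of $0$ is quasi-isomorphic to $\Omega_X^\bullet(\log D)(*P)[\log \pi]$ — this is exactly the content of the untwisted statement, and the twisting by $w$ (equivalently by $dw \wedge (-)$) does not interact with the $y$-variables at all, since $w$ is a monomial in the $x_i$ only. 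So I would run the two-step spectral sequence argument: filter both sides by powers of $\log \pi$ (for the left side this is the filtration $\mathcal G_\bullet$ of Lemma \ref{l:qis2}; for the right side one filters $k_*\Omega_{X_\infty}^\bullet$ by the order of the pole/growth in $\log \pi$), and show $i^*\alpha$ induces an isomorphism on associated graded. On each graded piece the map becomes the identity on $\mathcal{H}^\bullet(i^*\Omega_X^\bullet(\log D\cup X_0)(*P))_0$ tensored with a fixed power of $d\log\pi$, because away from $\pi = 0$ the complexes literally agree after $k^*$; this is precisely where I invoke that $k^*\Omega_X^p(\log D^*)(*P^*) \cong \Omega_{X_\infty}^p(\log D_\infty)(*P_\infty)$, noted in the text just before Lemma \ref{l:redstalk}. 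The finite-dimensionality from Lemma \ref{l:fdstalk} guarantees the spectral sequences converge and the bookkeeping is legitimate.

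The main obstacle I anticipate is making the right-hand side $(i^* k_* \Omega_{X_\infty}^\bullet(\log D_\infty)(*P_\infty))_0$ genuinely computable: unlike the left side, it is defined via a pushforward along the non-proper (Stein) map $k$, so one cannot just take naive stalks. The resolution is to use that $k$ is Stein together with the fact — established in the paragraph preceding Lemma \ref{l:redstalk} — that $\Omega_{X_\infty}^\bullet(\log D_\infty)(*P_\infty)$ is quasi-isomorphic to the coherent Kontsevich-type complex $\Omega_{X_\infty}^\bullet(\log D_\infty, w)$, so that $\mathbb{R}k_* \cong k_*$ and the stalk can be computed by a direct limit of sections over tubular neighborhoods $\pi^{-1}(\mathbb{D}^*_\epsilon)$. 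On such a neighborhood the $y$-directions contribute exactly the $\log \pi$-polynomial algebra (Poincaré lemma in the $y$-variables on a punctured polydisc, unwound via the exponential cover, as in \cite[Theorem 11.13]{peters2008mixed}), while the $(\underline{x},\underline{z})$-directions together with $dw\wedge(-)$ contribute the same twisted Koszul-type cohomology that appears on the left — this is the cohomology $\mathcal{H}^p(\mathbb{C}\{\underline{x},\underline{z}\}[x^{-1}]\otimes S_\bullet, d_{x,z}+dw)$ computed in Lemma \ref{l:redstalk}. Once both sides are expressed in these terms, $i^*\alpha$ is manifestly the identity, and the proof concludes. A minor secondary point to be careful about is compatibility of the two filtrations under $i^*\alpha$ (that $\alpha$ sends $\mathcal{G}_a$ into the $a$-th piece of the growth filtration on the right), which follows directly from the formula $\alpha(\sum \omega_i(\log\pi)^i) = \sum \alpha'(\omega_i)(\log\pi)^i$.
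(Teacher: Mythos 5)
Your proposal has genuine gaps, and they sit exactly at the two places where you deviate from a direct comparison. First, the proposed filtration of the target by ``order of pole/growth in $\log\pi$'' does not exist: local sections of $k_*\Omega_{X_\infty}^\bullet(\log D_\infty)(*P_\infty)$ over a neighbourhood $V_{\varepsilon,\gamma}$ are forms on $W_{\varepsilon,\gamma}\cong \mathbb{D}^a\times(\mathbb{D}^*)^{d-a}\times\mathfrak{h}$ whose dependence on the $\mathfrak{h}$-coordinate $\tilde{t}$ (i.e.\ on $\log\pi$) is an arbitrary holomorphic function, e.g.\ $e^{c\tilde{t}}$, not a polynomial; so there is no exhaustive $\log\pi$-degree filtration on the right-hand side to compare graded pieces against, and restricting to the subcomplex of polynomial sections and asserting it is quasi-isomorphic to the full pushforward is precisely the content of the lemma, not an input to it. Your justification for the graded comparison --- that ``away from $\pi=0$ the complexes literally agree after $k^*$'' --- is empty at the relevant points: agreement over $X^*$ says nothing about the stalk at a point of $X_0$, which is a limit over neighbourhoods meeting $X_0$, and that is the whole difficulty. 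Second, your fallback direct computation of the right-hand stalk is stated incorrectly: after unwinding, the $y$-directions contribute the cohomology of $(\mathbb{D}^*)^a\times_{\mathbb{D}^*}\mathfrak{h}$, which is the finite exterior algebra on the classes $d\log y_j$ modulo $d\log\pi$ (the $\tilde{T}_\bullet$ of Lemma \ref{l:redstalk}), not a ``$\log\pi$-polynomial algebra'', which is infinite-dimensional; the $\log\pi$-powers are only cochain-level representatives on the source and are killed in cohomology, as the spectral sequence in the proof of Lemma \ref{l:qis2} shows. As written, your identification of the two sides would not even match ranks.

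For comparison, the paper's proof avoids computing the right-hand stalk by separation of variables altogether. It writes both stalks as direct limits over $V_{\varepsilon,\gamma}$ and $W_{\varepsilon,\gamma}=k^{-1}V_{\varepsilon,\gamma}$ and compares them through restriction to a nearby fibre: in the commutative square relating cohomology over $V_{\varepsilon,\gamma}$, over $V_{\varepsilon,\gamma}\cap\pi^{-1}(t)$, over $W_{\varepsilon,\gamma}$, and over $W_{\varepsilon,\gamma}\cap\tilde{\pi}^{-1}(\tilde{t})$, the bottom restriction is an isomorphism because $\tilde{\pi}:W_{\varepsilon,\gamma}\to\mathfrak{h}$ is a trivial family of Landau--Ginzburg models over a contractible Stein base, and a local argument identifies the kernel of the top restriction with the image of $\wedge\, d\log\pi$. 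Hence the stalk map $K$ into the right-hand side has kernel exactly the image of $\wedge\, d\log\pi$, and factoring $K$ through $i^*\alpha$ and invoking Lemma \ref{l:qis2} (which identifies the cohomology of the $[\log\pi]$-complex with the cokernel of $\wedge\, d\log\pi$) finishes the argument. If you want to keep your route, you must replace the $\log\pi$-filtration of the target by an honest computation of the twisted cohomology of $W_{\varepsilon,\gamma}$ with the correct exterior-algebra answer in the $y$-directions --- at which point you are redoing Steenbrink's computation rather than the graded comparison you proposed.
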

\begin{proof}
    We choose appropriate neighbourhoods $V_{\varepsilon,\gamma}$ of a point $0$ in $X_0$, which lie within a polydisc of radius $\varepsilon$ of $p$ and for which $|\pi(V)|< \gamma$. Let $W_{\varepsilon,\gamma} = k^{-1}V_{\varepsilon,\gamma}.$ Then, 
    \begin{align*}
    \mathcal{H}^p(i^*k_*\Omega_{X_\infty}^\bullet(\log D_\infty)(*P_\infty))_0 & = \lim_{\substack{ \longrightarrow \\ W_{\varepsilon,\gamma}}} H^p(\Omega_{W_{\varepsilon,\gamma}}^\bullet(\log D_\infty \cap W_{\varepsilon,\gamma})(*(P_\infty\cap W_{\varepsilon,\gamma}))) \\ 
    \mathcal{H}^p(i^*\Omega_{X}^\bullet(\log D)(*P))_0 & = \lim_{\substack{ \longrightarrow \\ V_{\varepsilon,\gamma}}} H^p(\Omega_{V_{\varepsilon,\gamma}}^\bullet(\log (D \cup X_0) \cap V_{\varepsilon,\gamma})(*(P\cap V_{\varepsilon,\gamma}))) 
    \end{align*}
    For small enough $\varepsilon$, $V_{\varepsilon,\gamma} = \mathbb{D}^a \times (\mathbb{D}^*)^{d-a+1}$ and $W_{\varepsilon,\gamma}\cong \mathbb{D}^a \times (\mathbb{D}^*)^{d-a} \times \mathfrak{h}$ and the map $k$ is simply the exponential map in the last coordinate. If we choose $t$ so that $0<|t|<\gamma$, and $\tilde{t}$ so that $\exp(2\pi {\tt i}\tilde{t}) = t$ then there is a commutative diagram of maps, 
    \[
    \begin{tikzcd}
        H^p(\Omega_{V_{\varepsilon,\gamma}}^\bullet(\log (D \cup X_0) \cap V_{\varepsilon,\gamma})(*(P\cap V_{\varepsilon,\gamma}))) \ar[r,"B"] \ar[d,"C"] & H^p(\Omega_{V_{\varepsilon,\gamma}\cap \pi^{-1}(t)}^\bullet(\log (D \cup X_0) \cap V_{\varepsilon,\gamma})(*(P\cap V_{\varepsilon,\gamma})))\ar[d,"\cong"] \\
        H^p(\Omega_{W_{\varepsilon,\gamma}}^\bullet(\log D_\infty \cap W_{\varepsilon,\gamma})(*(P_\infty\cap W_{\varepsilon,\gamma})))  \ar[r,"A"] & H^p(\Omega_{W_{\varepsilon,\gamma}\cap \tilde{\pi}^{-1}(\tilde{t})}^\bullet(\log D_\infty \cap W_{\varepsilon,\gamma})(*(P_\infty\cap W_{\varepsilon,\gamma}))) 
    \end{tikzcd}
    \]
    Since $\tilde{\pi} : W_{\varepsilon,\gamma} \rightarrow \mathfrak{h}$ is a trivial local family of Landau--Ginzburg models, the arrow $A$ is an isomorphism. A local argument shows that the top horizontal arrow $B$ has kernel spanned by the image of $\wedge d\log \pi$. Therefore, the kernel of $C$ is also spanned by the image of $\wedge d\log \pi$ and the induced morphism on stalks 
    \[
   K: \mathcal{H}^p(i^*\Omega_{X}^\bullet(\log D)(*P))_0 \longrightarrow \mathcal{H}^p(i^*k_*\Omega_{X_\infty}^\bullet(\log D_\infty)(*P_\infty))_0
    \]
    has kernel spanned by the image of $\wedge d\log \pi$. This map factors through the morphism
    \[
    \begin{tikzcd}
    \mathcal{H}^p(i^*\Omega_{X}^\bullet(\log D)(*P))_0\ar[r]\ar[rr,bend right=5,swap,"K"] &\mathcal{H}^p(i^*\Omega_{X}^\bullet(\log D)(*P)[\log \pi])_0 \ar[r,"L"] & \mathcal{H}^p(i^*k_*\Omega_{X_\infty}^\bullet(\log D_\infty)(*P_\infty))_0.
    \end{tikzcd}
    \]
    By the argument in the proof of Lemma \ref{l:qis2}, $L$ is an isomorphism.
\end{proof}

As a consequence of these three lemmas, we have a quasi-isomorphism 
\[
\psi_\pi \Omega_{X^*}^\bullet(\log D^*)(*P^*) \dashrightarrow \Omega_X^\bullet(\log D\cup X_0)(*P) \otimes \mathcal{O}_{X_0}.
\]
It remains to show that $\mathbb{H}^p(E, \psi_\pi \Omega^\bullet_{X^*}(\log D^*)(*P^*))$ has rank equal  to that of $H^p(X_t\setminus D_t,w_t)$ for a generic point $t$ in  $\mathbb{D}$, however, this is a consequence of general arguments (see e.g. \cite[II, \S 6.13]{goresky1988stratified} or \cite[Theorem 4.14]{MAXIM_2020}), along with the fact that $(\Omega^\bullet_{X^*}(\log D^*)(*P^*),d+dw)$ is quasi-isomorphic to a constructible complex \cite[Lemma C.2]{esnault20171}.  We have the following result.
\begin{theorem}\label{t:constdim}
    Let $(X,D,w,\pi)$ be a quasi-stable degeneration of Landau--Ginzburg models over a disc $\mathbb{D}$. The coherent sheaf $\mathbb{R}^p\pi_*\Omega_{X/\mathbb{D}}^\bullet(\log D\cup X_0)(*P)$ is locally free. 
\end{theorem}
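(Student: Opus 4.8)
The plan is to deduce local freeness of $\mathbb{R}^p\pi_*\Omega_{X/\mathbb{D}}^\bullet(\log D\cup X_0)(*P)$ from Grauert's theorem, by showing that the fibre dimension of this coherent sheaf is constant over all of $\mathbb{D}$. First I would replace the non-coherent complex $\Omega_{X/\mathbb{D}}^\bullet(\log D\cup X_0)(*P)$ by the filtered quasi-isomorphic \emph{coherent} complex $\Omega_{X/\mathbb{A}^1}^\bullet(\log D,\nabla)$ from Proposition \ref{p:yu}, so that $\mathcal{F}^p := \mathbb{R}^p\pi_*\Omega_{X/\mathbb{A}^1}^\bullet(\log D,\nabla)$ is a genuine coherent sheaf on $\mathbb{D}$ to which base change and Grauert's theorem apply.

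The core of the argument is the comparison $\dim \mathbb{H}^p(X_0, \mathcal{F}^p\otimes \mathcal{O}_{X_0}) = \dim \mathbb{H}^p(X_t, \Omega_{X_t}^\bullet(\log D_t)(*P_t))$ for $t\neq 0$. For $t\neq 0$ this follows from the base change isomorphism $\Omega_{X/\mathbb{A}^1}^\bullet(\log D,\nabla)\otimes\mathcal{O}_{X_t}\cong F_{\mathrm{irr}}^0\Omega^\bullet_{X_t}(\log D_t)(*P_t)$ together with the fact (Proposition \ref{p:GM}) that the Gauss--Manin connection $\nabla_{\mathrm{GM}}$ is holomorphic away from $0$, forcing $\mathcal{F}^p$ to be locally free of constant rank on $\mathbb{D}\setminus 0$. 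The subtle point is the behaviour over $0$: a coherent sheaf carrying a logarithmic connection need not be locally free at $0$. Here I would invoke the chain of quasi-isomorphisms established in Lemmas \ref{l:fdstalk}, \ref{l:redstalk}, \ref{l:qis2}, and \ref{l:qis3}, which together give a quasi-isomorphism
\[
\psi_\pi\big(\Omega_{X^*}^\bullet(\log D^*)(*P^*),d+dw\big) \simeq \Omega_X^\bullet(\log D\cup X_0)(*P)\otimes\mathcal{O}_{X_0} \simeq \Omega_{X/\mathbb{D}}^\bullet(\log D\cup X_0)(*P)\otimes\mathcal{O}_{X_0}.
\]
Thus $\mathbb{H}^p(X_0,\Omega_{X/\mathbb{D}}^\bullet(\log D\cup X_0)(*P)\otimes\mathcal{O}_{X_0})\cong \mathbb{H}^p(X_0,\psi_\pi\Omega_{X^*}^\bullet(\log D^*)(*P^*))$. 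Then, because $(\Omega_{X^*}^\bullet(\log D^*)(*P^*),d+dw)$ is quasi-isomorphic to a constructible complex by \cite[Lemma C.2]{esnault20171}, the general theory of nearby cycles for constructible complexes (e.g. \cite[II, \S6.13]{goresky1988stratified} or \cite[Theorem 4.14]{MAXIM_2020}) gives that the hypercohomology of the nearby cycle complex on $X_0$ has the same total dimension as the cohomology of a generic fibre, i.e. $\dim \mathbb{H}^p(X_0,\psi_\pi(\cdots)) = \dim H^p(X_t\setminus D_t, w_t)$.

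Combining these, the function $t\mapsto \dim \mathbb{H}^p(X_t,(\mathcal F^p)_t/\mathfrak m_t)$ — which is upper semicontinuous by general coherent-sheaf theory — is constant on $\mathbb{D}\setminus 0$ and takes the same value at $0$, hence is constant on all of $\mathbb{D}$. By Grauert's theorem (constancy of fibre dimension for a coherent sheaf over a reduced base implies local freeness), $\mathcal{F}^p = \mathbb{R}^p\pi_*\Omega_{X/\mathbb{A}^1}^\bullet(\log D,\nabla)\cong \mathbb{R}^p\pi_*\Omega_{X/\mathbb{D}}^\bullet(\log D\cup X_0)(*P)$ is locally free, which is the claim. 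I expect the main obstacle to be the careful bookkeeping at $t=0$: one must ensure that the quasi-isomorphism identifying $\Omega_{X/\mathbb{D}}^\bullet(\log D\cup X_0)(*P)\otimes\mathcal{O}_{X_0}$ with the nearby cycle complex is compatible with taking hypercohomology on the compact (proper over $0$) fibre $X_0$, and that the cited constructibility and stratified Morse theory inputs genuinely apply in the orbifold-normal-crossings setting — the latter reduces to the normal crossings case by passing to orbifold charts and taking $G$-invariants, using exactness of $(-)^G$ in characteristic zero, exactly as in the proof of Lemma \ref{l:redstalk}.
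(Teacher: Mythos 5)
Your proposal is correct and follows essentially the same route as the paper: replace the complex by the coherent model of Proposition \ref{p:yu}, use the Gauss--Manin connection (Proposition \ref{p:GM}) for constancy of rank away from $0$, identify the fibre over $0$ with the twisted nearby cycles via Lemmas \ref{l:redstalk}, \ref{l:qis2}, and \ref{l:qis3} together with the constructibility input from \cite[Lemma C.2]{esnault20171} and the cited references, and conclude by Grauert's theorem. The compatibility caveats you flag (properness over $0$, reduction of the orbifold case to $G$-invariants in charts) are exactly the points the paper handles the same way.
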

\begin{proof}
    This is a direct consequence of Grauert's semicontinuity theorem (see e.g. \cite[Theorem 8.5]{barth2015compact}). The arguments above show that the rank of the stalks of $\mathbb{R}^p\pi_*\Omega_{X/\mathbb{D}}^\bullet(\log D\cup X_0)(*P)$, which are isomorphic to $\mathbb{H}^p(\Omega_{X/\mathbb{D}}^\bullet(\log D\cup X_0)(*P)\otimes \mathcal{O}_{X_t})$, have constant rank. Therefore, $\mathbb{R}^p\pi_*\Omega^\bullet_{X/\mathbb{D}}(\log D\cup X_0)(*P)$ is locally free by \cite[Theorem 8.5(iii)]{barth2015compact}.
\end{proof}
The following corollary is a direct consequence of Theorem \ref{t:constdim}. 
\begin{corollary}
Let $(X,D,w,\pi)$ be a quasi-stable degeneration of Landau--Ginzburg models and let $T_p$ be the monodromy operator attached to $\mathbb{R}^p\pi_*\Omega_{X^*/\mathbb{D}^*}^\bullet(\log D^*)(*P^*)$. Then $\log T_p = \mathrm{Res}_0\nabla_\mathrm{GM}$. 
\end{corollary}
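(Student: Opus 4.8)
The plan is to deduce this identity from the general machinery relating the Gauss--Manin connection, its residue at the origin, and the monodromy operator of a locally free sheaf with logarithmic connection on a punctured disc. The essential input has already been assembled: by Theorem~\ref{t:constdim}, $\mathbb{R}^p\pi_*\Omega_{X/\mathbb{D}}^\bullet(\log D\cup X_0)(*P)$ is locally free on $\mathbb{D}$, and by Proposition~\ref{p:GM} the connection $\nabla_\mathrm{GM}$ is a logarithmic connection on this sheaf which is holomorphic away from $0$. After replacing $\Omega_{X/\mathbb{D}}^\bullet(\log D\cup X_0)(*P)$ by the filtered quasi-isomorphic coherent complex $\Omega_{X/\mathbb{D}}^\bullet(\log D,\nabla)$ as in Proposition~\ref{p:yu}, we therefore have a genuine vector bundle $(\mathcal{V},\nabla_\mathrm{GM})$ on $\mathbb{D}$ with a logarithmic connection, whose restriction to $\mathbb{D}^* $ recovers $\mathbb{R}^p\pi_*\Omega_{X^*/\mathbb{D}^*}^\bullet(\log D^*)(*P^*)$ with its flat Gauss--Manin connection.

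First I would recall the standard local statement: if $(\mathcal{V},\nabla)$ is a locally free sheaf on a disc $\mathbb{D}$ with a logarithmic connection with pole along $0$, then the residue endomorphism $\mathrm{Res}_0\nabla \in \mathrm{End}(\mathcal{V}_0)$ governs the monodromy of the restriction $(\mathcal{V},\nabla)|_{\mathbb{D}^*}$; concretely, transporting a flat frame around a small positively-oriented loop about $0$ multiplies it by $\exp(-2\pi \mathtt{i}\,\mathrm{Res}_0\nabla)$, equivalently $\log T_p = -2\pi\mathtt{i}\,\mathrm{Res}_0\nabla_\mathrm{GM}$ up to the usual normalization of $T_p$ and of the residue; with the conventions in force here (the identification $X_\infty \to X_0$ via $\exp(2\pi\mathtt{i}\,-)$ used to define the nearby cycles, which already absorbs the $2\pi\mathtt{i}$), this reads $\log T_p = \mathrm{Res}_0\nabla_\mathrm{GM}$. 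This is the content of, e.g., Deligne's analysis of regular singular connections; since $\nabla_\mathrm{GM}$ has at worst a logarithmic pole, the connection is automatically regular singular, so the comparison applies verbatim.

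The step requiring care is the compatibility between the monodromy operator $T_p$ as defined through the nearby cycles complex $\psi_\pi\Omega_{X^*}^\bullet(\log D^*)(*P^*)$ and the formal monodromy of the bundle $(\mathcal{V},\nabla_\mathrm{GM})$ on $\mathbb{D}^*$. Here I would invoke the chain of quasi-isomorphisms established in Lemmas~\ref{l:fdstalk}--\ref{l:qis3}, which identify $\psi_\pi\Omega_{X^*}^\bullet(\log D^*)(*P^*)$ with $\Omega_X^\bullet(\log D\cup X_0)(*P)\otimes\mathcal{O}_{X_0}$, hence $\mathbb{H}^p$ of the nearby fibre with $\mathcal{V}_0$; under this identification the topological monodromy acting on nearby cycles corresponds precisely to the formal monodromy of $\nabla_\mathrm{GM}$, because both are computed by parallel transport of the Gauss--Manin local system $\mathbb{R}^p\pi_*$ around the puncture, and the exponential chart used in the definition of $\psi_\pi$ is exactly the universal cover used to trivialize that local system. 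I expect this matching of two a priori different descriptions of ``the monodromy'' to be the main obstacle, but it is a formal consequence of the quasi-isomorphisms already proved together with the standard identification of the nearby cycles functor with pullback along the universal cover. Once this is in place, the identity $\log T_p = \mathrm{Res}_0\nabla_\mathrm{GM}$ follows immediately from the local theory of regular singular logarithmic connections.
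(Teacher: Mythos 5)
Your argument is correct and is essentially the one the paper has in mind: the paper gives no proof beyond declaring the corollary a direct consequence of Theorem \ref{t:constdim}, i.e.\ local freeness of $\mathbb{R}^p\pi_*\Omega_{X/\mathbb{D}}^\bullet(\log D\cup X_0)(*P)$ together with Proposition \ref{p:GM} and the standard comparison, for a logarithmic connection on the disc, between its residue at $0$ and the monodromy of its restriction to $\mathbb{D}^*$ (matched with the topological monodromy on nearby cycles via the quasi-isomorphisms of Lemmas \ref{l:redstalk}--\ref{l:qis3}), which is exactly what you spell out. The only informality, shared with the paper's own statement, is the normalization: the classical formula is $T_p=\exp(-2\pi i\,\mathrm{Res}_0\nabla_{\mathrm{GM}})$ (valid on the nose when, as here, the residue is nilpotent so no two eigenvalues differ by a nonzero integer), so the identity $\log T_p=\mathrm{Res}_0\nabla_{\mathrm{GM}}$ should be read up to the factor $-2\pi i$ or with a correspondingly normalized residue, rather than being literally "absorbed" by the exponential chart defining $\psi_\pi$.
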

We may also adapt usual arguments (e.g. \cite[Corollary 11.19]{peters2008mixed}) to obtain the following statement.
\begin{corollary}
    Let $(X,D,w,\pi)$ be a quasi-stable degeneration of Landau--Ginzburg models. Then $\log T_p$ is nilpotent. 
\end{corollary}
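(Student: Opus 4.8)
The plan is to deduce nilpotence of $\log T_p$ from the constancy of the rank of the nearby cohomology together with the existence of the logarithmic Gauss--Manin connection, following the template of the classical monodromy theorem for semistable degenerations (as in \cite[Corollary 11.19]{peters2008mixed}). The key structural inputs are already available: by Theorem \ref{t:constdim} the coherent sheaf $\mathcal{V}^p := \mathbb{R}^p\pi_*\Omega_{X/\mathbb{D}}^\bullet(\log D\cup X_0)(*P)$ is locally free on $\mathbb{D}$, and by Proposition \ref{p:GM} (and the preceding corollary) it carries a logarithmic connection $\nabla_\mathrm{GM}$ with $\log T_p = \mathrm{Res}_0\nabla_\mathrm{GM}$. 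So the task reduces to bounding the size of the Jordan blocks of the residue endomorphism $\mathrm{Res}_0\nabla_\mathrm{GM}$ on the fibre $\mathcal{V}^p_0$.

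The main step is to identify $\mathrm{Res}_0\nabla_\mathrm{GM}$ with a combinatorially defined operator on the nearby complex for which nilpotency is manifest. Concretely, I would use the residue resolution of Proposition \ref{p:residue-resolution2} applied to $D' = D\cup X_0$ (so the role of $E$ is played by the irreducible components of the central fibre $X_0 = \bigcup_{i} X_i$, each of multiplicity $1$). This exhibits $\Omega_{X/\mathbb{A}^1}^\bullet(\log D\cup X_0)(*P)\otimes\mathcal{O}_{X_0}$ as quasi-isomorphic to a complex whose $m$-th term is $\bigoplus_{|I|=m+1}\Omega_{X_I/\mathbb{A}^1}^{\bullet - m}(\log D_I\cup X_{I,0})(*P_I)$, i.e.\ an analogue of the Steenbrink double complex. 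On such a complex the weight filtration $W_\bullet$ — defined, as in \cite{steenbrink1974mixed}, by the number of $d\log$ poles along components of $X_0$, or equivalently by $\log$-order along the singular strata — is preserved up to a shift by the monodromy logarithm, so that $\log T_p$ strictly decreases the induced filtration on cohomology. Since the filtration has finite length (bounded by the dimension, or more sharply by the number of components of $X_0$ meeting a given point), any operator strictly decreasing it is nilpotent, and hence so is $\log T_p$.

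An alternative, and perhaps cleaner, route avoids reconstructing the full weight filtration: once $\mathcal{V}^p$ is locally free with a logarithmic connection, one may observe that over the punctured disc $\mathcal{V}^p|_{\mathbb{D}^*}$ underlies a local system which, by the quasi-isomorphism of $(\Omega^\bullet_{X^*}(\log D^*)(*P^*),d+dw)$ with a constructible complex (\cite[Lemma C.2]{esnault20171}) and the constructibility of nearby cycles, has quasi-unipotent monodromy — indeed the eigenvalues of $T_p$ are roots of unity by Grothendieck's local monodromy theorem applied to the underlying perverse sheaf. To upgrade quasi-unipotence of $T_p$ to nilpotence of $\log T_p$ (i.e.\ full unipotence of the unipotent part combined with the absence of a nontrivial semisimple part on the relevant summand) one still needs the semistability hypothesis: it is precisely the multiplicity-$1$ condition on the components of $X_0$ in Definition \ref{d:deglg} that forces $T_p$ itself to be unipotent, via the local picture $\pi = y_1\cdots y_a$ exploited in the proof of Lemma \ref{l:redstalk}. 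So I would run the argument by passing to the model complex of Lemma \ref{l:redstalk} and checking on stalks that $\wedge d\log\pi$ (which computes $d_1$ in the relevant spectral sequence and whose associated graded incarnates $\log T_p$) nilpotently decreases the number of available $d\log y_i$ factors.

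The hard part will be the bookkeeping in identifying $\mathrm{Res}_0\nabla_\mathrm{GM}$ with the combinatorial $N$-operator coming from the residue resolution, and in particular checking compatibility of the irregular-Hodge-theoretic setup (the $(*P)$ twist, the $\nabla = d + dw\wedge(-)$ differential) with the Steenbrink machinery; in the classical case $w=0$ this is routine, but one must verify that the pole divisor $P$ and the superpotential $w$, being disjoint from $X_0$ in the sense that $w$ has multiplicity $0$ along the components of $X_0$, do not interfere with the weight or $\log$-order filtrations along $X_0$. Given the stalk computations already carried out in Lemmas \ref{l:redstalk}, \ref{l:qis2}, and \ref{l:qis3} — where exactly this separation of the $\underline{x}$-variables (carrying $w$ and $P$) from the $\underline{y}$-variables (carrying $\pi$) is what makes the spectral sequences degenerate — this compatibility should follow without new ideas, and the nilpotence statement then drops out.
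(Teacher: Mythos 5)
Your second route is essentially the paper's own argument: the paper proves this corollary by "adapting the usual arguments" of \cite[Corollary 11.19]{peters2008mixed}, i.e.\ by exploiting the multiplicity-one hypothesis through the local model $\pi=y_1\cdots y_a$ and the $[\log\pi]$-complex. One correction to how you phrase the mechanism, though: nilpotence does not come from "$\wedge\, d\log\pi$ decreasing the number of available $d\log y_i$ factors" (wedging with $d\log\pi$ is square-zero on forms, but that by itself does not bound the nilpotency order of the induced operator on hypercohomology). The mechanism already built into the paper is the finite filtration $\mathcal{G}_\bullet$ by $\log\pi$-degree in Lemma \ref{l:qis2}: under the identification of the nearby complex with $i^*\Omega_X^\bullet(\log D\cup X_0)(*P)[\log\pi]$ (Lemmas \ref{l:qis2}, \ref{l:qis3}), the monodromy acts by the deck translation $\log\pi\mapsto\log\pi+2\pi{\tt i}$, so $T_p-\mathrm{id}$ strictly lowers $\mathcal{G}_\bullet$, whose length is bounded; unipotence of $T_p$, hence nilpotence of $\log T_p$, follows. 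The multiplicity-one condition is what guarantees that only integral powers of $\log\pi$ (no fractional powers of $\pi$) occur. The detour through Grothendieck's quasi-unipotence theorem is not needed for this.

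Your first route, as written, does not get off the ground: Proposition \ref{p:residue-resolution2} cannot be applied with $E$ equal to the components of $X_0$. Its hypotheses require the $E_i$ to be horizontal, so that each $(E_I,D_I,w_I,\pi|_{E_I})$ is again a quasi-stable degeneration; the components of the central fibre are contracted by $\pi$ and do not qualify. Moreover the resolution you would want of $\Omega^\bullet_{X/\mathbb{A}^1}(\log D\cup X_0)(*P)\otimes\mathcal{O}_{X_0}$ along the strata of $X_0$ is by \emph{restriction} maps (as in Lemma \ref{p:tropresn-cpt} and the rest of Section \ref{s:nearbyfib}), with no residues and no degree shift $\bullet-m$; the residue-plus-shift complex you wrote computes something else. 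A Steenbrink-type double complex with a visibly nilpotent $N$-operator could be constructed and would give the statement, but that is genuinely more machinery than the corollary needs, and the identification of $\mathrm{Res}_0\nabla_{\mathrm{GM}}$ with that combinatorial operator is exactly the bookkeeping you defer; the $\mathcal{G}_\bullet$-filtration argument above avoids it entirely.
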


\subsection{Irregular filtration on the nearby cycles}
There is a natural irregular Hodge filtration on the nearby twisted de Rham complex $(\Omega_{X/\mathbb{D}}^\bullet(\log D\cup X_0)(*P) \otimes \mathcal{O}_{X_0},d+dw)$. Explicitly, let us define $\Lambda_{X,D}^\bullet = \Omega_{X/\mathbb{D}}^\bullet(\log D\cup X_0)$ and we define 
\begin{equation}\label{e:nbirreg}
F^\lambda_{\mathrm{irr}}(\Lambda_{X,D}^\bullet(*P) \otimes \mathcal{O}_{X_0}) := \mathrm{im}((F^\lambda_{\mathrm{irr}}\Lambda^\bullet_{X,D}(*P)) \otimes \mathcal{O}_{X_0}\longrightarrow \Lambda^\bullet_{X,D}(*P)\otimes \mathcal{O}_{X_0}).
\end{equation}
The sheaf $\mathcal{O}_{X_0} = \mathcal{O}_X/\mathcal{I}_{X_0}$ is not a flat $\mathcal{O}_X$-module, so it is not immediately clear that $F_{\mathrm{irr}}^\lambda \Lambda_{X,D}^\bullet(*P)$ is a subsheaf of $\Lambda_{X,D}^\bullet(*P)$, but this may be checked locally on stalks. 
\begin{proposition}\label{p:locslog}
    The natural morphism $(F^\lambda_{\mathrm{irr}}\Lambda^\bullet_{X,D}(*P)) \otimes \mathcal{O}_{X_0}\rightarrow \Lambda^\bullet_{X,D}(*P)\otimes \mathcal{O}_{X_0}$ is injective.
\end{proposition}
\begin{proof}
    We choose local coordinates for $\Omega_{X/\mathbb{D}}^p(\log D)(*P)$ called $x_1,\dots, x_a, y_1,\dots, y_b,z_1,\dots, z_c$ where $D = x_1\dots x_a$, $w =1/(x^{e_1}_1\dots x^{e_\ell}_\ell)$ and $\pi = y_1\dots y_b$. Then, locally, $F^\lambda_\mathrm{irr}\Omega_{X/\mathbb{D}}^p(\log D\cup X_0)(*P)$ is isomorphic to
    \[
   \left( \dfrac{1}{\underline{x}^\alpha}\right) \mathbb{C}\{\underline{x},\underline{y},\underline{z}\} \otimes_\mathbb{C}\bigwedge^p \left(\mathrm{span}_\mathbb{C} \left\{ \dfrac{dx_1}{x_1}, \dots , \dfrac{dx_a}{x_a}, \dfrac{dy_1}{y_1},\dots , \dfrac{dy_b}{y_b}, dz_1,\dots, dz_c\right\} \bmod d\log \pi \right)
    \]
    where $\underline{x}^\alpha=x_1^{\alpha_1}\dots x_\ell^{\alpha_\ell}$ and $\alpha_i = \lfloor(p-\lambda)e_i\rfloor$.  Furthermore, $\Omega_{X/\mathbb{D}}^p(\log D\cup X_0)(*P)$ is isomorphic to 
    \[
    \mathbb{C}\{\underline{x}, \underline{y}, \underline{z}\}[\underline{x}^{-1}] \otimes_\mathbb{C}\bigwedge^p \left(\mathrm{span}_\mathbb{C} \left\{ \dfrac{dx_1}{x_1}, \dots , \dfrac{dx_a}{x_a}, \dfrac{dy_1}{y_1},\dots , \dfrac{dy_b}{y_b}, dz_1,\dots, dz_c\right\} \bmod d\log \pi \right).
    \]
    We have the obvious inclusion. Taking the tensor product with $\mathcal{O}_{X_0}$ amounts to taking the quotient by the submodule generated by $\pi$. The induced  morphism of $\mathbb{C}\{\underline{x},\underline{y},\underline{z}\}$-modules is injective.
\end{proof}

There is an induced filtration on the twisted nearby cohomology, $\mathbb{H}^p({X_0},\Lambda_{X,D}^\bullet(*P)\otimes \mathcal{O}_{X_0})$. In the case where $w = c$ is constant, this simply reduces to the usual Hodge filtration on nearby cohomology, thus the spectral sequence attached to $F_{\mathrm{irr}}^\bullet$ degenerates at the initial term. We expect that the following property holds frequently. 

\begin{property}\label{c:E1deg}
    Suppose $(X,D,w,\pi)$ is a quasi-stable degeneration of Landau--Ginzburg models. The natural morphism $\mathbb{H}^p(F_{\mathrm{irr}}^\lambda\Lambda^\bullet_{X,D}(*P)\otimes \mathcal{O}_{X_0})\rightarrow \mathbb{H}^p(\Lambda^\bullet_{X,D}(*P)\otimes \mathcal{O}_{X_0})$ is injective for all $\lambda$.
\end{property}
Following standard Hodge theoretic arguments \cite[Chapters 10, 11]{peters2008mixed}, we obtain the following results.

\begin{proposition}\label{p:hnconst}
    Suppose $(X,D,w,\pi)$ is a quasi-stable degeneration of Landau--Ginzburg models for which Property \ref{c:E1deg} holds. Then for any $t \neq 0$, we have 
    \[
    \dim \gr^{F_\mathrm{irr}}_\lambda H^p(X_t,D_t,w_t) = \dim \gr^{F_\mathrm{irr}}_\lambda \mathbb{H}^p(E,\Lambda_{X,D}^\bullet(*P)\otimes \mathcal{O}_E).
    \]
\end{proposition}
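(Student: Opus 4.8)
The plan is to combine the constancy result Theorem \ref{t:constdim} with the quasi-isomorphisms established in Lemmas \ref{l:qis2} and \ref{l:qis3}, and then run the standard semicontinuity-plus-degeneration argument. The point is that Property \ref{c:E1deg} forces the alternating sum controlling the $E_1$-page of the irregular Hodge filtration spectral sequence to be an equality, after which one upgrades ``equality on the fibre over a generic $t$'' to ``equality on the fibre over $0$'' using the local freeness already proved. I would organize the proof around the three numerical quantities
\[
a^\lambda_p(t) := \dim \gr^{F_\mathrm{irr}}_\lambda H^p(X_t,D_t,w_t), \qquad b^\lambda_p := \dim \gr^{F_\mathrm{irr}}_\lambda \mathbb{H}^p(X_0,\Lambda^\bullet_{X,D}(*P)\otimes \mathcal{O}_{X_0}),
\]
and the ranks $h^\lambda_p := \operatorname{rk} \mathbb{R}^p\pi_*(F^\lambda_\mathrm{irr}\Lambda^\bullet_{X,D}(*P))$ of the sub-object. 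The goal is $a^\lambda_p(t) = b^\lambda_p$ for all $\lambda, p$ and all $t\ne 0$; since Corollary \ref{c:trivfib} already gives that $a^\lambda_p(t)$ is independent of $t\ne 0$, it suffices to compare with $b^\lambda_p$.

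First I would set up the spectral sequences. On $X_0$, filtering $\Lambda^\bullet_{X,D}(*P)\otimes\mathcal O_{X_0}$ by $F_\mathrm{irr}^\bullet$ (which is an honest filtration of subcomplexes by Proposition \ref{p:locslog}) gives a spectral sequence whose $E_1$ abutting to $\mathbb H^\bullet(X_0,\Lambda^\bullet_{X,D}(*P)\otimes\mathcal O_{X_0})$; Property \ref{c:E1deg} is exactly the statement that this spectral sequence degenerates at $E_1$, i.e.\ $b^\lambda_p = \dim \mathbb{H}^p(\gr^\lambda_{F_\mathrm{irr}}(\Lambda^\bullet_{X,D}(*P)\otimes\mathcal O_{X_0}))$. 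By the same token $a^\lambda_p(t) = \dim\mathbb H^p(\gr^\lambda_{F_\mathrm{irr}}\Omega^\bullet_{X_t}(\log D_t)(*P_t))$ using the $E_1$-degeneration of the irregular Hodge filtration on a single nondegenerate Landau--Ginzburg model (this is part of the Sabbah--Yu theory recalled in Section \ref{s:tw}). Thus it is enough to compare, for each fixed $\lambda$, the hypercohomology of $\gr^\lambda_{F_\mathrm{irr}}$ on the fibre over $0$ with that on the fibre over $t$.

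Next I would compare these graded complexes. One has the short exact sequence of complexes on $X$
\[
0 \longrightarrow F^\lambda_\mathrm{irr}\Lambda^\bullet_{X,D}(*P) \xrightarrow{\ \pi\ } F^\lambda_\mathrm{irr}\Lambda^\bullet_{X,D}(*P) \longrightarrow F^\lambda_\mathrm{irr}\Lambda^\bullet_{X,D}(*P)\otimes\mathcal O_{X_0} \longrightarrow 0
\]
since multiplication by $\pi$ is injective on the coherent sheaves $F^\lambda_\mathrm{irr}\Omega^p_{X/\mathbb D}(\log D\cup X_0)(\lfloor(p-\lambda)P\rfloor)$ (a local monomial check as in Proposition \ref{p:locslog}). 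Applying $\mathbb R\pi_*$ and using that $\mathbb R^p\pi_* F^\lambda_\mathrm{irr}\Lambda^\bullet_{X,D}(*P)$ is coherent (here I would invoke Proposition \ref{p:yu} to replace $(*P)$ by the coherent model $\Omega^\bullet_{X/\mathbb D}(\log D,\nabla)$, whose $F_\mathrm{irr}$-subcomplexes are genuinely coherent), one gets the usual base-change/Grauert dichotomy: the function $t\mapsto \dim\mathbb H^p(X_t, F^\lambda_\mathrm{irr}\Lambda^\bullet\otimes\mathcal O_{X_t})$ is upper semicontinuous, with a jump over $0$ controlled by a $\operatorname{Tor}^1$-term. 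Property \ref{c:E1deg} is what eliminates the jump: it implies $\sum_p (-1)^p\dim\mathbb H^p(\gr^\lambda(\Lambda^\bullet\otimes\mathcal O_{X_0}))$ equals the corresponding alternating sum over $X_t$ (both equal the $\chi$ of the whole complex, computed by $F$-graded pieces), while upper semicontinuity forces each individual dimension at $0$ to be $\geq$ the one at $t$; an alternating-sum-plus-pointwise-inequality argument then forces equality in every degree. (Equivalently: the $E_1$-degeneration over $0$ together with the already-known $E_1$-degeneration over $t$ and the local freeness of Theorem \ref{t:constdim} applied to the total complex identifies all ranks.)

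The step I expect to be the genuine obstacle is the base-change compatibility of the \emph{filtration} --- i.e.\ showing that $F^\lambda_\mathrm{irr}(\Lambda^\bullet_{X,D}(*P)\otimes\mathcal O_{X_0})$, defined in \eqref{e:nbirreg} as the image of $(F^\lambda_\mathrm{irr}\Lambda^\bullet_{X,D}(*P))\otimes\mathcal O_{X_0}$, really does compute the graded pieces $\gr_\lambda H^p$ of the limiting filtration on nearby cohomology, and that its hypercohomology base-changes correctly despite $\mathcal O_{X_0}$ being non-flat. Proposition \ref{p:locslog} handles injectivity at the level of sheaves, but one must be careful that passing to hypercohomology does not introduce extra $\operatorname{Tor}$ contributions that would spoil the dimension count; this is precisely where Property \ref{c:E1deg} does the real work, and where the argument must be assembled rather than quoted. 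Everything else is a routine transcription of Steenbrink-style arguments (cf.\ \cite[Chapters 10, 11]{peters2008mixed}) into the twisted, orbifold setting, using that taking $G$-invariants for a finite group $G$ is exact in characteristic $0$.
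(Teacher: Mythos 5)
Your overall strategy coincides with the paper's: identify $F^\lambda_\mathrm{irr}$ of the fibrewise hypercohomology with $\mathbb{H}^p\bigl((F^\lambda_\mathrm{irr}\Lambda^\bullet_{X,D}(*P))\otimes\mathcal{O}_{X_t}\bigr)$ using Property \ref{c:E1deg} at $t=0$ and Yu's $E_1$-degeneration at $t\neq 0$ (with Proposition \ref{p:locslog} and \eqref{e:nbirreg} handling the image filtration, and Proposition \ref{p:yu} supplying a coherent model), then play Grauert semicontinuity against the constancy of the total rank from Theorem \ref{t:constdim}. The problem is the step you use to close the squeeze. You compare, for fixed $\lambda$, the alternating sums $\sum_p(-1)^p\dim\mathbb{H}^p(\gr_{F_\mathrm{irr}}^\lambda(\cdot))$ over $X_0$ and $X_t$ and claim that equality of these Euler characteristics plus the one-sided inequalities from semicontinuity forces equality in every degree $p$. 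That implication is false: if $x_p\geq y_p$ for all $p$ and $\sum_p(-1)^px_p=\sum_p(-1)^py_p$, one may still have $x_0=y_0+1$ and $x_1=y_1+1$; Euler characteristics cannot detect simultaneous jumps in degrees of opposite parity, so this step does not prove the proposition.

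The correct bookkeeping (which is what the paper's appeal to Peters--Steenbrink, Chapters 10--11, stands in for, and what your ``equivalently'' parenthetical gestures at) fixes the cohomological degree $p$ and sums over $\lambda$: writing $g^\lambda_p(t)=\dim\mathbb{H}^p(\gr^\lambda_{F_\mathrm{irr}}(\Lambda^\bullet\otimes\mathcal{O}_{X_t}))$ and $h_p(t)=\dim\mathbb{H}^p(\Lambda^\bullet\otimes\mathcal{O}_{X_t})$, one has the chain $h_p(t)\leq\sum_\lambda g^\lambda_p(t)\leq\sum_\lambda g^\lambda_p(0)=h_p(0)=h_p(t)$, where the first inequality is the filtration spectral sequence at $t$, the second is upper semicontinuity applied to each graded piece, the first equality is Property \ref{c:E1deg}, and the last is Theorem \ref{t:constdim}. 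Since every summand is nonnegative, all inequalities are equalities, giving $g^\lambda_p(0)=g^\lambda_p(t)$ termwise, and degeneration at both ends identifies these with $\dim\gr^{F_\mathrm{irr}}_\lambda$ of the respective hypercohomologies. Note two points your write-up elides: (i) the semicontinuity must be applied to the graded complexes $\gr^\lambda_{F_\mathrm{irr}}$, which are coherent and $\pi$-flat because $P$ is horizontal (no component of $X_0$ is a pole divisor), so Grauert and naive base change apply to them; semicontinuity of the subobjects $\mathbb{H}^p((F^\lambda\Lambda^\bullet)\otimes\mathcal{O}_{X_t})$ alone, which is what you actually state, cannot rule out all the nested subspaces jumping up at $0$ while the ambient rank stays constant; (ii) the Euler-characteristic equality you attribute to Property \ref{c:E1deg} is automatic from flatness and plays no role. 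With the fixed-$p$, sum-over-$\lambda$ squeeze substituted for your alternating-sum step, the rest of your argument is sound and agrees with the paper.
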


Briefly, if Property \ref{c:E1deg} holds, we see that for every $t \in \mathbb{D}$,
\begin{align*}
F_{\mathrm{irr}}^\lambda\mathbb{H}^p(\Lambda_{X,D}^\bullet(*P)\otimes \mathcal{O}_{X_t}) & \cong \mathbb{H}^p(F_{\mathrm{irr}}^\lambda(\Lambda_{X,D}^\bullet(*P)\otimes \mathcal{O}_{X_t})) \\ 
& \cong \mathbb{H}^p((F_{\mathrm{irr}}^\lambda\Lambda_{X,D}^\bullet(*P))\otimes \mathcal{O}_{X_t}).
\end{align*}
By Grauert's semicontinuity theorem, the map 
\[
t \longmapsto \dim \mathbb{H}^p((F_{\mathrm{irr}}^\lambda\Lambda_{X,D}^\bullet(*P))\otimes \mathcal{O}_{X_t}) 
\]
is upper semi-continuous. Since $\sum_{\lambda}\dim \gr_{F_{\mathrm{irr}}}^\lambda \mathbb{H}^p(\Lambda_{X,D}^\bullet(*P)\otimes \mathcal{O}_{X_t}) = \dim \mathbb{H}^p(\Lambda_{X,D}^\bullet(*P)\otimes \mathcal{O}_{X_t})$ for all $t$. By Theorem \ref{t:constdim}, $\dim  \mathbb{H}^p(\Lambda_{X,D}^\bullet(*P)\otimes \mathcal{O}_{X_t})$ is constant as $t$ varies, so $\dim \gr_{F_{\mathrm{irr}}}^\lambda \mathbb{H}^p(\Lambda_{X,D}^\bullet(*P)\otimes \mathcal{O}_{X_t})$ must also be constant.

One of the results of the next section is that Property \ref{c:E1deg} holds for a certain class of quasi-stable degenerations coming from toric geometry.

\section{Toric Landau--Ginzburg models and tropical twisted cohomology}\label{s:nearbyfib}

In this section, we prove a major technical result used in this paper, Theorem \ref{t:descent}, that for a degenerating Landau--Ginzburg model $(T(\Sigma),w_\varphi)$, the irregular Hodge numbers may be calculated from a sheaf on the polyhedral complex $\mathsf{T}(\Sigma,w_\varphi)_0$. As the details of this result play only a minor role in the discussion above, we have chosen to present it in the later portions of this paper. Additionally, we work in a more general setup than is strictly required for constructing degenerations of Landau--Ginzburg models from Clarke dual pairs. Applications to Clarke dual pairs are summarized at Section \ref{s:apptoClarke}.

\subsection{Construction of a quasi-stable degeneration of toric Landau--Ginzburg models}\label{s:combdeg}

We set up the notation and recall construction of the tropicalization discussed in Section \ref{s:thyper}. Suppose $A \subset M$ is a finite collection of integral points. Let $\Delta$ be its convex hull and let $\mathrm{nf}(\Delta)$ denote the normal fan of $\Delta$. We may choose a function $\varphi : A \rightarrow \mathbb{Z}$ so that the normal fan of 
\[
\widetilde{\Delta} = \{ (n,z) \in N_\mathbb{R}  \times \mathbb{R} \mid  n(m)+\varphi(m)  \geq -z \quad \forall \,\, m \in A\}.
\]
is a simplicial refinement of the fan $\mathrm{cone}(A \times 1)$.

As before, we let
\[
\mathsf{w}_\varphi : N_\mathbb{R} \longrightarrow  \mathbb{R}, \qquad
n \in N_\mathbb{R} \longmapsto \min_{m\in A} \{ n(m)+\varphi(m)\}.
\]
The domains along which $\mathsf{w}_\varphi$ is linear determine a polyhedral decomposition of $N_\mathbb{R}$ that we denote $\mathrm{PC}(w_\varphi)$. Without loss of generality, we may scale $\varphi$ so that the vertices of $\mathrm{PC}(w_\varphi)$ are at integral points in $N_\mathbb{R}$. We may also shift $\varphi$ so that $\varphi(0_M) = 0$. We distinguish two classes of strata in $\mathrm{PC}(w_\varphi)$; we let $\mathrm{PC}(w_\varphi)_0$ denote those strata along which $\mathsf{w}_\varphi$ vanishes identically. In other words, $\mathrm{PC}(w_\varphi)_0$ is a polyhedral complex structure of a unique maximal-dimensional cell $\sigma_0$ which corresponds to $0_M \in A$ under the bijection in Theorem \ref{t:subdiv}. Let $\Sigma_0$ be any refinement of $\mathrm{nf}(\Delta)$. 

\begin{assumption}\label{a:1-4}
Let $A$ be a pointed set of points in $M$ along with a pointed triangulation determined by a function $\varphi : A\rightarrow \mathbb{Z}$. Let $\Sigma_0$ be a simplicial, quasiprojective refinement of $\mathrm{nf}(\Delta)$. We assume that $\Sigma \subseteq N \times \mathbb{Z}$ satisfies the following conditions.
\begin{enumerate}
    \item $\Sigma \cap (N\times 0) = \Sigma_0$,
    \item the polyhedral complex $\{c \cap N \times 1 \mid c \in \Sigma\}$ is a refinement of $\mathrm{PC}(w_\varphi)$,
    \item every ray $r$ of $\Sigma$ is of the form $(n,0)$ or $(n,1)$,
    \item if $r = (n,1)$ then $\mathsf{w}_\varphi(n) =0$. In other words, $n \in \mathrm{PC}(w_\varphi)_0$.
\end{enumerate}
\end{assumption}
The fan $\Sigma$ may be projected onto $\mathbb{R}_{\geq 0}$, therefore the toric variety $T(\Sigma)$ admits a projection map to  $\mathbb{A}^1$ that we denote $\pi$. The corresponding toric rational function will be denoted by $t$. The ring of characters on $T(\Sigma)$ is then identified with $\mathbb{C}[t^\pm,M]$. According to Theorem \ref{t:subdiv} each cell $\sigma \in \mathrm{PC}(w_\varphi)$ is determined by a collection of points $A_\sigma\subseteq A$ which we define to be
\[
A_\sigma := \{ m \in  A \mid n(m) + \varphi(m) = \mathsf{w}_\varphi(n), n \in \sigma \}.
\]
Following notation in Section \ref{s:thyper}, we let $\tau_\sigma:=\mathrm{Conv}(A_\sigma)$ and $f_{\tau_\sigma}$ be the minimal face of $\Delta$ containing $\tau_\sigma$. For the following discussion, we choose a simplicial refinement $\Sigma_0$ of $\mathrm{nf}(\Delta)$. We obtain a family of Laurent polynomials 
\[
w_\varphi = \sum_{m \in A} u_m t^{\varphi(m)} \underline{x}^m 
\]
where $u_m \in \mathbb{C}^*$ are viewed as parameters. Condition (4) implies that $\mathsf{w}_\varphi(n) \leq n(0_M) + \varphi(0_M) = 0$ for all $n$, and it implies that $w_\varphi$, viewed as a family of rational functions on $T(\Sigma_0) \times \mathbb{D}^*$, does not vanish along any toric divisor of $T(\Sigma_0)$. For simplicity, we will assume $a_0 = 1$. We view this both as a generic Laurent polynomial with monomial support $\{(m,\varphi(m)) \mid m \in A'\} \cup (0,0)$ and as a rational function on $T(\Sigma)$. 

Suppose we are given a maximal cone $c$ of $\Sigma$ with generators 
\[
(n_1,1),\dots, (n_a,1), (n_{a+1},0),\dots, (n_{d+1},0) \in N \times \{0,1\}.
\] 
Let $c$ be a maximal cone of $\Sigma$, there is an orbifold chart $T(\Sigma)_c$ with coordinates $y_1,\dots, y_{a},z_1,\dots, z_b$ so that $y_1,\dots, y_a$ corresponding to ray generators $(n_1,1),\dots, (n_a,1)$ and $z_{a+1},\dots, z_{d+1}$ correspond to ray generators $(n_{a+1},0),\dots, (n_{d+1},0)$ respectively. In these coordinates, $w_\varphi$ is written as:
\begin{equation*}
w_\varphi|_{T(\Sigma)_c} = \sum_{m\in A}u_m \left( \prod_{i=1}^a y_i^{n_i(m) + \varphi(m)}    \right) \left( \prod_{i=a+1}^{d+1} z_i^{n_i(m)}    \right).
\end{equation*}
The fibre $\pi^{-1}(0)$ is the union of toric divisors $\prod y_i = 0$, which we call {\em vertical divisors}. The remaining toric boundary divisors will be called {\em horizontal divisors}. Let $Z$ denote the zero divisor of $w_\varphi$ in $T(\Sigma)$. 
The following statement follows directly from the definitions. Recall the bijection in Theorem \ref{t:subdiv} and the notation used therein.
\begin{proposition}
  Suppose $\Sigma_0$ is a refinement of $\mathrm{nf}(\Delta)$ and suppose $\Sigma$ satisfies Assumption \ref{a:1-4}. Let $\sigma_\kappa$ be the minimal cell in $\mathrm{PC}(w_\varphi)$ containing $\kappa \cap( N \times 1)$. For each cone $\kappa \in \Sigma$ define $\kappa_1[1]$ to be the ray generators of $\kappa$ of the form $(n,1)$ and let $\kappa_0[1]$ be  the ray generators of the form $(n,0)$. Then
  \[
  A_{\sigma_\kappa} = \left\{m \in  A \,\, \left| \,\, \begin{array}{ll} \varphi(m) + n(m) = \mathsf{w}_\varphi(n) &\text{ for all }n \in \kappa_1[1] ,\\ n(m)= \min_{n'\in \Sigma_0[1]}\{n'(m)\} &\text{ for all } n \in \kappa_0[1] \end{array}\right.\right\}.
  \]

\end{proposition}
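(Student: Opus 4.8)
The statement is purely combinatorial once one unwinds the definitions, so the plan is to trace through the correspondence of Theorem~\ref{t:subdiv} applied to the Laurent polynomial $w_\varphi$. First I would recall that, by Theorem~\ref{t:subdiv}, for a cell $\sigma\in\mathrm{PC}(w_\varphi)$ the associated face $\tau_\sigma=\mathrm{Conv}(A_\sigma)$ is determined by $A_\sigma=\{m\in A\mid n(m)+\varphi(m)=\mathsf{w}_\varphi(n)\ \text{for all } n\in\sigma\}$, where $\mathsf{w}_\varphi$ is the tropicalization; since $\mathsf{w}_\varphi$ is affine-linear on $\sigma$, it suffices to test this equality on any generating set of $\sigma$, in particular on its ray generators.

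The key point is then to compute the ray generators of the minimal cell $\sigma_\kappa$ of $\mathrm{PC}(w_\varphi)$ containing $\kappa\cap(N\times 1)$. By Assumption~\ref{a:1-4}(2) the slices $\{c\cap(N\times 1)\mid c\in\Sigma\}$ refine $\mathrm{PC}(w_\varphi)$, and by (3) every ray of $\kappa$ has last coordinate $0$ or $1$. A ray $(n,1)\in\kappa_1[1]$ contributes the vertex $n$ to the slice $\kappa\cap(N\times 1)$, and by (4) such $n$ lies in $\mathrm{PC}(w_\varphi)_0$, i.e.\ $\mathsf{w}_\varphi(n)=0$; a ray $(n,0)\in\kappa_0[1]$ is a recession direction of the slice, so it lies in $\mathrm{rec}(\sigma_\kappa)$. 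By Theorem~\ref{t:subdiv} again, $\mathrm{rec}(\sigma_\kappa)=\mathrm{nc}(f_{\tau_{\sigma_\kappa}})$. Thus $m\in A_{\sigma_\kappa}$ iff (a) $\varphi(m)+n(m)=\mathsf{w}_\varphi(n)$ for every vertex $n$ of the slice, which by construction of $\Sigma$ runs over $\kappa_1[1]$, and (b) $m$ attains its minimum over the recession cone, i.e.\ $\langle n,m\rangle\geq\langle n',m\rangle$ is replaced by the condition that $m$ lies in the face of $\Delta$ normal to each $n\in\kappa_0[1]$. Because $\Sigma_0$ refines $\mathrm{nf}(\Delta)$, each ray $n\in\Sigma_0[1]$ lies in some normal cone of a face of $\Delta$, and the statement ``$m$ lies on the face normal to $n$'' is exactly the equality $n(m)=\min_{m'\in\Delta}n(m')=\min_{n'\in\Sigma_0[1]}\{n'(m)\}$ after noting (using $\varphi(0_M)=0$ and $0_M\in A$, so $\mathsf{w}_\varphi\leq 0$) that the minimum of $n(\,\cdot\,)$ over $A$ equals the minimum over the vertices of $\Delta$. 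Here one uses that the $\min_{n'\in\Sigma_0[1]}$ appearing in the claimed formula is simply a convenient way of recording which face of $\Delta$ the point $m$ sits on relative to $n$.

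Assembling these two conditions gives precisely the displayed description of $A_{\sigma_\kappa}$. I would write the argument as two inclusions: ``$\subseteq$'' follows since $\kappa_1[1]\cup\kappa_0[1]$ generates $\kappa$ and $\sigma_\kappa\supseteq\kappa\cap(N\times1)$ together with the recession-cone identification force both bullet conditions; ``$\supseteq$'' follows because any $m$ satisfying both conditions has $\langle(n,z),(m,\varphi(m))\rangle$ minimal over $\kappa$, hence lies in $A_{\sigma_\kappa}$ by the definition of $\sigma_\kappa$ as the minimal cell of $\mathrm{PC}(w_\varphi)$ meeting $\kappa\cap(N\times1)$. The main obstacle, and the only place requiring genuine care rather than bookkeeping, is matching the recession-cone condition $\mathrm{rec}(\sigma_\kappa)=\mathrm{nc}(f_{\tau_{\sigma_\kappa}})$ with the elementary ``$n(m)=\min_{n'\in\Sigma_0[1]}\{n'(m)\}$'' phrasing: one must verify that restricting the minimum of $n(\,\cdot\,)$ from all of $\Delta$ to the ray generators $\Sigma_0[1]$ does not lose information, which holds because $\Sigma_0$ refines $\mathrm{nf}(\Delta)$ so the relevant normal cones are unions of cones of $\Sigma_0$, and that the horizontal/vertical dichotomy of the toric divisors of $T(\Sigma)$ (as recorded just before the proposition) correctly separates the two types of rays. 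Everything else is a direct substitution into Theorem~\ref{t:subdiv}.
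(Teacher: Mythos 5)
The paper records this proposition with no proof at all (``follows directly from the definitions''), so your route --- reduce membership in $A_{\sigma_\kappa}$ to testing the affine function $\ell_m(n)=n(m)+\varphi(m)$ against $\mathsf{w}_\varphi$ at the vertices of the slice $\kappa\cap(N\times 1)$ (coming from $\kappa_1[1]$) and along its recession rays (coming from $\kappa_0[1]$), using minimality of $\sigma_\kappa$ and Theorem~\ref{t:subdiv} --- is exactly the intended elementary argument, and that skeleton is sound. Two points you gesture at rather than prove are worth spelling out: equality at the vertices propagates to their convex hull because $\ell_m\geq\mathsf{w}_\varphi$ everywhere while $\mathsf{w}_\varphi$ is affine on $\sigma_\kappa$; and ``equality on the slice implies equality on $\sigma_\kappa$'' uses that $\{\ell_m=\mathsf{w}_\varphi\}$ is the closed cell $\sigma_m$ of $\mathrm{PC}(w_\varphi)$, so the minimality of $\sigma_\kappa$ applies. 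These are minor.

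The genuine gap is in your handling of the recession-ray condition. You correctly identify it as ``$m$ lies on the face of $\Delta$ on which the linear functional $n$ attains its minimum'', i.e.\ $n(m)=\min_{m'\in A}\{n(m')\}$, but you then assert $\min_{m'\in\Delta}n(m')=\min_{n'\in\Sigma_0[1]}\{n'(m)\}$ in order to land on the displayed formula. That equality is false: the left-hand side depends only on $n$ and the right-hand side only on $m$, and they disagree already in one dimension. For example, take $A=\{0,1\}\subset M=\mathbb{Z}$, $\varphi(0)=0$, $\varphi(1)=1$, $\Sigma_0=\{\mathbb{R}_{\geq0},\mathbb{R}_{\leq0},0\}$, and $\kappa=\mathrm{cone}\bigl((-1,0),(-1,1)\bigr)$ in a fan $\Sigma$ satisfying Assumption~\ref{a:1-4}: here $\sigma_\kappa=(-\infty,-1]$ and $A_{\sigma_\kappa}=\{1\}$, yet $m=0_M$ satisfies both printed conditions (since $n'(0_M)=0$ for every $n'\in\Sigma_0[1]$ and $\varphi(0)+(-1)(0)=0=\mathsf{w}_\varphi(-1)$). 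So the displayed condition as printed is a misprint for $n(m)=\min_{m'\in A}\{n(m')\}$ --- this corrected condition is exactly what is used when clearing the $z$-denominators in the proof of Lemma~\ref{l:poly} --- and the literal statement you set out to prove is not provable. The right move is to prove the corrected condition (which your argument essentially does) and flag the discrepancy, rather than bridge the two expressions by an identification that fails; as written, that bridging step is where your proof breaks.
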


The following result is now a direct computation.
\begin{lemma}\label{l:poly}
  Let $A$ be a collection of points, $\Sigma_0$ a complete fan refining $\mathrm{nf}(\Delta)$, and let $\Sigma$ be a fan satisfying Assumption \ref{a:1-4} listed above. For each stratum $T_{\kappa}$ of $T_0$, $Z$ is the vanishing locus of 
       \begin{equation*}
      w_{{\sigma_\kappa}} = \sum_{m \in {A}_{\sigma_\kappa}} u_m \underline{x}^m \in \mathbb{C}[\kappa^\perp].
      \end{equation*}
      If $T_\kappa$ is not contained in $P_\mathrm{red}$ then $w_\varphi$ neither vanishes nor has pole along $T_\kappa$.
  \end{lemma}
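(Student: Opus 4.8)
The proof will be a direct local computation in toric coordinates, resting on three inputs: the explicit expression for $w_\varphi$ in the orbifold chart of a maximal cone recalled just above, the description of $A_{\sigma_\kappa}$ furnished by the preceding proposition, and the standard bookkeeping of how a character of $T(\Sigma)$ restricts to a torus orbit (to a unit, to zero, or to a function with a pole). The plan is to fix a stratum $T_\kappa$ of $T_0=\pi^{-1}(0)$; since $T_0$ is the union of the vertical divisors, $\kappa_1[1]\ne\emptyset$. Choosing a maximal cone $c\supseteq\kappa$ and working in the chart $T(\Sigma)_c$ with coordinates $y_1,\dots,y_a$ (the $(n_i,1)$-rays) and $z_{a+1},\dots,z_{d+1}$ (the $(n_j,0)$-rays), we have
\[
w_\varphi|_{T(\Sigma)_c}=\sum_{m\in A}u_m\left(\prod_{i=1}^{a}y_i^{\,n_i(m)+\varphi(m)}\right)\left(\prod_{i=a+1}^{d+1}z_i^{\,n_i(m)}\right).
\]
I would first record two consequences of Assumption \ref{a:1-4}(4) together with $0_M\in A$ and $\varphi(0_M)=0$: every ray $(n,1)$ of $\Sigma$ has $\mathsf{w}_\varphi(n)=0$, so $n(m)+\varphi(m)\ge 0$ for all $m\in A$ and $w_\varphi$ has no pole along any vertical divisor, whence $P$ is supported on horizontal divisors; and for each horizontal generator $(n,0)$, $\mathrm{mult}_{E_{(n,0)}}(w_\varphi)=\min_{m\in A}n(m)\le n(0_M)=0$ (using the generic coefficients in play), so $w_\varphi$ has a pole along $E_{(n,0)}$ precisely when this minimum is negative. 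In particular $T_\kappa\subseteq P_{\mathrm{red}}$ if and only if some $(n,0)\in\kappa_0[1]$ has $\min_{m\in A}n(m)<0$.

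The heart of the argument: let $I_1,I_0$ index the $y$- and $z$-rays of $c$ lying in $\kappa$, and set $c_j=\min_{m\in A}n_j(m)\le 0$ for $j\in I_0$. Passing to $\widetilde{w}=(\prod_{j\in I_0}z_j^{-c_j})\,w_\varphi$, which is regular near $T_\kappa$ and satisfies $\mathrm{div}(w_\varphi)=\mathrm{div}(\widetilde{w})+\sum_{j\in I_0}c_j E_{(n_j,0)}$ with the last term anti-effective, the zero divisor $Z$ of $w_\varphi$ agrees with that of $\widetilde{w}$ in a neighbourhood of $T_\kappa$. Restricting $\widetilde{w}$ to the open orbit $T_\kappa$ (set $y_i=0$ for $i\in I_1$, $z_j=0$ for $j\in I_0$), a monomial of $\widetilde{w}$ survives exactly when all of its exponents along the rays of $\kappa$ vanish, which by the preceding proposition happens iff $m\in A_{\sigma_\kappa}$ — the shift by $-c_j$ in the $z_j$-exponents being precisely compensated by the minimality clause in that proposition. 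For $m\in A_{\sigma_\kappa}$ the surviving characters differ pairwise by elements of $\kappa^\perp$, so $\widetilde{w}|_{T_\kappa}$ equals a unit of $\mathbb{C}[\kappa^\perp]$ times $\sum_{m\in A_{\sigma_\kappa}}u_m\underline{x}^m\in\mathbb{C}[\kappa^\perp]$, i.e.\ $w_{\sigma_\kappa}$ in the paper's notation; hence $Z\cap T_\kappa=\{w_{\sigma_\kappa}=0\}$, which is the first assertion. For the second, if $T_\kappa\not\subseteq P_{\mathrm{red}}$ then by the first paragraph all $c_j=0$, so $\widetilde{w}=w_\varphi$ is regular near $T_\kappa$ and $w_\varphi$ has no pole along $T_\kappa$; moreover every inequality above is an equality at $m=0_M$, so $0_M\in A_{\sigma_\kappa}$ and $w_{\sigma_\kappa}$ has constant term $u_0=1\ne0$, hence is not identically zero on $T_\kappa$, i.e.\ $w_\varphi$ does not vanish along $T_\kappa$.

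The only genuinely delicate point is the compatibility invoked in the previous paragraph: one must verify that clearing poles by $\prod_{j\in I_0}z_j^{-c_j}$ turns the restriction of $w_\varphi$ to $T_\kappa$ into exactly the combinatorial object $w_{\sigma_\kappa}$ attached to $\sigma_\kappa$ by Theorem \ref{t:subdiv} and the preceding proposition — that the pole-clearing exponents $-c_j$ are precisely the numbers appearing in the minimality clause defining $A_{\sigma_\kappa}$, and that no monomial supported outside $A_{\sigma_\kappa}$ survives restriction. This is a short verification, but it is where the hypotheses of Assumption \ref{a:1-4} genuinely enter; beyond it the statement is formal toric geometry and I anticipate no further obstacle.
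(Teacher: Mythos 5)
Your proof is correct and takes essentially the same route as the paper's: work in the chart of a maximal cone containing $\kappa$, use Assumption \ref{a:1-4}(4) together with $0_M\in A$ and $\varphi(0_M)=0$ to confine poles to horizontal divisors, clear the $z$-denominators, and identify the monomials surviving restriction to $T_\kappa$ with $A_{\sigma_\kappa}$, with the constant term $1$ giving nonvanishing in the second assertion. Your write-up is somewhat more explicit than the paper's (the pole-clearing exponents $-c_j=-\min_{m\in A}n_j(m)$ and the unit factor in $\mathbb{C}[\kappa^\perp]$), and the minimality clause you invoke is the correct reading of the preceding proposition, whose displayed $\min_{n'\in\Sigma_0[1]}\{n'(m)\}$ should evidently be $\min_{m'\in A}\{n(m')\}$.
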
 
  
  \begin{proof}
      Assume that $T_{\kappa}$ is a stratum of $T_0$ and let $c$ be a maximal cone of $\Sigma$ containing $\kappa$. Let the coordinates $z_i, y_i$ be as in the discussion above. We may assume that $T_{\kappa}$ is the vanishing locus of a collection of variables $y_1,\dots, y_k$ and $z_1,\dots, z_\ell$ corresponding to the elements of $\kappa_1[1]$ and $\kappa_0[1]$ respectively. 

      By condition (4), $\mathsf{w}_\varphi(n_i) = 0$ for all $n_i \in \kappa_1[1]$. Therefore, $w_\varphi$ is regular along all vertical divisors. In fact, since $n_i(0_M) + \varphi(0_M) = 0$ by construction, $w_\varphi$ is also nonvanishing along all vertical divisors. Similarly, since $0_M \in A$ and $n_i(0_M) = 0$ for every $n_i$, so no $z_i$ appears to a positive power in all monomials, or in other words, $w_\varphi$ does not vanish along any horizontal divisor. The intersection locus of $Z$ and $T_c$ is the vanishing locus of the Laurent polynomial obtained by clearing all denominators involving $z_1,\dots, z_\ell$. The monomials $t^{\varphi(m)}\underline{x}^m$ which do not vanish when $y_1,\dots,y_k$ and $z_1,\dots, z_\ell$ vanish therefore are precisely those which correspond to members of $A_{\sigma_\kappa}$. 
  \end{proof}

\begin{proposition}\label{p:poly}
  Choose a collection of horizontal divisors $D$ which contains all irreducible components of $P_\mathrm{red}$. If $\Sigma$ is a fan satisfying Assumption \ref{a:1-4}, the data $(T(\Sigma), D, w_\varphi,\pi)$ defines a quasistable degeneration of Landau--Ginzburg models.
\end{proposition}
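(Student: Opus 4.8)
The goal is to verify the four defining conditions of a quasi-stable degeneration of Landau--Ginzburg models (Definition~\ref{d:deglg}) for the data $(T(\Sigma), D, w_\varphi, \pi)$: namely that $(T(\Sigma), D, w_\varphi)$ is a nondegenerate Landau--Ginzburg model, that $\pi : T(\Sigma)\to \mathbb{A}^1$ is proper, that every irreducible component of $\pi^{-1}(0)$ has multiplicity $1$ with respect to $\pi$ and multiplicity $0$ with respect to $w_\varphi$, and that $X_0\cup D$ is an orbifold normal crossings divisor. I would organize the argument according to these bullets, each of which is short given the results already in hand.

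First I would record that $T(\Sigma)$ is an orbifold: this is immediate from Assumption~\ref{a:1-4}, since $\Sigma_0$ is simplicial and hence $\Sigma$ is simplicial by condition~(1) together with the fact that the added rays lie in $N\times 1$ and are combined with faces of $\Sigma_0$, so every maximal cone of $\Sigma$ is simplicial; the toric boundary is an orbifold normal crossings divisor by the general theory of simplicial toric varieties. Next, properness of $\pi$: the fan $\Sigma$ projects onto the fan $\{0,\mathbb{R}_{\geq 0}\}$ of $\mathbb{A}^1$, and the preimage of $\mathbb{R}_{\geq 0}$ is all of $\Sigma$; since $\Sigma_0$ is complete (being a refinement of $\mathrm{nf}(\Delta)$ with $\dim\Delta = \mathrm{rk}\,M$), the map of fans is proper, hence $\pi$ is proper. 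For the multiplicity statement: by condition~(3) every ray of $\Sigma$ is of the form $(n,0)$ or $(n,1)$, so the value of the coordinate $t$ (the pullback of the character dual to $\mathbb{R}_{\geq 0}$) has multiplicity exactly $1$ along every vertical divisor, which is precisely the statement that the components of $X_0 = \pi^{-1}(0)$ occur with multiplicity $1$. The vanishing of $w_\varphi$ along vertical divisors to order $0$ is exactly the content of Lemma~\ref{l:poly} (via condition~(4), $\mathsf{w}_\varphi(n_i)=0$ for $(n_i,1)\in\Sigma[1]$, so $w_\varphi$ is regular and in fact nonvanishing there), so the multiplicity with respect to $w_\varphi$ is $0$.

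The nondegeneracy of the Landau--Ginzburg model is where I would spend the most care, but it still reduces to citing earlier material. By Lemma~\ref{l:poly}, along each toric stratum $T_\kappa$ the zero divisor $Z$ of $w_\varphi$ is cut out by the Laurent polynomial $w_{\sigma_\kappa} = \sum_{m\in A_{\sigma_\kappa}} u_m\underline{x}^m$ on $T_\kappa$, and since the coefficients $u_m$ are generic, this is a nondegenerate Laurent polynomial in the sense of the definition in Section~2; hence $Z$ meets each stratum transversally and $Z$ is quasi-smooth, so $Z\cup D\cup X_0$ has orbifold normal crossings away from issues of irreducibility. Irreducibility of $Z$ follows because the monomial $\underline{x}^{0_M}=1$ appears in $w_\varphi$ with coefficient $u_0 = 1$, so $Z$ does not contain any toric stratum; combined with genericity of coefficients this makes $Z$ irreducible. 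The pole divisor $P$ of $w_\varphi$ is supported on horizontal divisors only (again by Lemma~\ref{l:poly}, $w_\varphi$ is regular along all vertical divisors), and we have chosen $D$ to be a collection of horizontal divisors containing all irreducible components of $P_{\mathrm{red}}$, so $P\subseteq D$ as required. Finally $X_0\cup D$ is orbifold normal crossings because both $X_0$ (a union of vertical divisors) and $D$ (a union of horizontal divisors) are unions of toric boundary divisors of the simplicial toric orbifold $T(\Sigma)$, hence their union is orbifold normal crossings.

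\textbf{Main obstacle.} The only genuinely delicate point is confirming that $Z\cup D\cup X_0$ really has \emph{orbifold} normal crossings, i.e.\ that in suitable orbifold charts the divisor $Z$ can be straightened against the toric boundary. This requires knowing that the hypersurface $\{w_{\sigma_\kappa}=0\}$ inside each stratum is not merely quasi-smooth but meets the toric boundary of that stratum transversally --- which is exactly the nondegeneracy of $w_{\sigma_\kappa}$ as a Laurent polynomial, applied on every face of $\tau_{\sigma_\kappa}$. The genericity of the coefficients $u_m$ is what guarantees this, but I would spell out that the relevant genericity is a \emph{finite} set of Zariski-open conditions (one for each stratum $\kappa$ and each face of $\tau_{\sigma_\kappa}$), so a single generic choice of $(u_m)_{m\in A'}$ works simultaneously for all of them. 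Everything else is bookkeeping with the combinatorics of Assumption~\ref{a:1-4} and direct appeals to Lemma~\ref{l:poly} and the discussion of nondegenerate Laurent polynomials in Section~2.
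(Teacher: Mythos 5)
Your proof is correct and follows essentially the same route as the paper's: both reduce the claim to checking the conditions of Definition~\ref{d:deglg}, with the quasi-smoothness and orbifold normal crossings of $Z\cup X_0\cup D$ coming from Lemma~\ref{l:poly} together with genericity of the coefficients $u_m$, and the multiplicity statements coming from conditions (3) and (4) of Assumption~\ref{a:1-4}. Your write-up is somewhat more detailed than the paper's brief proof (you make properness of $\pi$ and the multiplicity-one statement explicit), but the underlying argument is the same.
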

\begin{proof}
  By assumption, $T(\Sigma)$ is an orbifold toric variety, the poles of $w_\varphi$ are along toric boundary divisors, and the fibre of $\pi$ over 0 is also a collection of toric boundary divisors. Thus $X_0\cup D$ has orbifold normal crossings. Because ray generators of $T(\Sigma)$ have height 0 or 1, $\pi$ vanishes to order 0 or 1 along all toric boundary divisors. Finally, we need to check that $Z\cap \pi^{-1}\mathbb{D}$ is quasismooth and that that $Z\cup X_0 \cup D$ is orbifold normal crossings. The fact that coefficients $u_m$ are chosen generically means that $Z$ is a quasi-smooth hypersurface in $T(\Sigma)$ if it does not vanish identically along any toric stratum. The calculations in Lemma \ref{l:poly} ensure this. 
\end{proof}

\begin{remark}
    We comment on the roles played by conditions (1)--(4) in Assumption \ref{a:1-4}. Condition (1) ensures that $\pi$ has generic fibre $T(\Sigma_0)$ and (2) ensures that $w_\varphi$ is nondegenerate on $T(\Sigma)$ as long as $|\pi(x)| \ll 1$. Condition (3) ensures that $\pi$ is a quasi-stable degeneration. Condition (4) ensures that $w_\varphi$ has no poles on the fibre over $0$.
\end{remark}

\subsection{Polyhedral subdivisions}\label{s:polysub}

Let the notation be as in the previous section. We first show that for any simplicial, quasiprojective refinement $\Sigma_0$ of $\mathrm{nf}(\Delta)$, there exists a refinement of $\mathrm{PC}(w_\varphi)$ which satisfies both $(2)$ and $(4)$ in Assumption \ref{a:1-4}. 
    
\begin{proposition}\label{p:refined-complex}
 Given any simplicial refinement $\Sigma_0$ of $\mathrm{nf}(\Delta) = \mathrm{rec}(\mathrm{PC}(w_\varphi))$, there is a polyhedral complex $\mathrm{PC}(w_\varphi,\Sigma_0)$ which refines $\mathrm{PC}(w_\varphi)$ and whose recession fan is $\Sigma_0$ and whose $0$-dimensional strata are all contained in $\mathrm{PC}(w_\varphi)_0$. In particular, each cell in $\mathrm{PC}(w_\varphi,\Sigma_0)$ is the Minkowski sum of a compact simplex $\sigma$ in $N$ and a (possibly trivial) simplicial cone $c$ in $N$.
\end{proposition}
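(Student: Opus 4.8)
The plan is to build the refined polyhedral complex $\mathrm{PC}(w_\varphi,\Sigma_0)$ as a common refinement of two polyhedral structures on $N_\mathbb{R}$, and then verify its recession fan and vertex set. First I would recall that $\mathrm{PC}(w_\varphi)$ is itself a polyhedral complex whose recession fan is $\mathrm{nf}(\Delta)$ (by Theorem \ref{t:subdiv}, since $\mathrm{rec}(\sigma) = \mathrm{nc}(f_{\tau_\sigma})$ for each cell). Given a simplicial refinement $\Sigma_0$ of $\mathrm{nf}(\Delta)$, I would form the overlay of $\mathrm{PC}(w_\varphi)$ with $\Sigma_0$, i.e. the polyhedral complex whose cells are the nonempty intersections $\sigma \cap c$ for $\sigma \in \mathrm{PC}(w_\varphi)$ and $c \in \Sigma_0$. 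This overlay refines $\mathrm{PC}(w_\varphi)$, and one checks easily that its recession fan is the common refinement of $\mathrm{nf}(\Delta)$ and $\Sigma_0$, which is just $\Sigma_0$ since $\Sigma_0$ already refines $\mathrm{nf}(\Delta)$.

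Next I would arrange that all $0$-dimensional strata lie in $\mathrm{PC}(w_\varphi)_0$, i.e. in the single bounded-below region $\sigma_0$ where $\mathsf{w}_\varphi$ vanishes identically (the cell dual to $0_M \in A$). The key observation is that any vertex $v$ of the overlay lies in $\sigma_0$: indeed, $\mathsf{w}_\varphi(n) \le n(0_M) + \varphi(0_M) = 0$ everywhere, with equality exactly on $\sigma_0$, and the value $\mathsf{w}_\varphi$ is unbounded below away from $\sigma_0$ along recession directions; more directly, since $0_M \in A$ and $\varphi(0_M)=0$, the cell $\sigma_0$ is full-dimensional and its interior is an open neighbourhood of the origin region — any compact face of the subdivision is forced into $\sigma_0$ because the other maximal cells of $\mathrm{PC}(w_\varphi)$ are unbounded cones translated off to infinity and their bounded faces are shared with $\sigma_0$. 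If after the overlay some vertices still fail to lie in $\mathrm{PC}(w_\varphi)_0$ (which can only happen if $\Sigma_0$ introduces spurious bounded cells outside $\sigma_0$, and it does not, since $\Sigma_0$ is a fan and intersecting with a fan cannot create bounded cells not already bounded), we are done; otherwise one performs a further barycentric-type subdivision supported on $\sigma_0$ to make every cell a Minkowski sum of a simplex and a cone. Finally, to get the simplicial conclusion, I would triangulate: since $\Sigma_0$ is simplicial, every recession cone appearing is simplicial, and I would choose a coherent triangulation of each bounded cell $\sigma$ of the overlay (pulling these together coherently, e.g. via a regular subdivision induced by a generic height function), so that each resulting cell is $\sigma' + c$ with $\sigma'$ a compact simplex in $N$ and $c$ a simplicial cone in $N$, its recession cone a cone of $\Sigma_0$.

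The main obstacle I anticipate is the global coherence of the triangulation step: one needs the triangulations of the individual bounded cells to glue along shared faces into a single polyhedral complex, and one needs the resulting complex to still have recession fan exactly $\Sigma_0$ rather than some refinement of it. The standard way to handle this is to produce $\mathrm{PC}(w_\varphi,\Sigma_0)$ directly as a regular (coherent) subdivision of $N_\mathbb{R}$ determined by a single piecewise-linear function: take $\mathsf{w}_\varphi$ itself (which gives back $\mathrm{PC}(w_\varphi)$), add the support function of $\Sigma_0$ scaled by a small generic $\epsilon > 0$ to get the overlay, and then perturb by a further generic small convex function with the prescribed recession behaviour to break all non-simplicial cells; regularity of the subdivision is preserved under such small perturbations and automatically yields a genuine polyhedral complex with the asserted Minkowski-sum description of cells. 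The verification that the recession fan is unchanged under a bounded perturbation, and that vertices stay inside $\sigma_0$, is then routine and I would only sketch it, referring to \cite{ruddat2014skeleta} and the discussion of coherent star triangulations in Section \ref{t:thomsheaf} for the analogous constructions.
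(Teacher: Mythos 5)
The central claim of your construction --- that every vertex of the overlay of $\mathrm{PC}(w_\varphi)$ with $\Sigma_0$ lies in $\mathrm{PC}(w_\varphi)_0$ --- is false, and this is exactly the condition the proposition is about. New $0$-dimensional strata of the overlay arise at transversal intersections of cones of $\Sigma_0$ (which are based at the origin) with unbounded faces of $\mathrm{PC}(w_\varphi)$, and these can sit far outside $\sigma_0$. Concretely, take $N=\mathbb{Z}^2$, $A=\{(0,0),(1,0),(1,1)\}$, $\varphi(0,0)=0$, $\varphi(1,0)=1$, $\varphi(1,1)=0$ (so $\mathrm{SD}(w_\varphi)$ is the single simplex $\Delta$, a star triangulation based at $0_M$). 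Then $\mathsf{w}_\varphi(n)=\min\{0,\,n_1+1,\,n_1+n_2\}$, $\sigma_0=\{n_1\geq -1,\ n_1+n_2\geq 0\}$, and the $1$-cell separating the cells dual to $(1,0)$ and $(1,1)$ is $\{(t,1)\mid t\leq -1\}$. The recession cone of the $(1,0)$-cell is the second quadrant; let $\Sigma_0$ be the simplicial refinement of $\mathrm{nf}(\Delta)$ obtained by inserting the ray through $(-2,1)$. The overlay then has the $0$-cell $\{(-2,1)\}=\{(t,1)\mid t\leq-1\}\cap\mathbb{R}_{\geq0}(-2,1)$, and $\mathsf{w}_\varphi(-2,1)=-1<0$, so this vertex is not in $\mathrm{PC}(w_\varphi)_0$. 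Your parenthetical justification (``intersecting with a fan cannot create bounded cells'') misses the point: the problem is not new bounded maximal cells but new vertices created where rays of $\Sigma_0$ cross unbounded facets of $\mathrm{PC}(w_\varphi)$; a further subdivision supported on $\sigma_0$ cannot remove them, and the regular-subdivision variant $\mathsf{w}_\varphi+\epsilon\, h_{\Sigma_0}$ (plus a generic perturbation) produces the same overlay for small $\epsilon$, hence the same bad vertices. This is not a cosmetic condition: the vertices of $\mathrm{PC}(w_\varphi,\Sigma_0)$ become the height-one rays $(v,1)$ of $\Sigma_\varphi$ in Definition \ref{d:sigma-varphi}, and Assumption \ref{a:1-4}(4) needs $\mathsf{w}_\varphi(v)=0$ so that $w_\varphi$ has neither zero nor pole along the vertical divisors; in the example above $w_\varphi$ acquires a pole along the component corresponding to $(-2,1)$, destroying quasi-stability.

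The way to avoid this is to refine only ``from inside $\sigma_0$'', which is what the paper's proof does: after a barycentric subdivision of the compact pieces $\sigma\cap\sigma_0$ (induced from a barycentric subdivision of $\mathrm{SD}(w_\varphi)$, with separate handling when $\sigma_0$ itself is non-compact and when $\dim\Delta<\mathrm{rk}\,M$), each unbounded cell is rewritten as a Minkowski sum of a compact cell contained in $\sigma_0$ and its recession cone attached at chosen points of $\sigma_0$, and then only the cone factor is subdivided according to $\Sigma_0$. All new vertices are barycenters or interior points of $\sigma_0$, the recession fan becomes $\Sigma_0$, and every cell has the asserted form (compact simplex) $+$ (simplicial cone). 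If you want to keep a coherent, height-function flavour, the refining piecewise-linear data must be linear on translated cones $v+c$, $c\in\Sigma_0$, based at points $v\in\sigma_0$, rather than on the cones of $\Sigma_0$ based at the origin; as written, your construction does not achieve this.
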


\begin{proof}

It suffices to construct a desired refinement of $\mathrm{PC}(w_\varphi)$. We take a similar approach used in \cite{Hasse2002integral}. Let $\mathrm{SD}(w_\varphi)$ denote the star coherent subdivision at $0$ of $\Delta$. There is a unique cell $\sigma_0 \in \mathrm{PC}(w_\varphi)$ corresponding to $0 \in A$. Suppose $\dim\Delta = \mathrm{rk}M$. There are two cases to consider: either $\sigma_0$ is compact or non-compact.

\begin{enumerate}
    \item ($\sigma_0$ is compact). Note that $\sigma_0$ is the unique compact maximal cell. Consider a barycentric subdivision of $\mathrm{SD}(w_\varphi)$, denoted by $\widetilde{\mathrm{SD}}(w_\varphi)$. By  Theorem \ref{t:subdiv}, $\widetilde{\mathrm{SD}}(w_\varphi)$ induces a barycentric subdivision of $\sigma \cap \sigma_0$ for each non-compact cell $\sigma \in \mathrm{PC}(w_\varphi)$. These subdivisions then induce subdivisions of both $\sigma_0$ and every non-compact cell $\sigma$ as follows: 
    \begin{itemize}
        \item For $\sigma_0$, choose an interior rational point $p$ of $\sigma_0$ and take a cone over the subdivision of $\sigma \cap \sigma_0$.
        \item For $\sigma$, add the corresponding recession cone $\mathrm{rec}(\sigma)$ at the center of $\sigma \cap \sigma_0$.
    \end{itemize}
    Let the resulting subdivision be denoted by $\widetilde{\mathrm{PC}}(w_\varphi)$. Each non-compact cell $\sigma' \in \widetilde{\mathrm{PC}}(w_\varphi)$ is the Minkowski sum of its recession cone and $\sigma' \cap \sigma_0$. Therefore, we can further coherently subdivide each non-compact cell in $\widetilde{\mathrm{PC}}(w_\varphi)$ with respect to the given refinement data $\Sigma_0$. We denote the resulting subdivision by $\mathrm{PC}(w_\varphi, \Sigma_0)$. By construction, this is a well-defined polyhedral subcomplex of $\mathrm{PC}(w_\varphi)$, and all $0$-dimensional strata are contained within $\sigma_0$.
    
    \item ($\sigma_0$ is non-compact). In this case, we first subdivide $\mathrm{PC}(w_\varphi)$ to apply the previous argument. As before, a barycentric subdivision $\widetilde{\mathrm{SD}}(w_\varphi)$ induces a subdivision of each compact cell of the form $\sigma \cap \sigma_0$, leaving non-compact cells of the form $\sigma \cap \sigma_0$ unrefined. By choosing an interior point $p$ of $\sigma_0$, we subdivide $\sigma_0$ as before. To make it a polyhedral complex, we must add the recession cone $\mathrm{rec}(\sigma_0)$ at $p$ in $\sigma_0$. Then this subdivision of $\sigma_0$ has cells of the product form. Also, at each center of compact $\sigma \cap \sigma_0$, we add the corresponding recession cone $\mathrm{rec}(\sigma)$ to subdivide $\sigma$. We denote the resulting subdivision by $\widetilde{\mathrm{PC}}(w_\varphi)$. Since all the non-compact cells in $\widetilde{\mathrm{PC}}(w_\varphi)$ are of the product form, we can coherently subdivide further with respect to $\Sigma_0$. Then by the same reasoning, we obtain the desired refinement $\mathrm{PC}(w_\varphi, \Sigma_0)$. 
\end{enumerate}
Next, consider the case where $\dim\Delta < \mathrm{rk}(M)$. Let $M_\Delta$ be the minimal lattice containing $A$, and let $N_\Delta$ be its dual lattice. In this case, we first regard $w_\varphi$ as a Laurent polynomial in $\mathbb{C}[M_\Delta]$, denoted by $w_{\varphi,\Delta}$. We then perform the same construction to obtain $\widetilde{\mathrm{PC}}(w_{\varphi,\Delta})$. This complex is canonically embedded into $N_\mathbb{R}$, where it is coherently subdivided with respect to $\Sigma_0$. Specifically, if each cell $\sigma \in \widetilde{\mathrm{PC}}(w_{\varphi,\Delta})$ can be expressed as the Minkowski sum of its recession cone $\mathrm{rec}(\sigma)$ and its compact part $\mathrm{cpt}(\sigma)$, then for every cone $c$ containing $\mathrm{rec}(\sigma)$ as a face, we add cells of the form $c + \mathrm{cpt}(\sigma)$.
\end{proof}

\begin{defn}\label{d:sigma-varphi}
    Suppose $\Sigma_0$ is a complete simplicial quasiprojective refinement of $\mathrm{nf}(\Delta)$ and let $\mathrm{PC}(w_\varphi,\Sigma_0)$ be as in Proposition \ref{p:refined-complex}. Define $\Sigma_{\varphi} \subset N_\mathbb{R} \times \mathbb{R}_{\geq 0}$ to be the following set of cones:
    \begin{enumerate}
        \item For each cone $c \in \Sigma_0$ construct a cone $c = c \times 0$. 
        \item For each cell $\sigma$ in $\mathrm{PC}(w_\varphi,\Sigma_0)\times \{1\}$ construct the cone 
        \[\kappa(\sigma) = \mathrm{cone}(\mathrm{rec}(\sigma) \times \{0\},\sigma\times \{1\}).\]
    \end{enumerate}
    \end{defn}    
    The reader may verify that $\Sigma_\varphi$ is in fact a fan. Furthermore, the final statement in Proposition \ref{p:refined-complex} implies that $\Sigma_\varphi$ is in fact simplicial.
 \begin{proposition}\label{p:schon}
     Given a pointed subset $A \subseteq M$, a coherent star triangulation of $A$ based at $0_M$ induced by $\varphi$, and a simplicial refinement of $\mathrm{nf}(\Delta)$, there is a fan $\Sigma$ satisfying Assumption \ref{a:1-4}. 
 \end{proposition}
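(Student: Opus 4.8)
The plan is to verify that the fan $\Sigma := \Sigma_\varphi$ constructed in Definition \ref{d:sigma-varphi} satisfies each of the four conditions of Assumption \ref{a:1-4} in turn; this is mostly a matter of unwinding definitions, with the bulk of the real content already packaged into Proposition \ref{p:refined-complex} (existence of the refinement $\mathrm{PC}(w_\varphi,\Sigma_0)$ with all the required structural properties) and into the remark following Definition \ref{d:sigma-varphi} (that $\Sigma_\varphi$ is a genuine simplicial fan). So first I would record that $\Sigma_\varphi$ is a simplicial fan in $N_\mathbb{R}\times\mathbb{R}_{\geq 0}$, which is exactly the content already asserted.

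Next I would check the four conditions. Condition (1), $\Sigma\cap (N\times 0) = \Sigma_0$: by construction the cones of $\Sigma_\varphi$ lying in $N_\mathbb{R}\times\{0\}$ are precisely the cones $c\times 0$ for $c\in\Sigma_0$ (item (1) of Definition \ref{d:sigma-varphi}), together with faces of the $\kappa(\sigma)$; but the face of $\kappa(\sigma)=\mathrm{cone}(\mathrm{rec}(\sigma)\times\{0\},\sigma\times\{1\})$ contained in $N_\mathbb{R}\times\{0\}$ is exactly $\mathrm{rec}(\sigma)\times\{0\}$, and $\mathrm{rec}(\sigma)\in\Sigma_0$ since the recession fan of $\mathrm{PC}(w_\varphi,\Sigma_0)$ is $\Sigma_0$ by Proposition \ref{p:refined-complex}. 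Hence the slice is exactly $\Sigma_0$. Condition (2): the slice $\{c\cap (N\times 1)\mid c\in\Sigma\}$ picks out, from $\kappa(\sigma)$, the cell $\sigma\times\{1\}$, so the slice is $\mathrm{PC}(w_\varphi,\Sigma_0)\times\{1\}$, which refines $\mathrm{PC}(w_\varphi)$ by Proposition \ref{p:refined-complex}. Condition (3): since $\Sigma_\varphi$ is simplicial and every cone $\kappa(\sigma)$ (and $c\times 0$) is generated by ray generators of the two stated shapes — a ray of $\Sigma_0$ gives $(n,0)$, and a vertex $v$ of a cell $\sigma$ gives the ray $\mathrm{cone}((v,1))$ with primitive generator $(v,1)$ because the vertices of the cells of $\mathrm{PC}(w_\varphi,\Sigma_0)$ are integral (we have scaled $\varphi$ to arrange this, as noted just before Assumption \ref{a:1-4}) — every ray of $\Sigma$ has the form $(n,0)$ or $(n,1)$. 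Condition (4): a ray of the form $(n,1)$ arises only from a vertex $n$ of a cell of $\mathrm{PC}(w_\varphi,\Sigma_0)$, and by the final conclusion of Proposition \ref{p:refined-complex} all $0$-dimensional strata of $\mathrm{PC}(w_\varphi,\Sigma_0)$ lie in $\mathrm{PC}(w_\varphi)_0$, i.e. $\mathsf{w}_\varphi(n)=0$.

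The step requiring the most care is condition (3), specifically the claim that the primitive integral generator of the ray $\mathbb{R}_{\geq 0}\cdot(v,1)$ is $(v,1)$ itself rather than some $(v',k)$ with $k\geq 2$. This follows from the height-$1$ coordinate: if $(v',k)$ were the primitive generator then $k$ would divide $1$, forcing $k=1$ and $v'=v$. One should also double-check that the rays of the $c\times 0$ cones are genuinely among those of the $\kappa(\sigma)$'s or at least all of the required form — but since they are $(\rho,0)$ for primitive $\rho\in\Sigma_0[1]$, this is immediate. Finally I would remark that the fan $\Sigma$ so produced also makes Proposition \ref{p:poly} applicable, so that choosing any collection $D$ of horizontal divisors containing all components of $P_\mathrm{red}$ yields the desired quasi-stable degeneration of toric Landau--Ginzburg models; but that is a consequence rather than part of the statement to be proved here. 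The main obstacle is purely bookkeeping: making sure the faces of the $\kappa(\sigma)$ are correctly enumerated so that conditions (1) and (3) hold on the nose, which is why the simpliciality and product-form assertions of Proposition \ref{p:refined-complex} are doing the real work.
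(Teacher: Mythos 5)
Your proposal is correct and follows exactly the paper's route: the paper's proof is the one-line observation that the fan $\Sigma_\varphi$ of Definition \ref{d:sigma-varphi} satisfies Assumption \ref{a:1-4} by Proposition \ref{p:refined-complex}, and your verification of conditions (1)--(4) simply spells out the details the paper leaves implicit. The added care about primitivity of the height-$1$ ray generators is a correct and worthwhile elaboration, but it is the same argument.
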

 \begin{proof}
     The fan $\Sigma_\varphi$ constructed in Definition \ref{d:sigma-varphi} satisfies Assumption \ref{a:1-4} by Proposition \ref{p:refined-complex}.
 \end{proof}

\subsection{The tropical cell complex of a toric family}

We finish this section by discussing tropicalization. Suppose $\Sigma_0$ and $w_\varphi$ are fixed, as in the previous section, and choose a fan $\Sigma$ satisfying Assumption \ref{a:1-4}. We obtain a tropical polyhedral complex $\mathsf{T}_\Sigma(\Sigma_0, w_\varphi)$ obtained by taking the closure of $\mathrm{PC}(w_\varphi,\Sigma_0)$ in $\mathrm{Trop}(\Sigma_0)$. Since $\mathsf{T}(\Sigma_0,w_\varphi)$ is the closure of $\mathrm{PC}(w_\varphi)$ in $\mathrm{Trop}(\Sigma_0)$, there is a refinement morphism $s:\mathsf{T}_\Sigma(\Sigma_0,w_\varphi) \rightarrow \mathsf{T}(\Sigma_0, w_\varphi)$.
 
From the discussion in Section \ref{s:thyper}, the cells in $\mathsf{T}_\Sigma(\Sigma_0, w_\varphi)$ are in bijection with pairs $(c,\sigma)$ where $(c,\sigma) \in \Sigma_0 \times \mathrm{PC}(w_\varphi,\Sigma_0)$ and $c \subseteq \mathrm{rec}(\sigma)$. We write the corresponding poset as: 
\[
\mathsf{S}_{\mathsf{T}} = \{(c,\sigma)  \mid (c,\sigma) \in \Sigma_0 \times \mathrm{PC}(w_\varphi,\Sigma_0), c\subseteq \mathrm{rec}(\sigma)\} 
\]
where $(c,\sigma) \prec (c',\sigma')$ if $c' \subseteq c$ and $\sigma \subseteq \sigma'$. We use the notation $\mathsf{p}(c,\sigma)$ to denote the cell of $\mathsf{T}_\Sigma(\Sigma_0,w_\varphi)$ corresponding to $(c,\sigma)$. 
For later use, we will describe the topology of $\mathsf{p}(c,\sigma)$. In the following lemma, $\Delta_{q}$ denotes the standard $q$-simplex.

        \begin{lemma}\label{lem:simptrop}
            The cells ${\mathsf{p}}(c,\sigma)$ of  $\mathsf{T}_\Sigma(\Sigma_0,w_\varphi)$ are homeomorphic to a product $\Delta_{k-l} \times [0,\infty]^{l}$ where $k = \dim \sigma$ and $l = \dim \mathrm{rec}(\sigma)$. The intersection between $\mathsf{p}(c,\sigma)$ and each tropical boundary divisor is either empty or of the form $\Delta_{k-l} \times [0,\infty]^{l-1}\times \infty$.
         \end{lemma}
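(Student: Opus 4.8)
\textbf{Proof proposal for Lemma \ref{lem:simptrop}.}

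The plan is to reduce the claim to a direct computation with the explicit polyhedral structure of the cells $\sigma \in \mathrm{PC}(w_\varphi,\Sigma_0)$ established in Proposition \ref{p:refined-complex}, together with the definition of the tropical toric variety $\mathrm{Trop}(\Sigma_0)$ and its stratification from Section \ref{s:tropgeo}. The first step is to recall that, by the last sentence of Proposition \ref{p:refined-complex}, every cell $\sigma$ of $\mathrm{PC}(w_\varphi,\Sigma_0)$ is the Minkowski sum $\sigma = \mathrm{cpt}(\sigma) + \mathrm{rec}(\sigma)$, where $\mathrm{cpt}(\sigma)$ is a compact simplex of some dimension $k - l$ (with $k = \dim\sigma$) and $\mathrm{rec}(\sigma)$ is a simplicial cone of dimension $l$. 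A simplicial cone of dimension $l$ is linearly isomorphic to $[0,\infty)^l$, and a compact simplex of dimension $k-l$ is homeomorphic to $\Delta_{k-l}$; hence $\sigma$ itself, as a subset of $N_\mathbb{R}$, is homeomorphic to $\Delta_{k-l}\times[0,\infty)^l$.

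The second step is to pass from $\sigma \subseteq N_\mathbb{R}$ to the closure $\mathsf{p}(c,\sigma)$ inside $\mathrm{Trop}(\Sigma_0)$. By \eqref{e:trop}, the tropical toric variety is $\coprod_{c'\in\Sigma_0}(N_\mathbb{R}/\mathrm{span}(c'))$ with the limit topology described there; taking the closure of a polyhedron $\sigma$ amounts to adjoining, for each face $c'$ of the recession cone $\mathrm{rec}(\sigma)$ with $c'\supseteq c$ in the fan-poset sense (equivalently each $c'$ such that $(c',\sigma')\in\mathsf{S}_\mathsf{T}$ for a face $\sigma'$ of $\sigma$), the image of $\sigma$ in $N_\mathbb{R}/\mathrm{span}(c')$, which is again a polyhedron of the same Minkowski-sum shape with recession cone $\mathrm{rec}(\sigma)/\mathrm{span}(c')$. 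Concretely, writing $\mathrm{rec}(\sigma)\cong[0,\infty)^l$ by choosing the primitive generators of the simplicial cone as coordinates, the closure operation replaces $[0,\infty)^l$ by its ``corner compactification'' $[0,\infty]^l$: a point escaping to infinity along a face of the cone lands in the corresponding stratum at infinity. Since $\mathrm{cpt}(\sigma)$ stays compact and is unaffected, this yields the homeomorphism $\mathsf{p}(c,\sigma)\cong \Delta_{k-l}\times[0,\infty]^l$. The cell $c$ itself records which boundary stratum of $\mathrm{Trop}(\Sigma_0)$ we sit in, and one checks $\dim\mathsf{p}(c,\sigma) = \dim\sigma - \dim c$ is consistent but the homeomorphism type only depends on $(k,l)$.

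The third step treats the boundary divisors: a tropical boundary divisor of $\mathrm{Trop}(\Sigma_0)$ corresponds to a ray $r\in\Sigma_0[1]$, and $\mathsf{p}(c,\sigma)$ meets it precisely when $r$ is a ray of $\mathrm{rec}(\sigma)$ containing $c$, in which case the intersection is the closure of the image of $\sigma$ in $N_\mathbb{R}/\mathrm{span}(r)$. Under the coordinate identification $\mathrm{rec}(\sigma)\cong[0,\infty)^l$ with $r$ one of the coordinate rays, this is exactly the face $\{$that coordinate $=\infty\}$, i.e. $\Delta_{k-l}\times[0,\infty]^{l-1}\times\infty$; if $r$ is not a ray of $\mathrm{rec}(\sigma)$ or does not contain $c$, the intersection is empty. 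I expect no serious obstacle here — the only point requiring care is bookkeeping the topology of $\mathrm{Trop}(\Sigma_0)$ near the strata at infinity and verifying that the limit topology of \eqref{e:trop} genuinely produces the one-point-per-face ``$[0,\infty]$'' compactification of each simplicial cone direction rather than something coarser; this is standard (see \cite{Payne2009trop}) but is the step I would write out most carefully.
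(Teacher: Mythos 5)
Your proposal is correct and follows essentially the same route as the paper: both exploit the product structure (compact simplex) $\times$ (simplicial recession cone) of each cell — which the paper extracts from the simpliciality of $\kappa(\sigma)$ via an explicit rational change of basis and you take directly from Proposition \ref{p:refined-complex} — and then observe that closure in $\mathrm{Trop}(\Sigma_0)$ corner-compactifies the cone directions to $[0,\infty]^{l}$, with the boundary-divisor intersections being the faces at infinity; the paper carries out this last step by writing the cell in explicit coordinates on the affine patch of $\mathrm{Trop}(\Sigma_0)$ attached to $\mathrm{rec}(\sigma)$, where you instead invoke the limit topology of \eqref{e:trop}. The only blemish is your side remark on general $c$ (asserting $\dim\mathsf{p}(c,\sigma)=\dim\sigma-\dim c$ while claiming the homeomorphism type $\Delta_{k-l}\times[0,\infty]^{l}$ is independent of $c$ cannot both be literally true), but the paper's own reduction ``assume without loss of generality that $c=0$'' glosses over the same point, and the application in Proposition \ref{p:cellcomplex} only needs that closed cells are simplex-times-$[0,\infty]^{*}$ products, so this does not affect the substance.
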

         \begin{proof}
            Assume without loss of generality that $c=0$. Then the cone $\kappa({\sigma})$ (Definition \ref{d:sigma-varphi}) has generators $\sigma_0 = \{(\rho,i) \in {\Sigma}[1] \mid i=0\}$ and  $\sigma_1 = \{(\rho,i) \in {\Sigma}[1] \mid i=1\}$. Here $\Sigma[1]$ denotes the ray generators of ${\Sigma}$. After (rational) change of basis, we may assume that $\sigma_1 = \{v_1,\dots, v_{k-l}\}$ and $\sigma_0 = \{e_{1},\dots, e_l\}$ where $e_1,\dots, e_d$  form a basis of $\mathbb{R}^d$. The affine patch in the tropical toric variety containing the closure of $\mathsf{p}(0,\sigma)$ is homeomorphic to $ \mathbb{R}^{k-l}\times (-\infty,\infty]^{l}\times \mathbb{R}^{d-k}$, in which 
             \begin{equation*}
             {\mathsf{p}(0,\sigma)} = \left\{\left. \sum_{i=1}^{k-l} a_i v_i + \sum_{j=1}^{l}b_j e_j\,\, \right|\,\, 0 \leq a_i \leq 1, \sum_{i}a_i =1, b_j \in [0,\infty]\right\}.
             \end{equation*}
             Projecting onto $(a_1,\dots, a_{k-l}, b_1,\dots, b_l)$, we obtain the desired homeomorphism. 
         \end{proof}
         \begin{remark}
         In other words, the cells of $\mathsf{T}_\Sigma(\Sigma_0,w_\varphi)$ are homeomorphic to the closed ball and its boundary. This implies that $\mathsf{T}_\Sigma(\Sigma_0,w_\varphi)$ is a regular cell complex (Definition \ref{d:regcell}). This fact will be useful when we want to describe irregular Hodge numbers in terms of tropical cohomology groups in the next section.
         \end{remark}
 \begin{defn}
Choose a union of irreducible toric boundary divisors, $D$, and let $\Sigma_{0,D}$ be a subfan of $\Sigma_0$ that corresponds to the complement of $D$. We define the tropical polyhedral subcomplex of $\mathsf{S}_\mathsf{T}$ and its corresponding poset:
\[
    \mathsf{T}_\Sigma(\Sigma_{0,D},w_\varphi) := \{\mathsf{p}(c,\sigma) \in \mathsf{T}_\Sigma(\Sigma_0,w_\varphi)  \mid c \in \Sigma_{0,D}\},\qquad \mathsf{S}_{\mathsf{T},\mathsf{D}} = \{ (c,\sigma) \mid \mathsf{p}(c,\sigma) \in \mathsf{T}_\Sigma(\Sigma_{0,D},w_\varphi)\}.  
\]
    The poset of compact cells in $\mathsf{S}_{\mathsf{T},\mathsf{D}}$ is 
    \[
        \mathsf{S}_{\mathsf{T},\mathsf{D}}^\mathrm{cpt} := \{(c,\sigma) \in \mathsf{S}_\mathsf{T} \mid  c\subseteq \mathrm{rec}(\sigma) \in \Sigma_{0,D}\} \subseteq \mathsf{S}_{\mathsf{T},\mathsf{D}}.
    \]
 \end{defn}
In the next section, we will construct a $\mathbb{Q}$-graded sheaf of $\mathbb{C}$-vector spaces on $\mathsf{S}_{\mathsf{T},\mathsf{D}}$ whose cohomology we would like to compute. To do so, we need to prove the following result.

 \begin{proposition}\label{p:cellcomplex}
    The polyhedral complex $\mathsf{T}_{\Sigma}(\Sigma_{0,D},w_\varphi)$ is a cell complex (see Definition \ref{d:regcell}).
 \end{proposition}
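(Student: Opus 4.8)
The goal is to verify that $\mathsf{T}_{\Sigma}(\Sigma_{0,D},w_\varphi)$ satisfies Definition \ref{d:regcell}, i.e.\ that it is a cell complex in the sense that its one-point compactification is a regular cell complex. The key input is Lemma \ref{lem:simptrop}, which already tells us that each cell $\mathsf{p}(c,\sigma)$ is homeomorphic to a product $\Delta_{k-l}\times[0,\infty]^l$ and that its intersection with each tropical boundary divisor is either empty or a face of the form $\Delta_{k-l}\times[0,\infty]^{l-1}\times\infty$. The plan is first to observe that $\Delta_{k-l}\times[0,\infty]^l$ is itself homeomorphic to a closed ball $\mathbb{B}^k$ (a product of balls is a ball), so that axiom (2) of Definition \ref{d:regcell} holds with each open cell $\mathrm{int}(\mathsf{p}(c,\sigma))\cong \mathbb{R}^k$, and axiom (3) for the individual pairs $(\mathrm{int}(\mathsf{p}(c,\sigma)),\mathsf{p}(c,\sigma))$ holds provided we check that the boundary decomposition of $\mathsf{p}(c,\sigma)$ induced by the poset $\mathsf{S}_{\mathsf{T},\mathsf{D}}$ is exactly the decomposition of $\mathbb{B}^k$ into its open faces; this is a routine consequence of the explicit coordinate description in the proof of Lemma \ref{lem:simptrop}, since the faces of $\Delta_{k-l}\times[0,\infty]^l$ visible in those coordinates (proper faces of the simplex, coordinate hyperplanes at finite or infinite value) correspond precisely to the pairs $(c',\sigma')\succeq(c,\sigma)$ of the poset.

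Next I would address axiom (1), the closure-finiteness/intersection condition: if $\mathsf{p}(c_1,\sigma_1)\cap \overline{\mathsf{p}(c_2,\sigma_2)}\neq\emptyset$ then $\mathsf{p}(c_1,\sigma_1)\subseteq\overline{\mathsf{p}(c_2,\sigma_2)}$. This follows from the corresponding statement for the polyhedral complex structure on $\mathrm{Trop}(\Sigma_0)$: two cells of a polyhedral complex meet along a common face, and the poset $\mathsf{S}_{\mathsf{T}}$ (hence its subposet $\mathsf{S}_{\mathsf{T},\mathsf{D}}$) records exactly the face relations, so $\overline{\mathsf{p}(c_2,\sigma_2)}=\bigcup_{(c_2,\sigma_2)\preceq(c',\sigma')}\mathsf{p}(c',\sigma')$ and membership in the closure is equivalent to the poset relation. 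Local finiteness holds because $\Sigma_0$ is a finite fan and $\mathrm{PC}(w_\varphi,\Sigma_0)$ is a locally finite polyhedral complex (indeed finite, as $A$ is finite), so only finitely many cells are involved near any point.

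The genuinely delicate point, and the one I would treat most carefully, is that $\mathsf{T}_\Sigma(\Sigma_{0,D},w_\varphi)$ itself need not satisfy axiom (3) of Definition \ref{d:regcell} globally, because it is noncompact and its ``cells at infinity'' (those $\mathsf{p}(c,\sigma)$ with $\mathrm{rec}(\sigma)\neq 0$) have closures that are not closed balls inside the space itself—they include boundary pieces lying in the tropical toric boundary. This is precisely why the definition of a \emph{cell complex} (as opposed to a regular cell complex) passes to the one-point compactification. So the real content is: after adjoining a single point $\infty$ and declaring the cells to be $\mathsf{p}(c,\sigma)\cup\{\infty\}$, the resulting decomposition is a regular cell complex. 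Here Lemma \ref{lem:simptrop} is used a second time: the intersection of each $\mathsf{p}(c,\sigma)$ with the union of tropical boundary divisors is a union of faces of the form $\Delta_{k-l}\times[0,\infty]^{l-1}\times\infty$, and collapsing all of $\mathsf{T}_\Sigma(\Sigma_{0,D},w_\varphi)$'s ``infinite'' directions to the single point $\infty$ glues these faces together consistently; one checks that the quotient map restricted to each $\overline{\mathsf{p}(c,\sigma)}$ is the standard collapse $\Delta_{k-l}\times[0,\infty]^l\to\Delta_{k-l}\times[0,\infty]^l/(\Delta_{k-l}\times\partial_\infty[0,\infty]^l)$, which is again a closed ball of dimension $k$ with its standard cell structure, and that these collapses are compatible across face inclusions. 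Verifying this compatibility—essentially that the compactification is carried out uniformly cell-by-cell and that no two distinct open cells get identified—is the main obstacle, but it is a direct, if somewhat tedious, unwinding of the coordinate charts from the proof of Lemma \ref{lem:simptrop} together with the poset bookkeeping already in place; I do not anticipate any conceptual difficulty beyond careful notation.
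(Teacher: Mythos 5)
Your opening steps — axioms (1) and (2), regularity of the compact complex via Lemma \ref{lem:simptrop}, and the reduction of the whole question to a one-point-compactification statement — follow the same route as the paper's proof. The problem lies in the step you yourself flag as the delicate one, and it is a genuine error rather than a notational slip. First, the noncompact cells of $\mathsf{T}_\Sigma(\Sigma_{0,D},w_\varphi)$ are \emph{not} those with $\mathrm{rec}(\sigma)\neq 0$: if $0\neq\mathrm{rec}(\sigma)\in\Sigma_{0,D}$, the tropical boundary strata that the cell reaches are still part of the space, so its closure is the compact ball $\Delta_{k-l}\times[0,\infty]^l$ of Lemma \ref{lem:simptrop} and needs no compactification at all; this is precisely the content of the subposet $\mathsf{S}_{\mathsf{T},\mathsf{D}}^\mathrm{cpt}=\{(c,\sigma)\mid c\subseteq\mathrm{rec}(\sigma)\in\Sigma_{0,D}\}$. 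The cells requiring attention are exactly those with $\mathrm{rec}(\sigma)\notin\Sigma_{0,D}$, i.e.\ whose recession cone contains a ray lying over $D$.

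Second, and more seriously, your verification of axiom (3) is carried out for the wrong quotient. Passing from $\mathrm{Trop}(\Sigma_0)$ to $\mathrm{Trop}(\Sigma_{0,D})$ deletes only the strata indexed by cones not in $\Sigma_{0,D}$, so inside a closed cell $\Delta_{k-l}\times[0,\infty]^l$ the deleted subset is $\Delta_{k-l}\times\bigcup_{j\in J}\{x_j=\infty\}$, where $J$ indexes only those rays of $\mathrm{rec}(\sigma)$ corresponding to components of $D$; what remains is the half-open cell written in \eqref{e:simplex}, and the one-point compactification of the space collapses precisely $\Delta_{k-l}\times\bigcup_{j\in J}\{x_j=\infty\}$ to the point $\infty$, not $\Delta_{k-l}\times\partial_\infty[0,\infty]^l$. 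Collapsing \emph{all} infinity faces, as you propose, would compactify a different space (essentially the finite part of the complex inside $N_\mathbb{R}$) and would crush every cell $\mathsf{p}(c',\sigma')$ with $c'\neq 0$ — but these are honest cells of $\mathsf{T}_\Sigma(\Sigma_{0,D},w_\varphi)$, indexed by $\mathsf{S}_{\mathsf{T},\mathsf{D}}$, and the compact ones among them are exactly what the cellular complexes of Section \ref{s:twistednearby} are built from; so the ``compatibility across face inclusions'' you intend to check would in fact fail for your collapse. The repair is the paper's argument: each half-open cell is a closed $k$-ball minus a closed $(k-1)$-ball sitting tamely in its boundary sphere (namely $\Delta_{k-l}\times\bigcup_{j\in J}\{x_j=\infty\}$, with $J\neq\emptyset$), and collapsing that boundary ball — equivalently, one-point compactifying the half-open cell — yields a $k$-ball again, giving axiom (3) for the compactification while leaving the retained boundary cells untouched. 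With $J$ identified correctly the rest of your outline goes through, so the gap is fixable, but as written the key step addresses the wrong compactification and misidentifies which cells are at issue.
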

 \begin{proof}
    By Lemma \ref{lem:simptrop}, the tropical polyhedral complex $\mathsf{T}_\Sigma(\Sigma_0,w_\varphi)$ is a regular cell complex. The tropical toric variety $\mathrm{Trop}(\Sigma_{0,D})$ is obtained from $\mathrm{Trop}(\Sigma_{0,D})$ by removing a number of toric boundary divisors. For each cell $\mathsf{p}(c,\sigma) \cong \Delta_k \times [0,\infty]^{l-k}$, the corresponding subset of $\mathsf{T}_\Sigma(\Sigma_{0,D},w_\varphi)$ is either empty or isomorphic to 
    \begin{equation}\label{e:simplex}
    \Delta_k \times \{(x_1,\dots, x_{l-k}) \in [0,\infty]^{l-k}\mid x_i \in [0,\infty], i\in I, x_j\in [0,\infty), j \notin I \}    
    \end{equation}
    for some subset $J$ of $[1,l-k]$. The sets in \eqref{e:simplex} and their boundaries are homeomorphic to $(\mathrm{int}(\mathbb{B}^l),\mathbb{B}^l \setminus \mathbb{B}^{l-1})$. The one-point compactification of such a set satisfies axiom (3) in Definition \ref{d:regcell}.
 \end{proof}
Proposition \ref{p:cellcomplex} tells us that we may use $\mathsf{S}_{\mathsf{T},\mathsf{D}}^\mathrm{cpt}$ to compute the cohomology of any cellular sheaf on $\mathsf{T}_\Sigma(\Sigma_{0,D},w_\varphi)$. Let $\mathsf{T}_{\Sigma}(\Sigma_{0,D},w_\varphi)_0$ be the subset of $\mathsf{T}_\Sigma(\Sigma_{0,D},w_\varphi)$ consisting of cells $\mathsf{p}(c,\sigma)$ which are contained in cells of $\mathsf{T}(\Sigma_{0,D},w_\varphi)_0$. By construction, $\mathsf{T}_\Sigma(\Sigma_{0,D},w_\varphi)_0$ is a refinement of $\mathsf{T}(\Sigma_{0,D},w_\varphi)_0$ therefore it is also a cell complex. We use the notation $\mathsf{S}_{\mathsf{T},\mathsf{D},0}$ to denote the pairs $(c,\sigma)$ so that $\mathsf{p}(c,\sigma) \in \mathsf{T}_\Sigma(\Sigma_{0,D},w_\varphi)_0$. Note that $\mathsf{S}_{\mathsf{T},\mathsf{D}}^\mathrm{cpt} \subseteq \mathsf{S}_{\mathsf{T},\mathsf{D},0}$.

\subsection{The tropical twisted cohomology sheaf}\label{s:tsheaf}
In this section we assume that we are in the situation described in Section \ref{s:combdeg}. We choose a fan $\Sigma$ satisfying Assumption \ref{a:1-4} and let $T:=T({\Sigma})$ for simplicity and we choose a toric boundary divisor $D$ of $T(\Sigma_0)$. By Proposition \ref{p:poly} there is a quasi-stable degeneration of Landau--Ginzburg models, $(T,D,w_\varphi, \pi)$ attached to this data. Let $T_0 = \pi^{-1}(0)$. The toric strata of $T_0$ are in bijection with cells of $\mathrm{PC}(w_\varphi,\Sigma_0)$. By Lemma \ref{l:poly} the strata along which $w_\varphi$ is regular are exactly those in $\mathrm{PC}(w_\varphi,\Sigma_0)_0$. In this section, we use this bijection and the nearby twisted cohomology complex to build a tropical sheaf $\tilde{\bf J}^\bullet$ on a simplicial complex $\mathsf{T}_\Sigma(\Sigma_{0,D},w_\varphi)_0$. In the next section, we will show that the (tropical) cohomology of $\tilde{\bf J}^\bullet$ is isomorphic to the (usual) cohomology of the nearby twisted de Rham complex.

For each cone $c$ of $\Sigma_0$ we let $\tilde{c} = c\times 0 \in {\Sigma}$ and let $T(c)$ denote the closed torus orbit attached to $\tilde{c}$, $B(c)$ is its toric boundary, and $P_c := T(c) \cap P$. Let $T(c)_0$ denote $T(c) \cap T_0$. We use similar notation for $\sigma \in \Sigma_1$: Let $T({\sigma})$ denote the closed torus orbit attached to $\kappa({\sigma}) = \mathrm{cone}(\mathrm{rec}(\sigma)\times 0,\sigma\times 1)$ and $P_\sigma:=T({\sigma}) \cap P$. For each $(c,\sigma)$ in $\mathsf{S}_{\mathsf{T},\mathsf{D},0}$, we define 
\begin{align*}
\Lambda_{(c,\sigma)}^\bullet &:= \cOmega^\bullet_{T(c)/\mathbb{A}^1}(\log B(c))(*P_c) \otimes \mathcal{O}_{T({{\sigma}})} \cong \cOmega^\bullet_{T(c)/\mathbb{A}^1}(\log B(c)) \otimes \mathcal{O}_{T({\sigma})}(*P_\sigma) \\
\Lambda^\bullet_c &:= \cOmega^\bullet_{T(c)/\mathbb{A}^1}(\log B(c))(*P_c)\otimes \mathcal{O}_{T(c)_0}.
\end{align*}
As  before, $\Lambda_{(c,\sigma)}^\bullet$ is equipped with an irregular Hodge filtration (see~\eqref{e:nbirreg}). Similarly, we  define a filtration on $\Lambda^\bullet_c$;
\[
F_\mathrm{irr}^\lambda\Lambda^\bullet_{(c,\sigma)} = \mathrm{im}\left( F^\lambda_\mathrm{irr}\Omega^\bullet_{T(c)/\mathbb{A}^1}(\log B(c))(*P_c) \otimes  \mathcal{O}_{T({\sigma})} \longrightarrow \Omega^\bullet_{T(c)/\mathbb{A}^1}(\log B(c))(*P_c)\otimes \mathcal{O}_{T({\sigma})} \right).
\]
A local computation like the proof of Proposition \ref{p:locslog} identifies 
\begin{equation*}
F_\mathrm{irr}^\lambda\Lambda_{(c,\sigma)}^p = \begin{cases} \Omega_{T(c)/\mathbb{A}^1}^p(\log B(c))\otimes \mathcal{O}_{T(\tilde{\sigma})}(\lfloor (p-\lambda)P_\sigma\rfloor) & \text{ if } p \geq \lambda \\ 0 & \text{ otherwise} \end{cases}
\end{equation*}

In this section, we use the fact that $\Omega_{T(c)}^\bullet(\log B(c)) \cong \mathcal{O}_{T(c)} \otimes \wedge^\bullet \tilde{c}^\perp$ frequently to give very concrete descriptions of $\Lambda^\bullet_{(c,\sigma)}$ and $\Lambda^\bullet_c$. If $m_\pi = (0,1)$ denotes the element of $\tilde{M}$ which projects $\tilde{N} = N \times \mathbb{Z}$ onto the second coordinate, the corresponding rational function on $T$ is simply $\pi$. Thus, $\Omega_{T/\mathbb{A}^1}^\bullet(\log B) \cong \mathcal{O}_T \otimes \wedge^\bullet M/(\wedge^{\bullet-1}M \wedge m_\pi)$ and, since $\langle \tilde{c}, m_\pi \rangle = 0$ for any $c \subseteq \Sigma_{0,D}$, 
\begin{equation}\label{eq:trivsheaf} 
   \Lambda_c:= \Omega_{T(c)/\mathbb{A}^1}^\bullet(\log B(c))(*P_c) \otimes \mathcal{O}_{T(c)} \cong \mathcal{O}_{T(c)}(*P_c)\otimes \left[ \wedge^\bullet \tilde{c}^\perp/(\wedge^{\bullet-1}\tilde{c}^\perp \wedge m_\pi)\right].
\end{equation}
Therefore,
\[
\Lambda^\bullet_{(c,\sigma)} \cong \mathcal{O}_{T({\sigma})}(*P_\sigma)\otimes    \left[ \wedge^\bullet \tilde{c}^\perp/(\wedge^{\bullet-1}\tilde{c}^\perp \wedge m_\pi)\right] \cong \mathcal{O}_{T({\sigma})}(*P_\sigma) \otimes \left[  \wedge^\bullet (\tilde{c}^\perp/m_\pi)\right].
\] 
Furthermore, $\tilde{c}^\perp/m_\pi \cong c^\perp$ by projection. Suppose $(c,\sigma)\preceq (c',\sigma')$ with both $(c,\sigma), (c',\sigma') \in \mathsf{S}_{\mathsf{T},\mathsf{D},0}$. We define a morphism of sheaves,
\begin{align}
\alpha((c,\sigma),(c', \sigma')) : \mathcal{O}_{T({\sigma})}  \otimes \wedge^\bullet c^\perp & \longrightarrow \label{eq:resmap}\mathcal{O}_{T({\sigma}')}  \otimes \wedge^\bullet (c')^\perp  \\
g \otimes \omega & \longmapsto g|_{T({\sigma}')} \otimes i(\omega)
\end{align}
where $i : \wedge^\bullet c^\perp \hookrightarrow \wedge^\bullet (c')^\perp$ is the natural inclusion. Suppose $\tilde{c}\subseteq \kappa({\sigma})$, or equivalently, that $c\subseteq \mathrm{rec}(\sigma)$. Then there is an inclusion $\kappa({\sigma})^\perp \hookrightarrow \tilde{c}^\perp$ and since $m_\pi \notin \kappa({\sigma})^\perp$, there is also a natural injection $\kappa({\sigma})^\perp \hookrightarrow c^\perp$. Choosing a splitting $c^\perp \cong \kappa({\sigma})^\perp \oplus (c^\perp/\kappa({\sigma})^\perp)$ we obtain a decomposition 
\begin{align}
\Lambda_{(c,\sigma)}^\bullet &\cong \mathcal{O}_{T({\sigma})}(*P_\sigma)\otimes (\wedge^\bullet \sigma^\perp \otimes \wedge^\bullet ({c}^\perp/\kappa({\sigma})^\perp)) \\ &\cong \cOmega^\bullet_{T({\sigma})}(\log B(\sigma))(*P_\sigma) \otimes \wedge^\bullet ({c}^\perp/\kappa({\sigma})^\perp). \label{e:trivial}
\end{align}
This isomorphism commutes with $\nabla$ and preserves irregular filtrations. In particular, under this isomorphism,
\begin{align}\label{e:irrigation}
    F^\lambda_\mathrm{irr} \Lambda_{(c,\sigma)}^p = \bigoplus_q F_\mathrm{irr}^{\lambda - q} (\Omega_{T({\sigma})}^{p-q}(\log B(\sigma))(*P_\sigma) )\otimes \wedge^q(c^\perp/\kappa({\sigma})^\perp).
\end{align}
It is shown in \cite[Corollary 4.2]{yu2014irregular} that the natural map
    \[
    \mathbb{H}^p(T({\sigma}), F_\mathrm{irr}^{\lambda} \Omega_{T({\sigma})}^{\bullet}(\log B(\sigma))(*P_\sigma) ) \longrightarrow \mathbb{H}^p(T({\sigma}), \Omega_{T({\sigma})}^{\bullet}(\log B(\sigma))(*P_\sigma) )
    \]
    is injective for all $p$ and $\lambda$. The following claim then follows from~\eqref{e:irrigation}.
\begin{proposition}\label{p:propertyholds}
    The natural morphism 
    \[
    \mathbb{H}^p(T(\Sigma), F^\lambda_\mathrm{irr}\Lambda^\bullet_{(c,\sigma)}) \longrightarrow \mathbb{H}^p(T(\Sigma), \Lambda^\bullet_{(c,\sigma)})
    \]
    is injective for all $p \in \mathbb{Z}$ and $\lambda \in \mathbb{Q}$.
\end{proposition}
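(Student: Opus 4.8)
The plan is to reduce the statement to the injectivity result of Yu \cite[Corollary 4.2]{yu2014irregular} cited just above, which concerns the Landau--Ginzburg model $(T(\sigma), B(\sigma), w|_{T(\sigma)})$ without the extra exterior algebra factor. First I would use the splitting $c^\perp \cong \kappa(\sigma)^\perp \oplus (c^\perp/\kappa(\sigma)^\perp)$ and the resulting product decomposition~\eqref{e:trivial} of $\Lambda^\bullet_{(c,\sigma)}$ together with its filtered refinement~\eqref{e:irrigation}. Because $\wedge^q(c^\perp/\kappa(\sigma)^\perp)$ is a finite-dimensional $\mathbb{C}$-vector space placed in a fixed cohomological degree, tensoring a complex of sheaves on $T(\sigma)$ by it commutes with taking hypercohomology: concretely, $\Lambda^\bullet_{(c,\sigma)}$ is a direct sum, over wedge-degree $q$ and a choice of basis of $\wedge^q(c^\perp/\kappa(\sigma)^\perp)$, of shifted copies of $\Omega^{\bullet}_{T(\sigma)}(\log B(\sigma))(*P_\sigma)$, and likewise $F^\lambda_\mathrm{irr}\Lambda^\bullet_{(c,\sigma)}$ is the corresponding direct sum of shifted copies of $F^{\lambda-q}_\mathrm{irr}\Omega^\bullet_{T(\sigma)}(\log B(\sigma))(*P_\sigma)$.

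Next I would observe that hypercohomology commutes with finite direct sums and with shifts, so that
\[
\mathbb{H}^p(T(\Sigma), F^\lambda_\mathrm{irr}\Lambda^\bullet_{(c,\sigma)}) \cong \bigoplus_q \mathbb{H}^{p-q}\!\left(T(\sigma), F^{\lambda-q}_\mathrm{irr}\Omega^\bullet_{T(\sigma)}(\log B(\sigma))(*P_\sigma)\right)\otimes \wedge^q(c^\perp/\kappa(\sigma)^\perp),
\]
and similarly for $\mathbb{H}^p(T(\Sigma),\Lambda^\bullet_{(c,\sigma)})$ with the filtration removed. Under these identifications the natural morphism in question is, degree by degree, the tensor product of the identity on $\wedge^q(c^\perp/\kappa(\sigma)^\perp)$ with the comparison map
\[
\mathbb{H}^{p-q}\!\left(T(\sigma),F^{\lambda-q}_\mathrm{irr}\Omega^\bullet_{T(\sigma)}(\log B(\sigma))(*P_\sigma)\right) \longrightarrow \mathbb{H}^{p-q}\!\left(T(\sigma),\Omega^\bullet_{T(\sigma)}(\log B(\sigma))(*P_\sigma)\right).
\]
Here one must be slightly careful that the naturally-defined map on $\Lambda^\bullet$-hypercohomology really does decompose this way; this follows because the decomposition~\eqref{e:irrigation} is compatible with the inclusion $F^\lambda_\mathrm{irr}\Lambda^\bullet_{(c,\sigma)}\hookrightarrow \Lambda^\bullet_{(c,\sigma)}$ summand-by-summand (the inclusion does not mix wedge degrees $q$). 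By \cite[Corollary 4.2]{yu2014irregular} each summand map is injective for every $p-q$ and $\lambda-q$, hence the direct sum is injective, and tensoring an injection of $\mathbb{C}$-vector spaces with a fixed $\mathbb{C}$-vector space preserves injectivity.

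The main subtlety I anticipate is not in any single step but in making the naturality and compatibility of the decomposition precise: one needs to check that the splitting $c^\perp \cong \kappa(\sigma)^\perp\oplus(c^\perp/\kappa(\sigma)^\perp)$, which is a choice, induces a genuine isomorphism of filtered \emph{complexes} (commuting with $\nabla = d + dw\wedge(-)$), so that the associated hypercohomology long exact sequences and comparison maps split accordingly. This is essentially the content of the sentence ``This isomorphism commutes with $\nabla$ and preserves irregular filtrations'' preceding~\eqref{e:irrigation}, together with the fact that $w$ restricted to $T(\sigma)$ and its restriction to $T(c)_0$-adjacent strata agree with $w_\sigma$, so that the twisted differential genuinely decomposes as $\nabla_{T(\sigma)}$ tensored with the de Rham differential (which is zero) on the constant factor $\wedge^\bullet(c^\perp/\kappa(\sigma)^\perp)$. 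Once this is in hand, the rest is the formal manipulation of hypercohomology with finite-dimensional coefficients described above, and the Proposition follows. I would record this as a short paragraph rather than a long computation, since all the analytic input has already been supplied by Yu's result and by~\eqref{e:irrigation}.
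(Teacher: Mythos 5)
Your proposal is correct and follows essentially the same route as the paper: the paper deduces the proposition directly from the decomposition~\eqref{e:irrigation} together with Yu's injectivity result \cite[Corollary 4.2]{yu2014irregular}, exactly the summand-by-summand argument you spell out. Your additional care about the splitting commuting with $\nabla$ and the filtration is precisely what the paper asserts in the sentence preceding~\eqref{e:irrigation}, so nothing is missing.
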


Let $w_\sigma := w|_{T({\sigma})}$ and assume that the monomial support of $w_\sigma$ is the vertex set of a simplex. Then by Proposition \ref{p:adolphson-sperber} we see that there is an isomorphism of bigraded vector spaces,  $\mathbb{H}^*(T({\sigma}),\cOmega^\bullet_{T({\sigma})}(\log B(\sigma))(*P_\sigma)) \cong {\bf B}_{\tau_\sigma}\otimes \wedge^\bullet \kappa({\sigma})^\perp \wedge \mathrm{Vol}(L(\Delta(w_\sigma)))$. Combining this, we obtain a filtered quasi-isomorphism 
\begin{equation}\label{e:adolph-sperb}
\xi_{(c,\sigma)}:{\bf B}_{\tau_\sigma} \otimes \wedge^\bullet c^\perp \wedge \mathrm{Vol}(L(\Delta(w_\sigma))) \longrightarrow \Gamma(T({\sigma}),\Lambda_{(c,\sigma)}^\bullet).
\end{equation}
We  note that this expression no longer depends on the choice of splitting that we made above. Whenever $(c,\sigma) \prec (c',\sigma')$ there is a commutative diagram 
\begin{equation}\label{e:cd}
\begin{tikzcd}
{\bf B}_{\tau_\sigma}\otimes \wedge^\bullet c^\perp \wedge \mathrm{Vol}(L(\Delta(w_\sigma))) \ar[r, "\xi_{({c},\sigma)}"] \ar[d,"p_{(c,\sigma)}^{(c',\sigma')}\otimes \varpi"] & \Gamma(T({\sigma}),\Lambda_{(c,\sigma)}^\bullet) \ar[d,"\alpha"] \\
{\bf B}_{\tau_\sigma}\otimes \wedge^\bullet( c')^\perp \wedge \mathrm{Vol}(L(\Delta(w_{\sigma'}))) \ar[r, "\xi_{({c'},\sigma')}"]  & \Gamma(T({\sigma}'),\Lambda_{(c',\sigma')}^\bullet) 
\end{tikzcd}
\end{equation}
\begin{defn}
    The {\em refined tropical Jacobian sheaf} of the quasistable degeneration of Landau--Ginzburg models $(T(\Sigma), D, \pi, w_\varphi)$ is the sheaf $\tilde{\bf J}^i$ of $\mathbb{Q}$-filtered vector spaces  on $\mathsf{T}_\Sigma(\Sigma_{0,D}, w_\varphi)_0$ so that 
    \[
    \tilde{\bf J}^i(c,\sigma) = ({\bf B}_{\tau_\sigma} \otimes \wedge^{i-\dim \tau_\sigma} {c}^\perp \otimes \mathrm{Vol}(L(\tau_\sigma)), \mathsf{F}^\bullet).
    \]
\end{defn}
Note that the sheaf $\tilde{\bf J}^i$ is not identical to the sheaf ${\bf J}^i$, because ${\bf J}^i$ is a sheaf on $\mathsf{T}({\Sigma}_{0,D}, w_\varphi)_0$, while $\tilde{\bf J}^i$ is a sheaf on $\mathsf{T}_\Sigma({\Sigma}_{0,D}, w_\varphi)_0$. However, we have the following result.
\begin{proposition}\label{p:curry}
    There are filtered isomorphisms in cohomology: 
    \[
    (H^p(\mathsf{T}({\Sigma}_{0,D}, w_\varphi)_0,{\bf J}^i), \mathsf{F}^\bullet) \cong (H^p(\mathsf{T}_\Sigma(\Sigma_{0,D}, w_\varphi)_0,\tilde{\bf J}^i),\mathsf{F}^\bullet).    
    \]
\end{proposition}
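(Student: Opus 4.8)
The statement is a comparison between the tropical cohomology of two sheaves living on two different polyhedral complexes: ${\bf J}^i$ on the coarser complex $\mathsf{T}(\Sigma_{0,D},w_\varphi)_0$ and $\tilde{\bf J}^i$ on its refinement $\mathsf{T}_\Sigma(\Sigma_{0,D},w_\varphi)_0$, related by the refinement morphism $s : \mathsf{T}_\Sigma(\Sigma_{0,D},w_\varphi)_0 \to \mathsf{T}(\Sigma_{0,D},w_\varphi)_0$. The natural plan is to show that $\tilde{\bf J}^i$ is (filtered) isomorphic to the pullback $s^*{\bf J}^i$ in the appropriate sense, and then to invoke a Leray-type argument: since $s$ subdivides cells without changing the underlying topological space, $\mathbb{R}s_* s^*{\bf J}^i \cong {\bf J}^i$ and hence cohomology is preserved. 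Concretely, I would proceed as follows. First, recall from Section~\ref{s:thyper} that both complexes have the same underlying topological space (the closure of $\mathrm{PC}(w_\varphi)_0$ in the tropical toric variety), and $s$ is the identity on points, merely refining the cell structure; by Proposition~\ref{p:refined-complex} each cell $\mathsf{p}(c,\sigma)$ of the refinement maps into a unique cell of $\mathsf{T}(\Sigma_{0,D},w_\varphi)_0$ indexed by a pair $(c',\tau')$ with $\tau' = \tau_{\sigma}$ the dual simplex (the triangulation data is unchanged; only the tropical side is refined).

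Second, I would verify the stalk-level identification. For $(c,\sigma) \in \mathsf{S}_{\mathsf{T},\mathsf{D},0}$ sitting inside the cell $(c',\tau_\sigma)$ of $\mathsf{T}(\Sigma_{0,D},w_\varphi)_0$, we have by definition $\tilde{\bf J}^i(c,\sigma) = {\bf B}_{\tau_\sigma}\otimes \wedge^{i-\dim\tau_\sigma}c^\perp \wedge \mathrm{Vol}(L(\tau_\sigma))$ while ${\bf J}^i(c',\tau_\sigma) = {\bf B}_{\tau_\sigma}\otimes \wedge^{i-\dim\tau_\sigma}(c')^\perp \wedge \mathrm{Vol}(L(\tau_\sigma))$. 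The point is that when $s$ restricts $\mathsf{p}(c,\sigma)$ into the open star of $(c',\tau_\sigma)$, one has $c^\perp = (c')^\perp$ as the relevant linear algebra is governed by $\mathrm{rec}(\sigma)$ and $c'$, which by construction of $\Sigma_\varphi$ (Definition~\ref{d:sigma-varphi}) are refinements of the same recession data — so the open-star value of $s^*{\bf J}^i$ at $(c,\sigma)$ agrees with $\tilde{\bf J}^i(c,\sigma)$ on the nose, and the restriction maps $p\otimes\varpi$ are compatible via the commutative diagram~\eqref{e:cd} together with Lemma~\ref{l:linalg}. This gives $\tilde{\bf J}^i \cong s^{-1}{\bf J}^i$ as filtered tropical sheaves on $\mathsf{T}_\Sigma(\Sigma_{0,D},w_\varphi)_0$.

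Third, I would conclude using the cellular cohomology machinery of Section~\ref{s:tropsheaves}: both $\mathsf{T}(\Sigma_{0,D},w_\varphi)_0$ and $\mathsf{T}_\Sigma(\Sigma_{0,D},w_\varphi)_0$ are cell complexes (Proposition~\ref{p:cellcomplex} and the remark after Lemma~\ref{lem:simptrop}), so their sheaf cohomologies are computed by cellular complexes. A subdivision of a cell complex, together with the pullback of a cellular sheaf, computes the same cohomology — this is the standard fact that barycentric or star subdivisions are cohomology-preserving (equivalently, via Proposition~\ref{p:Cech-stars}, the open stars of $\mathsf{T}_\Sigma$ refine those of $\mathsf{T}$ into a Leray cover for ${\bf J}^i$, since the stalks of ${\bf J}^i$ on the relevant stars are the constant-type pieces computed above with vanishing higher cohomology). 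Applying this to $s^{-1}{\bf J}^i \cong \tilde{\bf J}^i$ yields the filtered isomorphism $H^p(\mathsf{T}(\Sigma_{0,D},w_\varphi)_0,{\bf J}^i) \cong H^p(\mathsf{T}_\Sigma(\Sigma_{0,D},w_\varphi)_0,\tilde{\bf J}^i)$, and since the filtration $\mathsf{F}^\bullet$ in both cases comes from the same grading on ${\bf B}_{\tau_\sigma}$, it is preserved. The main obstacle I anticipate is the careful bookkeeping in the second step — namely checking that the linear-algebra data $(c^\perp, \mathrm{Vol}(L(\tau_\sigma)))$ and the maps $\varpi$ genuinely match under the subdivision, since $\Sigma_\varphi$ refines $\Sigma_0$ in the fibre direction and one must confirm that this refinement does not disturb the perpendicular spaces appearing in the Jacobian sheaf (this is where Assumption~\ref{a:1-4} and the product structure in Lemma~\ref{lem:simptrop} are essential); once that compatibility is in hand, the cohomological conclusion is formal.
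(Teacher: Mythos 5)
Your proposal is correct and takes essentially the same route as the paper: identify $\tilde{\bf J}^i$ with the pullback $s^*{\bf J}^i$ along the subdivision $s:\mathsf{T}_\Sigma(\Sigma_{0,D},w_\varphi)_0\rightarrow\mathsf{T}(\Sigma_{0,D},w_\varphi)_0$ and then invoke invariance of poset/cellular sheaf cohomology under subdivision, which the paper obtains by citing Curry's Theorem 7.3.9 together with the naive pullback formula (Definition 5.1.3 of \cite{curry}) rather than your Leray-cover sketch. One simplification worth noting: the subdivision preserves the cone index, i.e. $s(c,\sigma)=(c,\tau_\sigma)$ with the \emph{same} cone $c$ (only the polyhedral direction is refined), so the stalks of $s^*{\bf J}^i$ and $\tilde{\bf J}^i$ agree tautologically and there is no need to argue that $c^\perp=(c')^\perp$ for a possibly different cone $c'$.
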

\begin{proof}
    In \cite[Theorem 7.3.9]{curry}, Curry shows that if $s : {\mathfrak{X}}'\rightarrow \mathfrak{X}$ is a subdivision of posets and $\mathbf{F}$ is a sheaf on $\mathfrak{X}$ and ${\bf F}' = s^*\mathbf{F}$ is its pullback then $H^*(\mathfrak{X}',{\bf F}') \cong H^*(\mathfrak{X},{\bf F})$. Since $s:\mathsf{T}_\Sigma(\Sigma_{0,D}, w_\varphi)_0 \rightarrow \mathsf{T}({\Sigma}_{0,D}, w_\varphi)_0$ is a subdivision of tropical polyhedral complexes, it induces a corresponding subdivision of posets. We need to show that $s^*{\bf J}^i = \tilde{\bf J}^i$.

    In \cite[Definition 5.1.3]{curry} and the following discussion, Curry explains that the naive formula for the pullback of sheaves on posets is in fact correct: $(s^*{\bf F})({\bm x}) = {\bf F}(s({\bm  x}))$ and $({s^*{\bf F}})({\bm{x}_1, \bm{x}_2}) = {\bf F}({s(\bm{x}_1),s(\bm{x}_2)})$. The result then follows by comparing the definitions of ${\bf J}^i$ and $\tilde{\bf J}^i$.
\end{proof}

\subsection{The tropical resolution of the twisted nearby cycles sheaf}\label{s:twistednearby}
Our next step is to construct a tropical resolution of the twisted relative de Rham complex $\Lambda^\bullet$ by using the tropical stratification. 

Following Proposition \ref{p:curry}, we start with a quasistable degeneration of Landau--Ginzburg models $(T(\Sigma), D, w_\varphi, \pi)$. We fix an orientation on $\mathrm{Trop}(\Sigma_{D,0})$. This orientation is obtained from a pair of orientations; an orientation on the cones of $\Sigma_0$ which we denote $[c:c']$, and an orientation on $\Sigma_{1}$ which we denote $[\sigma:\sigma']$. The total orientation is then obtained by $[(c,\sigma):(c',\sigma')] = [c:c'][\sigma:\sigma']$ where $[c:c], [\sigma:\sigma]$ are taken to be 1 and $(-1)$ respectively.
\begin{lemma}\label{p:tropresn-cpt}
    There is an $F_\mathrm{irr}$-filtered isomorphism of complexes;
    \[
    \Lambda^\bullet_{c} \cong {\bf s\Lambda}_c^\bullet := {\bf s}\left[\bigoplus_{\substack{\dim {\sigma} - \dim c = 0 \\ \mathrm{rec}(\sigma)=c}} \Lambda^\bullet_{(c,\sigma)}  \longrightarrow \bigoplus_{\substack{\dim {\sigma} - \dim c = 1 \\ \mathrm{rec}(\sigma)=c}} \Lambda^\bullet_{(c,\sigma)} \longrightarrow  \dots \right]
    \]
    where each map is a sum of the maps $[\sigma:\sigma']\cdot\alpha((c,\sigma),(c,\sigma'))$.
\end{lemma}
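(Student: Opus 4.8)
The statement is essentially a \v{C}ech-to-complex comparison. The plan is to exhibit both sides as complexes of sheaves on $T(c)$, construct an explicit filtered map between them, and check it is a quasi-isomorphism fibrewise (stalkwise) on $T(c)$, all the while tracking the irregular Hodge filtration. First I would fix a maximal cone containing $\tilde c$ and work in the toric coordinates $y_1,\dots,y_a,z_1,\dots,z_b$ introduced in Section~\ref{s:combdeg}, so that $\Lambda_c^\bullet$ has the concrete shape~\eqref{eq:trivsheaf} as $\mathcal{O}_{T(c)}(*P_c)\otimes[\wedge^\bullet\tilde c^\perp/(\wedge^{\bullet-1}\tilde c^\perp\wedge m_\pi)]$ and each $\Lambda^\bullet_{(c,\sigma)}$ similarly as in~\eqref{e:trivial}. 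The key combinatorial input is that the cells $\sigma$ with $\mathrm{rec}(\sigma)=c$ which contain a fixed point of the open stratum $T(c)_0$ are exactly the cells of a simplicial fan structure on $N_\mathbb R/\mathrm{span}(c)$ near that point — this is the local structure of $\mathsf{T}_\Sigma(\Sigma_{0,D},w_\varphi)_0$ described in Lemma~\ref{lem:simptrop} and Proposition~\ref{p:cellcomplex} — so the restriction maps $\alpha((c,\sigma),(c,\sigma'))$ together with the signs $[\sigma:\sigma']$ assemble into the \v{C}ech-type differential on ${\bf s\Lambda}_c^\bullet$.

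The map $\Lambda_c^\bullet \to {\bf s\Lambda}_c^\bullet$ is the obvious one: in degree $0$ of the simplicial direction it is the product of the restriction maps $\mathcal{O}_{T(c)}(*P_c)\to \prod_{\mathrm{rec}(\sigma)=c,\ \dim\sigma=\dim c}\mathcal{O}_{T(\sigma)}(*P_\sigma)$ tensored with the identity on the exterior-algebra factor (which, under the projection $\tilde c^\perp/m_\pi\cong c^\perp$, is literally constant along the simplicial direction since $\langle\tilde c,m_\pi\rangle=0$). To see this is a quasi-isomorphism, I would argue stalkwise over points of $T(c)$, where it reduces to the statement that for a fixed monomial/exterior form the simplicial cochain complex of the local fan of $\mathsf{T}_\Sigma(\Sigma_{0,D},w_\varphi)_0$ at that point — which is a cone, hence contractible in the relevant poset-cohomology sense — computes the constant sheaf, i.e. has cohomology concentrated in the bottom degree equal to the stalk of $\Lambda_c^\bullet$. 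This is exactly the same local computation underlying Proposition~\ref{p:Cech-stars} and the residue resolution of Proposition~\ref{p:residue-resolution2}; alternatively one can invoke that residue resolution directly, applied to the normal-crossings divisor inside $T(c)_0$ cut out by the tropical subdivision, to get acyclicity of ${\bf s\Lambda}_c^\bullet$ in positive simplicial degree.

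For the filtered statement, I would check that the comparison map respects $F_\mathrm{irr}^\bullet$ on the nose: by~\eqref{e:nbirreg} and the local description of $F_\mathrm{irr}^\lambda\Lambda^p_{(c,\sigma)}$ recalled just before~\eqref{e:irrigation}, the filtration on each term only sees the order of pole along $P$ and the cohomological degree, both of which are preserved by restriction to deeper strata; since the exterior-algebra factor $\wedge^{\bullet-\dim\tau_\sigma}c^\perp$ carries a Tate twist that is constant along fibres of $T(\sigma)\to$(point), the filtration degrees match term by term, and the total-complex filtration on ${\bf s\Lambda}_c^\bullet$ restricts to $F_\mathrm{irr}^\bullet\Lambda_c^\bullet$. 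The main obstacle I expect is purely bookkeeping: verifying that the signs $[\sigma:\sigma']$ from the chosen orientation genuinely turn the collection of restriction maps into a differential (i.e. $d^2=0$) and that the resulting complex is the total complex ${\bf s}(-)$ as claimed — this is where one must be careful that the indexing by $\dim\sigma-\dim c$ and the simplicial signs are compatible, and that no spurious contributions from cells with $\mathrm{rec}(\sigma)\supsetneq c$ sneak in. Once the sign conventions are pinned down the analytic content is light, being a standard acyclicity-of-a-cone argument.
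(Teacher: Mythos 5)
Your proposal is correct and takes essentially the same route as the paper: the paper's proof simply invokes the standard orbifold normal-crossings resolution \(\mathcal{O}_{T(c)_0}\to\bigoplus\iota_{T(\sigma)*}\mathcal{O}_{T(\sigma)}\to\cdots\) (exactly the fact your stalkwise monomial/contractibility argument reproves), identifies the strata as the \(T(\sigma)\) with \(\mathrm{rec}(\sigma)=c\), and then tensors with the locally free factor \(\mathcal{O}_{T(c)}(kP_c)\otimes\wedge^\bullet c^\perp\), letting \(k\to\infty\), which also yields the filtered statement via \eqref{eq:trivsheaf}. Two cosmetic remarks only: the source of your degree-zero map should be \(\mathcal{O}_{T(c)_0}(*P_c)\) (i.e.\ \(\Lambda^\bullet_c\) itself) rather than \(\mathcal{O}_{T(c)}(*P_c)\), which would not be injective, and the suggested shortcut via Proposition \ref{p:residue-resolution2} is not quite apt there, since that resolution is built from residue maps rather than the restriction maps \(\alpha\) appearing in this lemma.
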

\begin{proof}
    For any orbifold normal crossings variety $X = \cup_{i \in  I} X_i$ there is a  resolution of sheaves 
    \[
    \mathcal{O}_X \longrightarrow \bigoplus_{|I| = 1} \mathcal{O}_{X_I} \longrightarrow \bigoplus_{|I| = 2} \mathcal{O}_{X_I}      \longrightarrow \dots 
    \]
    where the morphisms making up the complex are signed-sums of pullbacks, where the signs obey a signed incidence relation. The variety $T(c)_0$ has orbifold normal crossings with irreducible components given by $T({\sigma})$ where $\sigma$ are cells in $\mathrm{PC}(w_\varphi,\Sigma_0)$ whose recession cone is $c$. Thus we have a resolution of complexes 
    \[
    \mathcal{O}_{T(c)_0} \longrightarrow \bigoplus_{\substack{\dim {\sigma} - \dim c = 0 \\ \mathrm{rec}(\sigma)= c}} \iota_{T(\sigma)*}\mathcal{O}_{T({\sigma})}    \longrightarrow \bigoplus_{\substack{\dim {\sigma} - \dim c = 1 \\ \mathrm{rec}(\sigma)=c}} \iota_{T(\sigma)*}\mathcal{O}_{T({\sigma})}    \longrightarrow  \dots 
    \]
    Taking the tensor product with the vector bundle  $\mathcal{O}_{T(c)}(kP_c) \otimes ( \wedge^\bullet {c}^\perp)$ preserves exactness and $(\iota_{T(\sigma)*} \mathcal{O}_{T(\sigma)}) \otimes \mathcal{O}_{T(c)}(kP_c) \cong  \iota_{T(\sigma)*}\mathcal{O}_{T({\sigma})}(kP_\sigma)$. The required result follows from these facts and \eqref{eq:trivsheaf} after taking the limit as $k\rightarrow\infty$.
\end{proof}

The tensor product $\Lambda^\bullet_{c} \otimes \mathcal{O}_{T(c')}$ where $c\subseteq c'$ commutes with the resolution in Proposition \ref{p:tropresn-cpt}. Note that $\Lambda_{(c,\sigma)}^\bullet \otimes \mathcal{O}_{T(c')} \cong \Lambda_{(c,\sigma+c')}$ where $\sigma+c'$ denotes the Minkowski sum of $\sigma$ and $c'$, if that Minkowski sum is a cell of $\mathrm{PC}(w_\varphi,\Sigma_0)$, and zero otherwise. The recession cone of $\sigma + c'$ is $c'$ if the recession cone of $\sigma$ is $c$. Thus we get a resolution,
\[
    \Lambda^\bullet_{c}\otimes \mathcal{O}_{T(c')} \longrightarrow \bigoplus_{\substack{\dim {\sigma} - \dim c' = 0 \\ \mathrm{rec}(\sigma) = c' \\ c\subseteq c'}} \Lambda^\bullet_{(c,\sigma)}  \longrightarrow \bigoplus_{\substack{\dim {\sigma} - \dim c' = 1 \\ \mathrm{rec}(\sigma) = c'\\c\subseteq c'}} \Lambda^\bullet_{(c,\sigma)} \longrightarrow  \dots 
\]
From the~\eqref{eq:trivsheaf}  we may deduce that 
\begin{equation}\label{e:nearbyf}
    \Lambda_{c}^\bullet \otimes \mathcal{O}_{T(c')}  \cong \mathcal{O}_{T(c')}(*P_{c'}) \otimes \wedge^\bullet c^\perp.
\end{equation}
So for $c_1 \subseteq c_2\subseteq  c'$ there are natural morphisms obtained from the inclusion $c_2^\perp \subseteq c_1^\perp$;
\[
\alpha(c_2,c_1):\Lambda_{c_2}^\bullet\otimes \mathcal{O}_{T(c')} \longrightarrow \Lambda_{c_1}^\bullet\otimes \mathcal{O}_{T(c')}.
\]
As noted above, there are also morphisms;
\[
\alpha((c_2,\sigma),(c_1,\sigma)) : \Lambda^\bullet_{(c_2,\sigma)} \longrightarrow \Lambda^\bullet_{(c_1,\sigma)}.
\]
Choosing compatible signed incidence relations we obtain commutative diagrams of complexes;
\[
\begin{tikzcd}
\Lambda_{c_2}^\bullet \otimes \mathcal{O}_{T(c')} \ar[r] \ar[d,"\alpha({c_1,c_2})"] & \bigoplus_{\dim\sigma - \dim c'=0}\Lambda^\bullet_{(c_2,\sigma)} \ar[r] \ar[d,"\bigoplus_{\dim\sigma -\dim c' = 1}\alpha"] & \dots \\
\Lambda_{c_1}^\bullet \otimes \mathcal{O}_{T(c')} \ar[r]  & \bigoplus_{\dim\sigma - \dim c'=0}\Lambda^\bullet_{(c_1,\sigma)} \ar[r]  & \dots
\end{tikzcd}
\]
 
\begin{lemma}\label{l:lesresn}
    For any choice of signed incidence relation between the subcones of $c' \in  \Sigma_0$ there is a $F_\mathrm{irr}$-filtered long exact sequence of complexes of sheaves
\begin{equation}\label{e:res-resn}
   \dots \longrightarrow \bigoplus_{\substack{c\subseteq c' \\ \dim c=1}} \Lambda_{c}^\bullet\otimes \mathcal{O}_{T(c')}\longrightarrow  \Lambda^\bullet_{0_N}\otimes \mathcal{O}_{T(c')} \longrightarrow \Lambda^{\bullet-d+\dim c'}_{c'} \longrightarrow 0 
\end{equation}
where the filtration $F_\mathrm{irr}$ on the rightmost term is shifted by $-(d-\dim c')$.
\end{lemma}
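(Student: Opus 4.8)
The strategy is to combine the two resolutions already at hand: the ``exterior algebra'' Koszul-type resolution of $\cOmega^\bullet_{T(c')/\mathbb{A}^1}(\log B(c'))$ by the sheaves $\Lambda^\bullet_c\otimes\mathcal{O}_{T(c')}$ coming from the stratification by subcones $c\subseteq c'$, and the simplicial resolution of $\Lambda^\bullet_{c'}$ by the sheaves $\Lambda^\bullet_{(c',\sigma)}$ provided by Lemma \ref{p:tropresn-cpt}. Concretely, first I would note that \eqref{e:nearbyf} identifies $\Lambda^\bullet_c\otimes\mathcal{O}_{T(c')}$ with $\mathcal{O}_{T(c')}(*P_{c'})\otimes\wedge^\bullet c^\perp$, so that the maps $\alpha(c_2,c_1)$ are, fibrewise, induced by the inclusions $c_2^\perp\subseteq c_1^\perp$. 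Since $c'$ is simplicial, the poset of subcones $c\subseteq c'$ is the face poset of a simplex on $\dim c'$ vertices, and the chains $\wedge^\bullet c^\perp$ indexed by this poset, with the inclusion maps and a signed incidence relation, form precisely the augmented chain complex of that simplex tensored with the appropriate top exterior power; that is the standard computation showing that the complex
\[
0 \longrightarrow \wedge^\bullet (0_N)^\perp \longrightarrow \bigoplus_{\dim c = 1,\, c\subseteq c'} \wedge^\bullet c^\perp \longrightarrow \cdots \longrightarrow \wedge^\bullet (c')^\perp \longrightarrow 0
\]
is exact up to a degree shift of $d-\dim c'$ (the difference between $\dim(0_N)^\perp = d$ and $\dim (c')^\perp = d - \dim c'$), and that the cohomology sits in the last spot with value $\wedge^\bullet(c')^\perp$. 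This is where the grading shift $-(d-\dim c')$ in the statement comes from. Tensoring this exact complex of finite-dimensional graded vector spaces with the flat sheaf $\mathcal{O}_{T(c')}(*P_{c'})$ preserves exactness.

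Second, I would upgrade this from sheaves to complexes of sheaves: each term $\Lambda^\bullet_c\otimes\mathcal{O}_{T(c')}$ carries the twisted differential $\nabla$ and the irregular Hodge filtration, and the maps $\alpha(c_2,c_1)$ commute with $\nabla$ and are strict for $F_\mathrm{irr}$ — this is the content of the commutative diagrams displayed just before the lemma, together with the observation that under \eqref{e:nearbyf} the filtration $F^\lambda_\mathrm{irr}$ on $\Lambda^p_c\otimes\mathcal{O}_{T(c')}$ is (as in \eqref{e:irrigation}) a direct sum of shifted copies of $F^{\lambda-q}_\mathrm{irr}$ on $\mathcal{O}_{T(c')}(\lfloor(p-q-\lambda+q)P_{c'}\rfloor)\otimes\wedge^q(c^\perp/\kappa^\perp)$, so that the inclusions $c_2^\perp\subseteq c_1^\perp$ visibly respect the filtration. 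Since exactness of a complex of filtered vector spaces whose maps are strict can be checked on each graded piece, and since the graded pieces recover exactly the simplicial computation above (now with $\mathcal{O}_{T(c')}$ coefficients, after restriction), the augmented complex
\[
\cdots \longrightarrow \bigoplus_{\substack{c\subseteq c' \\ \dim c = 1}}\Lambda^\bullet_c\otimes\mathcal{O}_{T(c')} \longrightarrow \Lambda^\bullet_{0_N}\otimes\mathcal{O}_{T(c')} \longrightarrow \Lambda^{\bullet-d+\dim c'}_{c'}\longrightarrow 0
\]
is a filtered exact sequence of complexes of sheaves on $T(c')$. The last map is the composite of the augmentation $\Lambda^\bullet_{0_N}\otimes\mathcal{O}_{T(c')}\to\Lambda^\bullet_{c'}\otimes\mathcal{O}_{T(c')}$ (which is just restriction and the identity on $\wedge^\bullet(c')^\perp$) followed by the identification $\Lambda^\bullet_{c'}\otimes\mathcal{O}_{T(c')}\cong\Lambda^\bullet_{c'}$ — here one uses that $T(c')_0\subseteq T(c')$ and the definitions of $\Lambda^\bullet_{c'}$ — all compatibly with $\nabla$ by the diagrams preceding the lemma.

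\textbf{Main obstacle.} The routine part is the Koszul/simplex bookkeeping; the point requiring genuine care is \emph{strictness} of the filtration maps and hence the legitimacy of checking exactness graded-piece by graded-piece, because $\mathcal{O}_{T(c')}$ fails to be flat over $\mathcal{O}_{T(\Sigma)}$ (this is exactly the subtlety flagged around Proposition \ref{p:locslog}). I would handle this precisely as in the proof of Proposition \ref{p:locslog}: work locally on stalks, pass to orbifold charts and take $G$-invariants (an exact functor in characteristic $0$), choose local monomial coordinates in which $\pi$, $w_\varphi$, $P_{c'}$ and the divisors cutting out $T(c')$ from $T(\Sigma_0)$ are monomials, and observe that in these coordinates $F^\lambda_\mathrm{irr}\Lambda^\bullet_c\otimes\mathcal{O}_{T(c')}\hookrightarrow\Lambda^\bullet_c\otimes\mathcal{O}_{T(c')}$ is an inclusion of direct summands of a free module over $\mathbb{C}\{\underline{x},\underline{y},\underline{z}\}$, so restriction to $\mathcal{O}_{T(c')}$ remains injective and the associated graded of the whole augmented complex is literally the simplex chain complex tensored with a free coefficient ring. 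Once strictness is in place, the long exact sequence follows and the $F_\mathrm{irr}$-shift is read off from the dimension count $d - \dim c'$ as above.
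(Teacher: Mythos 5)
Your overall strategy coincides with the paper's: its proof likewise applies \eqref{e:nearbyf} to strip off the common factor $\mathcal{O}_{T(c')}(*P_{c'})$ and reduces \eqref{e:res-resn} to exactness of the constant-coefficient complex of exterior powers $\wedge^\bullet c^\perp$ with differentials the signed inclusions $c^\perp \subseteq (c'')^\perp$, which it settles "by a dimension count"; your monomial-by-monomial identification with the augmented chain complex of the simplex on the rays of $c'$ is that same elementary verification made explicit, and your stalkwise strictness discussion is extra care that becomes largely unnecessary once \eqref{e:nearbyf} is invoked, since all maps are then $\mathcal{O}_{T(c')}(*P_{c'})$-linear with constant coefficients and filtered exactness can be checked degree by degree.

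The one genuinely quantitative part of the lemma — the degree and filtration shift on the last term — is where your write-up does not go through as stated. First, your displayed auxiliary complex runs the wrong way: the maps $\alpha(c_2,c_1)$ exist for $c_1 \subseteq c_2$ and are induced by $c_2^\perp \subseteq c_1^\perp$, so the complex runs from $\wedge^\bullet (c')^\perp$ down through the faces to $\wedge^\bullet M$, and the shifted copy of $\wedge^\bullet (c')^\perp$ enters as the cokernel of the final map (the augmentation), not as the target of a chain of inclusions beginning at $\wedge^\bullet (0_N)^\perp$. Second, your parenthetical justifying the shift is internally inconsistent: the difference between $\dim (0_N)^\perp = d$ and $\dim (c')^\perp = d - \dim c'$ is $\dim c'$, not $d - \dim c'$. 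Indeed, if you carry out your own simplex argument (choose $f_1,\dots,f_{\dim c'}$ dual to the rays of $c'$ and decompose by monomials), you find that in ambient degree $p$ the cokernel of $\bigoplus_{\dim c = 1}\wedge^p c^\perp \rightarrow \wedge^p M$ is spanned by the monomials divisible by $f_1\wedge\dots\wedge f_{\dim c'}$, i.e.\ it is $\wedge^{p-\dim c'}(c')^\perp$ twisted by a determinant line, of dimension $\binom{d-\dim c'}{p-\dim c'}$ — a shift by $\dim c'$, not by $d-\dim c'$. So either your argument must be supplemented by an explicit identification (for instance a volume-form contraction in the spirit of Lemma \ref{l:linalg}) converting this cokernel into the form asserted in the statement, or you must record the shift your computation actually produces and reconcile it with the statement and with its use in Lemma \ref{l:bigres}; as written, the shift is asserted rather than derived, and the justification offered for it is arithmetically wrong.
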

\begin{proof}
    Applying \eqref{e:nearbyf}, this reduces to the fact that there is a long exact sequence induced by signed inclusions
    \[
        \dots \longrightarrow \bigoplus_{\substack{c\subseteq c' \\ \dim c = 2}} \wedge^\bullet c^\perp \longrightarrow \bigoplus_{\substack{c\subseteq c' \\ \dim c=1}} \wedge^\bullet c^\perp  \longrightarrow \wedge^\bullet M \longrightarrow \wedge^{\bullet-d+\dim c'} (c')^\perp \longrightarrow 0.
    \]
    In particular, for $m\in c^\perp$, $d(m) = \oplus_{c''\subseteq c} [c:c'']i_{c''}(m)$ where $i_{c''}$ denotes inclusion $c^\perp \subseteq (c'')^\perp$. The fact that this sequence is a complex is then a consequence of the signed incidence relation. Exactness follows by a dimension count.
\end{proof}
Now suppose we have $c_1 \subseteq c_2$ of codimension $1$, then there is a well-defined residue morphism described in Section \ref{s:nbf}:
\[
\mathrm{Res}_{c_1,c_2}: \Lambda_{c_1}^\bullet \longrightarrow \Lambda_{c_2}^\bullet.
\]
Under the identifications in \eqref{e:nearbyf}, and letting $c_2 = c_1 + \mathbb{R}_{\geq 0}\rho$ for some primitive $\rho$, we may identify the residue map with a combination of restriction and contraction:
\begin{align*}
\mathrm{Res}_{c_1,c_2} : \mathcal{O}_{T(c_1)_0}(*P_{c_1}) \otimes\wedge^\bullet c_1^\perp & \longrightarrow \mathcal{O}_{T(c_2)_0}(*P_{c_2})\otimes\wedge ^\bullet c_2^\perp   \\
g \otimes \omega & \longmapsto g|_{T(c_2)_0} \otimes (\wedge \rho)^*(\omega).
\end{align*}
This map extends to a morphism between the complexes in \eqref{e:res-resn}. 
\[
\begin{tikzcd}
\dots \ar[r] &  \oplus_{\substack{c'\subseteq c_1\\ \dim c' =1}} \Lambda_{c'}^\bullet\otimes \mathcal{O}_{T(c_1)}\ar[d] \ar[r] & \Lambda_{0_N}^\bullet\otimes \mathcal{O}_{T(c_1)} \ar[r]\ar[d] & \Lambda_{c_1}^{\bullet- d+\dim c_1} \ar[r]\ar[d,"\mathrm{Res}_{c_1,c_2}"] & 0 \\
\dots \ar[r] &  \oplus_{\substack{c'\subseteq c_2\\ \dim c' =1}} \Lambda_{c'}^\bullet\otimes \mathcal{O}_{T(c_2)} \ar[r] & \Lambda_{0_N}^\bullet\otimes \mathcal{O}_{T(c_2)} \ar[r] & \Lambda_{c_2}^{\bullet-d+\dim c_2} \ar[r] & 0 
\end{tikzcd}
\]
Here, the vertical maps are direct sums of morphisms 
\begin{equation}\label{eq:resmaps2}
\bigoplus_{c'\subseteq c_1\subseteq c_2} R_{c_1,c_2}^{c'} : \bigoplus_{\substack{c'\subseteq c_1 \\ \dim c' = i}}\Lambda_{c'}^\bullet\otimes \mathcal{O}_{T(c_1)} \longrightarrow \bigoplus_{\substack{c'\subseteq c_2 \\ \dim c' = i}}\Lambda_{c'}^\bullet\otimes \mathcal{O}_{T(c_2)}
\end{equation}
is a direct sum of morphisms 
\[
R_{c_1,c_2}^{c'} : \Lambda_{c'}^\bullet\otimes \mathcal{O}_{T(c_1)} \longrightarrow \Lambda_{c'}^\bullet\otimes \mathcal{O}_{T(c_2)}
\]
which are simply restriction of functions in the second coordinate. We obtain the following result by applying the residue resolution (Proposition \ref{p:residue-resolution2}) after taking the tensor product with $\mathcal{O}_T$.
\begin{lemma}\label{l:bigres}
    There is a triple complex of sheaves which is $F_\mathrm{irr}$-filtered quasi-isomorphic to $\Lambda^\bullet$ which is depicted in Figure \ref{eq:resol}. The horizontal morphisms are direct sums of morphisms described in \eqref{e:res-resn} and the vertical morphisms are induced by a direct sum of the residue morphisms \eqref{eq:resmaps2} taken with respect to a signed incidence relation.
\end{lemma}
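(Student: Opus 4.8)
\textbf{Proof strategy for Lemma \ref{l:bigres}.}

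The plan is to assemble the ``triple complex'' in Figure \ref{eq:resol} by stacking three resolutions constructed in the preceding lemmas, each of which is an $F_\mathrm{irr}$-filtered quasi-isomorphism, and then argue that the total complex of the resulting $3$-dimensional array is filtered quasi-isomorphic to $\Lambda^\bullet$. Concretely, I would proceed in the following order. First, apply Proposition \ref{p:residue-resolution2} with $D' = D \cup X_0$: since $X_0 = \pi^{-1}(0)$ is a union of vertical divisors and $D$ is a union of horizontal divisors containing $P_\mathrm{red}$, the divisor $X_0$ plays the role of the ``$E$''-part, and we obtain a filtered resolution of $\Lambda^\bullet = \cOmega^\bullet_{T(\Sigma)/\mathbb{A}^1}(\log D\cup X_0)(*P)\otimes \mathcal{O}_{X_0}$ by the complexes $\bigoplus_{|I|=m}\cOmega^{\bullet-m}_{E_I/\mathbb{A}^1}(\log D_I\cup E_{I,0})(*P_I)\otimes\mathcal{O}_{X_0}$, where the $E_I$ run over intersections of vertical divisors, i.e.\ over the toric strata $T(c')_0$ of $T_0$ indexed by cones $c'$ of $\Sigma_0$. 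This gives the first ``direction'' of the array: the complexes $\Lambda^\bullet_{c'}\otimes\mathcal{O}_{T(c'')}$ assembled via the residue maps $R^{c'}_{c_1,c_2}$ of \eqref{eq:resmaps2}, wedge-summed over all cones of $\Sigma_0$ with their signed incidence relation. Up to a reindexing of the two ambient cones ($c'$ versus the stratum $c''$ on which we restrict), this is exactly the vertical direction displayed in Figure \ref{eq:resol}.

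Second, for each fixed $c'$, I would invoke Lemma \ref{l:lesresn}: for each cone $c'$ of $\Sigma_0$ there is an $F_\mathrm{irr}$-filtered exact sequence $\cdots \to \bigoplus_{c\subseteq c',\dim c=1}\Lambda^\bullet_c\otimes\mathcal{O}_{T(c')} \to \Lambda^\bullet_{0_N}\otimes\mathcal{O}_{T(c')} \to \Lambda^{\bullet-d+\dim c'}_{c'}\to 0$, with the filtration on the last term shifted by $-(d-\dim c')$. This exhibits $\Lambda^{\bullet-d+\dim c'}_{c'}$ (up to shift) as a filtered quasi-isomorphic replacement by the complex $\bigoplus_{c\subseteq c'}\Lambda^\bullet_c\otimes\mathcal{O}_{T(c')}$, the ``horizontal'' direction of the array. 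Third, for each pair $(c,c')$ with $c\subseteq\mathrm{rec}(\sigma)$, apply Lemma \ref{p:tropresn-cpt} (and the remarks immediately following it, giving $\Lambda^\bullet_c\otimes\mathcal{O}_{T(c')}\cong{\bf s}\bigl[\bigoplus_{\dim\sigma-\dim c'=\bullet,\ \mathrm{rec}(\sigma)=c'}\Lambda^\bullet_{(c,\sigma)}\bigr]$): this resolves $\Lambda^\bullet_c\otimes\mathcal{O}_{T(c')}$ by the sheaves $\Lambda^\bullet_{(c,\sigma)}$ indexed by cells $\sigma$ of $\mathrm{PC}(w_\varphi,\Sigma_0)$, the third direction. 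Combining these three resolutions gives a $3$-fold array whose entries are the $\Lambda^\bullet_{(c,\sigma)}$ (with appropriate shifts), with structure maps assembled from $\alpha((c_1,\sigma),(c_2,\sigma))$, from the residue restriction maps $R^{c'}_{c_1,c_2}$, and from the wedge-inclusion maps of Lemma \ref{l:lesresn}.

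Finally I would verify, via the standard iterated-cone argument, that because each of the three resolutions is a filtered quasi-isomorphism and the filtrations are all induced from $F_\mathrm{irr}$, the total complex of the triple complex is $F_\mathrm{irr}$-filtered quasi-isomorphic to $\Lambda^\bullet$: one collapses the array one direction at a time, each collapse being a filtered quasi-isomorphism by the relevant lemma, using that wedge-tensoring with a vector bundle $\mathcal{O}_{T(c')}(\lfloor(p-\lambda)P\rfloor)\otimes\wedge^\bullet(\text{lattice})$ is exact and preserves the filtration level-by-level (as in the proofs of Propositions \ref{p:residue-resolution2} and \ref{p:yu}). The main obstacle I anticipate is purely bookkeeping rather than conceptual: keeping the various shifts consistent --- the cohomological shifts $[-m]$, $[-\dim c' + \ldots]$, $[-\dim\sigma + \dim c']$ from the three directions, together with the Tate-type filtration shifts $-(d-\dim c')$ from Lemma \ref{l:lesresn} and the $\lfloor(p-\lambda)P\rfloor$ twists --- must be arranged so that they match the indexing in Figure \ref{eq:resol}, and one must check that the three families of structure maps pairwise anticommute (equivalently, commute after the chosen sign conventions from compatible signed incidence relations on $\Sigma_0$, on $\Sigma_1$, and on the dual complex of the horizontal divisors $D$). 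Once the signs are fixed compatibly --- which is possible since all three index sets carry simplicial or fan structures admitting signed incidence relations --- the commutativity of the relevant squares has already been recorded in the diagrams preceding Lemmas \ref{l:lesresn} and \ref{l:bigres}, so the argument closes.
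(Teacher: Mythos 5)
There is a genuine gap, and it sits in your very first step: you apply Proposition \ref{p:residue-resolution2} with $D'=D\cup X_0$, letting the central fibre $X_0$ play the role of the divisor $E$ whose components are stripped off by residues. This is not how the lemma is proved, and it cannot work as stated. First, residues along components of $X_0$ are not defined on the \emph{relative} complex $\cOmega^\bullet_{X/\mathbb{A}^1}(\log D\cup X_0)(*P)$: one works modulo $d\log\pi$, and adding a multiple of $d\log\pi=\sum d\log y_i$ changes the would-be residue along a vertical component, so the maps $\mathrm{Res}_m$ of Proposition \ref{p:residue-resolution2} do not descend in that direction. Second, the hypotheses of Proposition \ref{p:residue-resolution2} require each $(E_I,D_I,w_I,\pi|_{E_I})$ to be a quasi-stable degeneration, which fails when $E_I\subseteq X_0$ (there $\pi$ restricts to the constant $0$). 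Third, even at the level of indexing your identification is off: intersections of \emph{vertical} divisors are the strata $T(\sigma)$ indexed by cells $\sigma$ of $\mathrm{PC}(w_\varphi,\Sigma_0)$, not the strata $T(c')_0$ indexed by cones $c'$ of $\Sigma_0$, which are intersections of \emph{horizontal} divisors with $T_0$; so your first direction would not produce the entries $\Lambda^\bullet_c\otimes\mathcal{O}_{T(c')}$ of Figure \ref{eq:resol} at all. The correct input is the residue resolution along the horizontal toric divisors \emph{not} contained in $D$, i.e.\ those corresponding to the rays of $\Sigma_{0,D}$: their intersections are exactly the $T(c')$, $c'\in\Sigma_{0,D}$, and after tensoring with $\mathcal{O}_{T_0}$ the terms become the complexes $\Lambda^{\bullet-\dim c'}_{c'}$ forming the left-hand entries of the figure, with differentials the residue maps $\mathrm{Res}_{c_1,c_2}$; each $\Lambda^\bullet_{c'}$ is then replaced by its row \eqref{e:res-resn} from Lemma \ref{l:lesresn}, compatibly with the restriction maps $R^{c'}_{c_1,c_2}$ of \eqref{eq:resmaps2} via the commutative diagram preceding the lemma.

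A secondary point: your third direction (resolving each $\Lambda^\bullet_c\otimes\mathcal{O}_{T(c')}$ by the $\Lambda^\bullet_{(c,\sigma)}$ via Lemma \ref{p:tropresn-cpt}) is not part of this lemma. The triple complex of Figure \ref{eq:resol} has as its three gradings the internal de Rham degree $\bullet$, the direction $\dim c'$ (residue/restriction maps), and the direction $\dim c$ (the maps of \eqref{e:res-resn}); the $\sigma$-resolution is spliced in only afterwards, in the passage from Lemma \ref{l:bigres} to Proposition \ref{lem:res-irregular}, and the decomposition of $T_0$ into its components is handled there by the structure-sheaf (Mayer--Vietoris type) resolution of Lemma \ref{p:tropresn-cpt}, not by residues. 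Your concluding iterated-cone and sign-bookkeeping remarks are fine in spirit, but they only close the argument once the first direction is set up along the correct (horizontal, complementary to $D$) divisors.
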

\begin{figure}

    \begin{tikzcd}
          & & & \Lambda_{0_N}^\bullet \ar[d] \\
         & & \bigoplus_{\substack{\dim c'=1}} \Lambda^\bullet_{c'} \ar[r]\ar[d] & \bigoplus_{\dim c'=1} \Lambda_{0_N}^\bullet \otimes \mathcal{O}_{T(c')}  \ar[d]\\ 
          & \bigoplus_{\substack{\dim c' = 2}} \Lambda^\bullet_{c'} \ar[r] \ar[d] & \bigoplus_{\substack{\dim c' = 2 \\ \dim c=1 \\ c\subseteq c'}} \Lambda^\bullet_{c} \otimes \mathcal{O}_{T(c')}  \ar[r]\ar[d] & \bigoplus_{\dim c' =2} \Lambda_{0_N}^\bullet \otimes \mathcal{O}_{T(c')} \ar[d]  \\
          \dots \ar[r] & \bigoplus_{\substack{\dim c'  =3 \\ \dim c = 2\\ c \subseteq c'}} \Lambda^\bullet_{c} \otimes \mathcal{O}_{T(c')} \ar[r] \ar[d] & \bigoplus_{{\substack{\dim c'=3 \\ \dim c=1 \\ c\subseteq c'}}} \Lambda_{c}^\bullet \otimes \mathcal{O}_{T(c')} \ar[r] \ar[d] & \bigoplus_{\dim c' =3} \Lambda_{0_N}^\bullet \otimes \mathcal{O}_{T(c')}  \ar[d] \\
           & \vdots & \vdots & \vdots & 
    \end{tikzcd}
    \caption{}\label{eq:resol}
    \end{figure}

    In particular, after a shift, the resolution described in  Lemma \ref{l:bigres} is the total complex of a double complex of the form
    \[
    \bigoplus_{\substack{\dim c' = \dim c \\ c\subseteq c'}} \Lambda_{c}^\bullet \otimes \mathcal{O}_{T(c')} \longrightarrow \bigoplus_{\substack{\dim c' = \dim c +1 \\ c\subseteq c'}} \Lambda_{c}^\bullet \otimes \mathcal{O}_{T(c')} \longrightarrow\bigoplus_{\substack{\dim c' = \dim c+2 \\ c\subseteq c'}} \Lambda_{c}^\bullet \otimes \mathcal{O}_{T(c')} \longrightarrow \dots
    \]
    Finally, each filtered complex object appearing in Figure \ref{eq:resol} may be replaced by the filtered complex ${\bf s\Lambda}^\bullet_{c}\otimes \mathcal{O}_{T(c')}$. This then becomes a complex whose $p$-th term is of the form
    \[
        \bigoplus_{a+b=p}\bigoplus_{\substack{\dim c' = \dim c +a \\\dim {\sigma} - \dim c' = b \\ \mathrm{rec}(\sigma) = c'\\c\subseteq c'}} \Lambda^\bullet_{(c,\sigma)} 
    \]
    The sum is taken over all possible $c,c'$ and $\sigma$ satisfying the given numerical conditions and so that $\mathrm{rec}(\sigma) \in \Sigma_{D,0}$. The expression above simplifies to 
    \[
    \bigoplus_{\substack{  (c,\sigma) \in  \mathsf{S}_{\mathsf{T},\mathsf{D}}^\mathrm{cpt}\\ \dim \sigma - \dim c = p }}\Lambda^\bullet_{(c,\sigma)}
    \]
    Taking care to keep track of the morphisms and signs appearing in this total complex, this proves the following result.
\begin{proposition}\label{lem:res-irregular}
    There is a $F_\mathrm{irr}^\bullet$-filtered resolution of $\Lambda^\bullet$ given by the total complex of the following double complex;
    \begin{equation*}
        \bigoplus_{\substack{(c,\sigma) \in \mathsf{S}_{\mathsf{T},\mathsf{D}}^\mathrm{cpt} \\ \dim \sigma - \dim c = 0}} \Lambda^\bullet_{(c,\sigma)} \longrightarrow \bigoplus_{\substack{(c,\sigma) \in \mathsf{S}_{\mathsf{T},\mathsf{D}}^\mathrm{cpt} \\ \dim \sigma - \dim c = 1}} \Lambda^\bullet_{(c,\sigma)} \longrightarrow \bigoplus_{\substack{(c,\sigma) \in \mathsf{S}_{\mathsf{T},\mathsf{D}}^\mathrm{cpt} \\ \dim \sigma - \dim c = 2}} \Lambda^\bullet_{(c,\sigma)} \longrightarrow \dots 
        \end{equation*}
        There is a choice of signed incidence relation on $\mathsf{S}_{\mathsf{T},\mathsf{D},0}$ so that the $i$th differential of this complex is the matrix whose entries are 
        \[
        [(c,\sigma): (c',\sigma')]\cdot \alpha((c,\sigma),(c',\sigma')).
        \]
        for all pairs $(c,\sigma)$ so that $\dim \sigma - \dim c =i$ and $\dim\sigma' -\dim c' = i+1$.
\end{proposition}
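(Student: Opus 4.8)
The plan is to assemble Proposition \ref{lem:res-irregular} from the three resolutions proved above — Lemma \ref{p:tropresn-cpt}, Lemma \ref{l:lesresn}, and the residue resolution Proposition \ref{p:residue-resolution2} — by a standard ``totalize the triple complex'' argument, keeping careful track of the $F_\mathrm{irr}$-filtration at each stage. First I would recall the triple complex displayed in Figure \ref{eq:resol}: its three directions are (i) the residue/Steenbrink direction indexed by $\dim c$, coming from Proposition \ref{p:residue-resolution2} applied after $\otimes\,\mathcal{O}_T$; (ii) the ``combinatorial inclusion of subcones of a fixed maximal $c'$'' direction indexed by $\dim c' - \dim c$, coming from Lemma \ref{l:lesresn}; and (iii) the normal-crossings direction indexed by $\dim\sigma - \dim c'$, coming from Lemma \ref{p:tropresn-cpt} once each entry $\Lambda_c^\bullet\otimes\mathcal{O}_{T(c')}$ is replaced by the quasi-isomorphic complex ${\bf s}\Lambda_c^\bullet\otimes\mathcal{O}_{T(c')}$. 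Since each of these three resolutions is $F_\mathrm{irr}$-filtered and filtered quasi-isomorphisms are stable under totalization of (locally finite) multicomplexes, the total complex of Figure \ref{eq:resol} is $F_\mathrm{irr}$-filtered quasi-isomorphic to $\Lambda^\bullet$. This is exactly the content of Lemma \ref{l:bigres}, so most of the work is already done; the present proposition is the bookkeeping step that identifies the $p$-th term of this total complex.

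The key computational step is the index reshuffle. In the total complex, the degree-$p$ term is the direct sum over all triples $(c,c',\sigma)$ with $c\subseteq c'$, $\mathrm{rec}(\sigma)=c'$, $\mathrm{rec}(\sigma)\in\Sigma_{D,0}$, and $(\dim c' - \dim c) + (\dim\sigma - \dim c') = p$, of the summand $\Lambda^\bullet_{(c,\sigma)}$. The constraint telescopes: $\dim\sigma - \dim c = p$. Moreover, once $(c,\sigma)$ is fixed with $c\subseteq\mathrm{rec}(\sigma)$ and $\mathrm{rec}(\sigma)\in\Sigma_{D,0}$, the intermediate cone $c'$ is forced to be $\mathrm{rec}(\sigma)$, so each pair $(c,\sigma)$ contributes exactly once. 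Comparing with the definition of $\mathsf{S}^\mathrm{cpt}_{\mathsf{T},\mathsf{D}}=\{(c,\sigma)\in\mathsf{S}_\mathsf{T}\mid c\subseteq\mathrm{rec}(\sigma)\in\Sigma_{0,D}\}$, the degree-$p$ term is precisely $\bigoplus_{(c,\sigma)\in\mathsf{S}^\mathrm{cpt}_{\mathsf{T},\mathsf{D}},\ \dim\sigma-\dim c=p}\Lambda^\bullet_{(c,\sigma)}$, as claimed. I would then note that under the various identifications $\Lambda^\bullet_{(c,\sigma)}\cong\mathcal{O}_{T(\sigma)}(*P_\sigma)\otimes\wedge^\bullet(c^\perp/m_\pi)$ the three families of structure maps — the restriction maps from Lemma \ref{p:tropresn-cpt}, the subcone-inclusion maps from Lemma \ref{l:lesresn}, and the residue maps $R^{c'}_{c_1,c_2}$ which are restriction of functions in the ``$c'$-coordinate'' — all become instances of the single map $\alpha((c,\sigma),(c',\sigma'))$ of \eqref{eq:resmap}, differing only by a sign coming from the chosen orientations on $\Sigma_0$ and $\Sigma_1$. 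The signs $[c:c'][\sigma:\sigma']$ combine into a genuine signed incidence relation on $\mathsf{S}_{\mathsf{T},\mathsf{D},0}$ (the product of two signed incidence relations on the two factors is one), which yields the differential $[(c,\sigma):(c',\sigma')]\cdot\alpha((c,\sigma),(c',\sigma'))$.

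The main obstacle, as usual with triple complexes, is the sign/orientation coherence: verifying that the composite of the three differentials, after reindexing, really does produce a legitimate signed incidence relation on the poset $\mathsf{S}_{\mathsf{T},\mathsf{D},0}$ (so that consecutive differentials compose to zero and the resulting complex genuinely computes cellular cohomology of $\tilde{\bf J}^\bullet$ in the next section). This is where one must be careful that the Steenbrink residue signs, the $\mathrm{Res}^{E_I}$ signs of Proposition \ref{p:residue-resolution2}, and the normal-crossings restriction signs are chosen compatibly — ``taking care to keep track of the morphisms and signs'' in the phrasing of the excerpt. I would handle this by fixing, once and for all, orientations on $\Sigma_0$ and on $\Sigma_1$ (equivalently, on the cells of $\mathsf{T}_\Sigma(\Sigma_{0,D},w_\varphi)_0$ via Lemma \ref{lem:simptrop}, using that each cell is a product of a simplex and a cube), defining all signs as products of these, and then invoking the elementary fact that a product of signed incidence relations on two posets is a signed incidence relation on the product poset. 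The filtered statement then requires no extra argument beyond what is already in Lemmas \ref{p:tropresn-cpt}, \ref{l:lesresn} and Proposition \ref{p:residue-resolution2}, since each of those is stated $F_\mathrm{irr}$-filtered and the shifts in \eqref{e:res-resn} (by $-(d-\dim c')$) exactly cancel against the cohomological shift $[-d+\dim c']$ built into the entries of Figure \ref{eq:resol}, so that the filtration on $\Lambda^\bullet_{(c,\sigma)}$ appearing in degree $p$ is the intrinsic one, with no net Tate twist.
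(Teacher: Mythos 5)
Your proposal follows essentially the same route as the paper: it assembles the resolution from Lemma \ref{l:bigres} (Figure \ref{eq:resol}), replaces each entry $\Lambda_c^\bullet\otimes\mathcal{O}_{T(c')}$ by ${\bf s\Lambda}_c^\bullet\otimes\mathcal{O}_{T(c')}$ via Lemma \ref{p:tropresn-cpt}, and then performs the same reindexing in which $c'=\mathrm{rec}(\sigma)$ is forced and the degrees telescope to $\dim\sigma-\dim c=p$, yielding the sum over $\mathsf{S}_{\mathsf{T},\mathsf{D}}^\mathrm{cpt}$. Your handling of the signs (orientations on $\Sigma_0$ and $\Sigma_1$, with the understanding that their product must be combined with the usual Koszul-type adjustment to give a genuine signed incidence relation) and of the filtration shifts is at the same level of detail as the paper's own ``taking care to keep track of the morphisms and signs,'' so the argument is correct and not a different approach.
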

\begin{remark}
    All of the resolutions in this section can be viewed as termwise resolutions of the sheaves $\Lambda^p$ or even as resolutions of $\Omega^p_{T/\mathbb{A}^1}(\log D\cup T_0)\otimes \mathcal{O}_{T_0}$. These resolutions commute with $d + dw_\sigma$ and with $F^\bullet_\mathrm{irr}$ to produce resolutions of filtered complexes.
\end{remark}

Finally, we prove the main theorem of this section. 

\begin{theorem}\label{t:nearbyfibcoh}
    Suppose $(T(\Sigma),D,w_\varphi,\pi)$ is a quasi-stable degeneration of toric Landau--Ginzburg models as constructed in Proposition \ref{p:poly}, and that the monomial support of $w_\sigma$ is the vertex set of a simplex for all $\sigma$. Then:
    \begin{enumerate}
    \item Property \ref{c:E1deg} holds for $(T(\Sigma),D,w_\varphi,\pi)$.
    \item There is a filtered isomorphism 
    \[
        (\mathbb{H}^n(T,\Lambda^\bullet),F^\bullet_\mathrm{irr}) \cong \bigoplus_{p+q=n} (H^p(\mathsf{T}_\Sigma(\Sigma_{0,D},w_\varphi)_0,\tilde{\bf J}^q),\mathsf{F}^\bullet).
    \]
    \end{enumerate}
\end{theorem}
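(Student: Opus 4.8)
The plan is to derive both statements from the filtered resolution of $\Lambda^\bullet$ established in Proposition \ref{lem:res-irregular}, which identifies $\Lambda^\bullet$ with the total complex of a first-quadrant double complex whose $(p,\bullet)$-column is $\bigoplus_{(c,\sigma):\dim\sigma-\dim c=p}\Lambda^\bullet_{(c,\sigma)}$, with horizontal differentials given by signed sums of the $\alpha((c,\sigma),(c',\sigma'))$. Applying $\mathbb{R}\Gamma(T,-)$ to this resolution produces a spectral sequence whose $E_1$-page is $E_1^{p,q}=\bigoplus_{(c,\sigma):\dim\sigma-\dim c=p}\mathbb{H}^q(T(\sigma),\Lambda^\bullet_{(c,\sigma)})$, and this spectral sequence is filtered by $F_\mathrm{irr}^\bullet$ because all the resolution maps respect the irregular filtration (as recorded in the remark after Proposition \ref{lem:res-irregular}). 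The first step is to compute this $E_1$-page: by the filtered Adolphson--Sperber quasi-isomorphism \eqref{e:adolph-sperb} (valid precisely because $w_\sigma$ has support the vertex set of a simplex), $\mathbb{H}^q(T(\sigma),\Lambda^\bullet_{(c,\sigma)})\cong {\bf B}_{\tau_\sigma}\otimes\wedge^{q}c^\perp\wedge\mathrm{Vol}(L(\tau_\sigma))$ in the appropriate degree shift, i.e.\ exactly $\tilde{\bf J}^\bullet(c,\sigma)$; moreover the commutative diagram \eqref{e:cd} shows the $d_1$-differential matches the \v{C}ech/cellular differential of $\tilde{\bf J}^\bullet$ on $\mathsf{S}^\mathrm{cpt}_{\mathsf{T},\mathsf{D}}$. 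Hence $E_2^{p,q}=\bigoplus_k H^p(\mathsf{T}_\Sigma(\Sigma_{0,D},w_\varphi)_0,\tilde{\bf J}^k)$ with $q$ recording the internal grading, because $\mathsf{T}_\Sigma(\Sigma_{0,D},w_\varphi)_0$ is a cell complex (Proposition \ref{p:cellcomplex}) so cellular cohomology over $\mathsf{S}^\mathrm{cpt}_{\mathsf{T},\mathsf{D}}$ computes sheaf cohomology.

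The crux is to prove that this spectral sequence degenerates at $E_2$ \emph{and} that the induced filtration on $E_2$ from $F_\mathrm{irr}^\bullet$ is strict, which simultaneously yields (2) and Property \ref{c:E1deg}. For degeneration I would exploit the extra internal grading: the sheaves $\tilde{\bf J}^k$ and all the maps in the double complex of Proposition \ref{lem:res-irregular} carry a grading by ${\bf B}$-degree (since all $\tau_\sigma$ are simplices, as in Section \ref{t:thomsheaf}), and this grading is preserved by every differential in sight; the spectral sequence therefore splits as a direct sum over graded pieces, and on each graded piece the higher differentials $d_r$ for $r\geq 2$ are forced to vanish by a weight/dimension argument analogous to the degeneration argument used in the proof of Lemma \ref{l:redstalk} — more precisely, $d_2$ on $E_2^{p,q}$ lands in $E_2^{p+2,q-1}$, but the residue-resolution structure (the vertical direction in Figure \ref{eq:resol}) means $q-1$ can only decrease the ${\bf B}$-degree while the cellular direction fixes it, so $d_2$ must be zero, and similarly for higher $d_r$. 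An alternative, cleaner route for degeneration: observe that when $w_\varphi$ is replaced by a constant, the whole package reduces to Steenbrink's $E_2$-degeneration for the nearby-cycle mixed Hodge structure of a semistable degeneration (cf.\ Remark \ref{r:derhamorb} and \cite{steenbrink1974mixed}), and the presence of the superpotential only refines the Hodge filtration to the irregular one without creating new differentials — this is exactly the philosophy of Yu's Proposition \ref{p:Yusub} and Corollary 4.2 of \cite{yu2014irregular}, which already gives termwise injectivity of $F_\mathrm{irr}^\lambda$ on each $\Lambda^\bullet_{(c,\sigma)}$ (Proposition \ref{p:propertyholds}).

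Granting $E_2$-degeneration, I would then feed the strictness back to conclude. Strictness of $F_\mathrm{irr}^\bullet$ on the $E_1\Rightarrow E_2$ passage follows from Proposition \ref{p:propertyholds}: each $\mathbb{H}^q(T(\sigma),F_\mathrm{irr}^\lambda\Lambda^\bullet_{(c,\sigma)})\hookrightarrow\mathbb{H}^q(T(\sigma),\Lambda^\bullet_{(c,\sigma)})$ injectively, and since the $d_1$-complex is a finite direct-sum complex of such filtered vector spaces with filtration-preserving maps, its cohomology inherits a filtration whose graded pieces are computed by the corresponding graded complex — this is the standard ``strict on $E_1$ plus no higher differentials implies the abutment filtration agrees with the naive one'' argument. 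Chasing this through, $F_\mathrm{irr}^\lambda\mathbb{H}^n(T,\Lambda^\bullet)$ is the subspace spanned by $\bigoplus_{p+q=n}\bigoplus_{k}\gr$-pieces of $H^p(\tilde{\bf J}^k)$ in $\mathsf{F}$-degree $\geq\lambda$, which is precisely the right-hand side of (2); and the injectivity $\mathbb{H}^n(F_\mathrm{irr}^\lambda\Lambda^\bullet)\hookrightarrow\mathbb{H}^n(\Lambda^\bullet)$ — Property \ref{c:E1deg} — is the $\lambda$-truncated version of the same statement. The main obstacle I anticipate is rigorously ruling out the higher differentials $d_r$, $r\geq 2$, on the abstract spectral sequence without invoking an $E_2$-degeneration black box: the internal-grading argument sketched above needs to be checked against the precise form of the vertical (residue) maps in Figure \ref{eq:resol}, and one must verify that no component of $d_r$ can mix the cellular index with the ${\bf B}$-degree or the $\wedge^\bullet c^\perp$-degree in a way that survives — this bookkeeping, while conceptually forced, is where the real work lies, and it is the step the authors flag as needing to be ``prove[d] directly'' in the introduction.
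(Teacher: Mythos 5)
Your overall architecture is the same as the paper's: use the resolution of Proposition \ref{lem:res-irregular}, compute the columns via the Adolphson--Sperber identification \eqref{e:adolph-sperb}, identify the horizontal differentials with the cellular differentials of $\tilde{\bf J}^\bullet$ via \eqref{e:cd}, and finish with Proposition \ref{p:curry}. But the step you yourself flag as the real work --- ruling out the higher differentials $d_r$, $r\geq 2$ --- is a genuine gap, and the arguments you sketch to fill it do not hold up. The internal (${\bf B}$/age) grading you want to exploit exists only on the cohomology of each column, not on the cochain level: the twisted differential $\nabla=d+dw\wedge(-)$ does not preserve any such grading of $\Lambda^\bullet_{(c,\sigma)}$ (the term $dw\wedge$ shifts monomials by elements of $A$), and the higher differentials of the double-complex spectral sequence are computed by zig-zags in the cochain complex, so they are not a priori constrained by a grading that only appears on $E_1$. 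Moreover, within a fixed internal grade there are nonzero components in both $E_2^{p,q}$ and $E_2^{p+2,q-1}$, so grade-preservation alone would not force $d_2=0$ even if it held. Your alternative route via Steenbrink's $E_2$-degeneration is exactly the tool the authors state is unavailable in the irregular setting (this is why they prove the degeneration ``directly''), so it cannot be invoked as a black box.

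The way the paper closes this gap is to avoid the abstract spectral sequence altogether, using a stronger consequence of Proposition \ref{p:adolphson-sperber} than the one you use: the isomorphism $\xi_{(c,\sigma)}$ is not just a computation of the column cohomology but a canonical \emph{splitting}, compatible with the restriction/residue maps (this is the content of the commutative diagram \eqref{e:cd} and the corollary on residues), and by Theorem \ref{t:as} one has $\mathbb{R}\Gamma(F^\lambda_\mathrm{irr}\Lambda^\bullet_{(c,\sigma)})\cong\Gamma(F^\lambda_\mathrm{irr}\Lambda^\bullet_{(c,\sigma)})$. Hence each column of the double complex can be replaced, \emph{functorially in} $(c,\sigma)$ and compatibly with the horizontal maps, by a quasi-isomorphic complex with zero differential whose $p$-th term is $\mathsf{F}^\lambda\tilde{\bf J}^p(c,\sigma)$. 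The total complex is then literally quasi-isomorphic to $\bigoplus_p C^\bullet_\mathrm{cell}(\mathsf{T}_\Sigma(\Sigma_{0,D},w_\varphi)_0,\tilde{\bf J}^p)[p]$, so there are no higher differentials to analyze and statement (2) follows. Statement (1) then falls out because the cellular differentials of $\tilde{\bf J}^\bullet$ are graded morphisms, hence $\mathsf{F}$-strict, so $H^*(\mathsf{F}^\lambda\tilde{\bf J}^\bullet)\to H^*(\tilde{\bf J}^\bullet)$ is injective and this map is identified with $\mathbb{H}^n(T,F^\lambda_\mathrm{irr}\Lambda^\bullet)\to\mathbb{H}^n(T,\Lambda^\bullet)$; note that in the paper the injectivity of Property \ref{c:E1deg} is a by-product of the explicit splitting, not an input fed back after a degeneration argument. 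To repair your write-up you should upgrade your use of \eqref{e:adolph-sperb} from ``computes $E_1$ and $d_1$'' to ``provides a termwise quasi-isomorphism of double complexes onto one with trivial vertical differential,'' at which point your degeneration problem disappears.
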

\begin{proof}
    According to Proposition \ref{lem:res-irregular}, $\mathbb{H}^n(T,F^\lambda_\mathrm{irr}\Lambda^\bullet)$ is the hypercohomology of 
    \begin{equation*}
        \bigoplus_{\substack{(c,\sigma) \in \mathsf{S}_{\mathsf{T},\mathsf{D}}^\mathrm{cpt} \\ \dim \sigma - \dim c = 0}} F^\lambda_\mathrm{irr}\Lambda^\bullet_{(c,\sigma)} \longrightarrow \bigoplus_{\substack{(c,\sigma) \in \mathsf{S}_{\mathsf{T},\mathsf{D}}^\mathrm{cpt} \\ \dim \sigma - \dim c = 1}} F^\lambda_\mathrm{irr}\Lambda^\bullet_{(c,\sigma)} \longrightarrow \bigoplus_{\substack{(c,\sigma) \in \mathsf{S}_{\mathsf{T},\mathsf{D}}^\mathrm{cpt} \\ \dim \sigma - \dim c = 2}} F^\lambda_\mathrm{irr}\Lambda^\bullet_{(c,\sigma)} \longrightarrow \dots.
        \end{equation*} 
        By Theorem \ref{t:as} and \eqref{e:trivial}, there is a functorial quasi-isomorphism $\mathbb{R}\Gamma(F^\lambda_\mathrm{irr}\Lambda^\bullet_{(c,\sigma)}) \cong \Gamma(F^\lambda_\mathrm{irr}\Lambda_{(c,\sigma)}^\bullet)$ and by \eqref{e:adolph-sperb} and Proposition~\ref{p:propertyholds} there is a functorial quasi-isomorphism between $\Gamma(F^\lambda_\mathrm{irr}\Lambda^\bullet_{(c,\sigma)})$ and the trivial complex with trivial differential whose $p$-th component is $\mathsf{F}^{\lambda-p}{\bf B}_{\tau_\sigma} \otimes \wedge^{p - \dim \tau_\sigma} c^\perp \otimes  \mathrm{Vol}(L(\tau_\sigma)) = \mathsf{F}^{\lambda}\tilde{\bf J}^p{(c,\sigma)}$. The rows of this double complex are of the form:  
    \begin{align}
        C_\mathrm{cell}^p&(\mathsf{F}^\lambda\tilde{\bf J}^\bullet):= \nonumber \\ & \left[ \bigoplus_{\substack{(c,\sigma) \in \mathsf{S}_{\mathsf{T},\mathsf{D}}^\mathrm{cpt} \\ \dim \sigma - \dim c = 0}} \mathsf{F}^\lambda\tilde{\bf J}^p{(c,\sigma)} \longrightarrow \bigoplus_{\substack{(c,\sigma) \in \mathsf{S}_{\mathsf{T},\mathsf{D}}^\mathrm{cpt} \\ \dim \sigma - \dim c = 1}} \mathsf{F}^\lambda\tilde{\bf J}^p{(c,\sigma)} \longrightarrow \bigoplus_{\substack{(c,\sigma) \in \mathsf{S}_{\mathsf{T},\mathsf{D}}^\mathrm{cpt} \\ \dim \sigma - \dim c = 2}} \mathsf{F}^\lambda\tilde{\bf J}^p{(c,\sigma)} \longrightarrow \dots   \right]\label{e:descent5}
        \end{align}
        where morphisms are matrices of maps whose entries are identified with the morphisms induced by $[(c,\sigma):(c',\sigma')]\alpha((c,\sigma),(c',\sigma'))$. By the discussion in Section \ref{s:tsheaf}, specifically \eqref{e:cd}, these morphisms are simply $\tilde{\bf J}^p((c,\sigma),(c',\sigma'))$ when restricted to $\mathrm{im}(\xi_{(c,\sigma)})$ (see \eqref{e:adolph-sperb} for the notation). Consequently, they are graded morphisms and in particular, $\mathsf{F}^\lambda$-strict. Therefore, the inclusion $\mathsf{F}^\lambda\tilde{\bf J}^\bullet \hookrightarrow \tilde{\bf J}^\bullet$ induces an inclusion in cohomology. By the argument above, this inclusion is identified with the morphism $\mathbb{H}^n(T,F^\lambda_\mathrm{irr}\Lambda^\bullet) \rightarrow \mathbb{H}^n(T,\Lambda^\bullet)$. This proves the first statement.
        
        To prove the second statement, we identify \eqref{e:descent5} with $C_\mathrm{cell}^\bullet(\mathsf{T}(\Sigma_{0,D},w_\varphi)_0,\tilde{\bf J}^p)$. Therefore, $\mathbb{H}^n(T,\Lambda^\bullet)$ is isomorphic to the cohomology of 
        \[
        \bigoplus_{p} C^\bullet_\mathrm{cell}(\mathsf{T}(\Sigma_{0,D},w_\varphi)_0,\tilde{\bf J}^p)[p]
        \]
        which implies that $\mathbb{H}^n(T,\Lambda^\bullet) \cong \oplus_{p+q = n} H^q(\mathsf{T}(\Sigma_{0,D},w_\varphi)_0,\tilde{\bf  J}^p)$. The second statement then follows by application of Proposition~\ref{p:curry}. 
\end{proof}

Finally, we arrive at the main result of this section by combining Theorem \ref{t:nearbyfibcoh} and Proposition \ref{p:curry} with Proposition \ref{c:E1deg}.

\begin{corollary}\label{c:nearbyfibcoh}
Suppose $(T(\Sigma),D,w_\varphi,\pi)$ is a simplicial quasi-stable degeneration of toric Landau--Ginzburg models and choose some $t$ so that $1\gg |t| > 0$. Then 
\[
\dim \gr_{F_\mathrm{irr}}^\lambda H^n(T(\Sigma)_t\setminus D_t,w_{\varphi,t}) = \sum_{p+q=n} \gr_{\mathsf{F}}^{\lambda}H^q(\mathsf{T}(\Sigma_{0,D},w_\varphi)_0,{\bf J}^p).
\]
\end{corollary}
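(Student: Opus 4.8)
\textbf{Proof proposal for Corollary \ref{c:nearbyfibcoh}.}
The plan is to assemble the statement from the machinery developed in Sections~\ref{s:degnby} and~\ref{s:nearbyfib} together with the earlier comparison results. First I would invoke Proposition~\ref{p:poly}, which guarantees that the data $(T(\Sigma),D,w_\varphi,\pi)$ is indeed a quasi-stable degeneration of Landau--Ginzburg models, so that all the results of Section~\ref{s:degnby} apply. Since $\Sigma$ is obtained from a simplicial refinement (via Definition~\ref{d:sigma-varphi} and Proposition~\ref{p:schon}), the toric strata $T(\sigma)$ of $T_0$ carry the property that $w_\sigma = w_\varphi|_{T(\sigma)}$ is a nondegenerate Laurent polynomial whose monomial support is the vertex set of a simplex---this is exactly the hypothesis needed to run Theorem~\ref{t:nearbyfibcoh}. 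Concretely, one checks this using Lemma~\ref{l:poly}: the intersection of the zero divisor $Z$ with a stratum $T(\kappa)$ is cut out by $w_{\sigma_\kappa} = \sum_{m\in A_{\sigma_\kappa}} u_m \underline{x}^m$, and since $\mathrm{SD}(w_\varphi)$ is a star triangulation (Proposition~\ref{p:simplicial}), each $\tau_\sigma = \mathrm{Conv}(A_{\sigma_\kappa})$ is a simplex.

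Next I would chain together the three comparison statements. Theorem~\ref{t:constdim} and Proposition~\ref{p:GM} show that $\mathbb{R}^n\pi_*\Omega_{X/\mathbb{D}}^\bullet(\log D\cup X_0)(*P)$ is locally free with a logarithmic Gauss--Manin connection, hence $\dim\mathbb{H}^n(T,\Lambda^\bullet) = \dim H^n(T(\Sigma)_t\setminus D_t,w_{\varphi,t})$ for $1\gg|t|>0$. Then Theorem~\ref{t:nearbyfibcoh}(1) establishes that Property~\ref{c:E1deg} holds for this degeneration, so by Proposition~\ref{p:hnconst} the graded dimensions also agree:
\[
\dim \gr_{F_\mathrm{irr}}^\lambda H^n(T(\Sigma)_t\setminus D_t,w_{\varphi,t}) = \dim \gr_{F_\mathrm{irr}}^\lambda \mathbb{H}^n(T,\Lambda^\bullet).
\]
Finally Theorem~\ref{t:nearbyfibcoh}(2) gives the filtered isomorphism
\[
(\mathbb{H}^n(T,\Lambda^\bullet),F^\bullet_\mathrm{irr}) \cong \bigoplus_{p+q=n}(H^p(\mathsf{T}_\Sigma(\Sigma_{0,D},w_\varphi)_0,\tilde{\bf J}^q),\mathsf{F}^\bullet),
\]
and Proposition~\ref{p:curry} identifies $H^p(\mathsf{T}_\Sigma(\Sigma_{0,D},w_\varphi)_0,\tilde{\bf J}^q)$ with $H^p(\mathsf{T}(\Sigma_{0,D},w_\varphi)_0,{\bf J}^q)$ as filtered vector spaces, since $s:\mathsf{T}_\Sigma(\Sigma_{0,D},w_\varphi)_0\to\mathsf{T}(\Sigma_{0,D},w_\varphi)_0$ is a subdivision of posets and $\tilde{\bf J}^q = s^*{\bf J}^q$. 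Taking $\gr^\lambda_{\mathsf{F}}$ throughout and summing over $p+q=n$ yields the claimed identity.

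The bulk of the work is really already packaged in Theorem~\ref{t:nearbyfibcoh}, so at the level of this corollary the only genuine task is to verify that the present degeneration satisfies the running hypotheses of that theorem---in particular the simpliciality of $\Sigma$ (so that all cells $\kappa(\sigma)$ are simplicial cones, which is the final assertion of Proposition~\ref{p:refined-complex}) and the simplex condition on the monomial support of each $w_\sigma$. I expect the main (though still modest) obstacle to be bookkeeping: one must make sure the chain of filtered isomorphisms is compatible, i.e.\ that the shift $-(d-\dim c')$ appearing in the residue resolutions of Lemma~\ref{l:lesresn}, the irregular filtration shift in~\eqref{e:irrigation}, and the grading conventions on ${\bf B}_{\tau_\sigma}$ defining $\mathsf{F}^\bullet$ all line up, so that "$\gr^\lambda_{F_\mathrm{irr}}$ on the left" corresponds precisely to "$\gr^\lambda_{\mathsf{F}}$ on the right." Since each individual isomorphism in the chain has already been shown to be filtered, this is a matter of transitivity rather than new content, and the statement follows.
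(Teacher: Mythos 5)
Your proposal is correct and follows essentially the same route as the paper, whose proof is exactly the combination of Theorem \ref{t:constdim}/Proposition \ref{p:hnconst} (constancy of the irregular Hodge-graded dimensions, enabled by Theorem \ref{t:nearbyfibcoh}(1)), Theorem \ref{t:nearbyfibcoh}(2), and Proposition \ref{p:curry}; you have merely spelled out the verification of the hypotheses (quasi-stability via Proposition \ref{p:poly} and the simplex condition on the $w_\sigma$) that the paper leaves implicit.
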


\begin{remark}[Orbifold cohomology]\label{r:relinertia}
	 It is straightforward to generalize Theorem \ref{t:nearbyfibcoh} and Corollary \ref{c:nearbyfibcoh} to the orbifold cohomology. For a quasistable degeneration of Landau--Ginzburg models $(X, D, w, \pi)$, the relevant twisted sectors are parametrized by a relative inertia stack $\mathfrak{I}_{\pi,X}$ that is a substack of $\mathfrak{I}_{X}$ defined by the union of components over which the restriction of $\pi$ is flat. Geometrically, each component in $\mathfrak{I}_{\pi, X}$ corresponds to a quasi-stable degeneration of the corresponding twisted sector, hence we apply the same argument above to compute the irregular Hodge numbers of the nearby fibre in each twisted sector. 
\end{remark}

\subsection{Application to Clarke dual pairs}\label{s:apptoClarke}
    The very last observation that we need to make is that the constructions in this section may be applied in the case of Clarke dual pairs of Landau--Ginzburg models. Recall that we are given a pair $({\bf \Sigma},\check{\bf \Sigma})$ satisfying Definition \ref{d:adjectives}. We let $A = {\bf \check{\Sigma}}[1]\cup \{0\}$ and note that $\Delta_{\bf \check{\Sigma}}=\mathrm{Conv}(A)$. Since ${\bf  \check{\Sigma}}$ is quasiprojective and convex we are given a star triangulation based at $0_N$ by the function $\check{\varphi}$ (Proposition \ref{p:simplicial}). The condition that $\langle n, m\rangle \geq 0$ for $n \in \mathrm{Supp}(\check{\Sigma})$ and $m \in \mathrm{Supp}(\Sigma)$ implies that the simplicial fan ${\Sigma}$ is contained in $\mathrm{nc}(0_N)$. We would like ${\Sigma}$ to play the role of $\Sigma_{0,D}$ in this situation, which requires that we extend ${\Sigma}$ to a subfan of a refinement of $\mathrm{nf}(\Delta_{\bf \check{\Sigma}})$. 

\begin{proposition}
    Suppose $({\bf \Sigma},\check{\bf \Sigma})$ form a Clarke dual pair of fans. Then there is a simplicial fan $\Sigma_0$ so that 
    \begin{enumerate}
        \item $\Sigma_0$ is a refinement of $\mathrm{nf}(\Delta_{\check{\bf \Sigma}})$,
        \item $\Sigma$ is a subfan of $\Sigma_0$,
        \item $T(\Sigma_0) \setminus T(\Sigma)$ is the support of a divisor.
    \end{enumerate}
\end{proposition}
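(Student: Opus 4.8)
The statement we need is essentially a combinatorial fan-completion result, so the plan is to build $\Sigma_0$ in two stages: first produce a simplicial fan $\Sigma_0'$ whose support is all of $N_\mathbb{R}$ and which refines $\mathrm{nf}(\Delta_{\check{\bf\Sigma}})$, and then arrange that $\Sigma$ itself appears as a subfan. The input data is a pair $({\bf\Sigma},\check{\bf\Sigma})$ satisfying Definition~\ref{d:adjectives}; in particular $\check{\bf\Sigma}$ is simplicial, quasiprojective and convex, so by Proposition~\ref{p:simplicial} the subdivision $\mathrm{SD}(w_{\check\varphi})=\Delta_{\check{\bf\Sigma}}$ is a star triangulation based at $0_N$, and its normal fan $\mathrm{nf}(\Delta_{\check{\bf\Sigma}})$ is a complete (non-pointed if $\dim\Delta_{\check{\bf\Sigma}}<d$, but after the $N^\perp_{\Delta,\mathbb{R}}$-splitting of Remark~\ref{r:nonpointed} one may assume full-dimensional) fan in $N_\mathbb{R}$. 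The regularity condition $\langle m,n\rangle\geq 0$ for $m\in\mathrm{Supp}(\Sigma)$, $n\in\mathrm{Supp}(\check\Sigma)$ says precisely that $\mathrm{Supp}(\Sigma)\subseteq \check{c}_0^\vee$ where $\check{c}_0 = \mathrm{Supp}(\check\Sigma)$, and in particular $\mathrm{Supp}(\Sigma)\subseteq \mathrm{nc}(0_N)$, the maximal cone of $\mathrm{nf}(\Delta_{\check{\bf\Sigma}})$ dual to the star vertex $0_N$; this is the observation already flagged in the paragraph preceding the proposition.

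\textbf{Key steps.} First I would take a common refinement $\Sigma_1$ of the two complete fans $\mathrm{nf}(\Delta_{\check{\bf\Sigma}})$ and the fan generated by $\Sigma$ together with $\mathrm{nf}(\Delta_{\check{\bf\Sigma}})$ restricted to $\mathrm{nc}(0_N)$; concretely, inside the cone $\mathrm{nc}(0_N)$ one refines $\mathrm{nf}(\Delta_{\check{\bf\Sigma}})\cap\mathrm{nc}(0_N)$ so that $\Sigma$ becomes a subfan — this is possible because $\Sigma$ is itself a fan supported inside $\mathrm{nc}(0_N)$, and any two fans with comparable supports admit a common refinement obtained by intersecting cones. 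Second, I would simplicialize: starting from the (possibly non-simplicial) refinement, apply the standard pushing/stellar-subdivision construction (Cox--Little--Schenck, or the quasiprojective simplicial refinement argument already used in the excerpt for non-simplicial fans) to obtain a simplicial fan, taking care to perform all subdivisions in the open stars of cones \emph{not} lying in $\Sigma$, so that $\Sigma$ is untouched and remains a subfan — this uses that $\Sigma$ is already simplicial. Third, I would verify the three listed properties: (1) holds because every cone of $\Sigma_0$ is contained in a cone of $\mathrm{nf}(\Delta_{\check{\bf\Sigma}})$ by construction; (2) holds by the choice of subdivision locus; (3) holds because $\mathrm{Supp}(\Sigma_0)=N_\mathbb{R}$ while $\mathrm{Supp}(\Sigma)=\check c_0^{\vee}\cap(\text{something})$ is a $d$-dimensional cone that does not meet the relative interior of any cone of $\Sigma_0$ outside $\Sigma$ — hence the torus orbits of $T(\Sigma_0)$ not lying in the open subset $T(\Sigma)$ are exactly those attached to cones not in $\Sigma$, and the union of the divisorial ones among the rays of $\Sigma_0\setminus\Sigma$ sweeps out $T(\Sigma_0)\setminus T(\Sigma)$ as a (reduced) divisor. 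One should note here that $T(\Sigma)$ sits inside $T(\Sigma_0)$ as the toric open subvariety determined by the subfan $\Sigma$, and that the complement of a toric open subvariety corresponding to a subfan is the union of the closed orbits attached to cones in the larger fan but not the smaller one; when $\Sigma$ contains all cones of $\Sigma_0$ of codimension $\geq 2$ that are faces of its maximal cones, the complement is pure codimension one, i.e. a divisor. If $\Sigma$ does not a priori contain enough low-dimensional cones, one enlarges it harmlessly by adding the missing faces inside $\mathrm{Supp}(\Sigma)$, which does not change $T(\Sigma)$.

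\textbf{Main obstacle.} The subtle point — and the step I would expect to require the most care — is item (3) together with the compatibility of simplicialization with keeping $\Sigma$ as a genuine subfan: one must simultaneously (a) refine $\mathrm{nf}(\Delta_{\check{\bf\Sigma}})$ to a simplicial fan, (b) leave $\Sigma$ literally unchanged as a set of cones, and (c) ensure the complement $T(\Sigma_0)\setminus T(\Sigma)$ is cut out by toric divisors rather than being a more complicated closed set. For (c) the key is that $\mathrm{Supp}(\Sigma)$ is a union of top-dimensional cones of $\Sigma_0$ (being $d$-dimensional and a subfan), so no cone of $\Sigma_0$ has its relative interior partly in $\mathrm{Supp}(\Sigma)$ and partly out; this makes the orbit-closure stratification behave well and forces the complement to be a union of toric boundary divisors. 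For (a) versus (b), the resolution is that simplicialization by stellar subdivisions can always be localized away from any prescribed simplicial subfan — one orders the non-simplicial cones by dimension and subdivides, and since cones of $\Sigma$ are already simplicial they are never chosen, so the procedure is constant on $\Sigma$. Once these points are pinned down the rest is bookkeeping, and I would not grind through it in the write-up beyond citing \cite{Cox2011toricbook} for the existence of simplicial refinements and the orbit--cone correspondence. If one also wants $\Sigma_0$ quasiprojective (which the later applications implicitly use via Assumption~\ref{a:1-4}), one additionally runs the refinement so that the support function stays strictly convex, exactly as in the quasiprojective-refinement remark earlier in the paper; this adds no real difficulty but should be mentioned.
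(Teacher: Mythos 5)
Your route is genuinely different from the paper's (the paper compactifies $T(\Sigma)$ equivariantly inside $\mathbb{P}^n$ using quasiprojectivity, so that the complement is a divisor from the start, and then performs iterated toric blow-ups away from $T(\Sigma)$ to reach orbifold singularities and nondegeneracy of the zero locus of $w(\check{\bf\Sigma})$; items (1)--(3) are read off from that geometric construction), and a purely combinatorial argument along your lines can be made to work. But as written there are two real gaps. First, the step that is actually the content of the proposition --- extending $\Sigma$ to a simplicial refinement of the complete fan $\mathrm{nf}(\Delta_{\check{\bf\Sigma}})$ containing $\Sigma$ as a literal subfan --- is asserted, not proved. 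Intersecting cones of $\Sigma$ with cones of $\mathrm{nf}(\Delta_{\check{\bf\Sigma}})$ only produces a fan structure on $\mathrm{Supp}(\Sigma)$ (there it does leave $\Sigma$ untouched, since every cone of $\Sigma$ lies inside the single normal cone $\mathrm{nc}(0_N)$, so such intersections are faces of cones of $\Sigma$); the nontrivial part is to subdivide the region $\mathrm{nc}(0_N)\setminus\mathrm{Supp}(\Sigma)$, and the remaining cones of the normal fan, compatibly with the boundary fan structure induced by $\Sigma$. This is a relative fan-completion problem and needs either an argument or a citation (Ewald, Rohrer's completion of fans, or the paper's own geometric device); "any two fans with comparable supports admit a common refinement" does not deliver it.

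Second, your justification of item (3) is incorrect as stated. $\mathrm{Supp}(\Sigma)$ need not be $d$-dimensional (e.g.\ $\Sigma=\{0\}$ is allowed and occurs in the weak Fano situation), the criterion "$\Sigma$ contains all codimension $\geq 2$ faces of its maximal cones" is automatic for any subfan and hence says nothing, and "enlarging $\Sigma$ by adding the missing faces inside $\mathrm{Supp}(\Sigma)$" is either vacuous (a fan contains all faces of its cones) or changes $T(\Sigma)$ by adding orbits. The correct statement is: $T(\Sigma_0)\setminus T(\Sigma)$ is a divisor if and only if every cone of $\Sigma_0$ not in $\Sigma$ contains a ray not in $\Sigma$, and in your setting this follows from the convexity half of Clarke duality: $\mathrm{Supp}(\Sigma)$ is the cone over the convex set $\Delta_{\bf\Sigma}\ni 0$, hence a convex cone, so any cone of $\Sigma_0$ all of whose rays lie in $\mathrm{Supp}(\Sigma)$ is itself contained in $\mathrm{Supp}(\Sigma)$ and therefore belongs to $\Sigma$ (its relative interior meets a cone of $\Sigma$, and the fan axioms force it to be a face of that cone). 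With that fix, every orbit in the complement lies in the closure of a ray-orbit for a ray outside $\Sigma$, which is exactly item (3). It is worth noting the contrast: the paper obtains (3) from quasiprojectivity of $\Sigma$, while your argument must lean on convexity of $\mathrm{Supp}(\Sigma)$; either hypothesis is available, but one of them has to be invoked explicitly, and your draft invokes neither.
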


 \begin{proof}

 This construction is perhaps easiest to explain geometrically. Since $\Sigma$ is quasiprojective, the toric variety $T(\Sigma)$ may be compactified to a toric subvariety $\overline{T}$ of $\mathbb{P}^n$ for some integer $n$, and whose image is the complement of a collection of divisors. By blowing up repeatedly in singular strata, we may assume that $\overline{T}$ has at worst orbifold singularities. This blow-up avoids $T({\Sigma}) \subseteq \overline{T}$ because $T(\Sigma)$ itself has at worst orbifold singularities. The zero-locus of $w(\check{\bf \Sigma})$, denoted  $Z$, is not necessarily a nondegenerate hypersurface in $\overline{T}$. However, for each open toric stratum $B$ of $\overline{T}$, either $Z\cap B$ is smooth or $B\subseteq Z$ because coefficients for $w(\check{\bf \Sigma})$ are chosen generically. After iterated blow-up at those strata $B \subseteq Z$, we may assume $Z\cap B$ is smooth for all $B$. Let ${T}_0$ be such a variety. Since $\check{\Sigma} \subseteq \mathrm{nc}(0_M)$, the intersection of $Z$ and all open toric strata in $T({\Sigma})$ is smooth, so this iterated blow up again does not affect $T({\Sigma}) \subseteq \overline{T}$. Let ${\Sigma}_0$ denote the fan attached to ${T}_0$. The fact that $Z\cap B$ is smooth for all open toric strata is  equivalent to item (1). Item (2) holds because $T(\Sigma)$ is an  open subvariety of $T_0$.

    By construction, $T\setminus T(\Sigma)$ is a divisor. The blow up process does not  change this. Therefore, $T'\setminus T(\Sigma)$ is also a collection of divisors. Therefore, item (3) holds.
\end{proof}

\bibliographystyle{plain}
\bibliography{hom.bib}{}

\end{document}